\documentclass[11pt,reqno]{amsart}
\usepackage{color}
\usepackage{graphics,epic}
\usepackage{amsmath,amssymb, amsthm}
\usepackage[all,2cell]{xy}
\usepackage{soul}
\usepackage[colorlinks=true,citecolor=red,linkcolor=blue]{hyperref}

\newtheorem{theorem}{Theorem}[section]
\newtheorem*{theorem*}{Theorem}

\newtheorem{lemma}[theorem]{Lemma}
\newtheorem{proposition}[theorem]{Proposition}
\newtheorem{corollary}[theorem]{Corollary}

\newtheorem*{conjecture*}{Conjecture}

\newtheorem{remark}[theorem]{Remark}
\newtheorem{definition}[theorem]{Definition}

\newcommand\cC{{\mathcal C}}

\newcommand{\cF}{{\mathcal F}}
\newcommand{\cG}{{\mathcal G}}

\renewcommand{\hat}[1]{\widehat{#1}}

\newcommand{\id}{{\rm id}}

\newcommand{\Hom}{{\rm Hom}\,}

\newcommand{\Res}{{\rm Res}\,}

\newcommand{\Z}{\mathbb{Z}}

\newcommand{\C}{\mathbb{C}}
\newcommand{\N}{\mathbb{N}}

\def\Res{{\rm Res}}
\def\wt{{\rm wt}}
\def\C{{\mathbb C}}

\def\Z{{\mathbb Z}}

\def\N{{\mathbb N}}

\def\1{{\bf 1}}

\def \Hom{{\rm Hom}}

\def \Ind{{\rm Ind}}
\def \pf{\noindent {\bf Proof: \,}}
\def\theequation{5.\arabic{equation}}

\def \h{\mathfrak{h}}

\def \g{\mathfrak{g}}

\begin{document}

\title[Tensor categories of VOAs $L_{-1}(sp(2n))$ and $M(1)^+$]{Tensor category of $\mathbb{Z}_2$-orbifold of Heisenberg vertex operator algebra and its applications}

\author{Dra\v{z}en Adamovi\'{c} }
\address{Dra\v{z}en Adamovi\'{c}, Department of Mathematics, Faculty of Science, University of Zagreb, Croatia}
\email{adamovic@math.hr}

\author{Xingjun Lin}
\address{Xingjun Lin,  School of Mathematics and Statistics, Wuhan University, Wuhan 430072,  China}
\email{linxingjun88@126.com}

\author{Jinwei Yang}
\address{Jinwei Yang, School of Mathematical Sciences, Shanghai Jiao Tong University, Shanghai 200240,  China.}
\email{jinwei2@sjtu.edu.cn}

\begin{abstract}
In this paper, we prove the category of finite length modules for the $\Z_2$-orbifold $M(1)^+$ of the Heisenberg vertex operator algebra whose simple composition factors are $M(1)^\pm$ or $M(1,\lambda)$ for $\lambda \in \C^\times$ is a vertex and braided tensor category. Our strategy is to show these simple composition factors are $C_1$-cofinite and the category of finite length $M(1)^+$-modules is exactly the category of grading-restricted $C_1$-cofinite modules. We also determine the fusion product decompositions of simple objects and prove the rigidity of this category. 

As an application of the tensor category structure of $M(1)^+$-modules, we prove the category $\cC_{-1}(sp(2n))$ of grading-restricted generalized modules for the simple affine vertex algebra $L_{-1}(sp(2n))$ is semisimple. For this, we first prove $M(1)^+$ and simple affine vertex algebra $L_{-\frac{1}{2}}(sp(2n))$ form a commutant pair in the simple minimal $W$-algebra $W_{-1}^{min}(sp(2n))$ for $n \geq 2$ and determine $W_{-1}^{min}(sp(2n))$ as well as its irreducible modules obtained from quantum Hamilton reduction as decompositions of $M(1)^+ \otimes L_{-\frac{1}{2}}(sp(2n))$-modules, then we show all the highest weight modules for $L_{-1}(sp(2n))$ in $\cC_{-1}(sp(2n))$ are irreducible via the quantum Hamilton reduction.

We also prove a Schur-Weyl duality between $L_{-1}(sp(2n))$ and $M(1)^+$ by showing they form a commutant pair in the $\Z_2$-orbifold of the rank $n$ $\beta\gamma$ system, and then establish a braided reversed equivalence between the category $\cC_{-1}(sp(2n))$ and the full subcategory of $C_1$-cofinite $M(1)^+$-modules consisting of direct sums of irreducible modules $M(1)^\pm$ and $M\big(1, \frac{s}{\sqrt{-2n}}\big)$ for $s \in \Z_{\geq 0}$. Consequently, we obtain the rigidity and fusion rules of the category $\cC_{-1}(sp(2n))$.
\end{abstract}
\maketitle

\tableofcontents
\section{Introduction }
\def\theequation{1.\arabic{equation}}
\setcounter{equation}{0}

\subsection{Tensor categories of $M(1)^+$-modules}
Tensor category of vertex operator algebras plays an important role in representation theory of vertex operator algebras. In the tensor category theory of vertex operator algebras, the first major difficulty is to establish the tensor category structure on certain representation categories of vertex operator algebras. When the vertex operator algebra $V$ is $C_2$-cofinite, Y.-Z. Huang proved that the category of grading restricted generalized modules is a vertex and braided tensor category (\cite{H3}), and if the vertex operator algebra is also rational, then the category is a modular tensor category (\cite{H2}). It is natural to ask whether there are tensor category structures on appropriate representation categories of non-$C_2$-cofinite vertex operator algebras. 

For non $C_2$-cofinite vertex operator algebras, it is important to study the category of $C_1$-cofinite modules because this category is closed under taking fusion product (\cite{Miy}) and the matrix elements of products and iterates of intertwining operators satisfy the differential equations (\cite{H1}). In fact, Huang recently proved the category of $C_1$-cofinite modules is a braided monoidal category (\cite{H4}). However, it is not clear whether the irreducible modules are $C_1$-cofinite, and whether the category of $C_1$-cofinite modules is an abelian category, particularly whether the category is closed under taking submodules and contragredient duals.

Previously in \cite{CY}, T. Creutzig and the third named author showed that if the category of $C_1$-cofinite $V$-modules is the same as the category of the finite length $V$-modules whose simple composition factors are irreducible grading-restricted $C_1$-cofinite modules, then the category of finite length modules is a vertex and braided tensor category. Using this idea, we have been able to establish the braided tensor categories for some non-$C_2$-cofinite vertex operator algebras including the Virasoro vertex operator algebra at all central charges (\cite{CJORY}), affine vertex superalgebra $\mathfrak{gl}(1|1)$ (\cite{CMY2}), affine Lie algebra $\mathfrak{sl}_2$ at positive rational levels (\cite{MY}), singlet algebras (\cite{CMY1}, \cite{CMY3}), $N=1$ super Virasoro algebra at all central charges (\cite{CMOY}) and $N=2$ super Virasoro algebra at central charge ${3k}/{(k+2)}$ for admissible $k$ of $sl(2)$ (\cite{C}).

In this paper, we construct and study the tensor categories arising from the $\Z_2$-orbifold $M_\h(1)^+$ of the Hisenberg vertex operator algebra $M_\h(1)$ generated by a one-dimensional Lie algebra $\mathfrak{h}$, introduced by C. Dong and R. Griess (\cite{DG}). We simply write this orbifold vertex operator algebra as $M(1)^+$ and Heisenberg vertex operator algebra as $M(1)$ if $\mathfrak{h}$ is clear. It is one of the most fundamental vertex operator algebras coming from orbifold theory, neither the Heisenberg vertex operator algebra nor this orbifold vertex operator algebra itself is rational or $C_2$-cofinite. Its simple modules were classified by Dong and K. Nagatomo (\cite{DN1}), they are: the fixed point and the eigenvalue $-1$-subspaces $M(1)^\pm$ by the involution $\theta$ of $M(1)$ induced by sending $h \mapsto -h$ for $h \in \h$, the Heisenberg simple modules $M(1,\lambda)$ for $\lambda \in \mathfrak{h}^* - \{0\} \cong \C^\times$,  and the fixed point and eigenvalue $-1$ subspaces of the Heisenberg twisted module $M(1)(\theta)^\pm$. The category of grading-restricted generalized $M(1)^+$-modules is not semisimple, Abe (\cite{Abe}) calculated extensions between irreducible
$M(1)^+$--modules and proved that 
the extension group   $\mbox{Ext} ^1 (L_1, L_2)  $ is trivial except  for the pairs
$(M(1)^+, M(1)^-), \;  (M(1)^-, M(1)^+), \;     (M(1, \lambda) , M(1, \lambda)).$

To describe the category of $C_1$--cofinite $M(1) ^+$--modules, we first need to analyze whether the irreducible  $M(1) ^+$-modules are $C_1$--cofinite. The strategy is similar to the proof of $C_1$-cofiniteness of irreducible modules for the singlet algebras (\cite{CMR}). We first show the atypical modules $M\big(1, \frac{n}{\sqrt{2}}\big)$ for $n \in \Z_{\geq 0}$ are $C_1$-cofinite by finding a spanning set of these modules regarding as modules for the simple Virasoro algebra $L(1,0)$ of central charge $1$ and using the fact the irreducible modules $L\big(1, \frac{n^2}{4}\big)$ for the Virasoro algebra $L(1,0)$ are $C_1$-cofinite. Then we show the modules $M(1,\lambda)$ are $C_1$-cofinite for all $\lambda \in \C$ but possibly only a countable set. Finally, we exclude this countable set by using the fact $C_1$-cofinite modules are closed under taking fusion products (\cite{Miy}). To summarize, we prove  (Theorem \ref{cofinite1} and \ref{cofinite2} in the main context):
\begin{theorem} The   $M(1) ^+$--modules
$$ M(1) ^{\pm},\; M(1,\lambda) (\lambda \ne 0),  $$
provide a complete list of  irreducible $C_1$--cofinite $M(1) ^+$--modules.
\end{theorem}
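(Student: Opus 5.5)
The plan follows the strategy sketched just before the statement. There are two parts: first show that each module on the list is $C_1$-cofinite, then show that no other irreducible module is. By the Dong--Nagatomo classification, the only remaining irreducibles are the twisted-type modules $M(1)(\theta)^\pm$. We normalize $\h=\C h$ with $\langle h,h\rangle=1$. We take $\omega=\frac12 h(-1)^2\1$, which generates a copy of the Virasoro vertex operator algebra of central charge $1$ inside $M(1)^+$. For $C_1$-cofiniteness of a module $W$ we use $C_1(W)$, the span of $u_{-1}w$ with $u\in V_+$ of positive weight, and we must show $\dim W/C_1(W)<\infty$.

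\textbf{Step 1 (atypical modules).} For $\lambda=\frac{n}{\sqrt2}$ with $n\in\Z_{\geq0}$, the decomposition of $M\big(1,\frac{n}{\sqrt2}\big)$, and of $M(1)^\pm$, as modules for the simple Virasoro algebra $L(1,0)$ is known. Each is a direct sum of irreducibles $L\big(1,\frac{m^2}{4}\big)$, and each summand occurs with finite multiplicity. The lowest weights are spaced quadratically, and $M(1)^+$ also contains a weight-$4$ primary vector $J$. The idea is to produce a finite spanning set modulo $C_1$. Acting by modes $J_{-1}$ and $\omega_{-1}=L(-2)$, together with modes of the $M(1)^+$-generators, one moves from the lowest weight vector of $L\big(1,\frac{m^2}{4}\big)$ to that of $L\big(1,\frac{(m+2)^2}{4}\big)$ modulo lower terms. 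This shows that all but finitely many Virasoro summands lie in $C_1(W)$. Each irreducible $L\big(1,\frac{m^2}{4}\big)$ is $C_1$-cofinite for the Virasoro algebra, since $W/C_1$ is spanned by the lowest weight vector. Hence the finitely many remaining summands contribute only a finite-dimensional quotient. This gives $C_1$-cofiniteness of $M(1)^\pm$ and of the modules $M\big(1,\frac{n}{\sqrt2}\big)$.

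\textbf{Step 2 (generic $\lambda$, then all $\lambda$).} For generic $\lambda$ we bound $\dim M(1,\lambda)/C_1(M(1,\lambda))$ directly. Write $h(-1)^2\1$ and $J$ as explicit elements, and apply $u_{-1}$ to $e^\lambda$ and to monomials $h(-k_1)\cdots e^\lambda$. Modulo $C_1$, this yields recursive relations that express all monomials of weight $\ge\frac{\lambda^2}2+N$ through lower ones. The coefficients of these relations are polynomials in $\lambda$. Invertibility of the leading coefficients fails only on the zero set of countably many nonzero polynomials, so the bound holds outside a countable set $S\subset\C$. To handle $\lambda\in S$, choose $\mu\notin S$ such that $\lambda-\mu\notin S$ and $\mu,\lambda-\mu\neq0$; such $\mu$ exist because $S$ is countable. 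By Miyamoto's theorem, $M(1,\mu)\boxtimes M(1,\lambda-\mu)$ is $C_1$-cofinite. Using the intertwining operator $M(1,\mu)\otimes M(1,\lambda-\mu)\to M(1,\lambda)$, which is surjective, $M(1,\lambda)$ is a quotient of this fusion product. Quotients of $C_1$-cofinite modules are $C_1$-cofinite, so $M(1,\lambda)$ is $C_1$-cofinite.

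\textbf{Step 3 (excluding twisted-type modules).} We show that $M(1)(\theta)^\pm$ is not $C_1$-cofinite. The vector space $M(1)(\theta)$ is spanned by monomials $h(-k_1-\frac12)\cdots h(-k_r-\frac12)\1$ and has character $q^{1/16}\prod(1-q^{n-1/2})^{-1}$. The strong generators of $M(1)^+$ are $\omega$, $J$, and finitely many others. Modulo $C_1$, the only contributions come from modes $u_{-1}$ of these generators, and each such mode applied to a monomial increases the weight by at least $2$. Under the twisted vertex operator, the modes $\omega_{-1}$ and $J_{-1}$ act as quadratic and quartic expressions in the half-integer modes $h(m+\frac12)$. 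The plan is to show that $C_1(W)$ misses, in each weight, the vectors $h(-\frac12)^{r}v$ for infinitely many $r$. Concretely, we would check that the quotient of $W$ by $C_1(W)$ surjects onto a space with an infinite-dimensional graded quotient, by finding a linear functional in each weight $r/2+\frac1{16}$ that vanishes on the image of $u_{-1}$ for all generators $u$. An alternative is to use Zhu's $C_1$-algebra, or the fact that $C_1$-cofinite modules have a finitely generated $C_1$-quotient, whose growth would contradict the partition growth of $M(1)(\theta)^\pm$. I expect this step, together with the explicit recursion in Step 2, to be the main obstacle. If the direct functional argument fails, the fallback is to argue via fusion: $M(1)(\theta)\boxtimes M(1,\lambda)$ would be $C_1$-cofinite and would contain infinitely many nonisomorphic twisted summands, contradicting the finite length of $C_1$-cofinite modules.
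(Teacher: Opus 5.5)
Your outline follows the paper's architecture: Virasoro decomposition and $J$-modes for the atypical modules, a countable exceptional set plus Miyamoto's theorem for general $\lambda$, and exclusion of $M(1)(\theta)^\pm$. But the steps that carry the weight are asserted rather than proved.

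\textbf{Steps 1 and 2.} In Step~2 you claim the leading coefficients of an unspecified recursion are \emph{nonzero} polynomials in $\lambda$. Nothing supports this, and proving it directly amounts to proving $C_1$-cofiniteness for some $\lambda$ with a bound uniform in the degree. The missing idea is to feed Step~1 into Step~2. In the monomial basis $h(-n_1)\cdots h(-n_s)e^\lambda$ of the degree-$n$ space, $C_1(M(1,\lambda))(n)$ is the column span of a finite matrix with entries in $\C[\lambda]$. Since $M(1,\frac{m}{\sqrt2})$ is $C_1$-cofinite, there is $n_0$ such that this matrix has full rank $p(n)$ at $\lambda_0=\frac{m}{\sqrt2}$ for all $n\ge n_0$. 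So for each such $n$ some $p(n)\times p(n)$ minor is a nonzero polynomial, the exceptional set is a countable union of finite sets, and $n_0$ is uniform in $\lambda$. Your fusion step is then fine; you can apply Miyamoto's theorem directly to the surjective intertwining operator of type $\binom{M(1,\lambda)}{M(1,\mu)\;M(1,\lambda-\mu)}$.

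Likewise, the real content of Step~1 is that $C\neq0$ in $J_{(-2m-4k-1)}v_m^{(k)}=Cv_m^{(k+2)}+w$. It is this mode, not $J_{-1}$, that matches the weights $(\frac m2+k)^2\to(\frac m2+k+2)^2$. The paper proves $C\neq0$ in Appendix~A by realizing $M(1,\frac m{\sqrt2})$ inside the lattice vertex operator algebra $V_{\Z\sqrt2\alpha}$. There it uses the $\mathfrak{sl}_2$-action to show $E^{k+2}J_{(-2m-4k-1)}v_m^{(k)}\neq0$, while $E^{k+2}$ kills the lower terms. Two minor corrections: $L(1,h)/C_1$ is spanned by $L_{-1}^iv$, $0\le i\le k$, not by $v$ alone. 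The correct conclusion is $W=C_1(W)+\sum_{t=0,1}\sum_{i\le k_t}\C L_{-1}^iv_m^{(t)}$, not that the higher Virasoro summands lie in $C_1(W)$.

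\textbf{Step 3.} This step is not carried out, and neither fallback works. Growth cannot detect $C_1$-cofiniteness, since $M(1,\lambda)$ has comparable partition-type growth and is $C_1$-cofinite. The fusion fallback fails because untwisted$\,\times\,$twisted fusion, e.g.\ $M(1,\lambda)\boxtimes M(1)(\theta)^\pm$, can only produce $M(1)(\theta)^+$ and $M(1)(\theta)^-$. So there are not infinitely many nonisomorphic twisted summands. The fact that weights rise by at least $2$ is also not the right invariant. What works is a length-versus-weight count.
\begin{itemize}
\item Give a monomial $h(-l_1-\frac12)\cdots h(-l_q-\frac12)\1_{tw}$ length $q$ and weight $\sum l_i+\frac q2\ge\frac q2$ above $\frac1{16}$. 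Then $h(-\frac12)^N\1_{tw}$ is the unique monomial of maximal length $N$ in its weight space.
\item Take $u=h(-m_1)\cdots h(-m_r)\1\in M(1)^+$ of positive weight, so $r\ge2$ is even. The field $Y_{tw}(u,z)=W_\theta(\Delta_zu,z)$ is a sum of normally ordered products of at most $r$ twisted modes. Hence $u_{(-1)}$ raises weight by $\sum m_i\ge r$ but raises length by at most $r$.
\item Landing on $h(-\frac12)^N\1_{tw}$ would require the length to grow by at least $2\sum m_i\ge2r$, which is impossible.
\end{itemize}
So the coefficient of $h(-\frac12)^N\1_{tw}$ vanishes on $C_1(W)$. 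The span of these vectors, with $N$ of the appropriate parity, therefore injects into $W/C_1(W)$, which is infinite-dimensional. This handles all $u\in M(1)^+$ at once, so no reduction to strong generators is needed. Such a reduction would in any case require all modes $u_{(-n)}$, $n\ge1$, not just $u_{-1}$.
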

This list is smaller than the list of irreducible grading-restricted generalized $M(1)^+$--modules, since it excludes the modules $M(1)(\theta)^{\pm}$, which we prove are not $C_1$--cofinite. 

Now the simple modules $M(1)^\pm$ and $M(1, \lambda)$ are $C_1$-cofinite, all the finite length modules with simple composition factors $M(1)^\pm$ or $M(1, \lambda)$ are $C_1$-cofinite. We proceed to show that they actually exhaust all the $C_1$-cofinite modules by classifying all the highest weight $M(1)^+$--modules (see Theorem \ref{thm:irreduciblehwm} in the context):
\begin{theorem}
    Suppose $M$ is a highest weight module with the highest weight vector $v$, then
    \begin{itemize}
        \item[(1)]If the conformal weight of $v$ is nonzero, then $M$ is irreducible.
        \item[(2)]If the conformal weight of $v$ is zero, then either $M$ is isomorphic to $M(1)^+$ or a length $2$ logarithmic module with the following non-split exact sequence
        \[
        0 \rightarrow M(1)^- \rightarrow M \rightarrow M(1)^+ \rightarrow 0.
        \]
    \end{itemize}
\end{theorem}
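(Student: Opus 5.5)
We work with the Zhu algebra $A(M(1)^+)$ as described by Dong--Nagatomo \cite{DN1}. It is a quotient of the commutative algebra generated by $[\omega]$ and $[J]$, where $J = h(-1)^4\1 - 2h(-3)h(-1)\1 + \frac{3}{2}h(-2)^2\1$, together with a noncommutative part coming from the weight-$3$ vector $E$. On top levels of irreducible modules $[\omega]$ and $[J]$ act by known scalars: on $M(1,\lambda)$ we have $\omega\mapsto \lambda^2/2$ and $J\mapsto 0$ (with the paper's normalization), on $M(1)^+$ both act by $0$, and on $M(1)^-$ we have $\omega\mapsto 1$ and $J\mapsto -6$. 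Let $M$ be a highest weight module with highest weight vector $v$, so that $M$ is generated by $v$ and $v$ spans a one-dimensional top level. The top level $\C v$ is then a one-dimensional $A(M(1)^+)$-module, and $M$ is a quotient of the generalized Verma module $\bar M(\C v)$, taken as the induced module modulo the relations of $M(1)^+$. The first step is to read off from the Dong--Nagatomo relations which characters of $A(M(1)^+)$ occur. The candidates are the top levels of $M(1)^+$, $M(1)^-$ and $M(1,\lambda)$, together with the twisted ones $M(1)(\theta)^\pm$, whose top weights are $1/16$ and $9/16$.

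For part (1), fix a character with conformal weight $\Delta\neq 0$. I would show that the maximal highest weight module $\bar M(\C v)$ is irreducible, so that any $M$ is isomorphic to the corresponding simple module. The key step is the following claim: for each $\Delta\neq 0$, the only irreducible $M(1)^+$-module whose lowest conformal weight lies in $\Delta + \Z_{>0}$ and which could occur as a subquotient of $\bar M(\C v)$ is ruled out. To prove it, I would argue in two ways.

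\emph{Via extensions.} A proper submodule $N$ of $M$ would give a subquotient $W$ with lowest weight in $\Delta+\Z_{>0}$, and $W$ would have to be linked to $L(\C v)$ through a chain of nonsplit extensions. Abe's computation \cite{Abe} gives $\mathrm{Ext}^1(L_1,L_2)=0$ except for the pairs $(M(1)^\pm,M(1)^\mp)$ and $(M(1,\lambda),M(1,\lambda))$. A self-extension of $M(1,\lambda)$ has both top levels in the same weight, so it cannot occur in a highest weight module whose top is one-dimensional. The only remaining possibility is $M(1)^-$ (weight $1$) sitting below $M(1)^+$ (weight $0$), and this forces $\Delta=0$.

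\emph{More directly.} Since the twisted modules have weights in $\frac{1}{16}+\frac{1}{2}\Z$, they cannot lie in the same coset as untwisted ones. I would therefore compare $\bar M(\C v)$, graded by $L(0)$, with the character of the simple module and show the graded dimensions agree. This can be done by exhibiting a PBW-type spanning set of $\bar M(\C v)$ in terms of modes of $\omega$ and $J$, and matching it with the known character of $M(1,\lambda)^{(+)}$ or of the twisted module.

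Part (2) is the case $\Delta=0$, where the character is the top of $M(1)^+$. Take a highest weight module $M$ with $M\neq M(1)^+$. Then $M$ surjects onto $M(1)^+$ with nonzero kernel $K$. By weight considerations and the Ext computation, every composition factor of $K$ must be $M(1)^-$. I would show that the weight-one space of $\bar M(\C v)$ is spanned by $h(-1)^2$-type images, giving dimension at most $2$ (from $L(-1)v$ and $J$-type modes). Hence $K$ contains at most one copy of $M(1)^-$ and no further composition factor can be generated. The extension is nonsplit because $M$ is generated by $v$. Existence of such a logarithmic module follows from Abe \cite{Abe}, and uniqueness from $\dim \mathrm{Ext}^1(M(1)^+,M(1)^-)=1$.

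\textbf{Main obstacle.} The hardest step is bounding $\bar M(\C v)$ in part (1), that is, proving that no singular vectors appear for $\Delta\neq 0$. A highest weight module need not have finite length a priori, so Ext alone is not enough. I would first try to reduce to Virasoro-type arguments, using that $M(1,\lambda)$ decomposes into $L(1,\cdot)$-modules. Failing that, I would use the explicit relations in $A(M(1)^+)$ together with the associated $A_n$-algebras to control each graded piece.
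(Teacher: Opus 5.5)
You correctly identify the obstacle, but neither of your two routes gets past it, so part (1) is not proved.

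\textbf{The extension route needs finite length.} Abe's vanishing of $\mathrm{Ext}^1$ between irreducibles only controls length-two subquotients. A highest weight module $M$ with maximal submodule $K$ need not have a subquotient of the form $0\to S\to E\to M/K\to 0$. By Zorn's lemma you can pass to a quotient with simple top and essential simple socle, but the middle can be arbitrary. Without finite length no radical-layer argument is available. Finite length cannot be assumed here: the paper later uses exactly this theorem to prove that $C_1$-cofinite modules have finite length.

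\textbf{The direct route is not workable as stated.} Monomials in modes of $\omega$ and $J$ applied to $v$ vastly overcount $p(n)$ in degree $n$. Cutting them down to the simple character requires producing enough relations in the universal highest weight module, which is the whole difficulty.

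\textbf{Other errors.} Your coset remark is false: if $\lambda^2/2\in\frac{1}{16}+\Z$ (e.g.\ $\lambda^2/2=-\frac{15}{16}$), twisted-type factors are not excluded by weights. There are also minor slips. On the top of $M(1,\lambda)$, $J$ acts by $\lambda^4-\lambda^2/2$; it is $\widetilde{H}^{4}(0)$ that vanishes there. In rank one, $A(M(1)^+)$ is commutative, with no extra generator $E$.

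\textbf{The missing idea.} Instead of quoting the Ext groups, run Abe's extension machinery on a single vector. The paper takes a lowest-weight $A(M(1)^+)$-eigenvector $u\in K$ and its dual vector $u'\in M'$.
From $0\to L'\to M'\to K'\to 0$ and $L'\cong L$, one gets $L(n)u'\in L$ for all $n\ge 1$. On the other hand, $u'$ cannot be singular, since a singular vector of $M'$ pairs to zero with all of $K$.
Decompose $L(1)u'$ into simultaneous eigenvectors of $\widetilde{H}^{4}(0),\widetilde{H}^{6}(0)$, whose spectra on irreducibles are explicitly known. By Lemma~\ref{h4h6actondual}, $u'$ carries the same eigenvalues $(p,q)$ as $u$.
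Abe's identity $\bigl(5(h-p)(h-p-1)+9(k-q)-1\bigr)L(1)u'=0$ then has a coefficient that never vanishes on the allowed spectrum; a parity check covers the twisted-type values. Hence $L(1)u'=0$, and likewise $L(2)u'=0$, a contradiction.
This argument involves only one vector and the irreducible $L$, so no finiteness is needed.

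\textbf{Part (2).} It inherits the same gap, since ``every composition factor of $K$ is $M(1)^-$'' again presupposes finite length. Also, $\dim M_1\le 2$ alone does not exclude $K\cong M(1)^-\oplus M(1)^-$. The paper first disposes of the vacuum-like case $L(-1)v=0$, which gives $M\cong M(1)^+$. Otherwise it uses that $K$ is generated by $L(-1)v$, so $K$ has one-dimensional top and part (1) applies to it.
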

Once we show the category of finite length modules with composition factors $M(1)^\pm$ or $M(1,\lambda)$ for $\lambda \in \C^\times$ is exactly the category $\cC_1(M(1)^+)$ of grading-restricted $C_1$-cofinite modules, it follows from \cite[Theorem~3.6]{CY} that the category of finite length modules is a vertex and braided tensor category.


We also determine the fusion product decompositions of simple objects in the category $\cC_1(M(1)^+)$. Previously, Abe computed the fusion rules among three irreducible modules (\cite{Abe3}), but since the category is not semisimple, the fusion rules cannot quite determine the fusion product decomposition completely. To solve this problem, we use the tensor category for vertex operator algebra extensions developed in \cite{HKL} and \cite{CKM} by regarding the Heisenberg vertex operator algebra $M(1)$ as an algebra object in the category $\cC_1(M(1)^+)$ and using fusion product decompositions of $M(1)$-modules. The fusion product decompositions of simple objects are as follows (see Theorem \ref{fr-1}):
\begin{theorem} 
For $\lambda, \mu \in \C^\times$,
\begin{align*}
&M(1)^- \boxtimes M(1)^- = M(1)^+,\\
&M(1)^- \boxtimes M(1, \lambda)= M(1, \lambda),\\
& M(1, \lambda) \boxtimes M(1, \mu) = M(1, \lambda+\mu) \oplus M(1, \lambda - \mu).
\end{align*}
\end{theorem}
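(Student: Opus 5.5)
The plan is to use the vertex operator algebra extension $M(1)^+\subset M(1)$, as the introduction indicates. Since $M(1)=M(1)^+\oplus M(1)^-$ is a simple, $\Z$-graded, commutative algebra object in the braided tensor category $\cC_1(M(1)^+)$, the results of \cite{HKL} and \cite{CKM} identify local $M(1)$-modules in $\cC_1(M(1)^+)$ with the category $\mathrm{Rep}^0 M(1)$. This gives a braided monoidal induction functor $\mathcal{F}=M(1)\boxtimes -$ with restriction as right adjoint. On objects we have $\mathcal{F}(M(1)^\pm)=M(1)$ and $\mathcal{F}(M(1,\lambda))=M(1,\lambda)\oplus M(1,-\lambda)$, where $M(1,-\lambda)\cong M(1)^-\boxtimes M(1,\lambda)$ will be identified below. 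The fusion rules of Heisenberg modules, $M(1,\lambda)\boxtimes_{M(1)} M(1,\mu)=M(1,\lambda+\mu)$, are standard.

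\textbf{Step 1: $M(1)^-\boxtimes M(1)^-$.} This product is a simple current computation. By monoidality of induction,
$$\mathcal{F}(M(1)^-\boxtimes M(1)^-)\cong \mathcal{F}(M(1)^-)\boxtimes_{M(1)}\mathcal{F}(M(1)^-)=M(1),$$
whose restriction is $M(1)^+\oplus M(1)^-$. On the other hand, the restriction of $\mathcal{F}(W)$ is $W\oplus (M(1)^-\boxtimes W)$. Comparing with $W=M(1)^-\boxtimes M(1)^-$ shows that $W$ is $M(1)^+$ or $M(1)^-$. The algebra product $M(1)^-\times M(1)^-\to M(1)^+$ is a nonzero intertwining operator, so the lowest conformal weights force $W\cong M(1)^+$. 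Alternatively, one can use Abe's fusion rules \cite{Abe3}: the fusion rule of type $\binom{M(1)^-}{M(1)^-\,M(1)^-}$ vanishes.

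\textbf{Step 2: $M(1)^-\boxtimes M(1,\lambda)$.} Since $M(1)^-$ is invertible by Step 1, $M(1)^-\boxtimes M(1,\lambda)$ is simple. By the restriction formula, $M(1,\lambda)\oplus (M(1)^-\boxtimes M(1,\lambda))$ is the restriction of $\mathcal{F}(M(1,\lambda))$. That induced module is a local $M(1)$-module, because $M(1,\lambda)$ is already an $M(1)$-module on which $M(1)^-$ acts with integral monodromy. It has length $2$ and its lowest weight is $\lambda^2/2$. The Frobenius reciprocity map $\mathcal{F}(M(1,\lambda))\to M(1,\lambda)$ is surjective, and since $M(1)$-modules in this category with that lowest weight are sums of $M(1,\pm\lambda)$, the kernel is $M(1,\mu)$ with $\mu^2=\lambda^2$. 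Restricting to $M(1)^+$ then gives $M(1)^-\boxtimes M(1,\lambda)\cong M(1,\pm\lambda)\cong M(1,\lambda)$, since $M(1,\lambda)\cong M(1,-\lambda)$ as $M(1)^+$-modules.

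\textbf{Step 3: $M(1,\lambda)\boxtimes M(1,\mu)$.} Induction gives
$$\mathcal{F}\bigl(M(1,\lambda)\boxtimes M(1,\mu)\bigr)\cong \bigoplus_{\epsilon,\delta=\pm} M(1,\epsilon\lambda+\delta\mu),$$
whose restriction to $M(1)^+$ is $2M(1,\lambda+\mu)\oplus 2M(1,\lambda-\mu)$. If $\lambda=\pm\mu$, the summands $M(1,0)$ restrict to $M(1)^+\oplus M(1)^-$. Write $X=M(1,\lambda)\boxtimes M(1,\mu)$. The restriction of $\mathcal{F}(X)$ is $X\oplus(M(1)^-\boxtimes X)$, and $M(1)^-\boxtimes X\cong X$ by Step 2 and associativity. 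Hence $X\oplus X$ has the composition factors just listed, so $X$ has composition factors $M(1,\lambda+\mu)$ and $M(1,\lambda-\mu)$, each of multiplicity one, with $M(1)^+$ and $M(1)^-$ in place of $M(1,0)$ when $\lambda=\pm\mu$.

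\textbf{Main obstacle: splitting.} Composition factors do not determine $X$, because the category is not semisimple. When $\lambda\pm\mu\neq0$ and $\lambda+\mu\neq\pm(\lambda-\mu)$, the factors are non-isomorphic simples with no self-extensions between them, by Abe's Ext computation \cite{Abe}, so $X$ splits. In the remaining cases more care is needed. Here $X$ is a direct summand of the restriction of $\mathcal{F}(X)$, which is a direct sum of restrictions of simple Heisenberg modules and is therefore semisimple over $M(1)^+$. This summand property holds because $M(1)^\pm$ split off in the restriction of $\mathcal{F}(X)$ functorially, via the $\Z_2$-action by $\theta$ on $\mathcal{F}(X)=M(1)\boxtimes X$ coming from the automorphism of the algebra. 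Hence $X$ is semisimple, which handles every case uniformly; I would use this argument throughout rather than the Ext computation. The case $\lambda=\pm\mu$ then reads $M(1,\lambda+\mu)\oplus M(1)^+\oplus M(1)^-$ or its analogue. One should check that this agrees with the notation $M(1,\lambda-\mu)$ for $\lambda=\mu$, interpreted as the Heisenberg module $M(1,0)=M(1)$ restricted to $M(1)^+$.
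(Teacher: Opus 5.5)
Your overall route (induction along $M(1)^+\subset M(1)$, Frobenius reciprocity, Heisenberg fusion rules) is the paper's, and your Step 3 is a legitimate variant. However, two steps as written do not hold up.

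\textbf{Step 1 is circular.} Since $A=M(1)^+\oplus M(1)^-$ in $\cC$, the restriction of $\cF(M(1)^-)=A\boxtimes M(1)^-$ is $M(1)^-\oplus\bigl(M(1)^-\boxtimes M(1)^-\bigr)$. By Krull--Schmidt, your opening assertion $\cF(M(1)^-)\cong M(1)$ is therefore equivalent to the identity $M(1)^-\boxtimes M(1)^-\cong M(1)^+$ that Step 1 is meant to prove. What you get for free is a nonzero, hence surjective, $A$-map $\cF(M(1)^-)\to A$ from Frobenius reciprocity, i.e.\ a surjection $M(1)^-\boxtimes M(1)^-\to M(1)^+$. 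Injectivity would need something like rigidity, which is not available at this point; in the paper rigidity is deduced \emph{from} these fusion rules. Abe's fusion rules do not help, since they only count maps from $M(1)^-\boxtimes M(1)^-$ onto simple modules. The paper instead invokes the orbifold simple-current theorem \cite{DLM,M1}, and you should do the same.

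\textbf{The decomposition $\cF(M(1,\lambda))\cong\cF_\lambda\oplus\cF_{-\lambda}$ is asserted, not proved, and Step 3 depends on it.} In Step 2 you only produce a surjection $\cF(M(1,\lambda))\to\cF_\lambda$ with simple kernel $\cF_\mu$. Even $\mu=\pm\lambda$ rests on an unjustified lower bound for the weights of $M(1)^-\boxtimes M(1,\lambda)$. Moreover, your claim that $M(1)$-modules in this category with lowest weight $\lambda^2/2$ are sums of $\cF_{\pm\lambda}$ is false: there are non-split self-extensions of $\cF_\lambda$ on which $\alpha(0)$ is not semisimple.

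If $\cF(M(1,\lambda))$ were such an extension $E$, then $E\boxtimes_A\cF_\mu$ would be a non-split self-extension of $\cF_{\lambda+\mu}$. Your semisimplicity argument in Step 3 would then no longer apply.

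The repair is the paper's. Frobenius reciprocity also gives $\Hom_{\cC_A}(\cF(M(1,\lambda)),\cF_{-\lambda})\cong\Hom(M(1,\lambda),M(1,-\lambda))\cong\C$, because $M(1,-\lambda)\cong M(1,\lambda)$ as $M(1)^+$-modules. A length-two object surjecting onto the two non-isomorphic simple $A$-modules $\cF_{\pm\lambda}$ is their direct sum, and this also makes your locality discussion unnecessary.

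The second fusion rule needs no induction at all. Once $M(1)^-$ is invertible, $M(1)^-\boxtimes M(1,\lambda)$ is simple. The action of $M(1)^-\subset M(1)$ on $\cF_\lambda$ is a nonzero intertwining operator of type $\binom{M(1,\lambda)}{M(1)^-\;M(1,\lambda)}$. Hence $M(1)^-\boxtimes M(1,\lambda)\cong M(1,\lambda)$.

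\textbf{Comparison for Step 3.} With these repairs, your Step 3 is correct but works by a different mechanism from the paper's.
\begin{itemize}
\item You use monoidality of $\cF$ together with the summand $X\subset A\boxtimes X\cong X\oplus(M(1)^-\boxtimes X)\cong X\oplus X$. This summand comes from additivity of $\boxtimes$ alone; no $\theta$-action is needed. Krull--Schmidt then lets you halve a semisimple object.
\item The paper instead applies the projection formula \cite[Lemma~2.8]{CLR}, $M(1,\lambda)\boxtimes\cG(\cF_\mu)\cong\cG\bigl(\cF(M(1,\lambda))\boxtimes_A\cF_\mu\bigr)$. This yields $M(1,\lambda+\mu)\oplus M(1,\lambda-\mu)$ in one line, with no composition-factor bookkeeping or separate splitting argument.
\end{itemize}
Your reading of the case $\lambda=\pm\mu$, with $M(1,0)=M(1)^+\oplus M(1)^-$, agrees with the paper's.
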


From the fusion product decomposition, it is obvious that $M(1)^-$ is a simple current and the simple objects $M(1,\lambda)$ are non-negligible with moderate growth. Then it follows from \cite[Theorem~1.1]{EP} by P. Etingof and D. Penneys that $M(1,\lambda)$ are rigid. Therefore, the category $\cC_1(M(1)^+)$ is rigid by \cite[Theorem~4.4.1]{CMY1}.

\subsection{Semisimplicity of $KL_{-1}(sp(2n))$} 
Using the tensor category structure on the category $\cC_1(M(1)^+)$, we prove the category $\cC_{-1}(sp(2n))$ of grading-restricted generalized modules for the simple affine vertex operator algebra $L_{-1}(sp(2n))$ is semisimple. 

One important goal of the theory of affine vertex algebra is to construct semisimple categories of representations and provide them with the tensor category structure. Let $V^k(\g)$ be the universal affine vertex algebra associated to the Lie algebra $\g$, and let $L_k(\g)$ be its simple quotient, they both have vertex operator algebra structures when $k \neq -h^\vee$ (\cite{FZ}). There are two natural categories of $L_k(\g)$-modules to study, one is the category $\cC_k(\g)$ of grading-restricted generalized $L_k(\g)$-modules, the other is the Kazhdan-Lusztig category $KL_k(\g)$ consisting of finite length modules with simple composition factors in $\cC_k(\g)$. A basic question is whether $KL_k(\g)$ is a braided tensor category. 

When $k+h^\vee \notin \mathbb{Q}_{\geq 0}$, D. Kazhdan and G. Lusztig proved that the category $KL_k(\g)$ is a rigid braided tensor category (\cite{KL}). When $k$ is admissible, T. Arakawa proved the (bigger) category $\mathcal O_k$ is semisimple (\cite{A2}) and hence $KL_k(\g)$ is also semisimple, therefore $KL_k(\g)$ is also a braided tensor category (\cite{CHY}). It remains to study the case when $k$ is beyond admissible. 

One breakthrough on the representations beyond admissible level was made by the first named author with his collaborators in \cite{AKMPP2}, where it was proved that the subcategory of $L_{k}(\g)$-modules that are locally finite as $\g$-modules with semisimple $L(0)$-actions and $L(0)$-weights are bounded from below (they call this category $KL_k^{fin}(\g)$ in \cite{AKMPP2}) is semisimple if one of the following conditions holds:
\begin{itemize}
\item $k$ is a collapsing level for minimal affine $W$-algebra $W_k ^{min}(\g)$;
\item $W_k ^{min}(\g)$ is rational;
\item the category of ordinary $W_k ^{min}(\g)$--modules is semisimple. By ordinary modules we mean grading-restricted generalized modules with semisimple $L(0)$-actions.
\end{itemize}
Using this result, Creutzig and the third named author were able to prove the category $\cC_k(\g)$ is semisimple, and hence $KL_k(\g) = \cC_k(\g)$ have braided tensor category structures for $\g, k$ satisfying the above conditions (\cite{CY}). 

Some new cases in which $KL_k(\mathfrak{g})$ is semisimple were discovered in \cite{AALY, ACPV}. 
In \cite{ACPV} it was shown that $KL_k(\mathfrak{g})$ may be semisimple even when the category of ordinary 
$W_k ^{min}(\g)$--modules is not semisimple. 
In such situations the analysis is necessarily more subtle, as it requires detailed structural results 
on the category of $W_k ^{min}(\g)$--modules. 

In this paper, we study the category $KL_k(\g)$ for $\g = sp(2n)$ and $k = -1$. The category of ordinary $W_k ^{min}(\g)$--modules is indeed not semisimple. To establish semisimplicity of $\cC_{-1}(sp(2n))$, we apply \cite[Theorem~4.9]{CY} (cf. \cite{AKMPP2}), which states:
\begin{itemize}
    \item[(C)] If every highest weight $L_k(\g)$-module in $\cC_k(\g)$ is irreducible, then the category $\cC_k(\g)$ is semisimple.
\end{itemize}
Verifying condition (C) directly using the representation theory of $L_k(\g)$ is difficult. Nevertheless, it was observed in \cite{AKMPP2} that condition (C) follows from the weaker criterion:
\begin{itemize}
    \item[(C')] If, for every highest weight $L_k(\g)$-module $M$ in $KL_k(\g)$, the $W_k ^{min}(\g)$-module $H_{\theta}(M)$ is irreducible, then $KL_k(\g)$ is semisimple. Here $H_{\theta}$ denotes the quantum Hamiltonian reduction functor.
\end{itemize}
Therefore, our strategy is to verify condition (C'). The  statement of (C') is for $KL_k(\g)$, we actually show it also holds for the bigger category $\cC_k(\g)$. For this, we need to use the tensor category structure on $\mathcal{C}_1(M(1)^+)$ to show that no non-split extensions appear in the decomposition of the $W_{-1}^{min}(sp(2n))$-module $H_{\theta}(M)$, where $M$ is an $L_{-1}(sp(2n))$-module in $\mathcal{C}_{-1}(sp(2n))$. This establishes condition (C') and completes the proof of semisimplicity of $\mathcal{C}_{-1}(sp(2n))$.

As a byproduct, we also solved two problems that are of interest in the representation theory of vertex operator algebras. Firstly, we give a complete description of the structure of the minimal affine $W$-algebra $W_k^{min}(sp(2n))$ at the non-collapsing, non-conformal level $k=-1$. We prove the vertex operator algebras $L_{-\frac{1}{2}}(sp(2n-2))$ and $M(1)^+$ form a commutant pair in $W_{-1}^{min}(sp(2n))$ and as an $L_{-\frac{1}{2}}(sp(2n-2)) \otimes M(1)^+$-module,
$$ W_{-1}^{min}(sp(2n)) \cong L_{sp(2n-2)}\left(-\frac{1}{2},0\right) \otimes M(1)^+ \oplus  L_{sp(2n-2)}\left(-\frac{1}{2},\lambda_1\right) \otimes M(1)^-, $$   
where $\lambda_1$ is the first fundamental weight of $sp(2n-2)$ (see Theorem \ref{structureA}). The decomposition of minimal affine $W$-algebras as modules over their affine vertex subalgebras is largely unknown. While it has been studied in the case of conformal embeddings, the non-conformal setting remains considerably more difficult. 

Secondly, we extend the classification of irreducible modules for $L_{-1}(sp(2n))$, previously established for $n \ge 3$, to the remaining case $n=2$. The previous work \cite{AP2} provides a classification of irreducible modules in $KL_{-1}(sp(2n))$ for $n \ge 3$, based on the existence of a determinant-type singular vector of conformal weight three in  the universal affine vertex algebra $V^{-1}(sp(2n))$. The case $n=2$ remained open due to the complexity of the corresponding singular vectors. In the present paper, we resolve this case using a $W$-algebra approach, which avoids the need for explicit formulas for singular vectors (see Theorem~\ref{clasirr}).

\subsection{Schur-Weyl duality between $L_{-1}(sp(2n))$ and $M(1)^+$}
We study the tensor category structure of $KL_{-1}(sp(2n))$ by proving a Schur-Weyl duality between $L_{-1}(sp(2n))$ and $M(1)^+$ (see Theorem \ref{dec-mn} in the main context):
\begin{theorem}
The $\Z_2$-orbifold subalgebra $\mathcal{S}(2n)^+$ of the Weyl vertex algebra $\mathcal{S}(2n)$ viewed as an $L_{-1}(sp(2n))\otimes M(1)^+$-module has the following decomposition:
\begin{align*}
\mathcal{S}(2n)^+ \cong &  L_{-1}(sp(2n))\otimes M(1)^+\oplus L_{sp(2n)}(-1, \bar{\Lambda}_2)\otimes M(1)^-\\
  &\quad \oplus   \bigoplus_{s = 1}^\infty L_{sp(2n)}(-1, s\bar{\Lambda}_1) \otimes M\left(1,\frac{s}{\sqrt{-2n}}\right),
\end{align*}
where $\bar{\Lambda}_1$ and $\bar{\Lambda}_2$ are the first two fundamental weights of $sp(2n)$.
 \end{theorem}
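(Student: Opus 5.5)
Our plan is to start from the known decomposition of the Weyl vertex algebra $\mathcal{S}(2n)$ (the rank $n$ $\beta\gamma$ system) under the dual pair $L_{-1}(sp(2n))\otimes M(1)$. The weight-one Heisenberg field is $h=\sum_i :\beta_i\gamma_i:$, normalized so that it generates $M(1)$ with $\langle h,h\rangle=-n$ (up to the choice of normalization giving charges $s/\sqrt{-2n}$). The $sp(2n)$ currents are the quadratics $:\beta_i\beta_j:$, $:\gamma_i\gamma_j:$ and $:\beta_i\gamma_j:+\tfrac12\delta_{ij}\cdot$. These commute with $h$, and they generate $L_{-1}(sp(2n))$, a simple quotient that can be identified via the known singular vector.

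\textbf{Step 1: decomposition under $L_{-1}(sp(2n))\otimes M(1)$.} We grade $\mathcal{S}(2n)$ by the $h_0$-charge $s\in\Z$, where $\beta$ has charge $+1$ and $\gamma$ has charge $-1$. I would invoke the result of Adamovi\'c--Perše (and also Kac--Radul / Creutzig--Linshaw on $\beta\gamma$ cosets) that the charge-$s$ subspace is $L_{sp(2n)}(-1,|s|\bar\Lambda_1)\otimes M(1,s/\sqrt{-2n})$. For $s>0$ the top component is spanned by degree-$s$ monomials in the $\beta_i(-1/2)$, i.e.\ the $sp(2n)$-module $S^s(\C^{2n})$ with highest weight $s\bar\Lambda_1$. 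Simplicity of each summand would follow from the commutant/Howe duality argument: $\mathcal{S}(2n)$ is simple, and $\mathrm{Com}(M(1),\mathcal S(2n))=L_{-1}(sp(2n))$. Irreducibility of each charge space as a module for the commutant pair then follows by the standard argument for simple vertex algebras with a semisimple torus action.

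\textbf{Step 2: restrict to the $\Z_2$-orbifolds.} The automorphism $\theta$ of $\mathcal S(2n)$ is taken to be $\beta_i\mapsto\gamma_i$, $\gamma_i\mapsto-\beta_i$ (or a similar choice) composed so that it acts on $h$ by $h\mapsto -h$ and preserves $L_{-1}(sp(2n))$. It must lie in $Sp(2n)$ and normalize the Heisenberg, and the fixed-point algebra $\mathcal S(2n)^+$ is taken with respect to this $\theta$.

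Under $\theta$, charge $s$ maps to charge $-s$. Hence for $s\ge1$ the sum of charges $\pm s$ restricted to the fixed points is isomorphic, as an $L_{-1}(sp(2n))\otimes M(1)^+$-module, to $L_{sp(2n)}(-1,s\bar\Lambda_1)\otimes M(1,s/\sqrt{-2n})$, since $M(1,\lambda)\cong M(1,-\lambda)$ over $M(1)^+$. The charge-zero part $\mathcal{S}(2n)^0 = L_{-1}(sp(2n))\otimes M(1)$ is more delicate, because $\theta$ acts on it by some automorphism of $L_{-1}(sp(2n))$ times the involution of $M(1)$. Using the actual parity automorphism $\beta\gamma\mapsto-\beta\gamma$ (the $-1$ in $Sp(2n)$ is already in $L_{-1}$ after squaring), one instead needs a finer count. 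The zero-charge piece must decompose as $L_{-1}(sp(2n))\otimes M(1)^+\oplus X\otimes M(1)^-$. Here $X$ has a lowest-weight vector in the $h$-odd part of the charge-$0$ space, whose lowest vectors are $:\beta_i\gamma_j:$-type wedges. Its top is the $sp(2n)$-module $\Lambda^2_0(\C^{2n})$ of highest weight $\bar\Lambda_2$. One identifies $X=L_{sp(2n)}(-1,\bar\Lambda_2)$ by comparing characters.

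\textbf{Step 3 (main obstacle): the precise charge-zero decomposition.} The hardest step is the refined decomposition of $\mathcal S(2n)^0$ under $L_{-1}(sp(2n))\otimes M(1)^+$, namely showing that exactly two summands appear and that the second is irreducible with highest weight $\bar\Lambda_2$. I would first try a character computation: the character of $\mathcal S(2n)^{0}$ minus $\mathrm{ch}\,L_{-1}\cdot\mathrm{ch}\,M(1)^+$ should equal $\mathrm{ch}\,L(-1,\bar\Lambda_2)\cdot\mathrm{ch}\,M(1)^-$. The character of $L(-1,\bar\Lambda_2)$ would be taken from Step 1 via the branching $sp(2n)\supset gl(n)$ or from the \cite{AP2} classification. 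Combined with complete reducibility coming from the commutant-pair argument with respect to $M(1)^+$, this would give the result. The identity $\mathrm{Com}(M(1)^+,\mathcal S(2n)^+)=L_{-1}(sp(2n))$ and its converse then yield the Schur--Weyl duality statement.
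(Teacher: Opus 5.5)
\textbf{There is a genuine gap.} The Heisenberg dual pair you start from is wrong, and that is where the content of the theorem lies. Your overall shape does match the paper: decompose under a Heisenberg dual pair, pair charges $\pm s$ by the involution, and treat charge zero separately.

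In $\mathcal{S}(2n)$ there are $2n$ pairs $a_i^{\pm}$; you need $2n$ $\beta$'s anyway to get $S^s(\C^{2n})$ as a top space. The charge-zero bilinears $a_i^+a_j^-$ span $gl(2n)$, and the commutant of $M(1)=\langle H\rangle$ is $L_{-1}(sl(2n))$, not $L_{-1}(sp(2n))$ as you assert (Theorem \ref{swAM}, from \cite{AP1}). The quadratics $\beta_i\beta_j$, $\gamma_i\gamma_j$ you list have $h$-charge $\pm 2$, so they do not commute with $h$.

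Consequently your Step 1 is false at $s=0$. The charge-zero space is $L_{-1}(sl(2n))\otimes M(1)$; already at weight one it contains $\Lambda^2_0\C^{2n}$ besides $sp(2n)\oplus\C H$, as you yourself notice in Step 2, which contradicts Step 1.

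For $s\neq 0$ your stated answer is right, but your argument does not reach it. The torus-action argument only gives irreducibility of the charge-$s$ space over $L_{-1}(sl(2n))\otimes M(1)$. That $L_{sl(2n)}(-1,s\dot{\Lambda}_1)$ remains irreducible over $L_{-1}(sp(2n))$ and equals $L_{sp(2n)}(-1,s\bar{\Lambda}_1)$ is a separate, nontrivial branching result (\cite[Proposition~7]{AP2}) that your proposal never supplies. Given it, your pairing of charges $\pm s$ is exactly the paper's argument.

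The charge-zero step, which you correctly flag as the crux, is left without a workable argument. Your character comparison needs the characters of $L_{-1}(sp(2n))$ and $L_{sp(2n)}(-1,\bar{\Lambda}_2)$ at this non-admissible level, and these are not independently available. Even with them, it would not show that the multiplicity spaces of $M(1)^{\pm}$ are irreducible, or that the first is no larger than $L_{-1}(sp(2n))$. Nor is ``complete reducibility from the commutant pair'' a valid principle over $M(1)^+$, which has non-split extensions of $M(1)^+$ by $M(1)^-$ (Theorem \ref{thm:irreduciblehwm}). It holds here only because the charge-zero space is a direct sum of Fock modules $M(1)$.

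The missing ingredient is \cite[Section~8]{AP2}. Under the involution $\sigma$, $L_{-1}(sl(2n))^{+}=L_{-1}(sp(2n))$ and $L_{-1}(sl(2n))^{-}=L_{sp(2n)}(-1,\bar{\Lambda}_2)$; in particular the fixed points are generated by $sp(2n)$. Since $\sigma$ acts on $M(1)$ as $\theta$, the $\sigma$-fixed part of $L_{-1}(sl(2n))\otimes M(1)$ is then immediately $L_{-1}(sp(2n))\otimes M(1)^+\oplus L_{sp(2n)}(-1,\bar{\Lambda}_2)\otimes M(1)^-$.

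Finally, your candidate involution $\beta_i\mapsto\gamma_i$, $\gamma_i\mapsto-\beta_i$ does not work. It has order four on $\mathcal{S}(2n)$, and it induces $X\mapsto -X^{T}$ on $gl(2n)$, whose fixed points are $so(2n)$, not $sp(2n)$. You need the symplectically twisted $\sigma$ that pairs $a_i^{\pm}$ with $a_{2n+1-i}^{\mp}$ (with signs). That map fixes $sp(2n)$ pointwise and sends $H\mapsto -H$.
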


Using this theorem and \cite[Theorem~1.1]{M3} by R. McRae, 
\begin{theorem}
We have the following results:\\
(1) The full subcategory $\cC_1^{\Z}(M(1)^+)$ consisting of direct sums of $M(1)^\pm$ and $M\big(1, \frac{s}{\sqrt{-2n}}\big)$ for $s \in \Z_{\geq 1}$ form a tensor subcategory of $\cC_1(M(1)^+)$.\\
(2) The category $KL_{-1}(sp(2n))$ is braided reversed equivalent to the category $\cC_1^{\Z}(M(1)^+)$. In particular, $KL_{-1}(sp(2n))$ is rigid with the following fusion product decompsotion:
         \begin{align*}
&        L_{sp(2n)}(-1, \bar{\Lambda}_2) \boxtimes L_{sp(2n)}(-1, \bar{\Lambda}_2) \cong L_{-1}(sp(2n)),\\
&  L_{sp(2n)}(-1, \bar{\Lambda}_2) \boxtimes L_{sp(2n)}(-1, s\bar{\Lambda}_1) \cong L_{sp(2n}(-1, s\bar{\Lambda}_1),\\
& L_{sp(2n)}(-1, s\bar{\Lambda}_1) \boxtimes L_{sp(2n)}(-1, t\bar{\Lambda}_1) \cong L_{sp(2n)}(-1, (s+t)\bar{\Lambda}_1) \oplus L_{sp(2n)}(-1, (s-t)\bar{\Lambda}_1)
    \end{align*}
for $s, t \in \Z_{\geq 1}$ and $s \geq t$.
\end{theorem}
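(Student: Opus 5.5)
The plan is to obtain both parts from McRae's correspondence \cite[Theorem~1.1]{M3}, applied to the decomposition of $\mathcal{S}(2n)^+$ in the preceding theorem (Theorem \ref{dec-mn}). That decomposition exhibits $\mathcal{S}(2n)^+$ as a simple vertex operator algebra containing the commutant pair $L_{-1}(sp(2n))\otimes M(1)^+$. It has the form $\bigoplus_i U_i\otimes V_i$, where the $U_i$ are pairwise non-isomorphic simple $L_{-1}(sp(2n))$-modules and the $V_i$ are pairwise non-isomorphic simple $M(1)^+$-modules. So $\mathcal{S}(2n)^+$ is a Schur–Weyl type extension. McRae's theorem then says that once the relevant module categories are braided tensor categories and the pieces lie in them, the subcategory of $\cC_1(M(1)^+)$ generated by the $V_i$ (semisimple, direct sums) is a braided tensor subcategory. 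It also says this subcategory is braid-reversed equivalent to the subcategory generated by the $U_i$ via $U_i\leftrightarrow V_i$.

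Step 1: check the hypotheses. On the $M(1)^+$ side we use that $\cC_1(M(1)^+)$ is a rigid braided tensor category, as established earlier. We also need that the $V_i$, namely $M(1)^\pm$ and $M(1,\frac{s}{\sqrt{-2n}})$, are rigid simple objects, which was shown via Etingof–Penneys. On the $L_{-1}(sp(2n))$ side we need that $KL_{-1}(sp(2n))$ is a braided tensor category with the $U_i$ in it. This follows from the semisimplicity of $\cC_{-1}(sp(2n))$ proved in the previous section combined with \cite{CY}. The classification of irreducibles then shows that the $U_i$ exhaust the simple objects of $KL_{-1}(sp(2n))$: for $n\ge3$ this is \cite{AP2}, and for $n=2$ it is Theorem~\ref{clasirr}. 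Hence the image of the correspondence is all of $KL_{-1}(sp(2n))$.

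Step 2: part (1). Closure of $\cC_1^{\Z}(M(1)^+)$ under $\boxtimes$ also follows directly from the fusion rules in Theorem \ref{fr-1}. We have $M(1)^-\boxtimes M(1)^-=M(1)^+$ and $M(1)^-\boxtimes M(1,\lambda)=M(1,\lambda)$. For $\lambda=\frac{s}{\sqrt{-2n}}$ and $\mu=\frac{t}{\sqrt{-2n}}$, the product $M(1,\lambda)\boxtimes M(1,\mu)$ equals $M(1,\frac{s+t}{\sqrt{-2n}})\oplus M(1,\frac{s-t}{\sqrt{-2n}})$. Since $M(1,-\lambda)\cong M(1,\lambda)$ as $M(1)^+$-modules, we may take $s\ge t$. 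The degenerate case $s=t$ is the one point needing care: there $M(1,0)=M(1)$ decomposes as $M(1)^+\oplus M(1)^-$ over $M(1)^+$. So all products land in $\cC_1^{\Z}$, and the subcategory contains the unit and is closed under duals.

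Step 3: part (2). Apply the braid-reversed equivalence $\Phi$ of Step 1. Rigidity of $KL_{-1}(sp(2n))$ transfers from $\cC_1^{\Z}(M(1)^+)$. The fusion rules are obtained by transporting those of Step 2 through $\Phi$, which sends $M(1)^-\mapsto L_{sp(2n)}(-1,\bar\Lambda_2)$ and $M(1,\frac{s}{\sqrt{-2n}})\mapsto L_{sp(2n)}(-1,s\bar\Lambda_1)$. For $s=t$ the last formula is read with $L_{sp(2n)}(-1,0\cdot\bar\Lambda_1)$ replaced by $L_{-1}(sp(2n))\oplus L_{sp(2n)}(-1,\bar\Lambda_2)$.

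The main obstacle is verifying the precise hypotheses of \cite[Theorem~1.1]{M3}. In particular, one needs that $\mathcal{S}(2n)^+$ is an object, i.e. a direct sum of objects, in the Deligne-type product category. One also needs that the $M(1)^+$-side subcategory is rigid or at least has the required semisimplicity, which is why rigidity from Etingof–Penneys is invoked before applying McRae.
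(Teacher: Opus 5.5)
Your proposal is correct and follows the paper's argument. The paper likewise applies \cite[Theorem~1.1]{M3} to the decomposition of $\mathcal{S}(2n)^+$ in Theorem~\ref{dec-mn}, uses the semisimplicity of $KL_{-1}(sp(2n))=\cC_{-1}(sp(2n))$ and its classification of simples to identify the $sp(2n)$ side with all of $KL_{-1}(sp(2n))$, and transfers rigidity from the $M(1)^+$ side. Your explicit check of part (1) via Theorem~\ref{fr-1} is a worthwhile addition, since closure of the rigid side under fusion is an input to McRae's theorem rather than a conclusion. Your reading of the case $s=t$ is also right: the transported product is $L_{sp(2n)}(-1,2s\bar{\Lambda}_1)\oplus L_{-1}(sp(2n))\oplus L_{sp(2n)}(-1,\bar{\Lambda}_2)$, so the last displayed formula as stated is accurate only for $s>t$.
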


\section{Preliminaries}
\subsection{$\mathbb{Z}_2$-orbifold of Heisenberg vertex operator algebra and its representations}

In this subsection, we recall some basic facts about the $\mathbb{Z}_2$-orbifold of Heisenberg vertex operator algebras from \cite{FLM}, \cite{DN1}, \cite{DN2}, \cite{Abe}. 

Let $\h$ be a finite
dimensional vector space of dimension $d$ with a non-degenerate symmetric bilinear
form $(\cdot,\cdot)$. Define the affine Lie algebra
$$\hat{\h}=\h\otimes \C[t, t^{-1}]\oplus \C K$$ with Lie brackets: 
$$[\alpha(m), \beta(n)]=m(\alpha, \beta)\delta_{m+n,
0}K,\;\;\; [\hat{\h}, K]=0$$
for $\alpha, \beta\in \h$, $m,n\in \mathbb{Z}$, and $\alpha(n)=\alpha\otimes t^n$.

For any $\lambda\in \h$, set $$M_\h(1,
\lambda)=U(\hat{\h})/J_{\lambda},$$ where $J_{\lambda}$ is the left
ideal of $U(\hat{\h})$ generated by $\alpha(n),(n\geq 1)$,
$\alpha(0)-(\alpha, \lambda)$ and $K-1$. Set
$e^{\lambda}=1+J_{\lambda}$. Then $M_\h(1,\lambda)$ is
spanned by $\alpha_1(-n_1)\cdots \alpha_k(-n_k)e^{\lambda}$,
$n_1\geq \cdots \geq n_k\geq 1$. It is known that $M_\h(1) : = M_\h(1,0)$
is a vertex operator algebra such that $\1=1+J_0$ is the vacuum vector
and 
 \begin{equation}\label{conformalvh}
      \omega=\frac{1}{2}\sum_{1\leq
i\leq d}h_i(-1)^21,
 \end{equation}
 where $\{h_1,\cdots, h_d\}$
is an orthonormal basis of $\h$,
is a conformal element. Furthermore, $M_\h(1,
\lambda)$ is an irreducible $M_\h(1)$-module.

Let $\theta$ be an automorphism of $M_\h(1)$ defined by
$$\theta(\alpha_{1}(-n_{1})\alpha_{2}(-n_{2})\cdots \alpha_{k}(-n_{k})\1)=
(-1)^{k}\alpha_{1}(-n_{1})\alpha_{2}(-n_{2})\cdots
\alpha_{k}(-n_{k})\1$$
for $\alpha_{i}\in \h,\ n_{i}\in \Z_{>0}$. Denote the $\theta$-fixed point subalgebra of
$M_\h(1)$ by $M_\h(1)^+$ and the $\theta$-eigenspace of $M_\h(1)$ with eigenvalue $-1$ by $M_\h(1)^-$. Then the vertex operator algebra $M_\h(1)^+$ is simple, and $M_\h(1)^-$, $M_\h(1,\lambda)$ $(\lambda\neq0)$ are irreducible $M_\h(1)^+$-modules. Moreover,
$M_\h(1,\lambda)$  is isomorphic to $M_\h(1, -\lambda)$  as $M_\h(1)^+$-modules (\cite[Proposition~2.2.2]{DN2}).

Next, we introduce the construction of the $\theta$-twisted modules for $M_\h(1)$. Let $$\hat{\h}[-1]= \h\otimes t^{\frac{1}{2}}\C[t,t^{-1}]\oplus\C K$$ be the Lie algebra with commutation relations $$[\alpha\otimes t^{m},
\beta\otimes t^{n}]=m\delta_{m+n,0} (\alpha, \beta) K,\;\;\; [K,\hat{\h}[-1]]=0$$ for $\alpha, \beta \in \h,\ m,n\in
1/2+\Z$. Set $\hat{\h}[-1]^+= \h\otimes t^{\frac{1}{2}}\C[t]\oplus\C K$, and denote by $\C = \C{\bf 1}_{tw}$ the one-dimensional $\hat{\h}[-1]^+$-module such that $\h\otimes t^{\frac{1}{2}}\C[t]$ acts as $0$ and $K$ acts as $\id_{\C}$. Define the induced module $M_\h(1)( \theta)$ of $\hat{\h}[-1]$ as follows: $$M_\h(1)( \theta): =\Ind_{\hat{\h}[-1]^+}^{\hat{\h}[-1]}\C.$$ By Poincar\' e-Birkhoff-Witt theorem,
$M_\h(1)( \theta)\cong S(\h\otimes t^{-\frac{1}{2}}\C[t^{-1}])$ as vector spaces.
Denote the action of $\alpha\otimes t^{n}$ $(\alpha\in\h,\ n\in {1/2}+\Z)$ on
$M_\h(1)( \theta)$ by $\alpha(n)$, and define the action of $\theta$ on $M_\h(1)( \theta)$ by
$$\theta(\alpha_{1}(-n_{1})\alpha_{2}(-n_{2})\cdots \alpha_{k}(-n_{k})\1_{tw})=
(-1)^{k}\alpha_{1}(-n_{1})\alpha_{2}(-n_{2})\cdots
\alpha_{k}(-n_{k})\1_{tw}$$
for $\alpha_{i}\in \h,\ n_{i}\in 1/2+\N$. Let $M_\h(1)( \theta)^{\pm}$ be the $\theta$-eigenspace of $M_\h(1)( \theta)$ with eigenvalues $\pm
1$, respectively. Then in \cite[Theorem~2.2.3]{DN2} it is proved that $M_\h(1)(\theta)^{\pm}$ are irreducible modules for $M_\h(1)^+$. Moreover, the following result has been established in \cite[Theorem~6.2.2]{DN2}.
\begin{theorem}
The set $\{M_\h(1)^{\pm}, M_\h(1,\lambda)(\lambda\neq 0),  M_\h(1)( \theta)^{\pm}\}$ provides a complete list of irreducible $M_\h(1)^+$-modules.
\end{theorem}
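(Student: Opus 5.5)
The plan is to follow the Dong–Nagatomo strategy: determine the Zhu algebra $A(M_\h(1)^+)$ explicitly, classify its irreducible modules, and lift these to $M_\h(1)^+$-modules via Zhu's correspondence between irreducible $\N$-gradable $V$-modules and irreducible $A(V)$-modules. I will first treat $d=1$ in detail and then reduce general $d$ to it.

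\textbf{Generators and Zhu algebra.} With $h\in\h$ of norm one, the algebra $M_\h(1)^+$ is generated by $\omega$, by $J=h(-1)^4\1-2h(-3)h(-1)\1+\frac32 h(-2)^2\1$ (weight $4$), and by $E=h(-5)h(-1)\1+\dots$ (weight $6$); in fact $\omega$ and $J$ already suffice. Using the Virasoro $L(1,0)$-decomposition, $M(1)^+\cong\bigoplus_{m\ge0}L(1,4m^2)$. From singular-vector relations in $M(1)^+$ one obtains polynomial relations among $[\omega]$, $[J]$ and $[E]$ in $A(M(1)^+)$, for instance $[J]$ satisfying a quadratic relation over $\C[[\omega]]$ when $[E]$ vanishes. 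This shows that $A(M(1)^+)$ is a quotient of an explicit commutative algebra $\C[x,y,z]/I$. Comparing with the known modules then shows the relations are complete:
\begin{itemize}
\item $M(1,\lambda)$ has top weight $\lambda^2/2$ and $J$-eigenvalue $0$;
\item $M(1)^\pm$ have tops at weights $0$ and $1$;
\item $M(1)(\theta)^\pm$ have tops at weights $1/16$ and $9/16$.
\end{itemize}

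\textbf{Irreducible $A$-modules.} Next I would determine the irreducible $A(M(1)^+)$-modules. Since the relations force $A(M(1)^+)$ to be commutative modulo a small ideal, every irreducible module is one-dimensional, parametrized by points $([\omega],[J])$ on a variety. I expect these points to be exactly
\begin{itemize}
\item the curve $\{(\lambda^2/2,0)\}$, realized by $M(1,\lambda)$ and including $\lambda=0$ for $M(1)^+$;
\item the isolated points for $M(1)^-$ and for $M(1)(\theta)^\pm$.
\end{itemize}
Each such point is realized by a module in the list. By Zhu's theorem, any irreducible module is determined by its top level, so the list is complete, with $M(1,\lambda)\cong M(1,-\lambda)$ accounting for the two-to-one parametrization.

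\textbf{General $d$.} For $d>1$, I would use the orthonormal decomposition and the subalgebra $\bigotimes_i M_{\C h_i}(1)^+$ together with the mixed generators $h_i(-1)h_j(-1)\1$. Restricting an irreducible module and using that $\theta$ acts diagonally forces the weight spaces into the forms above, either untwisted with a vector $\lambda\in\h$ or $\theta$-twisted.

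\textbf{Main obstacle.} The hardest step is establishing enough relations in $A(M(1)^+)$ to pin down its spectrum exactly: one must exclude spurious points in the $([\omega],[J])$ plane. My first approach would be to compute $o(v)$ for suitable singular vectors on candidate top levels, which will force $J$-eigenvalues to lie in the allowed finite set for each weight.
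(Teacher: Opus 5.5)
The paper does not prove this statement. It quotes it from Dong--Nagatomo (\cite{DN1} for $d=1$, \cite[Theorem~6.2.2]{DN2} in general), and your strategy (compute $A(M_\h(1)^+)$, classify its irreducible modules, lift via Zhu's correspondence) is theirs. However, you leave undone exactly the step that constitutes the proof, and the target you announce for it is wrong.

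On $e^{\lambda}$ the zero mode of $J$ acts by $\lambda^4-\lambda^2/2$, not $0$ (Table~1). So with $x=[\omega]$, $y=[J]$, the family coming from $M(1,\lambda)$ is the parabola $y=4x^2-x$, not the line $y=0$. Relations designed to vanish on $\{y=0\}$ plus isolated points would fail on the tops of $M(1,\lambda)$, so they cannot hold in $A(M(1)^+)$.

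For $d=1$ you need two things.
(i) $A(M(1)^+)$ is generated by $[\omega]$ and $[J]$. This requires a spanning-set or strong-generation argument; generation of $M(1)^+$ as a vertex algebra does not by itself give it. It then follows that $A(M(1)^+)$ is commutative and its irreducible modules are points.
(ii) Explicit relations whose common zero locus is exactly the parabola together with $(1,-6)$, $(\tfrac1{16},\tfrac3{128})$, $(\tfrac9{16},-\tfrac{45}{128})$. Dong--Nagatomo obtain
\[
(y-4x^2+x)(70y+908x^2-515x+27)=0,\qquad (y-4x^2+x)(x-1)\bigl(x-\tfrac1{16}\bigr)\bigl(x-\tfrac9{16}\bigr)=0 .
\]
The mechanism is the decomposition $M(1)^+=\bigoplus_{m\ge0}L(1,4m^2)$. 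Everything of weight $<16$ lies in $L(1,0)\oplus L(1,4)$, whose image in $A(M(1)^+)$ is $\C[x]+\C[x]\,y$. Expanding $[J]*[J]$ therefore gives the relation quadratic in $y$, and suitable elements of $O(M(1)^+)$ of weight $<16$ give the one linear in $y$. These computations are the proof, and ``I expect these points'' leaves them out. You also do not need the relations to be complete, only to hold in $A(M(1)^+)$ and cut out exactly the realized points.

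For $d>1$ the reduction you outline is not an argument. $A(M_\h(1)^+)$ is noncommutative: $o(h_a(-1)h_b(-1)\1)$ acts on the top $\h$ of $M_\h(1)^-$ as $E_{ab}+E_{ba}$. It acts similarly, up to a scalar shift, on the $d$-dimensional top $\{h(-\tfrac12)\1_{tw}: h\in\h\}$ of $M_\h(1)(\theta)^-$. So irreducible tops are not points of a variety.

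Restricting to $\bigotimes_i M_{\C h_i}(1)^+$, whose Zhu algebra is $\bigotimes_i A(M_{\C h_i}(1)^+)$, tells you only this: on a common eigenvector of the rank-one generators $o(\omega_i)$, $o(J_i)$, each pair lies in the rank-one spectrum. Such an eigenvector need not exist a priori in an infinite-dimensional top, because where $y_i=4x_i^2-x_i$ the rank-one relations place no constraint on $o(\omega_i)$. You would still have to:
(a) rule out mixed patterns, with some coordinates of twisted type and others not;
(b) show that in the untwisted case the mixed operators $o(h_a(-1)h_b(-1)\1)$ act either by $(h_a,\lambda)(h_b,\lambda)$ for a single $\lambda\in\h$, or as on the top of $M_\h(1)^-$;
(c) identify the twisted tops.

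An abstract $M_\h(1)^+$-module carries no $\theta$-action, so ``$\theta$ acts diagonally'' supplies none of this. In \cite{DN2} it is done by deriving further explicit relations in $A(M_\h(1)^+)$ involving the mixed elements $h_a(-m)h_b(-n)\1$ with $a\neq b$.
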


From now on, we assume $\h=\C \alpha$ such that $(\alpha, \alpha)=1$. For simplicity, we drop the subscript $\h$ in the vertex algebra $M_\h(1)^+$ and its modules $M_\h(1,\lambda)$, $M_\h(1)^{\pm}$ and $M_\h(1)( \theta)^{\pm}$. 

The vertex algebra $M(1)^+$ is generated by $\omega$ and a weight $4$ Virasoro algebra singular vector
   $$J=\alpha(-1)^4\1-2\alpha(-3)\alpha(-1)\1+\frac{3}{2}\alpha(-2)^2\1$$
(see \cite[Theorem~2.7]{DG}). The following table from \cite[Theorem~4.5]{DN1} gives the actions of the $\omega$ and $J$ on the top of simple $M(1)^+$-modules:
\begin{table}[h]
\begin{center}
\caption{}
\begin{tabular}{|c|c|c|c|c|c|c|}
\hline
$W$  &  $M(1)^+$ & $M(1)^-$ & $M(1,\lambda)$ & $M(1,\theta)^+$ & $M(1,\theta)^-$\\
\hline
$W_{\rm top}$ & $\mathbb{C}{\bf 1}$  & $\C h(-1){\bf 1}$  & $\mathbb{C}$ & $\mathbb{C}$ & $\C h(-1/2){\bf 1}$\\
\hline
$\omega$ & $0$ & $1$ & $\lambda^2/2$ & $1/16$ & $9/16$\\
\hline
$J$ & $0$ & $-6$ & $\lambda^4 - \lambda^2/2$ & $3/128$ & $-45/128$\\
\hline
\end{tabular}
\end{center}
\end{table}

The extensions of irreducible modules have been established in \cite[Theorem~5.5, 5.6]{Abe}:
\begin{theorem} \label{Abe}
Assume that  $(L_1, L_2)$  is an ordered pair of irreducible $M(1)^+$-modules.
The extension group   $\mbox{Ext} ^1 (L_1, L_2)  $ is non-trivial for pairs
$$   (M(1)^+, M(1)^-),   (M(1)^-, M(1)^+),      (M(1,\lambda) , M(1,\lambda)).$$
For all other pairs $(L_1, L_2)$  of irreducible $M(1)^+$-modules, the extension group is trivial. In particular, there is no nontrivial extensions between irreducible modules $\{M(1)(\theta)^{\pm}\}$ with any other irreducible modules.
\end{theorem}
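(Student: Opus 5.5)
This statement is quoted from Abe's work \cite[Theorem~5.5, 5.6]{Abe}, so the paper will simply cite it. Still, here is how I would prove it directly, organized by conformal weights and Zhu's algebra.

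\emph{Reduction by weights.} Take a non-split extension $0\to L_2\to E\to L_1\to 0$ in the category of grading-restricted generalized $M(1)^+$-modules. Since $L(0)$ preserves the $\C$-grading cosets, a non-split extension forces the conformal weights of $L_1$ and $L_2$ to differ by an integer. Table 1 gives the lowest weights: $0$ for $M(1)^+$, $1$ for $M(1)^-$, $\lambda^2/2$ for $M(1,\lambda)$, $1/16$ for $M(1)(\theta)^+$ and $9/16$ for $M(1)(\theta)^-$. The weights of the twisted modules lie in $\frac{1}{16}+\frac12\Z$, while the untwisted modules have weights in $\frac12\Z$ or $\frac{\lambda^2}{2}+\Z$. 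This already removes most mixed pairs, apart from special values of $\lambda$.

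\emph{Zhu-algebra argument.} Let $\Omega(E)$ be the lowest-weight (or lowest-coset) part of $E$. Whenever $L_1$ and $L_2$ have the same lowest weight, the top level of $E$ is a module for Zhu's algebra $A(M(1)^+)$, which Dong--Nagatomo describe as generated by $[\omega]$ and $[J]$. I would compare the pair $(\omega,J)$-eigenvalues from Table 1.
\begin{itemize}
\item If the eigenvalues differ, the top level splits. The splitting then lifts to $E$ by the universal property of generalized Verma modules, since each summand generates a quotient of such a module.
\item If the eigenvalues agree, the only candidates are two copies of $M(1,\lambda)$ (the $J$-value $\lambda^4-\lambda^2/2$ together with $\lambda^2/2$ determines $\lambda^2$). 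In that case the extension is realized explicitly by a logarithmic module on which $L(0)$ acts non-semisimply, namely the derivative in $\lambda$ of $M(1,\lambda)$, built as $M(1)$-modules with $\alpha(0)$ acting by a Jordan block.
\end{itemize}

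\emph{Pairs with integer weight difference.} When the lowest weights differ by a positive integer, I would use the higher Zhu algebras $A_n(M(1)^+)$, or equivalently the following argument. A vector $w$ of the lowest weight of $L_1$ in $E$ generates a submodule; either this submodule meets $L_2$ trivially, and $E$ splits, or some mode $u_m w$ lands in $L_2$ and produces a singular vector there. I would check that such a singular vector exists only when $L_2=M(1)^-$ and $L_1=M(1)^+$ (via $\alpha(-1)\1$), or in the reverse order. An explicit non-split extension is then obtained from a quotient of $M(1)$ deformed by a logarithmic $\alpha(0)$-action. Fusion or contragredient duality swaps the order of the pair.

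\emph{Twisted modules.} For $M(1)(\theta)^\pm$ I would use the same top-level argument. The $(\omega,J)$ values $(1/16,3/128)$ and $(9/16,-45/128)$ are distinct from all others. An extension of $M(1)(\theta)^\pm$ by itself would need a non-semisimple $A(M(1)^+)$-action on the top. Abe's explicit relations in Zhu's algebra show that the top is a point where $A(M(1)^+)$ is semisimple (the twisted summand of Zhu's algebra is $\C\oplus\C$), so no such extension exists.

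The main obstacle is the integer-difference case: ruling out extensions whose top levels sit in different weights, such as $M(1,\lambda)$ with $\lambda^2/2\in\Z$ against $M(1)^\pm$. This requires precise knowledge of the higher Zhu algebras or of singular vectors, which is the genuinely computational part of Abe's work.
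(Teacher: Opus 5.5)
As you note, the paper only cites \cite{Abe} for this statement and gives no proof, so your sketch has to stand on its own. It does not, because the step that carries the content of the theorem is only announced. Several parts are fine: the reduction by weight cosets modulo $\Z$, the splitting when the lowest weights agree but the $A(M(1)^+)$-characters differ, and the Jordan-block constructions of the non-split extensions. The gap is the vanishing of $\mathrm{Ext}^1$ when the lowest weights differ by a nonzero integer. Examples are $(M(1,\lambda),M(1)^{+})$ with $\lambda^2/2\in\Z\setminus\{0\}$, $(M(1,\lambda),M(1,\mu))$ with $(\lambda^2-\mu^2)/2\in\Z\setminus\{0\}$, and $(M(1)(\theta)^{+},M(1,\lambda))$ with $\lambda^2/2-\frac{1}{16}\in\Z\setminus\{0\}$. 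You reduce all of these to ``I would check that such a singular vector exists only when\dots'', which restates the claim. You neither compute the higher Zhu algebras $A_n(M(1)^+)$ you invoke nor give a substitute. In particular, your twisted-module paragraph only compares top levels, so it does not prove the last sentence of the statement.

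What is missing is a concrete mechanism forcing a lift $u$ of the top vector of $L_1$ to be singular. Abe's mechanism, which the paper adapts in its proof of Theorem~\ref{thm:irreduciblehwm}, uses the operators $\widetilde{H}^4(0),\widetilde{H}^6(0)$. These act semisimply on irreducibles, with joint eigenvalues confined to the sets in \eqref{eqn:Abe414}, and they satisfy $(5(h-p)(h-p-1)+9(k-q)-1)L(1)u=0$, where $(p,q)$ is the weight of $u$ and $(h,k)$ are the weights of the components of $L(1)u$. When $(p,q)=(0,0)$, a parity check shows this scalar never vanishes on the allowed eigenvalues. Hence $L(1)u=0$, similarly $L(2)u=0$, and $u$ generates a complement. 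Once a statement like Theorem~\ref{thm:irreduciblehwm} is available, your integer-difference case closes at once. If $L_2$ has the larger lowest weight, a non-split $E$ is a reducible highest weight module generated by the one-dimensional lifted top of $L_1$, which forces $(L_1,L_2)=(M(1)^+,M(1)^-)$. The other order follows by contragredient duality, since all simple modules are self-dual.

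There is a second gap. In the equal-weight case, ``the eigenvalues agree'' just means $L_1\cong L_2$, so your claim that only $M(1,\lambda)$ has self-extensions is asserted, not proved. By your own splitting argument, the absence of self-extensions of a simple $L$ reduces to showing there is no non-semisimple two-dimensional $A(M(1)^+)$-module with the character of $L_{\rm top}$. Equivalently, $\mathfrak m=\mathfrak m^2$ locally for the corresponding maximal ideal $\mathfrak m$ of this commutative algebra. Checking this requires the explicit Dong--Nagatomo relations at the isolated points $(1,-6)$, $(\frac{1}{16},\frac{3}{128})$ and $(\frac{9}{16},-\frac{45}{128})$. You do not supply them: the claim that the twisted part is $\C\oplus\C$ is stated without derivation, and $M(1)^-$ is not treated at all. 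The case $M(1)^+$ is easy: since $(M(1)^+)_1=0$, any lift $w$ of $\1$ satisfies $L(-1)w=0$, so $w$ is vacuum-like and generates a complement.

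A minor point concerns the different-character case. The splitting does not come from a universal property of generalized Verma modules. It comes from the fact that the submodule generated by an $A(M(1)^+)$-submodule $\Omega_1$ of the top has top exactly $\Omega_1$, so it cannot contain the simple submodule $L_2$.
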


\begin{remark}
We say $\{M(1)^{\pm}, M(1,\lambda)(\lambda\neq0)\}$ are $M(1)^+$-modules of untwisted type, and $\{M(1)( \theta)^{\pm}\}$ are $M(1)^+$-modules of twisted type.
\end{remark}

\subsection{Basics on affine vertex operator algebras and minimal $W$-algebras}\label{3-1}
In this subsection, we recall from \cite{FZ}, \cite{LL} some facts on vertex operator algebras associated to affine Lie algebras. Let $\g$ be a simple Lie algebra, $\h$ be a Cartan subalgebra  of $\g$. Choose a minimal root $-\theta$ of $\g$, and let $(\cdot|\cdot)$ be the normalized  non-degenerate invariant symmetric bilinear form of $\g$ such that $(\theta|\theta)=2$. Here we abuse the letter $\theta$ to denote both the involution of the Heisenberg vertex operator algebra and the minimal root, it should be clear when they appear in the context. The affine Lie algebra associated to $\g$ is defined on $\hat{\g}=\g\otimes \C[t^{-1}, t]\oplus \C K$ with Lie brackets
\begin{align*}
[x(m), y(n)]&=[x, y](m+n)+(x|y) m\delta_{m+n,0}K,\\
[K, \hat\g]&=0,
\end{align*}
for $x, y\in \g$ and $m,n \in \Z$, where $x(n)$ denotes $x\otimes t^n$.

Let $M$ be a $\g$-module and $k$ be a complex number, regarding $M$ as a $\g\otimes \C[t]\oplus \C K$-module such that $\g\otimes t\C[t]$ acts as $0$ and $K$ acts as $k\cdot \id_{M}$. Then we consider the induced module
\begin{align*}
\hat{M}_{k}=\Ind_{\g\otimes \C[t]\oplus \C K}^{\hat \g}M.
\end{align*}
Let $ \h^*$ be the dual space of   $\h$. For $\lambda\in \h^*$, we use $L_{\g}(\lambda)$ or just $L(\lambda)$ if $\g$ is clear, to denote the irreducible highest weight $\g$-module of highest weight $\lambda$. The induced $\hat{\g}$-module $\hat{L(\lambda)}_k$ is called the {\em generalized Verma module}, which we denote by $V_{\g}(k,\lambda)$, and we denote the irreducible quotient of $V_{\g}(k,\lambda)$ by $L_{\g}(k,\lambda)$. If $k \neq -h^\vee$, both the modules $V_{\g}(k,0)$ and $L_{\g}(k,0)$ have vertex operator algebra structure (\cite{FZ}). For simplicity, we denote them by $V^{k}(\g)$ and $L_{k}(\g)$, respectively.


\begin{definition}\label{KL}
We will mainly study the following two categories of $L_{k}(\g)$-modules:
\begin{itemize}
\item[(1)] Let $\cC_k(\g)$ be the category of grading-restricted generalized  $L_{k}(\g)$-modules.
\item[(2)] Let $KL_k(\g)$ be the full subcategory of finite length generalized $L_{k}(\g)$-modules whose simple composition factors are in $\mathcal{C}_k(\g)$.
\end{itemize}
\end{definition}

The following result has been established in \cite[Theorem~4.9]{CY} (cf. \cite[Theorem~4.3]{AMP}), which will play an important role in this paper.
\begin{theorem}\label{keysemi}
Let $\g$ be a simple Lie algebra. If all the generalized Verma modules of $L_{k}(\g)$ in $\cC_k(\g)$ are irreducible, then the category $\cC_k(\g)$ is semisimple.
\end{theorem}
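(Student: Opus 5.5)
We will prove Theorem \ref{keysemi} by reducing semisimplicity of $\cC_k(\g)$ to the vanishing of extensions between simple objects, and then using the hypothesis to kill every such extension. Every object of $\cC_k(\g)$ is grading-restricted. Its $L(0)$-generalized eigenspaces are finite dimensional, and the conformal weights in each coset of $\C/\Z$ are bounded below. Hence any nonzero $M\in\cC_k(\g)$ has a nonzero lowest weight space $\Omega(M)$ in some coset, and this space is annihilated by $\g\otimes t\C[t]$. It is a finite-dimensional $\g$-module, invariant under the commuting operator $L(0)$. By the Sugawara construction ($k\neq -h^\vee$), $L(0)$ acts on $\Omega(M)$ through the Casimir, which is a $\g$-module endomorphism, so the generalized eigenspaces of $L(0)$ are $\g$-submodules. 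Finite-dimensional $\g$-modules are completely reducible (Weyl). So $\Omega(M)$ contains an irreducible $\g$-module $L(\lambda)$ on which $L(0)$ acts semisimply. The universal property of induction then gives a nonzero map $V_\g(k,\lambda)\to M$, whose image is a quotient of a generalized Verma module lying in $\cC_k(\g)$ and annihilated by the maximal ideal.

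The main step is to show that every $M\in\cC_k(\g)$ is a direct sum of irreducibles. We proceed in three steps.

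First, we show that every irreducible $W\in\cC_k(\g)$ is $L_\g(k,\lambda)$ for the dominant integral $\lambda$ of its top. By hypothesis, the generalized Verma module $V_\g(k,\lambda)$ of $L_k(\g)$ is irreducible, so it equals $L_\g(k,\lambda)$.

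Second, we show that there are no nontrivial self-extensions or cross-extensions. Take a short exact sequence $0\to W_1\to E\to W_2\to 0$ in $\cC_k(\g)$ with $W_i=L_\g(k,\lambda_i)$. We split into two cases.
\begin{itemize}
\item If the lowest conformal weight of $W_2$ is not strictly greater than that of $W_1$ (in the relevant coset), the top $L(\lambda_2)$ of $W_2$ lifts to a $\g$-submodule of the lowest weight space of $E$. This uses complete reducibility over $\g$, after first showing that $L(0)$ acts semisimply there. Semisimplicity of $L(0)$ holds because $L(0)$ acts via the Casimir plus a nilpotent part commuting with $\g$. Since the Casimir value is determined by $\lambda$, one has to argue that the nilpotent part vanishes. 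This follows because $L(0)=\frac{1}{2(k+h^\vee)}\sum_i x_i(0)x^i(0)+(\text{terms with positive modes})$, and the positive modes kill the lowest space, so $L(0)$ equals the Casimir there, which acts semisimply on a completely reducible $\g$-module. The lift then generates a quotient of $V_\g(k,\lambda_2)$, which by hypothesis is irreducible, so it maps isomorphically onto $W_2$ and splits the sequence.
\item Otherwise, the top of $W_1$ is strictly lower, and we use the contragredient dual, which lies in $\cC_k(\g)$ and reverses the sequence, to reduce to the first case.
\end{itemize}

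Third, we argue by induction on length for finite length objects, and pass to general objects in $\cC_k(\g)$. Let $M\in\cC_k(\g)$ and let $S\subset M$ be the sum of all irreducible submodules; this is the socle, a direct sum of irreducibles. If $M/S\neq0$, the first paragraph gives a submodule $\bar N\subset M/S$ that is a quotient of a generalized Verma module, hence irreducible. Its preimage $N$ satisfies $0\to S\to N\to\bar N\to0$. Grading-restriction lets us replace $S$ by a finite direct sum of irreducibles with the relevant conformal weights, via the $L(0)$-weight argument. The second step then splits this sequence, so $\bar N$ lifts to an irreducible submodule not contained in $S$, which is a contradiction.

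The expected main obstacle is making the splitting argument rigorous when $L(0)$ is not a priori semisimple on $E$. The key point there is the Sugawara identification of $L(0)$ with the Casimir on lowest weight spaces. A secondary obstacle is handling infinite direct sums in the socle argument using grading restriction.
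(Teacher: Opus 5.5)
The paper gives no proof of its own here (it quotes \cite[Theorem~4.9]{CY}, cf.\ \cite[Theorem~4.3]{AMP}), and your proposal is a correct, self-contained version of the standard argument behind that result. You show that $\mathrm{Ext}^1$ between simples vanishes by lifting the lowest-weight space of the quotient, where $L(0)$ reduces to the Sugawara Casimir term and so acts semisimply. That lift generates a highest weight $L_k(\g)$-module, which is irreducible by hypothesis; contragredient duality handles the case where the quotient has the larger lowest conformal weight. (In Step 1 the hypothesis concerns the maximal $L_k(\g)$-quotient of $V_\g(k,\lambda)$, not $V_\g(k,\lambda)$ itself, as you correctly use elsewhere.)

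The one step you should state more precisely is Step 3. Write $S=S_{\le}\oplus S_{>}$, where $S_{\le}$ is the sum of those summands whose lowest conformal weight in the coset of the lowest weight $h$ of $\bar N$ is $\le h$; this sum is finite by grading restriction. Then $N/S_{\le}$ has lowest-weight space $\cong L(\lambda)$ in that coset, so your first-case argument gives a copy of $\bar N$ inside $N/S_{\le}$. Step 2, together with additivity of $\mathrm{Ext}^1$ over the finite sum $S_{\le}$, then lifts this copy to a simple submodule of $N$ not contained in $S$.
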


If $\cC_k(\g)$ is semisimple, we can show that the category $KL_k(\g)$ is exactly the category of $C_1$-cofinite modules, and therefore has a vertex and braided tensor category structure. In fact, more generally, we have
\begin{theorem}[\cite{CY} Theorem~3.10]\label{tensorCY}
Let $\g$ be a finite dimensional simple Lie algebra. If all the generalized Verma modules of $L_{k}(\g)$ in $\cC_k(\g)$ are of finite length, then $KL_k(\g)$ has a braided tensor category structure.
\end{theorem}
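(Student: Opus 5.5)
The plan is to reduce the statement to the criterion of \cite{CY} for finite-length categories. Specifically, we show that every object of $KL_k(\g)$ is $C_1$-cofinite. Conversely, every grading-restricted $C_1$-cofinite $L_k(\g)$-module will be shown to be of finite length with composition factors in $\cC_k(\g)$. Once the category of grading-restricted $C_1$-cofinite modules coincides with $KL_k(\g)$, \cite[Theorem~3.6]{CY} gives the vertex and braided tensor category structure, since that category is then abelian and closed under contragredients.

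The first step is $C_1$-cofiniteness of the irreducibles. Every simple object of $\cC_k(\g)$ is a quotient $L_\g(k,\lambda)$ of a generalized Verma module $V_\g(k,\lambda)$, with $L(\lambda)$ finite dimensional. A generalized Verma module is generated by its finite-dimensional top level $L(\lambda)$ under the modes $x(-n)$, $n\geq 1$. Modulo $C_1(V_\g(k,\lambda))$, which contains $x(-n)w$ for $n \geq 1$, we therefore get $\dim V_\g(k,\lambda)/C_1 \le \dim L(\lambda)<\infty$. So $V_\g(k,\lambda)$ is $C_1$-cofinite as a $V^k(\g)$-module, and hence so are its quotients. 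Since $C_1$-cofiniteness is stable under extensions (a finite-codimension argument on short exact sequences), every finite-length module in $KL_k(\g)$ is $C_1$-cofinite.

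The second step is the converse. Let $W$ be grading-restricted and $C_1$-cofinite. Choose a finite-dimensional complement $U$ to $C_1(W)$; then $W$ is generated by $U$ as a $\hat\g$-module, so it is a quotient of a finite direct sum of induced modules $\hat{(U(\g)u)}_k$. Here each $U(\g)u$ is finite dimensional, because grading-restriction with $\g$ acting by zero modes preserving weight spaces forces this. Filtering these finite-dimensional $\g$-modules by composition series exhibits $W$ as a successive extension of quotients of generalized Verma modules $V_\g(k,\lambda)$, with $\lambda$ ranging over finitely many weights. The subquotients lie in $\cC_k(\g)$, being $L_k(\g)$-modules that are grading-restricted. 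By hypothesis each such generalized Verma module has finite length, and so does any quotient. Hence $W$ has finite length with composition factors in $\cC_k(\g)$, i.e., $W\in KL_k(\g)$.

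The main obstacle is the passage in the second step from quotients of $V^k(\g)$-induced modules to genuine $L_k(\g)$-modules. The subquotients of the filtration are a priori only $V^k(\g)$-modules, and the hypothesis concerns generalized Verma modules \emph{of} $L_k(\g)$, i.e.\ those in $\cC_k(\g)$. I would handle this by noting that each subquotient is a subquotient of $W$, which is an $L_k(\g)$-module, so the maximal ideal annihilates it. Each such subquotient is then a quotient of the largest $L_k(\g)$-quotient of $V_\g(k,\lambda)$, namely the generalized Verma module of $L_k(\g)$, which lies in $\cC_k(\g)$ and has finite length by assumption. A secondary point is to check that the $KL_k(\g)$ in Definition~\ref{KL} matches exactly the hypotheses of \cite[Theorem~3.6]{CY}, including the requirement that $C_1$-cofinite irreducibles be closed under contragredients. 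This holds because contragredients of $L_\g(k,\lambda)$ are again of the form $L_\g(k,\lambda^*)$.
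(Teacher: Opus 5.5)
Your reduction is the right one. Showing that $KL_k(\g)$ coincides with the category of grading-restricted $C_1$-cofinite $L_k(\g)$-modules, and then invoking \cite[Theorem~3.6]{CY}, is the mechanism behind \cite[Theorem~3.10]{CY}, which the paper quotes without proof. Your Step~1, including closure under extensions and contragredients, is fine.

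The gap is in Step~2, in the claim that $W$ is a quotient of $\bigoplus_u \hat{(U(\g)u)}_k$. By definition, $\hat{(U(\g)u)}_k$ is induced with $\g\otimes t\C[t]$ acting by zero on $U(\g)u$. So a $\hat\g$-map to $W$ extending the inclusion exists only if every $x(n)$, $n\geq 1$, annihilates $U(\g)u$ inside $W$. You never check this, and in general it fails for every choice of complement.

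For a concrete instance, let $V$ be a reducible, finite-length generalized Verma module of $L_k(\g)$ with lowest weight $h_\lambda$ and maximal submodule $K$. Take $W=V'$; it lies in $KL_k(\g)$, so it is $C_1$-cofinite by your Step~1.
\begin{itemize}
\item A vector of $V'$ killed by all positive modes pairs trivially with every $x_1(-n_1)\cdots x_r(-n_r)v$ with $r\geq 1$. By weights, it therefore lies in the lowest weight space $V'_{[h_\lambda]}$.
\item The image of $V'_{[h_\lambda]}+C_1(V')$ in $K'$ is $C_1(K')$, which is not all of $K'$.
\item Hence no complement of $C_1(V')$ consists of vectors killed by positive modes.
\end{itemize}
These are exactly the subsingular vectors that appear in the proof of Theorem~\ref{thm:irreduciblehwm}.

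The repair is routine, and there are two ways to do it.

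\textbf{Induce from a $\g\otimes\C[t]$-module.} Choose $U$ homogeneous and induce instead from the $\g\otimes\C[t]$-submodule $M=U(\g\otimes\C[t])\,U$ of $W$. It is finite dimensional because $W$ is grading-restricted, and it is graded. Filter it by $M_{\leq c}=M\cap\bigoplus_{{\rm Re}\,\mu\leq c}W_{[\mu]}$. Positive modes kill each $M_{\leq c}/M_{<c}$. Weyl's theorem and exactness of induction then give a finite filtration of $W$ whose factors are $L_k(\g)$-module quotients of finite sums of $V_\g(k,\lambda)$. At this point your ``main obstacle'' remark is exactly what lets you apply the hypothesis.

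\textbf{Iterate on lowest weight spaces.} This is what the paper does for $M(1)^+$ in Theorem~\ref{thm:mainM1+}. Repeatedly take the submodule of $W/W_i$ generated by its lowest weight space. Each such submodule is a quotient of a finite sum of generalized Verma modules of $L_k(\g)$. The dimension $\dim (W/W_i)/C_1(W/W_i)$ drops strictly at each step, because lowest weight vectors never lie in $C_1$. So the process terminates.
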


A universal $W$-algebra is a vertex algebra $\mathcal{W}^k(\mathfrak{g}, f)$ associated to the triple $(\mathfrak{g}, f, k)$, where $\mathfrak{g}$ is a simple Lie algebra, $f$ is a nilpotent element of $\mathfrak{g}$, and $k \in \C$, by applying the quantum Hamiltonian reduction functor $H_f$ to a complex associated to the affine vertex algebra $V^k(\mathfrak{g})$ \cite{KRW, KW2}. In particular, it was shown that, for $k$ non-critical and $f$ part of an $\mathfrak{sl}_2$-triple, $\mathcal{W}^k(\mathfrak{g}, f)$ has a conformal vertex algebra structure with a conformal vector $\omega$. For $k$ non-critical the vertex algebra $\mathcal{W}^k(\mathfrak{g}, f)$ has a unique simple quotient, denoted by $\mathcal{W}_k(\mathfrak{g}, f)$.

For the minimal root $-\theta$, we may choose root vectors $e_{\theta}$ and $e_{-\theta}$ such that
\[
[e_{\theta}, e_{-\theta}] = x \in \mathfrak{h},\;\;\; [x, e_{\pm \theta}] = \pm e_{\pm \theta}.
\]
It is proved in \cite{KW2} that the universal minimal $W$-algebras of level $k$ corresponding to $f = e_{-\theta}$,  which we denote by $W^k _{min}(\g)$,
has a unique simple quotient, which we denote by $W_k^{min} (\g)$.

Let $H_{\theta}$ be the Hamilton reduction functor from the category $\mathcal{O}^k$ consisting of $\widehat \g$-modules of level $k$ that are $\widehat\h$-diagonalizable with finite dimensional weight spaces and finitely many maximal weights to the category of  $W_k^{min} (\g)$-modules. The following properties of $H_{\theta}$ were proved in \cite{A1}:
\begin{theorem}\label{exact}
We have
\begin{enumerate}
\item $H_{\theta}$ is exact.
\item $H_{\theta}$ sends ordinary $L_k(\mathfrak{g})$-modules to ordinary  $W_k^{min} (\g)$-modules.
\end{enumerate}
\end{theorem}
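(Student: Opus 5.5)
The plan is to follow the approach of \cite{A1} and realize $H_{\theta}$ as the degree-zero cohomology of the BRST complex
\[
C(M)=M\otimes F^{ch}\otimes F^{ne},\qquad d=d_{st}+d_{\chi},
\]
where $F^{ch}$ is the charged fermion (ghost) system attached to $\g_{>0}$ for the minimal grading given by $\mathrm{ad}\,x$, and $F^{ne}$ is the neutral free fermion/$\beta\gamma$ system attached to $\g_{1/2}$. Both statements then reduce to structural facts about this complex, proved uniformly for $M\in\mathcal O^k$.

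For part (1), it suffices to prove the vanishing theorem $H^i_{\theta}(M)=0$ for $i\neq 0$ and all $M\in\mathcal O^k$. Exactness of $H_{\theta}=H^0_{\theta}$ then follows from the long exact cohomology sequence attached to a short exact sequence $0\to M_1\to M_2\to M_3\to 0$, which gives a short exact sequence of complexes $C(M_1)\to C(M_2)\to C(M_3)$.

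To prove the vanishing, I first filter $C(M)$ by the weight of $\h$ shifted by $x$, which makes each filtered piece finite-dimensional in each $L_0$-degree, and pass to the spectral sequence whose $E_1$ term is computed with $d_{st}$ alone. I would do this first for Verma modules. There $M$ is free over $U(\g\otimes t^{-1}\C[t^{-1}]\oplus \mathfrak n_+)$, so the $d_{st}$-cohomology is a semi-infinite cohomology of the nilpotent $\g_{>0}[t,t^{-1}]$. That cohomology is concentrated in one degree, and the spectral sequence degenerates. The same argument applied to contragredient Verma modules (using cofreeness) gives vanishing for them as well.

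For a general $M\in\mathcal O^k$, I would truncate to the subcategory with weights bounded by a fixed finite set. That subcategory has enough projectives with Verma flags, so a resolution argument together with dimension shifting pushes the vanishing to arbitrary $M$. The weight-space finiteness guarantees that $H^\bullet$ commutes with the relevant limits. I expect this extension from (dual) Verma modules to all of $\mathcal O^k$ to be the main obstacle. The delicate points are convergence of the spectral sequence and the fact that the minimal reduction is the ``$-$''-type reduction, so the degrees where nonzero cohomology would sit must be controlled carefully.

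For part (2), let $M$ be an ordinary $L_k(\g)$-module. Since $M$ is a module for the simple quotient, the maximal ideal of $V^k(\g)$ annihilates $M$. Functoriality of the BRST construction then shows that the image of the maximal ideal of $W^k_{min}(\g)$ annihilates $H_{\theta}(M)$, using the vanishing from (1) applied to $V^k(\g)$ and $L_k(\g)$. Hence $H_{\theta}(M)$ is a $W_k^{min}(\g)$-module.

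Next I would use the conformal vector $L^{W}(0)=L^{sug}(0)-x(0)+L^{gh}(0)$ on $C(M)$. Because $M$ is ordinary, $\h$ acts semisimply and $L^{sug}(0)$ is semisimple, so $L^W(0)$ is semisimple on $C(M)$ and hence on its cohomology. For grading-restrictedness, I would use local finiteness of the $\g$-action on each $L^{sug}(0)$-eigenspace of $M$. This bounds the $x(0)$-eigenvalues appearing in a fixed $L^{sug}(0)$-eigenspace, so each $L^W(0)$-eigenspace of $H^0$ is a subquotient of finitely many finite-dimensional pieces of $C(M)$, and the eigenvalues are bounded below. This shows $H_{\theta}(M)$ is ordinary.
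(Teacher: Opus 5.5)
The paper gives no proof of this theorem; it quotes \cite{A1}. Judged on its own, your argument has genuine gaps in both parts.

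\textbf{Part (1).} The base case fails as stated.
\begin{itemize}
\item A Verma module is free over $U(\mathfrak n_-\oplus\g\otimes t^{-1}\C[t^{-1}])$, not over $\mathfrak n_+$. The modes $\g_{>0}\otimes t^j$, $j\ge 0$, act only locally nilpotently. In the BRST complex they are paired with creation ghosts, so they enter in the cohomological direction, where cofreeness would be needed.
\item Consequently your $E_1$ page (the $d_{st}$-cohomology) of a Verma module is not concentrated in one degree. Take $\g=sl(2)$, $\g_{>0}=\C e$, $\lambda(h)=\ell\in\Z_{\ge0}$. The vector $f(0)^{\ell+1}v$ is killed by all $e(j)$, $j\ge0$. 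In its ($L^W(0)$, $\h$-weight)-bidegree the $d_{st}$-complex is exactly $\C f(0)^{\ell+1}v\to\C f(0)^{\ell}v\otimes\varphi^*$, where $\varphi^*$ is the ghost dual to $e(0)$. The differential is zero, so there are classes of charge $0$ and $1$, and nothing forces degeneration.
\item For contragredient Verma modules the cofreeness is over the negative modes. These enter in the homological direction, so that case is worse, not analogous.
\item What is special about the minimal reduction is the character term. The differential $d$ contains $(e_\theta(-1)+c)$, with $c\neq0$, times the ghost dual to $e_\theta t^{-1}$. The operator $e_\theta(-1)+c=e_{-\alpha_0}+c$ is injective on every weight module (look at a component of maximal weight), with no freeness needed. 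A working filtration should see this term at the first page, rather than relegating $d_\chi$ to $d_1$.
\item Dimension shifting from modules with (dual) Verma flags does not close up by itself. From $0\to K\to P\to M\to 0$ you only get $H^i(M)\cong H^{i+1}(K)$ for $i\ge1$. You need a uniform bound on ghost number in each graded piece over a truncated subcategory, plus injective coresolutions for $i\le -1$. Such a bound does hold: ghost creation modes have nonnegative $L^W(0)$-degree with only finitely many of degree $0$, and the $M$-part of the degree is bounded below on a truncated subcategory. But it must be stated and used; ``commuting with limits'' is not the issue.
\end{itemize}

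\textbf{Part (2).} Two steps fail.
\begin{itemize}
\item Exactness only makes $H_\theta(M)$ a module over $H_\theta(L_k(\g))\cong W^k_{min}(\g)/H_\theta(N_k)$, where $N_k$ is the maximal ideal of $V^k(\g)$. It gives no reason for $H_\theta(N_k)$ to be the maximal ideal of $W^k_{min}(\g)$. That $H_\theta(L_k(\g))$ equals $W^{min}_k(\g)$ (or is zero when $k\in\Z_{\ge0}$) is the separate theorem of \cite{A1} that $H_\theta(L(\lambda))$ is irreducible or zero. Your phrase ``the image of the maximal ideal of $W^k_{min}(\g)$'' presupposes it.
\item The finiteness argument is wrong. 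Since $[L^{sug}(0),e_\theta(-1)]=e_\theta(-1)=[x(0),e_\theta(-1)]$, the operator $e_\theta(-1)$ has $L^W(0)$-degree $0$. Hence the $L^W(0)$-eigenspaces of $C(M)$ are infinite-dimensional even for ordinary $M$. For $M=L_{-1}(sp(2n))$, the vacuum generates a Verma module for the $sl(2)$ attached to $\alpha_0$ because $-1\notin\Z_{\ge0}$. So the vectors $e_\theta(-1)^m\1$, $m\ge 0$, are all nonzero and all lie in $L^W(0)$-degree $0$, whereas $H^0$ in that degree is $\C\1$.
\item Your bound on $x(0)$ in the $L^{sug}(0)$-eigenspace of eigenvalue $h+m$ grows linearly in $m$. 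It therefore never reduces a fixed $L^W(0)$-eigenspace to finitely many finite-dimensional pieces.
\item Finite-dimensionality of the graded pieces of $H_\theta(M)$ must come from the cohomology itself, not from the size of the complex. Possible sources are the vanishing theorem together with a character computation in a grading finer than $L^W(0)$, or the highest-weight structure of $H_\theta(L(\lambda))$ combined with exactness.
\end{itemize}
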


\section{The category of $C_1$-cofinite modules for $M(1)^+$}
We want to construct a braided tensor category structure on an appropriate category of $M(1)^+$-modules. In \cite{H4}, Y.-Z. Huang proved the category of $C_1$-cofinite grading-restricted generalized modules for a vertex operator algebra has a vertex and braided monoidal category structure, although not necessarily an abelain category. It still remains to give a complete description of the category of grading-restricted $C_1$-cofinite modules.

In this section, we first prove that the irreducible modules $M(1)^\pm$ and $M(1,\lambda)$ for $\lambda \in \C^\times$ are $C_1$-cofinite using the approach of \cite{CMR} and that the irreducible modules $M(1)(\theta)^\pm$ are not $C_1$-cofinite, and then determine the category of grading-restricted $C_1$-cofinite modules for $M(1)^+$ by identifying it with the category of finite length modules whose simple composition factors are $M(1)^\pm$ and $M(1,\lambda)$ for $\lambda \in \C^\times$. Therefore, the category of grading-restricted $C_1$-cofinite modules is a vertex and braided tensor category. The idea of establishing the braided tensor category is the same as the one used in the previous work such as \cite{CY} and \cite{CJORY}, where the authors establish the vertex and braided tensor category structure for certain affine and Virasoro vertex operator algebras.

\subsection{A spanning set of $M(1)^+$-modules as Virasoro modules}\label{sec:span} 
The subalgebra of $M(1)^+$ generated by the conformal element $\omega = \frac{1}{2}\alpha(-1)^2$ is isomorphic to the simple Virasoro vertex operator algebra of central charge $1$, which we denote by $L(1,0)$. In this subsection, we will give a spanning set for the irreducible $M(1)^+$-modules as $L(1,0)$-modules. 

Denote by $L(1,h)$ the unique irreducible highest weight $L(1,0)$-module with highest weight $h \in \C$. The $M(1)^+$-module $M(1,\lambda)$ regarded as an $L(1,0)$-module is irreducible if and only if $\lambda \neq \frac{m}{\sqrt{2}}$ for $m \in \Z$ (see for example \cite{KR}), and the $M(1)^+$-module $M\left(1, \frac{m}{\sqrt{2}}\right)$ regarded as an $L(1,0)$-module has the following decomposition:
 \begin{align}\label{decompositionatypical}
 M\left(1, \frac{m}{\sqrt{2}}\right) = \bigoplus_{k=0} ^{\infty}   L\left(1, \left(\frac{m}{2}+k\right)^2\right)  = \bigoplus_{k=0} ^{\infty}   L(1,0 ) v_m ^{(k)},
 \end{align}
where $v_m ^{(k)}$ is a highest weight vector of $L\left(1, \left(\frac{m}{2}+k\right)^2\right)$.

We first prove some operators induced from the element $J$ send the highest weight vector $v_m^{(k)}$ to $v_m^{(k+2)}$ plus the lower terms:
\begin{lemma}\label{keylemma}
 For each $k \in \Z_{\geq 0}$, there exists a nonzero constant $C$ such that
 \begin{equation}\label{eq:reduction}
 J_{-2m-4k-1}v^{(k)}_m = Cv^{(k+2)}_m + w,
 \end{equation}
 where $w \in  \bigoplus_{i= 0}^{k+1} L\left(1, \left(\frac{m}{2}+i\right)^2\right)$.
 \end{lemma}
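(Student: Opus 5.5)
The plan is to split the claim into two parts. First, show that $J_{-2m-4k-1}v^{(k)}_m$ lies in $\bigoplus_{i=0}^{k+2} L\big(1,(\tfrac m2+i)^2\big)$ and that its component in $L\big(1,(\tfrac m2+k+2)^2\big)$ is a scalar multiple of $v^{(k+2)}_m$. Second, show that this scalar is nonzero.

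For the first part, I would begin with the degree count. Since $J$ has weight $4$, the mode $J_n$ raises weight by $3-n$. For $n=-2m-4k-1$ this gives $2m+4k+4$, and indeed
$$\left(\tfrac m2+k\right)^2+2m+4k+4=\left(\tfrac m2+k+2\right)^2 .$$
Next, $J$ is a Virasoro singular vector of weight $4$, so $L(1,0)J\cong L(1,4)=L(1,2^2)$ inside $M(1)^+$. Restricting $Y(\cdot,z)$ to $L(1,4)\otimes L\big(1,(\tfrac m2+k)^2\big)$ and projecting onto each summand $L\big(1,(\tfrac m2+i)^2\big)$ of (\ref{decompositionatypical}) yields intertwining operators among irreducible $L(1,0)$-modules. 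I would then invoke the known fusion rules for $c=1$ Virasoro modules of the form $L(1,p^2/4)$ (Milas): these vanish unless the "spins" satisfy a Clebsch--Gordan rule, so only $|i-k|\le 2$ can occur. Hence $J_n v^{(k)}_m$ has no component in $L\big(1,(\tfrac m2+i)^2\big)$ for $i>k+2$.

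In $L\big(1,(\tfrac m2+k+2)^2\big)$, the weight of $J_nv^{(k)}_m$ equals the lowest weight, so that component is $Cv^{(k+2)}_m$ for some scalar $C$. Everything else lies in the summands with $i\le k+1$, which gives $w$.

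The real content is $C\neq 0$, which I expect to be the main obstacle. I would use the lattice realization. View $M(1,\tfrac m{\sqrt2})$, for $\alpha$ normalized so that the lattice is $\Z\sqrt2\alpha$ (or its dual coset when $m$ is odd), as the charge-$\tfrac m2$ subspace of $V_{\Z\sqrt2\alpha}$ or $V_{\Z\sqrt2\alpha+\frac{\alpha}{\sqrt2}}$. These carry the horizontal $sl_2$ action, built from $e^{\pm\sqrt2\alpha}$ and $\alpha(0)$, which commutes with the Virasoro algebra (Frenkel--Kac). Under this action the $L(1,0)$-isotypic decomposition is the decomposition by $sl_2$-spin: the summand $L\big(1,j^2\big)$ with $j=\tfrac m2+k$ is the spin-$j$ isotypic part. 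Up to a nonzero scalar, $v^{(k)}_m=f_0^{\,k}e^{j\sqrt2\alpha}$, using $f_0^{\,j-m/2}$ applied to the extremal vector $e^{j\sqrt2\alpha}$.

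In the same way, $J$ lies in the spin-$2$ multiplet spanned by $e^{\pm2\sqrt2\alpha}$ and its $sl_2$-descendants; $J$ is the charge-$0$ member, up to scalar. Thus $\{Y(u,z)\}$, as $u$ runs over this multiplet, is a spin-$2$ tensor operator. Applying a Wigner--Eckart argument to the fixed mode $n$, the top-spin component of $J_nv^{(k)}_m$ factors as a product of two quantities. The first is the Clebsch--Gordan coefficient $\langle j+2,\tfrac m2\,|\,2,0;j,\tfrac m2\rangle$, which is nonzero because stretched couplings never vanish. The second is a reduced matrix element, which can be computed on the extremal vectors. There, $\big(e^{2\sqrt2\alpha}\big)_{n}e^{j\sqrt2\alpha}=\epsilon\, e^{(j+2)\sqrt2\alpha}$ with $\epsilon\neq0$: this is the leading term $z^{(2\sqrt2\alpha,\,j\sqrt2\alpha)}=z^{4j}$ of the lattice vertex operator, and it has the correct weight by the same degree count. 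Hence $C\neq0$.

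The technical point to check carefully is that the $sl_2$ zero-mode action commutes with all modes of $Y(u,z)$ in the tensor-operator sense. This follows from the commutator formula, since $e^{\pm\sqrt2\alpha}$ are weight-one primaries. If this route becomes awkward, the fallback is a direct computation with explicit Schur-polynomial formulas for $v^{(k)}_m$, pairing against the contragredient vector.
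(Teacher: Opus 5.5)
Your proposal is correct and takes essentially the paper's route: the lattice realization inside $V_L\oplus V_{L+\gamma/2}$, the degree count together with the Milas fusion rules to pin down the top component as $Cv_m^{(k+2)}$, and reduction of $C\neq 0$ to the extremal computation $(e^{2\gamma})_{(-2m-4k-1)}e^{m\gamma/2+k\gamma}\neq 0$. Where you invoke Wigner--Eckart and the nonvanishing of stretched Clebsch--Gordan coefficients, the paper does the same thing explicitly: it applies $E^{k+2}$ and uses $E^3J=0$ and $E^{k+1}v_m^{(k)}=0$ to get $E^{k+2}J_{(-2m-4k-1)}v_m^{(k)}=\binom{k+2}{2}\sigma\, e^{m\gamma/2+(k+2)\gamma}\neq 0$, while $E^{k+2}w=0$, which forces $C\neq 0$.
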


To focus on the main streamline, we put the proof of Lemma \ref{keylemma} in the Appendix \ref{AppA}. Using Lemma \ref{keylemma}, we prove $M\big(1, \frac{m}{\sqrt{2}}\big)$ has the following spanning set:
 \begin{theorem}\label{lem:spanset}
For any $m\in \Z_{\geq 0}$, $M\big(1, \frac{m}{\sqrt{2}}\big)$ is spanned by the elements
 \begin{equation}\label{spanset}
 J_{-i_1}\cdots J_{-i_k}L_{-j_1}\cdots L_{-j_\ell}v_m^{(t)},
 \end{equation}
 where $i_1, \dots i_k, j_1, \dots, j_\ell \geq 1$, $k,\ell \in \mathbb{Z}_{\geq 0}$ and $t = 0, 1$.
 \end{theorem}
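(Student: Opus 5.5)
Let $W$ denote the span of the elements \eqref{spanset}. Since $M\big(1,\frac{m}{\sqrt{2}}\big)=\bigoplus_{k\ge 0}L(1,0)v_m^{(k)}$ by \eqref{decompositionatypical}, and each $L(1,0)v_m^{(k)}$ is spanned by $L_{-j_1}\cdots L_{-j_\ell}v_m^{(k)}$, it suffices to show the following: for every $k\ge 0$ and every monomial $L_{-j_1}\cdots L_{-j_\ell}$, the vector $L_{-j_1}\cdots L_{-j_\ell}v_m^{(k)}$ lies in $W$. We argue by induction on $k$. The cases $k=0,1$ hold by definition, since we may take no $J$'s.

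For the inductive step, let $k\ge 2$. Lemma \ref{keylemma}, applied with $k-2$ in place of $k$, gives
\[
v_m^{(k)}=C^{-1}\bigl(J_{-2m-4k+7}v_m^{(k-2)}-w\bigr),\qquad w\in\bigoplus_{i=0}^{k-1}L\Big(1,\Big(\frac{m}{2}+i\Big)^2\Big).
\]
Note that $2m+4(k-2)+1\ge 1$, so the mode $J_{-2m-4k+7}$ is of the allowed form $J_{-i}$ with $i\ge 1$. By the induction hypothesis, $v_m^{(k-2)}$ and $w$ lie in $W$. We also need $W$ to be stable under $J_{-i}$ ($i\ge1$); this is immediate from the shape of the spanning elements. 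It follows that $v_m^{(k)}\in W$.

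The main issue is then to show that $L_{-j_1}\cdots L_{-j_\ell}v_m^{(k)}\in W$, i.e.\ that $W$ is stable under $L_{-j}$. The difficulty is that the $L$'s must be moved to the right of the $J$'s. I will use the commutator formula $[L_{p},J_q]=(3(p+1)-q-1)J_{p+q}$, valid because $J$ is a Virasoro primary of weight $4$ (it is a singular vector for the Virasoro algebra). This gives
\[
L_{-j}J_{-i_1}\cdots J_{-i_k}L_{-j_1}\cdots v=J_{-i_1}\cdots J_{-i_k}L_{-j}L_{-j_1}\cdots v+\sum(\text{terms with } J_{-i_r}\text{ replaced by } J_{-i_r-j}),
\]
and every term on the right is again of the form \eqref{spanset}, because $i_r+j\ge1$.

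Putting these together, the cleanest way to organize the whole argument is a single induction on $k$. For the statement at level $k$, take the expression for $v_m^{(k)}$ above, apply $L_{-j_1}\cdots L_{-j_\ell}$, and commute the $L$'s past $J_{-2m-4k+7}$ using the formula just stated. This produces only terms $J_{-i}L_{-j'_1}\cdots L_{-j'_r}v_m^{(k-2)}$ together with $L$-monomials applied to components of $w$. The components of $w$ lie in $\bigoplus_{i\le k-1}L(1,0)v_m^{(i)}$, so their images under $L$-monomials are in $W$ by the induction hypothesis. The terms $L_{-j'_1}\cdots L_{-j'_r}v_m^{(k-2)}$ are in $W$ by the induction hypothesis, and $W$ is closed under left multiplication by $J_{-i}$. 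This completes the induction; I expect no genuine obstacle beyond Lemma \ref{keylemma}.
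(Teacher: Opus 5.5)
Your proof is correct and is essentially the paper's argument: induction on $k$, using Lemma \ref{keylemma} to write $v_m^{(k)}$ in terms of $J_{-2m-4k+7}v_m^{(k-2)}$ plus lower Virasoro components, and noting that the span is stable under $J_{-i}$ and, via the $[L,J]$ commutator, under $L_{-j}$ for $i,j\ge 1$. One harmless slip: with the paper's mode convention the commutator is $[L_p,J_q]=(3(p+1)-q)J_{p+q}$, not $(3(p+1)-q-1)J_{p+q}$, but your argument only uses that it is a scalar multiple of $J_{p+q}$.
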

 \begin{proof}
Set $V_k = \bigoplus_{i= 0}^{k} L\left(1, \left(\frac{m}{2}+i\right)^2\right)$. We prove that $V_k$ is spanned by the element of the form \eqref{spanset} by induction on $k$. It is obvious when $k = 0, 1$. Now we assume the statement holds for $0, 1, \dots, k$. First it follows from \eqref{eq:reduction} that $v_m^{(k+1)}$ is also spanned by elements of the form \eqref{spanset}. Then an element of $V_{k+1}$ is of the form
\[
L_{-k_1}\cdots L_{-k_p}J_{-i_1}\cdots J_{-i_k}L_{-j_1}\cdots L_{-j_\ell}v^{(t)}_m,
\]
which is also of the form \eqref{spanset} because of the commutator formula
\[
[L_m, J_n] = (3(m+1) - n)J_{m+n}
\]
for $m,n \in \mathbb{Z}$.
 \end{proof}

\subsection{$C_1$-cofiniteness of irreducible $M(1)^+$-modules of untwisted type}
Recall that for a module $M$ for a vertex operator algebra $V$ is {\em $C_1$-cofinite} if the subspace
$$C_1(M)=\mbox{Span} \{v_{(-1)}m|v\in V, \wt~ v>0, m\in M \}$$ is of finite codimension in $M$.
  In this subsection, we show that the irreducible modules $M(1,\lambda)$ for $\lambda \in \C^\times$ and $M(1)^\pm$ are $C_1$-cofinite $M(1)^+$-modules.

We first show 
\begin{theorem}\label{cofinite1}
For any $m\in \Z_{\geq 0}$, the irreducible $M(1)^+$-module $M\big(1, \frac{m}{\sqrt{2}}\big)$ is $C_1$-cofinite.
\end{theorem}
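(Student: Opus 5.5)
The plan is to combine the spanning set of Theorem \ref{lem:spanset} with the known $C_1$-cofiniteness of the irreducible Virasoro modules $L\big(1,h\big)$ over $L(1,0)$. Set $M = M\big(1,\frac{m}{\sqrt{2}}\big)$. Every element $J_{-i_1}\cdots J_{-i_k}L_{-j_1}\cdots L_{-j_\ell}v_m^{(t)}$ with $k\geq 1$ can be written as $J_{-i_1}w$ for some $w\in M$. Since $\wt J = 4$, we have $J_{-i} = J_{(-i+3)}$ in the mode convention $J(z)=\sum J_n z^{-n-4}$. Thus $J_{-i}$ is a mode $J_{(n)}$ with $n\leq 2$ when $i\geq 1$.

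I would separate the cases by whether $J_{-i}$ is the $(-1)$-mode or can be reduced to it. The cleanest route is the standard fact that $C_1(M)$ is stable under the operators $L_{-1}$ and that $(L_{-1}v)_{(n)} = -n v_{(n-1)}$. From this, $v_{(-n)}m \in C_1(M)$ for all $n\geq 1$ and $\wt v>0$. So any $J_{(n)}w$ with $n\leq -1$, i.e. $J_{-i}$ with $i\geq 4$, lies in $C_1(M)$. Similarly, any $L_{-j}w=\omega_{(-j+1)}w$ with $j\geq 2$ lies in $C_1(M)$.

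It remains to control the finitely many ``bad'' modes $J_{-1},J_{-2},J_{-3}$ (the modes $J_{(2)},J_{(1)},J_{(0)}$) and $L_{-1}$. For these I would use a weight/PBW-reordering argument. Filter $M$ by the number of $J$'s and reorder the spanning monomials so that $J$-modes with large index $i$ appear leftmost, using the commutator relations. These relations come from the OPE of $J$ with itself, as well as the relation $[L_m,J_n]$ already used. Modulo $C_1(M)$, the leading factor can then be taken to be either a mode with $i\geq 4$, giving an element of $C_1(M)$, or a product of only the low modes $J_{-1},J_{-2},J_{-3},L_{-1}$ applied to $v_m^{(0)},v_m^{(1)}$.

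I then need to show that, modulo $C_1(M)$, only finitely many such elements survive. Here I would use the $C_1$-cofiniteness of the Virasoro modules $L\big(1,(\frac m2+k)^2\big)$ together with the direct sum decomposition \eqref{decompositionatypical}. The subspace $C_1^{Vir}(M)\subset C_1(M)$ already has codimension at most one in each summand, namely the span of $L(1,0)$ applied to the highest weight vector. So $M/C_1(M)$ is spanned by the images of the $v_m^{(k)}$. By Lemma \ref{keylemma}, $v_m^{(k+2)}$ is, up to the nonzero constant $C$, equal to $J_{-2m-4k-1}v_m^{(k)}$ minus a lower term. For $k\geq 1$, or for $k=0$ with $m\geq 1$, we have $2m+4k+1\geq 4$, so this mode lies in $C_1(M)$. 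Inductively, $v_m^{(k+2)}$ is then congruent to an element of $V_{k+1}$ modulo $C_1(M)$.

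The main obstacle is the edge case $m=0$, $k=0$, where the mode is $J_{-1}$. That gives only one extra generator $v_0^{(2)}$, and finitely many exceptions do no harm. Hence $M/C_1(M)$ is spanned by the images of finitely many vectors $v_m^{(0)},v_m^{(1)},v_m^{(2)}$, and $M$ is $C_1$-cofinite. Notice that this last argument bypasses the PBW reordering entirely, so I would present that version. The delicate points are checking that the lower-term induction closes, and justifying the one-dimensional Virasoro quotient, which uses that $L_{-1}$ applied to a highest weight vector is not in $C_1$ but the remaining descendants are.
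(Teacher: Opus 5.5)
Your final argument, the version you say you would present, uses the right ingredients: Lemma \ref{keylemma} and the $C_1$-cofiniteness of the Virasoro modules $L\big(1,(\frac m2+k)^2\big)$. However, one step is false as written. $C_1^{Vir}$ does not have codimension at most one in each summand, and $M/C_1(M)$ is not spanned by the images of the $v_m^{(k)}$.

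For the highest weight vector $u$ of a summand $L(1,h)$ with $h>0$, the vector $L_{-1}u\neq 0$ is not in $C_1^{Vir}(L(1,h))$, because $L(1,0)$ has no vectors of weight $1$. The larger space $C_1(M)$ does not help either. Take $m=1$ and $v_1^{(0)}=e^{\lambda}$ with $\lambda=\frac{1}{\sqrt2}\alpha$. Then $L_{-1}e^{\lambda}=\frac{1}{\sqrt2}\alpha(-1)e^{\lambda}$ has weight $\frac54$. Every $v_{(-1)}w$ with $v\in M(1)^+$ and $\wt v>0$ has weight at least $2+\frac14$, since $M(1)^+_1=0$. Also, no $v_1^{(k)}$ has weight $\frac54$.

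So your conclusion that $M/C_1(M)$ is spanned by the images of $v_m^{(0)},v_m^{(1)},v_m^{(2)}$ is wrong, and the ``one-dimensional Virasoro quotient'' you flag cannot be justified. What is true is that each summand $L(1,0)v_m^{(k)}$ is spanned, modulo $C_1^{Vir}\subseteq C_1(M)$, by finitely many $L_{-1}^iv_m^{(k)}$.

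Your induction therefore has to show $L(1,0)v_m^{(k+2)}\subseteq V_{k+1}+C_1(M)$, not merely $v_m^{(k+2)}\in V_{k+1}+C_1(M)$. This follows from tools you already stated. Descendants $L_{-n_1}\cdots L_{-n_r}v_m^{(k+2)}$ with $n_1\geq\cdots\geq n_r\geq 1$ and $n_1\geq 2$ lie in $C_1(M)$. The remaining ones satisfy $L_{-1}^r v_m^{(k+2)}=C^{-1}L_{-1}^r\big(J_{-2m-4k-1}v_m^{(k)}-w\big)\in C_1(M)+V_{k+1}$, because $C_1(M)$ is $L_{-1}$-stable and $V_{k+1}$ is an $L(1,0)$-submodule. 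Hence $M=V_1+C_1(M)$, which has finite codimension.

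You have also misread the mode convention. In the paper $J_n$ means $J_{(n)}=\Res_z z^nY(J,z)$. Two things force this. First, the formula $[L_m,J_n]=(3(m+1)-n)J_{m+n}$ is the commutator for $(n)$-modes of a weight-$4$ primary; the weight-graded modes would give $(3m-n)J_{m+n}$. Second, in Lemma \ref{keylemma} one has $(\frac m2+k+2)^2-(\frac m2+k)^2=2m+4k+4$. Under your reading, $J_{-2m-4k-1}$ would raise weight by only $2m+4k+1$, and the lemma would force $C=0$.

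Consequently every $J_{-i}$ with $i\geq 1$ is already a $C_1$-type mode. There are no bad modes $J_{-1},J_{-2},J_{-3}$ and no edge case $m=k=0$. With this reading, the paper's proof is immediate from Theorem \ref{lem:spanset}. Spanning monomials containing a $J$ lie in $C_1(M)$. The rest lie in $L(1,0)v_m^{(0)}+L(1,0)v_m^{(1)}$, which is $C_1$-cofinite over $L(1,0)\subset M(1)^+$.

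Once repaired, your induction modulo $C_1(M)$ is the same argument with the spanning-set step folded in. It replaces the commutator formula by the $L_{-1}$-stability of $C_1(M)$.
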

\begin{proof}
The spanning set \eqref{spanset} given in Theorem \ref{lem:spanset} shows that
\[
M\left(1, \frac{m}{\sqrt{2}}\right) = C_1\left(M\left(1, \frac{m}{\sqrt{2}}\right)\right) + \bigoplus_{t = 0, 1}\bigoplus_{i=0}^{k_t} L_{-1}^{i}v_m^{(t)},
\]
where $k_t \in \Z_{\geq 0}$ depending only on the Virasoro algebra modules $L\big(1, (\frac{m}{2}+t)^2\big)$.
Because the Virasoro algebra modules $L\big(1, (\frac{m}{2}+t)^2\big)$ are $C_1$-cofinite, so is $M\big(1, \frac{m}{\sqrt{2}}\big)$.
\end{proof}

Consequently, the modules $M(1)^{\pm}$ are $C_1$-cofinite:
\begin{corollary}
    The irreducible modules $M(1)^\pm$ are $C_1$-cofinite.
\end{corollary}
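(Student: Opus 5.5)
The plan is to read the corollary off the case $m=0$ of Theorem \ref{cofinite1}. When $m=0$ the module $M\big(1,\frac{0}{\sqrt 2}\big)$ is just $M(1)=M(1,0)$, and as an $M(1)^+$-module it splits as
\[
M(1)=M(1)^+\oplus M(1)^-,
\]
the $\pm1$-eigenspaces of $\theta$. Both summands are $M(1)^+$-submodules, because $\theta$ is an automorphism of $M(1)$ and therefore commutes with $Y(v,z)$ for every $v\in M(1)^+$.

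The key observation is that $C_1$ respects this decomposition. Take $v\in M(1)^+$ with $\wt v>0$ and write $m=m^++m^-$ with $m^\pm\in M(1)^\pm$. Then $v_{(-1)}m=v_{(-1)}m^++v_{(-1)}m^-$, and the two pieces lie in $M(1)^+$ and $M(1)^-$ respectively. Hence
\[
C_1(M(1))=C_1(M(1)^+)\oplus C_1(M(1)^-),
\]
and so
\[
M(1)/C_1(M(1))\cong M(1)^+/C_1(M(1)^+)\,\oplus\, M(1)^-/C_1(M(1)^-).
\]
Theorem \ref{cofinite1} with $m=0$ says the left-hand side is finite-dimensional. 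Each summand on the right is therefore finite-dimensional, which means $M(1)^+$ and $M(1)^-$ are each $C_1$-cofinite.

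One point needs checking: Theorem \ref{cofinite1} is stated for the ``irreducible'' module $M\big(1,\frac{m}{\sqrt2}\big)$, while at $m=0$ the module $M(1)$ is not irreducible over $M(1)^+$. The proof of Theorem \ref{cofinite1} never uses irreducibility, however. It uses only the Virasoro decomposition \eqref{decompositionatypical}, which for $m=0$ reads $M(1)=\bigoplus_{k\ge0}L(1,k^2)$, together with the spanning set of Theorem \ref{lem:spanset} and Lemma \ref{keylemma}. All of these hold verbatim at $m=0$, so the finite-codimensionality of $C_1(M(1))$ is valid.

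I expect no real obstacle: the argument is just the observation that a direct summand of a $C_1$-cofinite module is $C_1$-cofinite. As a cross-check, at $m=0$ the generators $v_0^{(0)}=\1$ and $v_0^{(1)}=\alpha(-1)\1$ lie in $M(1)^+$ and $M(1)^-$ respectively. Since $J\in M(1)^+$, the spanning set \eqref{spanset} splits along the two summands, which confirms the statement directly.
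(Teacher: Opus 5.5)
Your proposal is correct and follows essentially the paper's argument: the paper also applies Theorem \ref{cofinite1} at $m=0$ to get $C_1$-cofiniteness of $M(1)$, and concludes because $M(1)^\pm$ are quotients of it, whereas you use that they are direct summands, which amounts to the same thing here. Your observation that the proof of Theorem \ref{cofinite1} never uses irreducibility, which matters because $M(1)$ is not irreducible over $M(1)^+$, is a worthwhile clarification that the paper leaves implicit.
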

\begin{proof}
By Theorem \ref{cofinite1}, the Heisenberg algebra $M(1)$ regarded as an $M(1)^+$-module is $C_1$-cofinite. As quotient modules of $M(1)$, $M(1)^\pm$ are also $C_1$-cofinite.
\end{proof}

Now we proceed to show the modules $M(1, \lambda)$ are $C_1$-cofinite for all $\lambda \in \mathbb{C}^\times$.
\begin{theorem}\label{cofinite2}
All the irreducible $M(1)^+$-modules $M(1, \lambda)$ for $\lambda \in \mathbb{C}^\times$ are $C_1$-cofinite.
\end{theorem}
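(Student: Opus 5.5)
Following the strategy outlined in the introduction, the proof splits into two steps: first show $C_1$-cofiniteness for all $\lambda$ outside a countable set, then remove that countable set using fusion products.

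\textbf{Step 1 (generic $\lambda$).} Fix $\lambda\in\C^\times$ with $\lambda\notin\frac{1}{\sqrt 2}\Z$. By the remark before \eqref{decompositionatypical}, $M(1,\lambda)$ is then irreducible as an $L(1,0)$-module, so $M(1,\lambda)\cong L(1,\lambda^2/2)$. Every irreducible highest weight $L(1,0)$-module $L(1,h)$ is $C_1$-cofinite as an $L(1,0)$-module: modulo $L_{-n}L(1,h)$ with $n\ge 2$, it is spanned by $L_{-1}^i v_h$. Using the singular vector structure at $c=1$, one checks that $L_{-1}^i v_h$ lies in $C_1$ plus a finite span for large $i$. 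Since $\omega_{(-1)}=L_{-2}$ and $\omega_{(-n)}$ for $n\ge 2$ give the operators $L_{-n-1}$, the subspace $C_1$ for $L(1,0)$ is contained in $C_1$ for $M(1)^+$. Hence $M(1,\lambda)$ is $C_1$-cofinite for all $\lambda$ outside the countable set $\frac{1}{\sqrt2}\Z$.

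One caveat concerns the exact form of $C_1$-cofiniteness for generic Verma-type Virasoro modules. There, $L_{-1}^i v$ modulo $C_1$ might not reduce, since $C_1$ is spanned by $L_{-n}w$ with $n\ge2$, and $L_{-1}^i v\notin$ that span when the module is Verma. This is exactly why the paper's Step 1 likely uses a different argument. If direct Virasoro $C_1$-cofiniteness fails generically, I would fall back to a cleaner route: use $J$ to bridge, or use the $M(1)$-structure.

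\textbf{Step 2 (fusion argument, main route).} Use Miyamoto's theorem \cite{Miy}: if $W_1,W_2$ are $C_1$-cofinite, then $W_1\boxtimes W_2$ (the fusion product in the sense of \cite{H4}) is $C_1$-cofinite, and the quotient image of an intertwining operator is too. Given arbitrary $\lambda\in\C^\times$, write $\lambda=\mu+\nu$ where $M(1,\mu)$ and $M(1,\nu)$ are already known to be $C_1$-cofinite. This is possible because the good set is co-countable, so the set $\{\mu: \mu,\lambda-\mu \text{ good}\}$ is nonempty. The restriction of the Heisenberg intertwining operator $\mathcal{Y}: M(1,\mu)\otimes M(1,\nu)\to M(1,\mu+\nu)\{x\}$ is a surjective (nonzero, target irreducible) intertwining operator among $M(1)^+$-modules. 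Hence $M(1,\lambda)$ is a quotient of $M(1,\mu)\boxtimes M(1,\nu)$, which is $C_1$-cofinite, and quotients of $C_1$-cofinite modules are $C_1$-cofinite.

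\textbf{Main obstacle.} The main obstacle is Step 1: establishing $C_1$-cofiniteness for all but countably many $\lambda$. I would first try the Virasoro argument. If that fails for Verma-type $L(1,h)$, I would instead deform from the atypical modules. Write $M(1,\lambda)$ uniformly as $S(\h\otimes t^{-1}\C[t^{-1}])e^\lambda$, so that the operators $\omega_{(-1)},J_{(-1)}$, etc., become polynomial in $\lambda$. Then show that the codimension of $C_1$ is upper semicontinuous in $\lambda$ in each weight, via ranks of polynomial matrices, so that finite codimension in a fixed range of weights persists off a proper Zariski-closed, hence countable, set. One also needs a uniform weight bound beyond which everything lies in $C_1$, taken from the spanning set of Theorem~\ref{lem:spanset} at, e.g., $m=0$. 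In either variant, Step 2 then finishes the proof.
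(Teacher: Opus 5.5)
Your fallback route in Step 1, together with Step 2, is exactly the paper's proof. The Virasoro route you list first, however, is not merely doubtful; it is false and should be discarded.

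\textbf{Why the Virasoro route fails.} For $\lambda\notin\frac{1}{\sqrt2}\Z$ we have $\lambda^2/2\neq m^2/4$, so $M(1,\lambda)\cong L(1,\lambda^2/2)$ is the Virasoro Verma module at $c=1$ and has no singular vectors. Since $L(1,0)$ is strongly generated by $\omega$, the iterate formula gives $C_1(W)=\sum_{n\ge2}L_{-n}W$ for any $L(1,0)$-module $W$. For a Verma module, the PBW theorem shows that the quotient by this subspace has basis $\{L_{-1}^i v\}_{i\ge 0}$. So $M(1,\lambda)$ is \emph{never} $C_1$-cofinite as an $L(1,0)$-module, for precisely the $\lambda$ your Step 1 is meant to handle. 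The claim that every $L(1,h)$ is $C_1$-cofinite is therefore wrong, and the appeal to ``singular vector structure'' has no content at generic $h$.

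The extra modes from $J$ and the rest of $M(1)^+$ are essential, and there is no direct way to see their effect at generic $\lambda$. This is why the paper uses the Virasoro structure only at the atypical points $\lambda=m/\sqrt2$. There the summands $L(1,(m/2+k)^2)$ are $C_1$-cofinite, and $J$ links them via Lemma \ref{keylemma}.

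\textbf{Writing up the fallback.} Your Step 2 matches the paper; restricting the Heisenberg intertwining operator works as well as quoting \cite{Abe3}. When you write up the deformation argument, state the exceptional set precisely.
\begin{itemize}
\item In weight $n$, $C_1(M(1,\lambda))(n)$ is spanned by finitely many vectors $u_{(-1)}w$. Here $u$ runs over bases of $M(1)^+_k$ for $1\le k\le n$, and $w$ runs over the PBW basis of $M(1,\lambda)(n-k)$. The coordinates of these vectors lie in $\C[\lambda]$.
\item $C_1$-cofiniteness of $M(1)=M(1,0)$, i.e.\ Theorem \ref{cofinite1} with $m=0$, gives $n_0$ such that this matrix has rank $p(n)$ at $\lambda=0$ for every $n\ge n_0$.
\item Hence the rank is $p(n)$ off a finite set $B_n$, the zero set of a nonzero maximal minor.
\end{itemize}
The bad set $B=\bigcup_{n\ge n_0}B_n$ is a countable union of finite sets, not a single proper Zariski-closed set as you wrote. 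That is exactly why only co-countability is obtained and why the fusion step is needed.
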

\begin{proof}
The proof is similar to that of \cite[Theorem~16]{CMR}. For $n \in \mathbb{Z}_{\geq 0}$, let $M(1, \lambda)(n)$ be the degree $n$ subspace of  $M(1, \lambda)$.
 It is well-known that the following vectors form a basis of  $M(1, \lambda)(n)$:
 $$\alpha(-n_1)\cdots\alpha(-n_s)v_\lambda,$$
 where  $v_\lambda$ denotes the highest weight vector of $M(1, \lambda)$ and $n_1,\cdots, n_s\in \Z_{\geq 1}$ such that $n_1+\cdots+ n_s=n$.

 Set $C_1(M(1, \lambda))=\langle v_{(-1)}m|v\in M(1)^+, \wt~ v>0, m\in M(1, \lambda) \rangle,$ and $$C_1(M(1,\lambda))(n)= C_1(M(1, \lambda)) \cap M(1, \lambda)(n).$$ By the definition of $M(1, \lambda)$, every vector in $C_1(M(1,\lambda))(n)$ is a linear combination of vectors
 $$\alpha(-n_1)\cdots\alpha(-n_s)v_\lambda,\ \ n_1,\cdots, n_s\in \Z_{\geq 1}\ \text{such that } n_1+\cdots+ n_s=n$$ with coefficients in $\C[\lambda]$. Therefore, the dimension of $C_1(M(1,\lambda))(n)$ depends on the rank of a certain matrix $(a_{ij}(\lambda))$ with entries $a_{ij}(\lambda) \in \mathbb{C}[\lambda]$.

Set $$s(n)=\max \{\dim C_1(M(1,\lambda))(n)|\lambda\in \C\}.$$
Then $s(n)\leq \dim M(1,\lambda)(n)=p(n)$. By Theorem \ref{cofinite1}, $M\big(1, \frac{m}{\sqrt{2}}\alpha\big)$ is a $C_1$-cofinite  $M(1)^+$-module for any $m\in \Z_{\geq 0}$. Therefore,  there exists $n_0 \in \mathbb{Z}_{\geq 0}$ such that $s(n)=p(n)$ for any $n \geq n_0$. Thus, for all $n \geq n_0$, the dimension of $C_1(M(1,\lambda))(n)$ is strictly less than $p(n)={\rm dim}(M(1,\lambda)(n))$ only for the zero points of certain minors of the matrix $(a_{ij}(\lambda))$, which are finite sets. Let $B$ be the union of such zero points for all $n \geq n_0$. Then $B$ is a countable set. By the construction of $B$, $$C_1(M(1,\lambda))(n) = M(1,\lambda)(n)$$ holds for $\lambda \in \mathbb{C}\setminus B$ and $n \geq n_0$. Therefore, $M(1, \lambda)$ are $C_1$-cofinite for $\lambda \in \mathbb{C}\setminus B$.

Recall from Theorem 4.5 of \cite{Abe3} that there are nonzero intertwining operators of type $$\binom{M(1, \lambda+\mu)}{M(1, \lambda)\; M(1, \mu)}.$$ It follows from Main Theorem of \cite{Miy} that $M(1, \lambda+\mu)$ is $C_1$-cofinite if $M(1,\lambda)$ and $M(1,\mu)$ are both $C_1$-cofinite. Since every element $\lambda \in \mathbb{C}^\times$ can be written as a sum of two elements in $\mathbb{C}\setminus B$, it follows that $M(1,\lambda)$ is $C_1$-cofinite for all $\lambda \in \mathbb{C}^\times$.
\end{proof}

\subsection{Non-$C_1$-cofiniteness of irreducible $M(1)^+$-modules of twisted type}
The field  $h(z) = \sum _{n \in {\Z}} h(n + \frac{1}{2}) z^{-n-\frac{3}{2} }$ defines on  $M(1)(\theta)$ the structure of twisted $M(1)$--module. More precisely, for
$v = h(-m_1) \cdots h(-m_r) {\bf 1}$, set
$$W_{\theta} (v, z)  = : \partial ^{(m_1-1)} h (z) \cdots  \partial ^{(m_r-1)} h (z):, $$
where the normal ordering $:\; :$ is standard.  Define $\Delta_z$ as in \cite{DN1}. Then the  twisted vertex operator $Y_{tw} (v,z)$ for $v \in M(1)$ is given by
$$ Y_{tw} (v,z): = W_{\theta} (\Delta_z v, z). $$
Define the following subspaces of $M(1)(\theta)^{\pm}$:
$$M(1)(\theta)^{+} _{top} = \{ h(-\tfrac{1}{2}) ^{ 2 i} {\bf 1} _{tw}  \ \vert \ i \in {\Z}_{\ge 0} \},\;\;  M(1)(\theta)^{-} _{top} = \{ h(-\tfrac{1}{2}) ^{ 2 i+1}  {\bf 1}_{tw}\ \vert \ i \in {\Z}_{\ge 0} \}. $$


\begin{theorem} 
We have $$C_{1} (M(1)(\theta)^{\pm}) \cap  M(1)(\theta)^{\pm } _{top} = \{ 0\}.$$ In particular,  the $M(1)^+$-modules  $M(1)(\theta)^{\pm}$ are not $C_1$--cofinite.
\end{theorem}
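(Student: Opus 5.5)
The plan is a mode-by-mode weight count. Let $T$ be the span of all monomials $h(-\tfrac12)^{j}{\bf 1}_{tw}$, $j\ge 0$, and let $N$ be the span of the PBW monomials in $M(1)(\theta)$ that contain at least one mode $h(-n-\tfrac12)$ with $n\ge 1$. Then $M(1)(\theta)=T\oplus N$, compatibly with the $\theta$-eigenspace decomposition, and $M(1)(\theta)^{\pm}_{top}=T\cap M(1)(\theta)^{\pm}$. We will show $C_1(M(1)(\theta)^{\pm})\subseteq N$. It suffices to show that for every homogeneous $v\in M(1)^+$ with $\wt v>0$ and every PBW monomial $w$, the vector $v_{(-1)}w$ has zero component in $T$. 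Here $v_{(-1)}$ is the coefficient of $z^0$ in $Y_{tw}(v,z)$; since $v$ is $\theta$-even, only integral powers of $z$ occur. This operator raises $L(0)$-weight by exactly $\wt v$.

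\textbf{Expanding the twisted vertex operator.} Write $\Delta_z v=\sum_{k\ge 0} u^{(k)}z^{-k}$ with $\wt u^{(k)}=\wt v-k$. By linearity, we may take each $u^{(k)}$ to be a monomial $h(-m_1)\cdots h(-m_r){\bf 1}$ with $\sum m_j=\wt u^{(k)}$. Then $W_\theta(u^{(k)},z)z^{-k}$ expands as a sum of normally ordered products of $r$ modes $h(p_j+\tfrac12)$, with scalar coefficients times powers of $z$. The field $\partial^{(m-1)}h(z)$ attaches the power $z^{-p-1/2-m}$ to $h(p+\tfrac12)$.

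\textbf{Weight count.} In a product of $r$ such modes, the total $z$-exponent is $-\sum_j(p_j+\tfrac12)-\sum_j m_j-k$. The weight shift of the product is $\delta=-\sum_j(p_j+\tfrac12)$. Extracting the $z^0$ coefficient therefore forces
$$\delta=\textstyle\sum_j m_j+k=\wt v.$$
Now suppose such a normally ordered product, applied to $w$, produces a component in $T$. Since annihilation modes are placed to the right, they only remove factors from $w$, and the creation modes are then appended. For the final monomial to lie in $T$, every creation mode used must be $h(-\tfrac12)$, and $w$ itself must be a power of $h(-\tfrac12){\bf 1}_{tw}$; otherwise a surviving mode $h(-n-\tfrac12)$ with $n\ge1$ would remain. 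Each creation mode $h(-\tfrac12)$ contributes $+\tfrac12$ to $\delta$, and each annihilation mode contributes a negative amount. Hence $\delta\le r/2$.

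On the other hand, $\delta=\sum m_j+k\ge r$, because every $m_j\ge 1$. So $r\ge1$ is impossible. The remaining case $r=0$ is a scalar multiple of the identity operator, whose weight shift is $0\ne\wt v$. Thus no term contributes to the $T$-component, and $v_{(-1)}w\in N$. This gives $C_1(M(1)(\theta)^{\pm})\cap M(1)(\theta)^{\pm}_{top}=\{0\}$. Since $M(1)(\theta)^{\pm}_{top}$ is infinite dimensional, $C_1(M(1)(\theta)^{\pm})$ has infinite codimension, so $M(1)(\theta)^{\pm}$ is not $C_1$-cofinite.

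\textbf{Main obstacle.} The delicate point is the bookkeeping for $\Delta_z$ and the normal ordering. I need that the $z$-power attached to each mode is exactly as stated, so that $\Delta_z$ only shifts by $z^{-k}$ together with lowering the weight of $u^{(k)}$ by $k$. I also need that normal ordering does not produce additional creation modes other than those appearing in the product. I would check this first against the explicit formula for $\Delta_z$ in \cite{DN1}, which is $\exp$ of a quadratic expression in annihilation-type operators with constant coefficients times $z^{-n-m}$; that form is exactly what the weight-matching argument requires. Note that the binomial coefficients coming from $\partial^{(m-1)}$ never matter here, because the argument uses only the inequality $\delta\le r/2<r\le\delta$.
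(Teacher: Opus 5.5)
Your argument is correct and is essentially the paper's. Both expand $v_{(-1)}$ through $\Delta_z$ and $W_\theta$ into normally ordered products of twisted modes, and both play the weight $\wt v\ge r$ forced by the $z^0$ coefficient against the at most $r/2$ that $r$ modes can add if the result is to land in the top space; the paper phrases this as weight versus length of the output monomials, whereas you bound each operator's weight shift directly, which is a bit cleaner. One harmless slip: $w$ need not itself be a power of $h(-\tfrac12){\bf 1}_{tw}$, since annihilation modes can delete higher factors (e.g.\ $h(\tfrac32)h(-\tfrac32){\bf 1}_{tw}=\tfrac32{\bf 1}_{tw}$), but your bound $\delta\le r/2$ only uses that all creation modes are $h(-\tfrac12)$ and that annihilation modes contribute negatively.
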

 \begin{proof}
Since $M(1) ^+$ is spanned by the monomials
 $$ h(-m_1) \cdots h(-m_r){\bf 1}, $$
 where $r \in {\Z}_{\ge 0}$ is even, and $m_1, \dots, m_r \in {\Z}_{>0}$, the $C_1$-subspace $C_1(W)$ of any $M(1)^+$-module $W$ is spanned by
  $$ (h(-m_1) \cdots h(-m_r){\bf 1})_{(-1)} w, $$
  where $w \in W$. 
  
Now we consider $W = M(1)(\theta)^{\pm}$. First notice that 
$(h(-m_1) \cdots h(-m_r){\bf 1})_{(-1)}$ is  an (infinite) sum of operators of form
  $$ (*)\;\;\; :h\left(-p_1 - \frac{1}{2}\right) \cdots h\left(-p_s - \frac{1}{2}\right):, $$
where  $p_1, \dots, p_s \in {\Z}$  (not necessarily positive) such that $s$ is even, $s \le r$,   and 
$$ p_1 + p_2 + \cdots + p_s   + s/2 = m_1 + \cdots m_r.$$
This implies that $  p_1 + p_2 + \cdots + p_s \ge  r/2 >0. $
Then applying operator operator (*) on homogeneous vectors $w$ of the form 
  $$w = h\left(-l_1 - \frac{1}{2}\right) \cdots h\left(-l_q - \frac{1}{2}\right)  {\bf 1} $$ for $l_1, \dots, l_q \in {\Z}_{\ge 0 }$, $q$ even, which we call monomials in $W$, we get
$$:h\left(-p_1 - \frac{1}{2}\right) \cdots h\left(-p_s - \frac{1}{2}\right): w = w_1 + \cdots + w_s,$$ 
such that each $w_i$ is a monomial of the $L(0)$-weight 
$$l_1 + \cdots + l_q +   p_1 + p_2 + \cdots + p_s  +\frac{q+s}{2} =   l_1 + \cdots + l_q +  m_1 + \cdots m_r + \frac{q}{2}  $$    and the length  of all monomials $w_i$ is less or equal to $r+q$. This easily implies that each monomial $w_i \notin W_{top}$. We conclude that $C_1(W)$ consists of sums of monomials which don't belong to  $W_{top}$. 
Therefore, $C_{1} (W) \cap   W _{top}  = \{ 0\}$. Consequently, $W/ C_1(W)$ is an infinite-dimensional vector space which contains  a subspace isomorphic to $W_{top}$. 
\end{proof}
 
\subsection{Classification of highest weight $M(1)^+$-modules}

In order to study the extensions of irreducible modules, Abe introduced the fields  
  $$\widetilde{H}^{2r}(z) =\sum _{m \in {\Z} }{\widetilde  H} ^{2 r }    (m)  z^{-m -2 r-2}, $$ for $r \in \Z_{\geq 0}.$ In particular, operators $\widetilde{H}^{2r}(0)$, $r \in {\Z}_{>0}$, act semisimply on the irreducible modules \cite[Prop.~4.3]{Abe}.




The actions of the operators $\widetilde{H}^{4}(0)$ and $\widetilde{H}^{6}(0)$ on the top space of the irreducible modules are shown in the following table:
\begin{table}[h]
\begin{center}
\caption{}
\begin{tabular}{|c|c|c|c|c|c|c|}
\hline
$W$  &  $M(1)^+$ & $M(1)^-$ & $M(1,\lambda)$ & $M(1,\theta)^+$ & $M(1,\theta)^-$\\
\hline
$W_{\rm top} $ & $\mathbb{C}{\bf 1}$  & $\C h(-1){\bf 1}$  & $\mathbb{C}e^{\lambda}$ & $\mathbb{C}{\bf 1}_{\rm tw}$ & $\C h(-1/2){\bf 1}_{\rm tw}$\\
\hline
$\widetilde{H}^{4}(0)$ & $0$ & $1$ & $0$ & $-1/128$ & $15/128$\\
\hline
$\widetilde{H}^{6}(0)$ & $0$ & $1$ & $0$ & $1/256$ & $9/256$\\
\hline
\end{tabular}
\end{center}
\end{table}

We first need the following lemma:
\begin{lemma}\label{h4h6actondual}
    If $w$ is an $(\widetilde{H}^4(0), \widetilde{H}^6(0))$-weight vector, then its contragredient dual vector $w'$ satisfies
   \begin{equation}
        \langle \widetilde{H}^{2r}(0)w' , w\rangle = \langle w' , \widetilde{H}^{2r}(0)w\rangle,
    \end{equation}
    for $r = 2, 3$.
\end{lemma}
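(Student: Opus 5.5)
We will compute the adjoint of $\widetilde{H}^{2r}(0)$ with respect to the contragredient pairing and show it is $\widetilde{H}^{2r}(0)$ plus correction terms that vanish when paired against a weight vector. Recall that for a homogeneous quasi-primary vector $u\in M(1)^+$ of weight $N$, the contragredient action gives
\[
\langle Y'(u,z)w',w\rangle=\langle w', Y(e^{zL(1)}(-z^{-2})^{L(0)}u,z^{-1})w\rangle,
\]
so that the zero mode $o(u)=u_{(N-1)}$ satisfies $\langle o(u)w',w\rangle=(-1)^N\langle w',o(u)w\rangle$. Here $N=2r+2$ is even, so no sign appears. Thus the first step is to recall Abe's construction: $\widetilde{H}^{2r}(z)$ is the vertex operator of a vector $\widetilde{H}^{2r}\in M(1)^+$ of weight $2r+2$, built from $h(-1)h(-2r-1)\1$ modified by lower-order $L_{-1}$-descendants and Virasoro words. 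The mode $\widetilde{H}^{2r}(0)$ is its zero mode.

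Next we would decompose $\widetilde{H}^{2r}$, for $r=2,3$, as a quasi-primary part plus terms of the form $L_{-1}(\cdot)$ and $L_{-1}^2(\cdot)$, expanding $e^{zL(1)}$ explicitly. The zero mode of an $L_{-1}$-derivative term is determined by $(L_{-1}v)_{(n)}=-n v_{(n-1)}$. The terms arising from $L(1)\widetilde{H}^{2r}$ therefore contribute zero modes of lower-weight vectors, for example combinations of $\omega_{(k)}$ and $J$-type zero modes. The result is an identity
\[
\langle \widetilde{H}^{2r}(0)w',w\rangle=\langle w',(\widetilde{H}^{2r}(0)+P_r)w\rangle,
\]
where $P_r$ is a combination of zero modes of vectors of weight less than $2r+2$. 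Degree considerations force these to be polynomials in $L(0)$, and possibly in $\widetilde{H}^{4}(0)$ in the case $r=3$.

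Finally we use the hypothesis. If $w$ is a simultaneous weight vector, then pairing with its dual $w'$ reduces both sides to scalars, and the remaining task is to check that $P_r$ contributes zero. Ideally we would show $L(1)\widetilde{H}^{2r}=0$, which makes $P_r=0$ identically. Abe's operators are designed to commute with the Virasoro action, since they are built from Virasoro-primary or quasi-primary combinations, so we expect $\widetilde{H}^{2r}$ to be quasi-primary. If it is, the identity holds for all $w$ and the weight-vector hypothesis serves only to make $w'$ well defined as a dual basis vector.

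The main obstacle is verifying quasi-primarity. This requires computing $L(1)$ on the explicit formula for $\widetilde{H}^{4}$ and $\widetilde{H}^{6}$ in terms of $h(-m)$ monomials, which is a finite Heisenberg computation. If it fails, we would instead compute the correction $P_r$ explicitly and check that its eigenvalues vanish. We expect this to hold because the correction acts on a generalized eigenvector $w$ by a scalar, and the Table values show consistency on tops.
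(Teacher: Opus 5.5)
\textbf{Assessment: there is a genuine gap.} Your setup is the paper's: compute the transpose via the contragredient formula, expand $e^{xL(1)}$, and show that the correction $P_r$ vanishes. But you do not carry out the step that constitutes the proof, and the route you favor is false.

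$\widetilde{H}^{4}$ is \emph{not} quasi-primary. Abe's Eqn.~(4.10), which the paper uses, gives $L(1)H^4=\frac13L(-3)\mathbf{1}$ and $L(1)^2H^4=\frac43L(-2)\mathbf{1}$. In the paper's proof $H^4$ has weight $4$ and $\widetilde{H}^4(0)=H^4_{(3)}$. So the expectation that Abe's operators are built to be quasi-primary, and that $P_r=0$ for that reason, has no basis.

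Your fallback is not an argument either. A correction acting on $w$ by a scalar, say $cL(0)$, need not have scalar $0$. The table of $\widetilde{H}^4(0)$, $\widetilde{H}^6(0)$ values only records eigenvalues on top spaces of irreducible modules, so it cannot control the correction on an arbitrary weight vector $w$.

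\textbf{The missing idea.} The correction cancels identically. The terms produced by $e^{xL(1)}$ are $L(-1)$-descendants, and $o(L(-1)u)=-\mathrm{wt}(u)\,o(u)$ holds on every module. You already wrote down both ingredients (the decomposition into quasi-primary parts plus $L(-1)$-descendants, and $(L(-1)v)_{(n)}=-nv_{(n-1)}$), but you never combined them.

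For $r=2$, take the coefficient of $x^{-4}$:
\[
\langle \widetilde{H}^4(0)w',w\rangle=\langle w',\widetilde{H}^4(0)w\rangle+\tfrac13\langle w',o(L(-1)\omega)w\rangle+\tfrac23\langle w',o(\omega)w\rangle .
\]
Since $o(L(-1)\omega)=-2L(0)$ and $o(\omega)=L(0)$, the last two terms cancel. This is the paper's computation. It holds for all $w\in W$, $w'\in W'$, so the weight-vector hypothesis is never used.

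\textbf{What actually suffices.} You do not need quasi-primarity, only the weaker property that $\widetilde{H}^{2r}$ is a combination of vectors $L(-1)^jq$ with $q$ quasi-primary of \emph{even} weight. For such a vector, $o(L(-1)^jq)$ is a scalar multiple of $o(q)$ on both $W$ and $W'$. Also $o(q)$ is self-transpose, because $e^{L(1)}(-1)^{L(0)}q=q$. Hence the zero mode of $L(-1)^jq$ is self-transpose.

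Now use $\dim M(1)^+_n=0,1,1,3,3,6$ for $n=1,\dots,6$. Together with $M(1)^+_n=\ker L(1)|_{M(1)^+_n}\oplus L(-1)M(1)^+_{n-1}$ for $n\ge 2$, this shows $M(1)^+$ has no quasi-primary vectors of weight $3$ or $5$. So every vector of weight at most $6$ has the required property. Assuming, as the paper's $r=2$ computation indicates, that $\widetilde{H}^{2r}$ has weight $2r$, this settles $r=2$ and $r=3$ at once. It covers the case $r=3$ whose details the paper omits, and needs no explicit formula for $\widetilde{H}^{2r}$.
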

\begin{proof}
For a grading-restricted module $W$ and for $v \in V$, the operator $Y_{W'}(v, x)$ on the contragredient dual $W'$ is determined by the relation
\begin{equation}\label{eqn:contra}
\langle Y_{W'}(v,x)w', w \rangle = \langle w', Y_W(e^{xL(1)}(-x^{-2})^{L(0)}v,x^{-1})w \rangle
\end{equation}
for all $w \in W$ and $w' \in W'$.

Now let $V = M_\h(1,0)^+$ and $v = H^4$. Using \cite[Eqn.~(4.10)]{Abe}, we have
\[
L(1)v = \frac{1}{3}L(-3){\bf 1},\; L(1)^2v = \frac{4}{3}L(-2){\bf 1}.
\]
Plugging into \eqref{eqn:contra}, we have
\[
\langle Y_{W'}(H^4,x)w', w\rangle = x^{-8}\langle w', Y_W(e^{xL(1)}v,x^{-1})w\rangle,
\]
which implies
\begin{align*}
&    \langle Y_{W'}(H^4,x)w', w \rangle\\
& \quad = x^{-8}\langle w', Y(H^4,x^{-1})w\rangle + \frac{1}{3}x^{-7}\langle w', Y(L(-3){\bf 1},x^{-1})w\rangle + \frac{2}{3}x^{-6}\langle w', Y(L(-2){\bf 1},x^{-1})w\rangle.
\end{align*}
Recall that $\widetilde{H}^4(0) = H_3^4$. Taking coefficients of $x^{-4}$, we have
\[
\langle \widetilde{H}^4(0)w', w\rangle = \langle w', \widetilde{H}^4(0)w\rangle + \frac{1}{3}\langle w', (L(-3){\bf 1})_2w\rangle+\frac{2}{3}\langle w', L(0)w\rangle,
\]
which implies
\[
\langle \widetilde{H}^4(0)w', w\rangle = \langle w', (\widetilde{H}^4(0)-2/3L(0)+2/3L(0))w\rangle = \langle w', \widetilde{H}^4(0)w\rangle.
\]
The same spirit of calculations also shows
\[
\langle \widetilde{H}^6(0)w', w \rangle = \langle w', \widetilde{H}^6(0)w \rangle,
\]
we omit the details here.
\end{proof}

Now we are ready to prove the main theorem of this section, the proof relies on some results from \cite[Sec.~4]{Abe}.
\begin{theorem}\label{thm:irreduciblehwm}
    Suppose $M$ is a highest weight module with the highest weight vector $v$, then
    \begin{itemize}
        \item[(1)]If the conformal weight of $v$ is nonzero, then $M$ is irreducible.
        \item[(2)]If the conformal weight of $v$ is zero, then either $M$ is isomorphic to $M(1)^+$ or it has length $2$ with the following non-split exact sequence
        \[
        0 \rightarrow M(1)^- \rightarrow M \rightarrow M(1)^+ \rightarrow 0.
        \]
    \end{itemize}
\end{theorem}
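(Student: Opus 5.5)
Here a highest weight module means a module $M$ generated by a vector $v$ that is annihilated by all positive modes and is a simultaneous eigenvector for the zero modes $L(0)=\omega_1$, $J_3$, $\widetilde H^4(0)$ and $\widetilde H^6(0)$. Since the zero-mode algebra (Zhu algebra) of $M(1)^+$ is generated by $[\omega]$ and $[J]$, the vector $v$ spans a one-dimensional module for it. By Table~1 and the Dong--Nagatomo classification, the eigenvalue pair $(\omega,J)$ on $v$ then coincides with the top-weight data of exactly one irreducible module $L$. Let $M^{\rm top}$ be the lowest $L(0)$-weight space of $M$; it is spanned by $v$.

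\textbf{Step 1 (radical).} Let $N\subset M$ be the maximal proper submodule. It contains no nonzero multiple of $v$, so every composition factor of $N$ has lowest conformal weight strictly greater than $\mathrm{wt}\, v$ and differing from it by a positive integer.

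\textbf{Step 2 (case $\mathrm{wt}\, v\neq 0$).} We show $N=0$. Otherwise choose a composition factor $S$ of $N$ that appears as a quotient of a submodule $N_1\subset N$ generated by a lowest-weight vector $u$. Then $u$ is a singular vector in $M$ of higher weight. The possible $S$ are $M(1)^\pm$, $M(1,\mu)$ and $M(1)(\theta)^\pm$, and we compare weights and the $\widetilde H^{4}(0),\widetilde H^{6}(0)$ eigenvalues from Table~2.

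Twisted-type factors cannot occur, since Theorem~\ref{Abe} gives no extensions with them. More directly, their weights lie in $\frac1{16}+\frac12\Z$, and their $\widetilde H$ eigenvalues differ from those of untwisted $L$.

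For $L=M(1,\lambda)$ or $M(1)^-$, the quotient $M/N_2$, where $N_2$ is the maximal submodule of $N$ not containing $u$, is a length-two highest weight module. Using Lemma~\ref{h4h6actondual}, pass to its contragredient dual, which has $S'\cong S$ as a submodule-free quotient. This produces a nonsplit extension of $S$ by $L$ in which $S$ sits at a weight above $L$'s top weight. By Theorem~\ref{Abe}, the only nonsplit extensions among untwisted modules are $M(1,\lambda)$ with itself and $M(1)^+$ with $M(1)^-$. A self-extension requires equal top weights, which is impossible here. The pair $(M(1)^-, M(1)^+)$ is impossible because $M(1)^+$ has weight $0<1$, and it would also have to lie below.

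The subtle point is that Abe's Ext computation is for extensions in the category of modules with these eigenvalues. I would use his Sec.~4 analysis: $\widetilde H^{2r}(0)$ acts semisimply on irreducibles, and the generalized eigenvalues distinguish top spaces. This is the key tool for turning "a singular vector appears" into a forbidden extension.

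\textbf{Step 3 (case $\mathrm{wt}\, v=0$).} Here $L=M(1)^+$. The same argument allows only the factor $S=M(1)^-$, whose top sits at weight $1$. So $N$ has all composition factors equal to $M(1)^-$. Its weight-$1$ part lies in $M_1=\mathrm{span}\{L(-1)v\}\oplus$ (elements $J_{k}v$ of weight $1$), which has dimension at most $1$ because $M$ is spanned by the modes of $\omega$ and $J$ applied to $v$. Hence $N$ has a single top vector. Then $N\cong M(1)^-$, once we note that the highest weight module generated by a weight-one vector with the $M(1)^-$ eigenvalues is irreducible by Step 2. Consequently $M$ is either $M(1)^+$ or a length-two module. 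The resulting extension is nonsplit, because $M$ is generated by $v$.

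\textbf{Main obstacle.} The main obstacle is Step 2. Rigorously excluding singular vectors requires converting "highest weight module with a subquotient" into an Ext class among irreducibles in Abe's framework, and handling $M(1,\lambda)$ with $\lambda^2/2\in\Z$, where weights align with those of $M(1)^\pm$. For these, the $\widetilde H^4(0)$ eigenvalues from Table~2 (namely $0$ versus $1$) should separate the cases.
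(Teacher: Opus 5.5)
Your plan for part (1) is to turn a lowest-weight (hence singular) vector $u$ of the maximal submodule $N$ into a nonsplit extension between two irreducibles, and then get a contradiction from Theorem~\ref{Abe}. The weight bookkeeping at the end would work, but the reduction has a genuine gap, at the point you yourself flag as the main obstacle.

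\textbf{Step 2 does not produce a length-two module.} There is in general no largest submodule of $N$ avoiding $u$. Even for a Zorn-maximal $N_2$, you only learn that $\langle \bar u\rangle\cong S$ is an essential simple socle of $N/N_2$. The quotient $N/N_2$ can still have further composition factors, so $M/N_2$ need not have length two. To extract any nonsplit subquotient $0\to S_2\to E\to L\to 0$ you would need some finiteness of $N$: a maximal proper submodule of $N$, or a simple submodule of $N'$. D\'evissage on $\mathrm{Ext}^1$ likewise needs finite length. Nothing of this kind is available a priori, since finite length of highest weight modules is exactly what this theorem later supplies in Theorem~\ref{thm:mainM1+}.

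\textbf{Step 3 inherits this gap and has a further one.} The claim that all composition factors of $N$ are $M(1)^-$ presupposes a composition series. Your claim $\dim M_1\le 1$ is also unjustified. $M_1$ is spanned by $L(-1)v$ and $J_2v$, and nothing you state forces these to be proportional. The paper has to rule out $\dim M_1=2$ separately, using part (1).

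\textbf{What the paper does instead.} It bypasses Ext and uses Abe's local computations, which need no finiteness of $K$ (your $N$).
\begin{enumerate}
\item Dualize $0\to K\to M\to L\to 0$ to $0\to L'\to M'\to K'\to 0$, and take $u'\in M'$ dual to a lowest-weight eigenvector $u\in K$.
\item The image of $u'$ lies in the lowest weight space of $K'$, so $L(n)u'\in L'\cong L$ for all $n\ge 1$. This is exactly the situation Abe's Section~4 arguments handle.
\item By Lemma~\ref{h4h6actondual}, $u'$ has the same $(\widetilde H^4(0),\widetilde H^6(0))$-eigenvalues $(p,q)$ as $u$.
\item By \cite[Prop.~4.3]{Abe}, the components of $L(1)u'$ in $L$ have eigenvalues $(h,k)$ as in \eqref{eqn:Abe414}. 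By \cite[Prop.~4.5]{Abe}, $(5(h-p)(h-p-1)+9(k-q)-1)L(1)u'=0$.
\item For $(p,q)=(0,0)$, an integrality/parity check shows the coefficient never vanishes, so $L(1)u'=0$. The argument of \cite[Lemma~4.8]{Abe} then gives $L(2)u'=0$.
\item Hence $u'$ is singular. This is impossible, because $M$ is generated by $v$ and $\langle u',u\rangle\neq 0$.
\end{enumerate}
For $(p,q)=(1,1)$ and $(h,k)=(0,0)$ the coefficient does vanish, and this is precisely where the extension in part (2) comes from. This local argument is the ingredient your Step 2 needs in place of Theorem~\ref{Abe}.
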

\begin{proof}
Let $M$ be a highest weight module and let $L$ be its simple quotient. Then
we have the following exact sequence
\begin{equation}\label{eqn:basicexact}
0 \rightarrow K \rightarrow M \rightarrow L \rightarrow 0.
\end{equation}
Applying the contragredient dual, we obtain
\[
0 \rightarrow L' \rightarrow M' \rightarrow K' \rightarrow 0,
\]
Note that the simple modules are self-dual so that $L' \cong L$.

Suppose $K$ is nonzero, since it is grading-restricted, its top space is finite dimensional and thus has a $1$-dimensional irreducible $A(M(1)^+)$-submodule with a generator $u \in K$. Its dual vector $u'$ in $M'$ will be a subsingular vector, in particular, $L(n)u' \in L'\cong L$ for all $n \in \Z_{\geq 1}$. Note also that $u'$ can not be singular, otherwise, take an element $w \in K \subset M$, because $M$ is a highest weight module of highest weight $v$, $w$ is a linear combination of vectors
\[
w = J_{-i_1}\cdots J_{-i_k}L(-j_1)\cdots L(-j_t)v,
\]
where $i_1 \geq \dots \geq i_k \geq -2$, $j_1 \geq \dots \geq j_t \geq 1$ for some $k, t \in \Z_{\geq 0}$ not both $0$. If $u'$ is singular, then $\langle u', w\rangle$ is a linear combination of
\begin{align*}
\langle u', J_{-i_1}\cdots J_{-i_k}L(-j_1)\cdots L(-j_t)v \rangle= \langle L(j_t)\cdots L(j_1)J_{i_k+6}\cdots J_{i_1+6}u', v \rangle = 0.
\end{align*}
Since $w$ is an arbitrary element in $K$, it implies $u' = 0$ which is a contradiction.

Write $L(1)u' = \sum_i u_i$, where $u_i \in L$ are simultaneous eigenvectors for $\widetilde{H}^4(0)$ and $\widetilde{H}^6(0)$ with eigenvalues pairs $(h_i, k_i)$. By \cite[Prop.~4.3]{Abe} (cf. \cite[Eqn.~(4.14)]{Abe}),
\begin{equation}\label{eqn:Abe414}
    (h_i,k_i) \in \Z_{\geq 0}\times \Z_{\geq 0}, \quad {\rm or} \quad \bigg(-\frac{1}{128}+ \frac{1}{8}\Z_{\geq 0}\bigg)\times \bigg(\frac{1}{256}+\frac{1}{32}\Z_{\geq 0}\bigg).
\end{equation}

Since as an $A(M(1)^+)$-module, $\C u$ is isomorphic to one listed in Table 1, correspondingly the $(\widetilde{H}^4(0), \widetilde{H}^6(0))$-weight of $u$ must be one of the pairs in Table 2, which we denote by $(p,q)$. By Lemma \ref{h4h6actondual}, the $(\widetilde{H}^4(0), \widetilde{H}^6(0))$-weight of $u'$ is the same as $(p,q)$. By \cite[Prop.~4.5]{Abe}, the pair $(h_i,k_i)$ of $(\widetilde{H}^4(0), \widetilde{H}^6(0))$-weight of $u_i$ satisfies the equation
\[
(5(h_i-p)(h_i-p-1) +9(k_i-q)-1)L_1u' = 0.
\]

As the proof of \cite[Lemma~4.8]{Abe}, we will prove
\begin{equation}\label{eqn:nonzero}
5(h - p)(h - p - 1) + 9(k - q) - 1 \neq 0
\end{equation}
for all $(h,k) \in \C^2$ satisfying \eqref{eqn:Abe414}. 

If the conformal weight of the highest weight vector of $M$ is nonzero, the conformal weight of $u$ must be strictly greater than $1$. It follows that $\C u$ is isomorphic to the top of $M(1,\lambda)$ for some $\lambda$ as an $A(M(1)^+)$-module (see Table 1). In particular, $(p,q) = (0,0)$. Then the inequality \eqref{eqn:nonzero} becomes
\begin{equation}\label{eqn:nonzeoro1}
    5h(h-1)+9k-1 \neq 0,
\end{equation}
which always holds for $(h,k) \in \Z_{\geq 0}\times \Z_{\geq 0}$. Now we prove it also holds for
\[
h = -\frac{1}{128} + \frac{a}{8},\;\;\; k = \frac{1}{256} + \frac{b}{32}
\]
for $(a,b) \in \Z_{\geq 0}\times \Z_{\geq 0}$. Otherwise by plugging into the left-hand-side of \eqref{eqn:nonzeoro1}, we have
\[
5\cdot 2^8a^2 - 5(2^{11}+2^5)a +9\cdot 2^9b + 5\cdot 2^7 + 9\cdot 2^6 -2^{14} + 5 = 0,
\]
note the left-hand-side is always odd if $a,b$ are both integers, so the equation cannot have integer solutions. It follows that \eqref{eqn:nonzero} always holds for $(h,k) \in \C^2$ satisfying \eqref{eqn:Abe414}, which means $L(1)u' =0$, and furthermore Abe's proof of \cite[Lemma~4.8]{Abe} also shows $L(2)u' = 0$. Therefore, $u'$ is a singular vector, which is a contradiction! It follows that $K = 0$, and $M$ is irreducible.

Now we proceed to prove $(2)$. If $L(-1)v=0$, then $v$ is a vacuum-like vector. By Proposition 4.7.7 of \cite{LL}, $M$ is isomorphic to $M(1)^+$. 
We next consider the case that $L(-1)v\neq 0$. First, the weight one subspace $M_1$ of $M$ is spanned by $L(-1)v$ and $J_2v$. Let $K$  be the maximal submodule of $M$. Then we have
    \begin{equation}\label{eqn:basicexact1}
    0 \rightarrow K \rightarrow M \rightarrow M(1)^+ \rightarrow 0.
    \end{equation}
    This implies that $L(-1)v\in K$ and $J_2v\in K$. Furthermore, by the discussion above, $K$ is generated by $L(-1)v$.

    If $\dim M_1=1$, then $K$ must be isomorphic to $M(1)^-$. Therefore, we have \begin{equation}
    0 \rightarrow M(1)^- \rightarrow M \rightarrow M(1)^+ \rightarrow 0.
    \end{equation}
If $\dim M_1=2$, then $M_1$ has a 1-dimensional $A(M(1)^+)$-submodule $\mathbb C w$. Furthermore, the $M(1)^+$-submodule $K^1$ of $K$ generated by $w$ must be isomorphic to $M(1)^-$. Moreover, $K/K^1$ must be isomorphic to $M(1)^-$. This implies that $K\cong M(1)^-\oplus M(1)^-$. However, this implies that $L(-1)v$ is a highest weight vector, and $K$ is isomorphic to $M(1)^-$, which is a contradiction. We proved $(2)$.
\end{proof}

\begin{remark}
    The length $2$ highest weight module in Theorem \ref{thm:irreduciblehwm} was constructed in \cite[Sec.~6]{Abe}.
\end{remark}

\section{Tensor category structure on the category of $C_1$-cofinite $M(1)^+$-modules}
\subsection{Tensor category of $C_1$-cofinite $M(1)^+$-modules}
Denote by $\mathcal{C}_1(M(1)^+)$ the category of $C_1$-cofinite grading-restricted generalized $M(1)^+$-modules. We have shown the irreducible modules $M(1)^\pm$ and $M(1, \lambda)$ for $\lambda \in \mathbb{C}^\times$ are all $C_1$-cofinite, and thus by \cite[Prop.~2.9]{H3}, all the finite length modules with simple composition factors $M(1)^\pm$ and $M(1, \lambda)$ are in the category $\mathcal{C}_1(M(1)^+)$. In the next theorem, we show the category $\mathcal{C}_1(M(1)^+)$ is exactly the category of finite length modules with simple composition factors $M(1)^\pm$ and $M(1, \lambda)$. As a consequence, the category $\mathcal{C}_1(M(1)^+)$ possesses a structure of braided and vertex tensor category.

We first prove the following lemma:
\begin{lemma}\label{projtwisttype}
    The modules $M(1)(\theta)^{\pm}$ are projective in the category of finite length $M(1)^+$-modules. 
\end{lemma}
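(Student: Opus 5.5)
We need to show that $\mathrm{Ext}^1(M(1)(\theta)^{\pm}, N)=0$ for every finite length $M(1)^+$-module $N$, i.e.\ any surjection $P\twoheadrightarrow M(1)(\theta)^{\pm}$ in the category of finite length modules splits. Projectivity of $W=M(1)(\theta)^{\pm}$ means: for any short exact sequence $0\to N\to E\to W\to 0$ with $N$ (hence $E$) of finite length, the sequence splits. The plan is to reduce to the case of simple $N$ by induction on the length of $N$, and then apply Theorem~\ref{Abe}.

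\emph{Step 1: induction on length.} If $N$ is simple, Theorem~\ref{Abe} says $\mathrm{Ext}^1(W,N)=0$, because every extension with $W$ of twisted type is trivial; this includes $N=W$ and $N$ the other twisted module. For the inductive step, take $0\to N_1\to N\to N_2\to 0$ with $N_1$ simple. The long exact sequence $\mathrm{Ext}^1(W,N_1)\to \mathrm{Ext}^1(W,N)\to\mathrm{Ext}^1(W,N_2)$ gives vanishing in the middle. To make this rigorous without an abstract $\mathrm{Ext}$ theory, argue directly. Given $0\to N\to E\xrightarrow{\pi} W\to 0$, push out along $N\to N_2$ to get $0\to N_2\to E/N_1\to W\to 0$. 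By induction this splits, so there is a submodule $E'\subset E/N_1$ mapping isomorphically onto $W$. Its preimage $\tilde E\subset E$ fits into $0\to N_1\to\tilde E\to W\to 0$, which splits by the simple case. The resulting copy of $W$ inside $\tilde E\subset E$ gives a splitting of $\pi$.

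\emph{Step 2: checking the setting of Theorem~\ref{Abe}.} The main point to verify is that Abe's vanishing result is stated in a category containing all the extensions $E$ that arise. Here $E$ is a finite length (generalized, possibly with non-semisimple $L(0)$) module whose composition factors are irreducible $M(1)^+$-modules. The extensions in Theorem~\ref{Abe} are computed in the category of (weak/generalized) $M(1)^+$-modules of finite length, so a length-two $E$ is covered. I would also note why logarithmic behaviour is not an obstruction, as a sanity check. The twisted modules have conformal weights in $\frac{1}{16}+\frac12\Z$ or $\frac{9}{16}+\frac12\Z$. The untwisted ones have weights $\lambda^2/2+\Z$, which could coincide with these. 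However, Table~2 shows that the $(\widetilde H^4(0),\widetilde H^6(0))$-eigenvalues separate twisted from untwisted types: the twisted tops have non-integral eigenvalues $-\frac1{128},\frac1{256}$, etc., while the untwisted ones have integral eigenvalues. Combined with \cite[Prop.~4.3]{Abe}, generalized eigenspace decompositions for these commuting zero-mode operators split $E$ into a twisted-type part and an untwisted-type part, which is an alternative direct argument for the simple untwisted case.

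\emph{Expected main obstacle.} The delicate case is $N\cong W$ itself or $N$ the other twisted module, i.e.\ self-extensions of twisted type, possibly with a non-semisimple $L(0)$. Here the eigenvalue separation does not help, and I rely entirely on the statement of Theorem~\ref{Abe} including these pairs. If one prefers a direct argument, I would try the following. In a self-extension $E$, the top level (a finite-dimensional $A(M(1)^+)$-module) is a self-extension of the one-dimensional top of $W$. Abe's semisimplicity of $\widetilde H^{2r}(0)$ together with the known Zhu algebra relations forces this top to be semisimple. Then $E$ is generated by its top, so it is a quotient of two copies of the generalized Verma module, and irreducibility of the twisted highest weight modules gives the splitting.
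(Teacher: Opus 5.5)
Your argument is correct and is essentially the paper's proof. You induct on the length of the kernel, split off a simple submodule $N_1$, apply the induction hypothesis to $0\to N/N_1\to E/N_1\to W\to 0$, and then apply Theorem~\ref{Abe} to the preimage $\tilde E$ of the resulting copy of $W$. The Step~2 remarks and the ``direct argument'' are not needed, and the eigenvalue-separation aside would need more justification, since generalized eigenspaces of the zero modes $\widetilde{H}^{2r}(0)$ are not a priori $M(1)^+$-submodules.
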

\begin{proof}
    Let $P$ be either $M(1)(\theta)^+$ or $M(1)(\theta)^-$. We show by induction on the length of the $M(1)^+$-module $N$ that the following short exact sequence splits:
    \[
    0 \rightarrow M \rightarrow N \rightarrow P \rightarrow 0.
    \]
    If $N$ has length 2, the claim follows from \cite[Theorem~5.5, 5.6]{Abe} (cf. Theorem \ref{Abe}). In general, we take an irreducible submodule $M_0 \subset M$ and consider the exact sequence
    \[
    0 \rightarrow M/M_0 \rightarrow N/M_0 \rightarrow P \rightarrow 0,
    \]
    which splits by induction. Let $\widetilde{P}$ be a submodule of $N$ such that $\widetilde{P}/M_0 = P$. By \cite[Theorem~5.5, 5.6]{Abe} again, $\widetilde{P} = M_0 \oplus P$. Thus $P$ is a submodule of $N$, and the exact sequence splits.
\end{proof}

\begin{theorem}\label{thm:mainM1+}
The category $\mathcal{C}_1(M(1)^+)$ is the same as the category of finite length modules with simple composition factors isomorphic to $M(1)^\pm$ and $M(1, \lambda)$ for $\lambda \in \C^\times$.
\end{theorem}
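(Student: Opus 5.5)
One inclusion is already in place: the finite length modules with composition factors $M(1)^\pm$ and $M(1,\lambda)$ are $C_1$-cofinite by Theorems \ref{cofinite1}, \ref{cofinite2}, the corollary on $M(1)^\pm$, and \cite[Prop.~2.9]{H3}. So the plan is to prove the converse: every $C_1$-cofinite grading-restricted generalized module $W$ has finite length, and no composition factor of $W$ is of twisted type.

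\emph{Step 1: finite generation and finite length.} Since $W/C_1(W)$ is finite dimensional, $W$ is generated by finitely many homogeneous (generalized $L(0)$-eigen)vectors. I will use the standard fact that a grading-restricted $C_1$-cofinite module is finitely generated by a lift of a basis of $W/C_1(W)$. Grading-restriction gives conformal weights bounded below in finitely many cosets $h+\Z$ with finite-dimensional graded pieces. I will then filter $W$ by submodules generated by lowest weight vectors. The quotients are generated by $A(M(1)^+)$-submodules of top levels and are therefore quotients of highest-weight-type modules.

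To bound the length, I will use Theorem \ref{thm:irreduciblehwm}: each highest weight module has length at most $2$. Concretely, I will induct on the dimension of the span of the generators' lowest-weight components. Take a nonzero vector $v$ of minimal conformal weight that is an eigenvector for the commutative Zhu algebra action; $A(M(1)^+)$ acts on the finite-dimensional lowest weight space. The submodule $\langle v\rangle$ is a highest weight module, hence has finite length. Then pass to $W/\langle v\rangle$, which is again $C_1$-cofinite as a quotient. The induction must terminate. The main obstacle is showing that it does, since quotienting can create new low-weight vectors. I would control this with the total dimension of $\bigoplus_{n\le N} W_{(h+n)}$ for $N$ large enough that the generators lie in degrees $\le N$. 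Each quotient step strictly decreases this finite number, and the generators of $W$ remain generators of the quotient, so after finitely many steps everything is killed.

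\emph{Step 2: excluding the twisted type.} Suppose $M(1)(\theta)^\pm$ occurs as a composition factor of the finite length module $W$. By Lemma \ref{projtwisttype} it is projective in the category of finite length modules. Taking a composition series and splitting at the twisted factor shows that it appears as a quotient of some submodule, and that this submodule has the twisted module as a direct summand. Dually, using contragredients (the simple modules are self-dual, and the lemma applied to dual sequences shows injectivity as well), the twisted module becomes a direct summand of $W$ itself. Direct summands, being quotients, of $C_1$-cofinite modules are $C_1$-cofinite. This contradicts the non-$C_1$-cofiniteness theorem for $M(1)(\theta)^\pm$.

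An alternative for Step 2 avoids the summand argument. By Abe's Theorem \ref{Abe}, $W$ decomposes as $W_{tw}\oplus W_{untw}$ according to the types of its composition factors: no extensions link the two types, and induction on length yields the splitting. Then $W_{tw}$ is a quotient of $W$, hence $C_1$-cofinite. But $W_{tw}$ surjects onto some $M(1)(\theta)^\pm$, and that module is not $C_1$-cofinite, which is again a contradiction.
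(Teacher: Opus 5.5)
Your proposal is correct and follows essentially the paper's argument. Both proofs filter the module by highest weight submodules, whose factors have length at most $2$ by Theorem \ref{thm:irreduciblehwm}, and then exclude twisted factors via Lemma \ref{projtwisttype}, contragredient duality and the non-$C_1$-cofiniteness of $M(1)(\theta)^\pm$. The only real variation is the termination measure: you use the dimension of a low-degree space containing a finite generating set, whereas the paper uses the strictly decreasing $\dim (M/M_i)/C_1(M/M_i)$, which is slightly more direct because it needs no finite-generation lemma.
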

\begin{proof}
It suffices to show a $C_1$-cofinite grading-restricted generalized module $M$ is finite length whose simple composition factors are  $M(1)^\pm$ and $M(1, \lambda)$ for $\lambda \in \mathbb{C}^\times$.

We consider the top space $M(0)$ of $M$, since $M$ is lower bounded, $M(0) \neq 0$. From Zhu's theorem (\cite{Z}), $M(0)$ is an $A(M(1)^+)$-module. Because $A(M(1)^+)$ is commutative, we can take a one-dimensional irreducible submodule of $M(0)$, and let $M_0$ be the highest weight module generated by this irreducible $A(M(1)^+)$-submodule. If $M/M_0$ is zero, we are done since all the highest weight modules are of finite length, otherwise, consider the quotient module $M/M_0$, and take an irreducible $A(M(1)^+)$-submodule in its top space. It generates an $M(1)^+$-submodule of $M/M_0$, say $M_1/M_0$. Performing this procedure again on the quotient $M/M_1$ if its not zero, we obtain a submodule $M_2/M_1$. Repetition of this procedure yields a filtration
\[
 \cdots \supset M_n \supset \cdots \supset M_2 \supset M_1 \supset M_0 \supset 0,
\]
where $M_{i+1}/M_i$ are all highest weight modules. This filtration series will terminate because the dimensions of the $C_1$-quotient spaces strictly decrease:
\[
{\rm dim}(M/M_{i+1}/C_1(M/M_{i+1})) < {\rm dim}(M/M_{i}/C_1(M/M_{i})).
\]
Now we conclude that $M$ is of finite length because the length of filtration series is finite and all the filtration factors are of finite length by Theorem \ref{thm:irreduciblehwm}.

If for some $i$, $M_{i+1}/M_{i} \cong M(1)(\theta)^+$ or $M(1)(\theta)^{-}$, because $M(1)(\theta)^\pm$ are self-dual, $(M_{i+1}/M_i)' \cong M_{i+1}/M_i$. Consider the short exact sequence
\[
0 \rightarrow M_{i+1}/M_i \rightarrow M/M_i \rightarrow M/M_{i+1} \rightarrow 0
\]
and its contragredient dual, by Lemma \ref{projtwisttype}, $M_{i+1}/M_i$ is a direct summand of $(M/M_i)'$, and thus $M_{i+1}/M_i$ is also a direct summand of $M/M_i$. It is a contradiction since $M/M_i$ is $C_1$-cofinite but $M_{i+1}/M_i$ is not $C_1$-cofinite. As a result, the composition factors should all be of untwisted type.
\end{proof}

Combining Theorem \ref{thm:mainM1+} and \cite[Theorem~3.6]{CY} (cf. \cite[Theorem~2.25]{CMOY}), we establish the vertex and braided tensor category structure on the category $\mathcal{C}_1(M(1)^+)$:
\begin{theorem}
The category $\mathcal{C}_1(M(1)^+)$ is a vertex and braided tensor category.
\end{theorem}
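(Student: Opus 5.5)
The plan is to apply \cite[Theorem~3.6]{CY} (cf. \cite[Theorem~2.25]{CMOY}), which says that a vertex and braided tensor category structure in the sense of Huang--Lepowsky--Zhang exists on the category of finite length modules whose composition factors are $C_1$-cofinite, provided this category coincides with the category of grading-restricted $C_1$-cofinite modules.

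\textbf{Step 1: the coincidence of categories.} This is exactly Theorem~\ref{thm:mainM1+}. The simple modules $M(1)^\pm$ and $M(1,\lambda)$, $\lambda\in\C^\times$, are $C_1$-cofinite by Theorems~\ref{cofinite1} and \ref{cofinite2} and the Corollary after Theorem~\ref{cofinite1}. By \cite[Prop.~2.9]{H3}, finite length extensions of these are again $C_1$-cofinite. Conversely, Theorem~\ref{thm:mainM1+} shows that every object of $\mathcal{C}_1(M(1)^+)$ has finite length with untwisted-type composition factors.

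\textbf{Step 2: the remaining hypotheses of \cite[Theorem~3.6]{CY}.} I would check the following in turn.
\begin{itemize}
\item $M(1)^+$ is a simple, self-contragredient, $\N$-graded vertex operator algebra of CFT type. It is simple by \cite{DG,DN1}, and self-contragredient because its weight-one space is zero and $L(1)V_1=0$ trivially.
\item $\mathcal{C}_1(M(1)^+)$ is closed under submodules, quotients and finite direct sums. This follows because the finite length description is manifestly closed under these operations.
\item The category is closed under contragredients. The contragredient of a finite length module with composition factors among the self-dual simples $M(1)^\pm$ and $M(1,\lambda)$ again has such a composition series, since $M(1,\lambda)'\cong M(1,-\lambda)\cong M(1,\lambda)$ as $M(1)^+$-modules.
\item The category is closed under the $P(z)$-tensor product. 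This holds because fusion of $C_1$-cofinite modules is $C_1$-cofinite and grading-restricted \cite{Miy,H4}, and hence lies back in the category by Step~1.
\item The convergence and extension properties for products and iterates of intertwining operators hold. These follow from $C_1$-cofiniteness via the differential equations of \cite{H1}.
\end{itemize}

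\textbf{Step 3: conclusion.} With these in place, \cite[Theorem~3.6]{CY} gives the associativity isomorphisms and hence the vertex and braided tensor category structure.

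The main obstacle is ensuring that $L(0)$-eigenvalues of objects behave well enough (lower bounded, finitely many congruence classes mod $\Z$ per module) for the convergence and extension argument. Since objects have finite length, their weights lie in finitely many cosets $\lambda^2/2+\Z_{\ge0}$, so I expect this to be immediate.
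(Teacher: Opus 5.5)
Your proposal is correct and follows the paper's route: the paper proves this by combining Theorem~\ref{thm:mainM1+} (the coincidence of $\mathcal{C}_1(M(1)^+)$ with the category of finite length modules whose composition factors are $M(1)^\pm$ and $M(1,\lambda)$, $\lambda \in \C^\times$) with \cite[Theorem~3.6]{CY}. The checks in your Step~2 (self-contragredience, closure under contragredients and fusion, convergence and extension) are correct. They are largely what \cite[Theorem~3.6]{CY} already supplies once that coincidence is established, so they are harmless extra verification rather than needed steps.
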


\subsection{Rigidity and fusion product decompositions}
In this section, we will determine the fusion products of simple objects in the category $\mathcal{C}_1(M(1)^+)$ of $C_1$-cofinite grading-restricted generalized $M(1)^+$-modules. Using the fusion product decompositions, \cite[Theorem~1.1]{EP} and \cite[Theorem~4.4.1]{CMY2}, we prove the category $\mathcal{C}_1(M(1)^+)$ is rigid. 

First, we calculate fusion rules involving $M(1)^-$. Since $M(1)^+$ is the $\mathbb{Z}_2$-orbifold of $M(1)$ and $M(1) = M(1)^+ \oplus M(1)^-$, we have the following result by \cite{DLM, M1}.
\begin{proposition}\label{fusionM1}
$M(1)^- \boxtimes M(1)^- = M(1)^+.$
\end{proposition}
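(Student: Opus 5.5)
We prove $M(1)^-\boxtimes M(1)^-\cong M(1)^+$ by treating $M(1)=M(1)^+\oplus M(1)^-$ as a $\Z_2$-graded simple current extension of $M(1)^+$. The plan has three steps: produce a nonzero map $M(1)^-\boxtimes M(1)^-\to M(1)^+$, show it is surjective, and then show it is injective.

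First, the vertex operator of $M(1)$ restricted to $M(1)^-\otimes M(1)^-$ lands in $M(1)^+$, because $\theta$ is an automorphism and $\theta(u_nv)=(-u)_n(-v)=u_nv$. This restriction is a nonzero intertwining operator of type $\binom{M(1)^+}{M(1)^-\,M(1)^-}$. It is nonzero since $(h(-1)\1)_{1}h(-1)\1=\1$. By the universal property of the fusion product in $\cC_1(M(1)^+)$, we obtain a nonzero homomorphism $\varphi: M(1)^-\boxtimes M(1)^-\to M(1)^+$. Since $M(1)^+$ is simple, $\varphi$ is surjective.

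Second, we show that $\varphi$ is injective, so that $M(1)^-\boxtimes M(1)^-$ is itself simple. This is where I invoke the general results of \cite{DLM} and \cite{M1} on fixed-point subalgebras under a finite (here abelian) automorphism group. They say that when $V=M(1)$ is simple and $G=\langle\theta\rangle$, every nonzero eigenspace $V^\chi$ is a simple current for $V^G$, and $V^{\chi}\boxtimes V^{\psi}\cong V^{\chi\psi}$.

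If one prefers a self-contained argument inside the braided tensor category $\cC_1(M(1)^+)$, I would proceed as follows. The extension $M(1)$ is a commutative algebra object $A=\1\oplus M(1)^-$ in $\cC_1(M(1)^+)$. The induction functor $\cF(X)=A\boxtimes X$ satisfies
\[
\cF(M(1)^-)\cong M(1)^-\oplus M(1)^-\boxtimes M(1)^-
\]
as $M(1)^+$-modules. On the other hand, $\cF(M(1)^-)$ is a (possibly twisted, here untwisted since the monodromy is trivial) $M(1)$-module. It is generated by $M(1)^-$ and has the same lowest weight structure, and one identifies it with $M(1)$ itself via the multiplication map. Comparing $M(1)^+$-decompositions then gives $M(1)^-\boxtimes M(1)^-\cong M(1)^+$.

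The main obstacle is this last identification, namely showing that the induced module $\cF(M(1)^-)$ is exactly $M(1)$ and not a larger, possibly non-semisimple object. I would handle it through Frobenius reciprocity, $\Hom_A(\cF(M(1)^-),M(1))\cong\Hom(M(1)^-,M(1))\neq0$, and check that the resulting map is an isomorphism. Injectivity would follow from a dimension count: $\cF$ preserves lowest weight spaces up to the fusion $M(1)^-\boxtimes M(1)^-$, whose lowest weight space is bounded via Zhu's algebra by the one-dimensional space of intertwining operators \cite{Abe3}. Rather than completing this check, I would quote the simple current result of \cite{DLM}, \cite{M1} to conclude.
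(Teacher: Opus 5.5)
Your proposal is correct and rests on the same input as the paper, which simply cites the orbifold results of \cite{DLM} and \cite{M1} for $M(1)=M(1)^+\oplus M(1)^-$. Your explicit surjection $M(1)^-\boxtimes M(1)^-\to M(1)^+$ from the restricted vertex operator is a harmless addition, and it is right not to rely on the induction-functor sketch: fusion rules alone would not rule out a non-split extension with top $M(1)^+$ in this non-semisimple category.
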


Next we calculate the fusion rules between $M(1)^-$ and $M(1, \lambda)$. For simplicity, from now on we denote the category $\mathcal{C}_1(M(1)^+)$ by $\mathcal{C}$. Note the Heisenberg algebra $M(1)$ can be regarded as a commutative associative algebra in the category $\cC$, denote this algebra by $A$. Let $\cC_A$ be the category of $A$-modules in $\cC$, and let $\cF_A$ be the induction functor $\cC \rightarrow \cC_A$, and $\cG$ be the restriction functor by $\cC_A \rightarrow \cC$.
For $\lambda \in \C^{\times}$, the module $M(1, \lambda)$ can be regarded as both $M(1)$-module and $M(1)^+$-module. To avoid confusion, we denote the $A$-module $M(1, \lambda)$ by $\cF_\lambda$, and still use $M(1, \lambda)$ for $M(1)^+$-modules. Then we have the following result:
\begin{proposition}\label{fusionM2}
For $\lambda \in \C^{\times}$, $M(1)^- \boxtimes M(1, \lambda)= M(1, \lambda).$
\end{proposition}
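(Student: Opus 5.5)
Since $M(1)$ is a commutative algebra $A=M(1)^+\oplus M(1)^-$ in $\cC$, I will compute $\cG\cF_A(M(1,\lambda))=A\boxtimes M(1,\lambda)=M(1,\lambda)\oplus\big(M(1)^-\boxtimes M(1,\lambda)\big)$ in a second way and compare. The induction functor $\cF_A$ is left adjoint to $\cG$ (\cite{CKM}), so for any $A$-module $X$ in $\cC_A$ we have $\Hom_{\cC_A}(\cF_A(M(1,\lambda)),X)\cong\Hom_{\cC}(M(1,\lambda),\cG(X))$. Restricted to $M(1)^+$, the $A$-module $\cF_\mu=M(1,\mu)$ is the irreducible $M(1)^+$-module $M(1,\mu)\cong M(1,-\mu)$. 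Hence $M(1,\lambda)$ maps nontrivially, and with one-dimensional Hom space each, into $\cG(\cF_\lambda)$ and $\cG(\cF_{-\lambda})$, and into no other $\cG(\cF_\mu)$.

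Next I will show that $\cF_A(M(1,\lambda))\cong\cF_\lambda\oplus\cF_{-\lambda}$ as $A$-modules (untwisted, i.e.\ local modules, since $M(1)^-\boxtimes M(1,\lambda)$ has integral conformal weight differences relative to $M(1,\lambda)$, monodromy is trivial). For this I first bound the size. $M(1)^-\boxtimes M(1,\lambda)$ is a finite-length module in $\cC$ by Theorem \ref{thm:mainM1+}. By Proposition \ref{fusionM1} $M(1)^-$ is a simple current, so $M(1)^-\boxtimes-$ is an autoequivalence of $\cC$. Therefore $M(1)^-\boxtimes M(1,\lambda)$ is simple, and it is nonzero since $M(1)^-\boxtimes M(1)^-\boxtimes M(1,\lambda)\cong M(1,\lambda)$. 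So $\cF_A(M(1,\lambda))$ has length two in $\cC$, with $M(1,\lambda)$ as one factor. It is a local $A$-module, hence a generalized $M(1)$-module in $\cC$ via \cite{HKL,CKM}. Its $M(1)$-composition factors are Fock modules $\cF_\mu$, each restricting to a single simple $M(1)^+$-module.

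Length two in $\cC$ therefore forces $M(1)$-length two. The adjunction gives nonzero maps $\cF_A(M(1,\lambda))\to\cF_{\pm\lambda}$, which are surjective since the targets are simple. As $\lambda\ne-\lambda$ for $\lambda\neq 0$, these two maps are non-isomorphic quotients, so $\cF_A(M(1,\lambda))\cong\cF_\lambda\oplus\cF_{-\lambda}$. Applying $\cG$ gives $M(1,\lambda)\oplus M(1)^-\boxtimes M(1,\lambda)\cong M(1,\lambda)\oplus M(1,-\lambda)\cong M(1,\lambda)^{\oplus2}$, and Krull--Schmidt (finite length) yields the claim.

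The main obstacle is justifying that $\cF_A(M(1,\lambda))$ lies in the local (untwisted) module category, so that it is an honest $M(1)$-module. I would check that the monodromy $\mathcal M_{A,M(1,\lambda)}$ is trivial using the conformal weights: those of $M(1)^-$ are integral, and the $L(0)$-spectrum of the simple factor $M(1)^-\boxtimes M(1,\lambda)$ lies in $\lambda^2/2+\Z$. A fallback is to cite Abe's fusion rules \cite{Abe3}, which say the intertwining operator space of type $\binom{M(1,\mu)}{M(1)^-\,M(1,\lambda)}$ is nonzero only for $\mu=\pm\lambda$. Combined with simplicity of $M(1)^-\boxtimes M(1,\lambda)$, this directly gives $M(1)^-\boxtimes M(1,\lambda)\cong M(1,\lambda)$.
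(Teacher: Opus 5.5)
Your proof is correct, but it takes a different route from the paper's.

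\textbf{How the two routes differ.} The paper never needs to know in advance that $M(1)^-\boxtimes M(1,\lambda)$ is simple. It first computes $\cF_A(A)\cong A\oplus A$. This follows from $\cG(\cF_A(A))=A\boxtimes A\cong M(1)\oplus M(1)$ together with $\dim\Hom_{\cC_A}(\cF_A(A),A)=2$. It then applies the projection formula \cite[Lemma~2.8]{CLR}:
\[
A\boxtimes M(1,\lambda)\cong\cG\bigl(\cF_A(A)\boxtimes_A\cF_\lambda\bigr)\cong M(1,\lambda)^{\oplus 2},
\]
and Krull--Schmidt gives the claim at once. The identification $\cF_A(M(1,\lambda))\cong\cF_\lambda\oplus\cF_{-\lambda}$ comes afterwards, by the same Frobenius-reciprocity-plus-length argument you use; it is needed for Theorem~\ref{fr-1}.

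You instead use invertibility of $M(1)^-$, which follows from Proposition~\ref{fusionM1}, the same input the paper uses. This shows $M(1)^-\boxtimes M(1,\lambda)$ is simple, so $\cG\cF_A(M(1,\lambda))$ has length two, and adjunction alone then pins down $\cF_A(M(1,\lambda))$.

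What each buys: your route avoids the projection formula and the computation of $\cF_A(A)$. The paper's route needs no simplicity input and works uniformly for any module of the form $\cG(Y)$. Your fallback is also valid and is the shortest of all: simplicity of $M(1)^-\boxtimes M(1,\lambda)$ plus Abe's result that a nonzero intertwining operator of type $\binom{L}{M(1)^-\,M(1,\lambda)}$ with $L$ irreducible forces $L\cong M(1,\pm\lambda)$. It relies on \cite{Abe3} rather than on the algebra-object machinery.

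\textbf{A correction on what you call the main obstacle.} Locality of $\cF_A(M(1,\lambda))$ is not needed. Your justification for it is also circular: it assumes the $L(0)$-spectrum of $M(1)^-\boxtimes M(1,\lambda)$ lies in $\lambda^2/2+\Z$, which is essentially what is being proved. The argument runs in the full category $\cC_A$ of (not necessarily local) $A$-modules:
\begin{itemize}
\item Frobenius reciprocity holds in $\cC_A$, so you get nonzero morphisms $\cF_A(M(1,\lambda))\to\cF_{\pm\lambda}$.
\item $\cF_{\lambda}$ and $\cF_{-\lambda}$ are simple in $\cC_A$, since their restrictions are simple in $\cC$. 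They are non-isomorphic there, since $h(0)$ acts by $\lambda$ and $-\lambda$.
\item Hence the two maps are surjective and combine to a surjection $\cF_A(M(1,\lambda))\to\cF_\lambda\oplus\cF_{-\lambda}$.
\item After applying $\cG$, this is a surjection between objects of length two in $\cC$, hence an isomorphism.
\end{itemize}
At no point do you need to view $\cF_A(M(1,\lambda))$ as an untwisted $M(1)$-module. Your remark that every Fock module restricts to a single simple $M(1)^+$-module is also false for $\cF_0\cong M(1)^+\oplus M(1)^-$, but it becomes irrelevant once the locality detour is dropped.
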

\begin{proof}

By Proposition \ref{fusionM1}, we have
\[
M(1)\boxtimes M(1) = M(1) \oplus M(1) \boxtimes M(1)^- = M(1) \oplus M(1)
\]
as an object in $\cC$. By Frobenius reciprocity,
    \[
    \Hom_{\cC_A}(\cF_A(A), A) = \Hom_{\cC}(\cG(A), \cG(A)) = \mathbb{C}^2.
    \]
 This forces that $\cF_A(A) = A \oplus A$ as $A$-modules.
For $\lambda \in \C^\times$, it follows from \cite[Lemma~2.8]{CLR} that
\[
A \boxtimes M(1,\lambda) = \cG(\cF_A(A) \boxtimes_A \cF_{\lambda}) =  \cG(\cF_{\lambda} \oplus \cF_\lambda) = M(1,\lambda) \oplus M(1,\lambda).
\]
Moreover, by Frobenius reciprocity,
    \[
    \Hom_{\cC_A}(\cF_A(M(1,\lambda)), \cF_{\pm \lambda}) = \Hom_{\cC}(M(1,\lambda), \cG(\cF_{\pm \lambda}))  = \mathbb{C}.
    \]
   This forces  that
    \begin{equation}\label{eqn:inducedmod}
        \cF_A(M(1,\lambda)) = \cF_{\lambda} \oplus \cF_{-\lambda}.
    \end{equation}
    In particular, we have $M(1)^- \boxtimes M(1,\lambda)= M(1,\lambda).$
\end{proof}

We are now ready to prove the main result of this section.
\begin{theorem} \label{fr-1}
For $\lambda, \mu \in \mathbb{C}^{\times}$,
\begin{align*}
&M(1)^- \boxtimes M(1)^- = M(1)^+,\\
&M(1)^- \boxtimes M(1,\lambda)= M(1,\lambda),\\
& M(1,\lambda) \boxtimes M(1, \mu) = M(1, \lambda+\mu) \oplus M(1, \lambda - \mu).
\end{align*}
\end{theorem}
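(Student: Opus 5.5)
The first two identities are Propositions \ref{fusionM1} and \ref{fusionM2}, so only the decomposition of $M(1,\lambda)\boxtimes M(1,\mu)$ needs proof. I will stay within the induction/restriction formalism set up in the proof of Proposition \ref{fusionM2}: $A=M(1)$ is a commutative algebra in $\cC$, $\cF_A$ is monoidal, and there is the identity $\cG(\cF_A(X)\boxtimes_A Y)\cong X\boxtimes \cG(Y)$ for $Y\in\cC_A$ (as in \cite[Lemma~2.8]{CLR}). I will also use the fusion rules of Heisenberg modules in the category of $M(1)$-modules that restrict into $\cC$: $\cF_\lambda\boxtimes_A\cF_\mu\cong \cF_{\lambda+\mu}$. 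This holds because these are Fock modules of a lattice-type (simple current) fusion, and the restriction $\cG(\cF_{\lambda+\mu})$ is $C_1$-cofinite, so everything lives in $\cC_A$.

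\textbf{Step 1: compute $M(1,\lambda)\boxtimes\cG(\cF_\mu)$ by the induction identity.} Using \eqref{eqn:inducedmod}, $\cF_A(M(1,\lambda))\cong \cF_\lambda\oplus\cF_{-\lambda}$. Hence
\[
M(1,\lambda)\boxtimes M(1,\mu)\cong \cG\big((\cF_\lambda\oplus\cF_{-\lambda})\boxtimes_A\cF_\mu\big)\cong \cG(\cF_{\lambda+\mu})\oplus\cG(\cF_{\mu-\lambda}).
\]
Here I use that $\cG(\cF_\mu)=M(1,\mu)$ as $M(1)^+$-modules.

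\textbf{Step 2: identify the summands.} When $\nu\neq0$, $\cG(\cF_\nu)=M(1,\nu)$, and $M(1,\mu-\lambda)\cong M(1,\lambda-\mu)$ by \cite[Proposition~2.2.2]{DN2}. This gives the stated formula whenever $\lambda\neq\pm\mu$. When $\mu=\pm\lambda$, one summand is $\cG(\cF_0)=M(1)=M(1)^+\oplus M(1)^-$. So the formula should be read with the convention $M(1,0):=M(1)$ as an $M(1)^+$-module, and I will note this convention explicitly.

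\textbf{Main obstacle.} The key step is justifying $\cF_\lambda\boxtimes_A\cF_\mu\cong\cF_{\lambda+\mu}$ inside $\cC_A$. Here $\boxtimes_A$ is computed in the $M(1)^+$-tensor category, so I need the theorem of \cite{CKM} that $\cC_A^{loc}$ is braided equivalent, as a tensor category, to the category of $M(1)$-modules lying in $\cC$. The modules $\cF_\nu$ are local (untwisted) $M(1)$-modules. The Heisenberg fusion product of Fock modules is known to be $\cF_{\lambda+\mu}$, since the $P(z)$-tensor product of Fock modules is one-dimensional in the relevant Hom-space and the Fock modules are simple currents with $\cF_\lambda\boxtimes\cF_{-\lambda}=M(1)$. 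Locality of $\cF_\lambda,\cF_\mu$ also ensures that the monodromy-free part accounts for the whole tensor product. As a consistency check, the Abe fusion rules \cite{Abe3} give exactly nonzero intertwining operators into $M(1,\lambda\pm\mu)$, which matches the result.
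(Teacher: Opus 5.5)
Your proposal is correct and follows the paper's proof essentially verbatim: you cite Propositions \ref{fusionM1} and \ref{fusionM2} for the first two identities, induce $M(1,\lambda)$ to $\cF_\lambda\oplus\cF_{-\lambda}$ via \eqref{eqn:inducedmod}, apply \cite[Lemma~2.8]{CLR}, and use the Heisenberg fusion $\cF_\lambda\boxtimes_A\cF_\mu\cong\cF_{\lambda+\mu}$. Your two refinements are accurate and slightly tidier than the paper's writeup: first, you write the second summand as $\cG(\cF_{\mu-\lambda})$ and then use $M(1,\mu-\lambda)\cong M(1,\lambda-\mu)$; second, when $\mu=\pm\lambda$ you read $M(1,0)$ as $M(1)=M(1)^+\oplus M(1)^-$, which is the convention the paper itself uses implicitly in its rigidity proof.
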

\begin{proof}
It remains to prove the third identity. By (\ref{eqn:inducedmod}), we have
$$\cF_A(M(1,\lambda)) = \cF_{\lambda} \oplus \cF_{-\lambda}.$$
It follows from \cite[Lemma~2.8]{CLR} that
\begin{align*}
&M(1,\lambda) \boxtimes M(1, \mu) = M(1,\lambda) \boxtimes \cG(\cF_{\mu})\\ &\cong \cG(\cF_A(M(1,\lambda))\boxtimes_A \cF_\mu) = \cG((\cF_\lambda\oplus \cF_{-\lambda})\boxtimes \cF_\mu)\\
&  = \cG(\cF_{\lambda + \mu} \oplus \cF_{\lambda - \mu}) = M(1, \lambda + \mu) \oplus M(1, \lambda-\mu),
\end{align*}
as desired.
\end{proof}

\begin{remark}
   Theorem \ref{fr-1} gives an alternative proof of the fusion rule results from Abe's results \cite{Abe}, which only indicate that there is a surjective map from $M(1,\lambda)\boxtimes M(1,\mu)$ to $M(1,\lambda+\mu) \oplus M(1,\lambda-\mu)$. In a non-semisimple category, Abe's results are still not quite the same as Theorem \ref{fr-1}.
\end{remark}

Using the fusion rules given in Theorem \ref{fr-1}, we can verify the conditions of \cite[Theorem~1.1]{EP}, which shows that all the simple objects in the category $\cC$ are rigid. Then it follows from \cite[Theorem~4.4.1]{CMY2} that $\cC$ is rigid:
\begin{theorem}
    The category $\cC$ of grading-restricted $C_1$-cofinite $M(1)^+$-modules is rigid.
\end{theorem}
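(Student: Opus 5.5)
The plan is to first show that every simple object of $\cC$ is rigid, and then pass from simple objects to all objects.

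\textbf{Simple objects.} There are three kinds of simple objects.

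\emph{The unit $M(1)^+$.} It is trivially rigid.

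\emph{The module $M(1)^-$.} By Proposition \ref{fusionM1}, $M(1)^-\boxtimes M(1)^-\cong M(1)^+$, so $M(1)^-$ is an invertible object, that is, a simple current. Invertible objects are rigid: one takes $M(1)^-$ as its own dual. The evaluation and coevaluation are given by this isomorphism and its inverse, with normalizations chosen so that the zig-zag identities hold. This is possible because $\Hom(M(1)^-,M(1)^-)=\C$, so each zig-zag composite is a nonzero scalar and can be rescaled to $1$.

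\emph{The modules $M(1,\lambda)$, $\lambda\in\C^\times$.} Here I would invoke \cite[Theorem~1.1]{EP}, which requires a simple object $X$ of moderate growth that is not negligible. Iterating Theorem \ref{fr-1} gives
\[
M(1,\lambda)^{\boxtimes k}\cong\bigoplus_{\epsilon} M(1,\lambda+\epsilon_2\lambda+\cdots+\epsilon_k\lambda), \qquad \epsilon=(\epsilon_2,\dots,\epsilon_k)\in\{\pm1\}^{k-1},
\]
which has length $2^{k-1}$. This is exponential growth, and it gives the moderate-growth hypothesis. In the degenerate case the summand is $M(1,0)$; this has to be interpreted as the length-two object, with factors $M(1)^\pm$, that arises when $\mu=\pm\lambda$. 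The length still doubles at each step.

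Non-negligibility should follow because $M(1,\lambda)\boxtimes M(1,\lambda)$ contains $M(1,0)$, which has $M(1)^+$ as a composition factor. So $\Hom(\mathbf 1, X\boxtimes X)$ and $\Hom(X\boxtimes X,\mathbf 1)$ should both be nonzero, using $X'\cong X$ (the module is self-dual since $M(1,\lambda)\cong M(1,-\lambda)$).

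A cleaner alternative is available. By \eqref{eqn:inducedmod}, $\cF_A(M(1,\lambda))=\cF_\lambda\oplus\cF_{-\lambda}$, and the $\cF_{\pm\lambda}$ are invertible in $\cC_A$. Rigidity of $\cF_A(X)$ in $\cC_A$, together with $X$ being a direct summand of $\cG\cF_A(X)=X\oplus X$, can then be used to produce a dual for $X$ in $\cC$, via the ``induction-restriction'' rigidity argument. I would use this as a fallback if the Etingof--Penneys hypotheses are awkward to verify directly.

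\textbf{From simple objects to all objects.} I would apply \cite[Theorem~4.4.1]{CMY1}. By Theorem \ref{thm:mainM1+}, every object of $\cC$ has finite length. That theorem states that in a braided tensor category of finite-length objects with exact tensor product, if all simple objects are rigid, then every object is rigid, provided the category is closed under contragredients and these give the duals. I need to check two hypotheses:
\begin{itemize}
\item \emph{Right exactness/exactness of $\boxtimes$.} This follows once simple objects are rigid, since tensoring with a rigid object is exact.
\item \emph{Closure of $\cC$ under contragredient.} Contragredients of finite-length modules with composition factors $M(1)^\pm$ and $M(1,\lambda)$ again have such factors, because these simple modules are self-dual.
\end{itemize}

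\textbf{Main obstacle.} I expect the main difficulty to be the non-negligibility check for $M(1,\lambda)$ when $\mu=\pm\lambda$. There the fusion product contains the non-semisimple piece $M(1)=M(1)^+\oplus_{\text{ext}}M(1)^-$, and one must ensure the relevant morphism $\mathbf 1\to X\boxtimes X\to\mathbf 1$ has nonzero trace. If that is delicate, I would fall back on the induction argument above, using the invertibility of the $\cF_{\pm\lambda}$ in $\cC_A$.
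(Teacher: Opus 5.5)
Your route is the paper's. $M(1)^-$ is invertible by Proposition \ref{fusionM1}, the modules $M(1,\lambda)$ are handled by \cite[Theorem~1.1]{EP}, and \cite[Theorem~4.4.1]{CMY1} passes from simple objects to all of $\cC$. The growth part is fine. Only an exponential bound on the length of $M(1,\lambda)^{\boxtimes k}$ is needed; the length is at most $2^k$ (e.g.\ $3$ for $k=2$), not exactly $2^{k-1}$. Since these powers are semisimple, the bound also gives $\dim\mathrm{End}(M(1,\lambda)^{\boxtimes k})<k!$ for large $k$. That is the form the paper checks, via $462<720$ at $k=6$.

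The gap is in non-negligibility. Put $X=M(1,\lambda)$. \cite{EP} needs $\mathbf{1}$ to be a direct summand of $Y\boxtimes X$ for some $Y$, i.e.\ maps $\mathbf{1}\to Y\boxtimes X\to\mathbf{1}$ with nonzero composite. Knowing that $M(1)^+$ is a composition factor of $X\boxtimes X$ does not give this. Neither does knowing that $\Hom(\mathbf{1},X\boxtimes X)$ and $\Hom(X\boxtimes X,\mathbf{1})$ are both nonzero, since the composite can vanish. So as written, this hypothesis of \cite{EP} is not verified.

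The obstacle you anticipate rests on a false claim: $M(1)$ is not a non-split extension as an $M(1)^+$-module. Since $\theta Y(v,z)\theta^{-1}=Y(v,z)$ for $v\in M(1)^+$, the eigenspaces $M(1)^\pm$ are $M(1)^+$-submodules, and $M(1)=\cG(\cF_0)=\cG(A)=M(1)^+\oplus M(1)^-$. Theorem \ref{fr-1} then gives $X\boxtimes X\cong M(1,2\lambda)\oplus M(1)^+\oplus M(1)^-$. So $\mathbf{1}$ is a direct summand with $Y=X$, which is exactly the paper's one-line check. Had $M(1)$ really been a non-split extension of $M(1)^\pm$ by $M(1)^\mp$, one of your two Hom spaces would have been zero. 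Your argument would then have failed precisely in the case you were worried about.

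Your fallback is unnecessary and, as sketched, incomplete. $\cG$ is only lax monoidal, so rigidity of $\cF_A(X)=\cF_\lambda\oplus\cF_{-\lambda}$ in $\cC_A$ does not by itself make $A\boxtimes X$ rigid in $\cC$. You would need extra input, such as a splitting of $\cG(W)\boxtimes\cG(W^*)\to\cG(W\boxtimes_A W^*)$ coming from a special Frobenius structure on $A$.
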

\begin{proof}
By \cite[Theorem~1.1]{EP}, which says all the non-negligible objects of moderate growth in a braided tensor category are rigid, we only need to show $M(1, \lambda)$ for all $\lambda \in \C^\times$ are non-negligible of moderate growth. 

Note that an indecomposable object $X$ is called non-negligible if there exists an object $Y$ such that ${\bf 1}$ is a direct summand of $Y\otimes X$. An object $X$ is of moderate growth if there exists $n \in \Z_{\geq 1}$ such that dim End$(X^{\otimes n}) < n!$.

From the fusion product decomposition $$M(1,\lambda) \boxtimes M(1, \lambda) = M(1, 2\lambda) \oplus M(1) = M(1, 2\lambda) \oplus M(1)^+ \oplus M(1)^-,$$ the unit object ${\bf 1}$ is a direct summand of $M(1,\lambda) \boxtimes M(1, \lambda)$, which means $M(1, \lambda)$ is non-negligible. Furthermore, since
\[
M(1,\lambda)^{\boxtimes 6} = M(1,6\lambda) \oplus 6M(1,4\lambda) \oplus 15M(1,2\lambda)\oplus 10M(1)^+ \oplus 10M(1)^-,
\]
dim End$(M(1,\lambda)^{\boxtimes 6}) = 462 < 6! = 720$, it follows that $M(1,\lambda)$ has moderate growth.

Now all the simple objects in the category $\cC$ are rigidity, then by \cite[Theorem~4.4.1]{CMY2} which says that if all the simple objects in the category of finite length modules are rigid, then the category is rigid, we know the category $\cC$ is rigid.
\end{proof}

\subsection{Connection to a non-semisimple Heisenberg tensor category}
Now we can determine the category $\cC_A$ of $A$-modules in the category $\cC$:
\begin{proposition}
    The category $\cC_A$ is the category of finite length $M(1)$-modules whose composition factors are the Fock modules $\cF_{\lambda}$ for $\lambda \in \C$.
\end{proposition}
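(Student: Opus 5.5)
We prove the two inclusions separately, working throughout with the standard identification of $\cC_A$ with the category of (untwisted) $M(1)$-modules that are objects of $\cC$ when restricted to $M(1)^+$ (\cite{HKL}, \cite{CKM}). Since $A=M(1)$ is a simple vertex operator algebra, $\cC_A$ is exactly the category of grading-restricted generalized $M(1)$-modules $W$ such that $\cG(W)$ lies in $\cC$. By Theorem \ref{thm:mainM1+}, this last condition says that $\cG(W)$ has finite length with composition factors among $M(1)^\pm$ and $M(1,\mu)$, $\mu\in\C^\times$.

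For the inclusion ``$\subseteq$'': take $W\in\cC_A$. Then $\cG(W)$ has finite length $\ell$ as an $M(1)^+$-module. Any $M(1)$-submodule of $W$ is in particular an $M(1)^+$-submodule, so a strictly increasing chain of $M(1)$-submodules has length at most $\ell$. Hence $W$ has finite length as an $M(1)$-module. Each composition factor $S$ is a simple grading-restricted $M(1)$-module, and by the classification of irreducible modules for the Heisenberg algebra, $S\cong\cF_\lambda$ for some $\lambda\in\C$. Here I must make sure that grading-restriction of $W$ passes to its subquotients; it does, because the $L(0)$-generalized eigenspaces are the same as those of $\cG(W)$.

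For the inclusion ``$\supseteq$'': let $W$ be a finite length $M(1)$-module with composition factors $\cF_\lambda$. Each $\cG(\cF_\lambda)$ is either $M(1,\lambda)$ (for $\lambda\neq0$) or $M(1)^+\oplus M(1)^-$ (for $\lambda=0$). In either case it is a finite length object of $\cC$. I then induct on the length of $W$. A short exact sequence of $M(1)$-modules restricts to a short exact sequence of $M(1)^+$-modules. By Theorem \ref{thm:mainM1+}, the finite length $M(1)^+$-modules with untwisted composition factors are exactly the objects of $\cC$, and the class of finite length modules with prescribed composition factors is closed under extensions. So $\cG(W)\in\cC$. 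I also need $W$ to be grading-restricted, i.e. to have finite-dimensional generalized $L(0)$-eigenspaces bounded below. This follows because the graded dimensions add in extensions and there are finitely many factors. Finally, the $M(1)$-action on $W$ gives it the structure of an $A$-module in $\cC$ via the correspondence cited above, so $W\in\cC_A$.

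The main obstacle is the identification of $\cC_A$ with genuine (untwisted, i.e. local) $M(1)$-modules whose restriction lies in $\cC$, including the generalized, non-semisimple $L(0)$ case. The tensor-categorical ``$A$-modules'' could a priori include non-local or twisted ones, so I would invoke \cite[Theorem~3.4]{HKL} together with \cite{CKM}, which treat generalized modules and local $A$-modules. One also needs the remark that the twisted $M(1)$-modules restrict to sums of $M(1)(\theta)^\pm$, which are excluded from $\cC$, so they cannot appear in $\cC_A$.
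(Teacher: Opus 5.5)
\textbf{Gap: non-local $A$-modules.} Your two inclusions are correct for \emph{local} $A$-modules. The problem is the opening ``standard identification'', which is exactly where the content of the proposition lies. In the paper, $\cC_A$ is the category of \emph{all} $A$-modules in $\cC$: it is the target of $\cF_A$, Frobenius reciprocity is used there, and Corollary~\ref{cor:equiv} equivariantizes it. In contrast, \cite[Theorem~3.4]{HKL} and \cite{CKM} only identify the subcategory of local modules, those with $\mu_M\circ\mathcal{M}_{A,M}=\mu_M$, with untwisted $M(1)$-modules. Your ``$\subseteq$'' argument therefore silently assumes that every object of $\cC_A$ is local, and that is what has to be proved.

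Your closing remark about $M(1)(\theta)^\pm$ does not fill this. A non-local object of $\cC_A$ is a priori only an object of $\cC$ with a unital associative action of $A$. Nothing in HKL or CKM says it is a $\theta$-twisted $M(1)$-module in the vertex-algebraic sense, so the remark has nothing to apply to yet. The missing input is McRae's theorem \cite[Theorem~4.15]{M2}. It applies because $\cC$ is a braided vertex tensor category containing $A$ with $A^{\Z_2}=M(1)^+$, and it gives that every indecomposable object of $\cC_A$ is a $\theta^i$-twisted $M(1)$-module for $i=0,1$. This is exactly the first step of the paper's proof.

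\textbf{Comparison once the gap is filled.} With that theorem in hand, your way of excluding $i=1$ works and differs from the paper's. The paper uses Theorem~\ref{fr-1} and the balancing axiom to get $\mathcal{M}_{M(1)^-,M_0}=\id$ for a simple $M(1)^+$-submodule $M_0\subset M$. The twisting relation $\mu_M(\theta\boxtimes\id_M)\mathcal{M}_{A,M}=\mu_M$ then forces $\mu_M$ to vanish on $M(1)^-\boxtimes M_0$, a contradiction.

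You instead argue through the restriction $\cG(M)$, and you should spell out the step you only assert. A grading-restricted $\theta$-twisted $M(1)$-module has a lowest weight vector killed by all $h(n)$, $n\in\frac12+\Z_{\ge 0}$. That vector generates a copy of the irreducible induced module $M(1)(\theta)=M(1)(\theta)^+\oplus M(1)(\theta)^-$. Hence $\cG(M)$ would have twisted-type composition factors, contradicting Theorem~\ref{thm:mainM1+}. This route avoids the braiding computation and uses only the description of $\cC$.

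Your remaining steps are fine and more detailed than the paper's one-line treatment: finite length as $M(1)^+$-modules implies finite length as $M(1)$-modules, the composition factors are Fock modules, and the reverse inclusion follows from closure of $\cC$ under extensions.
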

\begin{proof}
   Because the category $\cC$ is a braided and vertex tensor category, the assumptions in \cite[Theorem~4.15]{M2} are all satisfied, it follows that every indecomposable object of $\cC_A$ is a $\theta^i$-twisted $M(1)$-module for $i = 0,1$. We will show an object $M$ in $\cC_A$ cannot be a $\theta$-twisted module, otherwise, we have that
   \begin{equation}\label{g-twist}
   \mu_M (\theta \boxtimes {\rm id}_M) \mathcal{M}_{A, M} = \mu_M, 
   \end{equation}
   where $\mu_{-}: A \boxtimes - \rightarrow -$ is the multiplication morphism in the category $\cC$, $\mathcal{M}_{A, -} : = c_{-, A}\circ c_{A, -}$ is the monodromy isomorphism. By the fusion product decomposition in Theorem \ref{fr-1} and the balancing axiom, we can deduce that $\mathcal{M}_{M(1)^-, M_0} = \id_{M(1)^-\boxtimes M_0}$ for a minimal irreducible submodule $M_0$ of $M$. It follows that $\mu_{M}|_{M(1)^- \boxtimes M_0} = - \mu_{M}|_{M(1)^- \boxtimes M_0}$, which is a contradiction.
   
   It is also easy to see all the simple $M(1)$-modules $\mathcal{F}_\lambda$ for $\lambda \in \C$ lie in $\cC_A$, thus $\cC_A$ consists of finite length modules with simple composition factors $\mathcal{F}_\lambda$ for $\lambda \in \C$.
\end{proof}

\begin{remark}
The existence of the tensor category structure of finite length $M(1)$-modules with simple composition factors $\cF_{\lambda}$ for $\lambda \in \C$ is evident because the category of finite length $M(1)$-modules is exactly the category of $C_1$-cofnite modules which follows from the fact that every highest weight $M(1)$-module is isomorphic to $\mathcal{F}_{\lambda}$ for some $\lambda \in \C$. This category should be the category {\rm Rep} $\C[x]$ of finite dimensional modules for $\C[x]$, where $x$ is the zero module operator $\alpha(0)$.
\end{remark}

Let $(\cC_A)^{\Z_2}$ be the $\Z_2$-equivariantization of the category $\cC_A$, the induction functor actually maps into the category $(\cC_A)^{\Z_2}$ (see \cite[Sec.~2.3]{M2}). Again, because the category $\cC$ is a braided and vertex tensor category, the assumptions in \cite[Theorem~4.17]{M2} are all satisfied, it follows that the induction functor $\cF_A: \cC \rightarrow (\cC_A)^{\Z_2}$ is an equivalence of braided tensor categories:  
\begin{corollary}\label{cor:equiv}
    The categories $\cC$ is braided equivalent to the category $(\cC_A)^{\Z_2}$.
\end{corollary}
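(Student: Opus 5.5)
The statement is Corollary~\ref{cor:equiv}: $\cC$ is braided equivalent to $(\cC_A)^{\Z_2}$. I would obtain it directly from McRae's theorem \cite[Theorem~4.17]{M2}, applied to the commutative algebra $A=M(1)$ in $\cC$ with the group $G=\Z_2=\langle\theta\rangle$ acting on $A$, whose fixed points give $A^G=M(1)^+$. So the work is to verify the hypotheses of that theorem one by one.

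First, $\cC$ must be a braided tensor category containing $A$ and the fixed-point algebra as objects, with the module categories available. This is supplied by Theorem~\ref{thm:mainM1+} together with the vertex and braided tensor structure from \cite[Theorem~3.6]{CY}. Rigidity of $\cC$, proved above, is also available if needed.

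Second, $A=M(1)^+\oplus M(1)^-$ must be a commutative algebra on which $G$ acts faithfully by algebra automorphisms. Each irreducible $G$-character must occur in $A$ with the corresponding $A^G$-module simple, and these must be mutually inequivalent. This holds because $M(1)^-$ is an irreducible $M(1)^+$-module not isomorphic to $M(1)^+$, which Table 1 shows via the distinct conformal weights. Haploidness of $A$, meaning $\dim\Hom_\cC(\1,A)=1$, also follows from this decomposition.

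Third, the induction functor must land in $(\cC_A)^{\Z_2}$, the equivariantization of $\cC_A$, or of its twisted-module version. For the $G$-crossed framework of \cite{M2}, I would note that by the preceding proposition every object of $\cC_A$ is untwisted. Hence the relevant category of (possibly twisted) $A$-modules is just $\cC_A$, and equivariantization is taken there. The induction functor $\cF_A$ is braided monoidal onto its image, and $A$ carries the equivariant structure $\theta$, so \cite[Sec.~2.3]{M2} applies.

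The main obstacle I expect is the hypothesis in \cite[Theorem~4.17]{M2} that $\cC$ is closed under the relevant operations. Specifically, $\cC$ should be exactly the category of $A^G$-modules arising as restrictions, or equivalently every object of $\cC$ should embed in $\cG(X)$ for some $X$ in $\cC_A$. I would handle it as follows. For $X$ in $\cC$, $\cF_A(X)$ lies in $\cC_A$ because $\cC$ is closed under fusion. The unit map $X\to\cG\cF_A(X)=A\boxtimes X$ is split injective, since $\1$ is a direct summand of $A$. Hence every object of $\cC$ is a summand of a restriction. This gives essential surjectivity of the equivariant lift, $(A\boxtimes X)^G\cong X$. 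Full faithfulness comes from Frobenius reciprocity together with taking $G$-invariants of $\Hom_\cC(X,A\boxtimes Y)$. With these hypotheses checked, \cite[Theorem~4.17]{M2} yields the braided tensor equivalence $\cF_A:\cC\to(\cC_A)^{\Z_2}$.
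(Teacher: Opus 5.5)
Your proposal is correct and takes the same route as the paper, which likewise just invokes \cite[Theorem~4.17]{M2} for $A=M(1)$ and $\Z_2=\langle\theta\rangle$, noting that $\cF_A$ lands in $(\cC_A)^{\Z_2}$ and that the hypotheses hold because $\cC$ is a braided vertex tensor category; your remark that $\cC_A$ has no $\theta$-twisted sector, by the preceding proposition, is consistent with this. One caution about your extra hypothesis-checking: $(A\boxtimes X)^{\Z_2}\cong X$ only shows that taking invariants is a left inverse of $\cF_A$, whereas essential surjectivity of $\cF_A$ is the converse statement $A\boxtimes M^{\Z_2}\cong M$ for equivariant $A$-modules $M$ (which uses that $M(1)^-\boxtimes M(1)^-\to M(1)^+$ is an isomorphism), but since you take the conclusion from McRae's theorem this does not affect the argument.
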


\begin{remark}
We should also mention that in  \cite[Section 6] {GLM}, the authors  consider an infinite
Tambara-Yamagami category ${\rm Vect}_{{\Bbb R}^d} \oplus {\rm Vect}$, which is semisimple  and include all untwisted and twisted modules of the Heisenberg VOA.  Our category $\cC$ is not semisimple and only has untwisted type modules, so it is different from the category constructed in \cite{GLM}.
\end{remark}

\section{Minimal affine $W$-algebra $W_{-1}^{min}(sp(2n))$}
\def\theequation{3.\arabic{equation}}
\setcounter{equation}{0}
In this section, we will prove the minimal $W$-algebra $W_{-1}^{min}(sp(2n))$ is an extension of $L_{-\frac{1}{2}}(sp(2n))\otimes M(1)^+$, and then determine the decompositions of irreducible $W_{-1}^{min}(sp(2n))$-modules as $L_{-\frac{1}{2}}(sp(2n-2)) \otimes M(1) ^+$-modules. These structures will play an important role in the study of the category of grading-restricted generalized modules for $L_{-1}(sp(2n))$.

\subsection{$W_{-1}^{min}(sp(2n))$ as an extension of $L_{-\frac{1}{2}}(sp(2n))\otimes M(1)^+$}\label{weyl}

We first introduce the Weyl vertex algebra $\mathcal{S}(n)$, which is also called a rank $n$ $\beta\gamma$ system in the literature (see \cite{K2}, \cite{W}). Recall the Weyl algebra $W_{n}$ is an associative algebra with generators
 $$a_i^{\pm}(r), \quad r\in\frac{1}{2}+\Z,\; 1\leq i\leq n,$$
satisfying the nontrivial relations
 $$[a_i^+(r), a^-_j(s)]=\delta_{r+s, 0}\delta_{i,j}.$$
 Denote by $\mathcal{S}(n)$ the irreducible module of $W_{n}$ generated by $\1$ such that $$a_i^{\pm}(r)\1=0, \; r> 0.$$
 The module $\mathcal{S}(n)$ has a simple vertex algebra structure, called the Weyl vertex algebra or a rank $n$ $\beta\gamma$-system, with the Virasoro element $$\omega_1=\frac{1}{2}\sum_{i=1}^{n}\left(a_{i}^-\left(-\frac{3}{2}\right)a_{i}^+\left(-\frac{1}{2}\right)-a_{i}^+\left(-\frac{3}{2}\right)a_{i}^-\left(-\frac{1}{2}\right)\right)\1,$$ and the vertex operator $Y(\cdot, z)$ is determined by for $1\leq i\leq n$,
 $$Y\left(a_{i}^{\pm}\left(-\frac{1}{2}\right)\1, z\right)=\sum_{m\in \Z}a_i^{\pm}\left(m+\frac{1}{2}\right)z^{-m-1}.$$
Set $Y(\omega_1, z)=\sum_{n\in Z}L(n)z^{-n-2}$. Then $\mathcal{S}(n)$ is $\frac{1}{2}\Z$-graded with respect to the operator $L(0)$. For $i\in \{0,1\}$, set
 $$\mathcal{S}(n)^{\bar i}=\big\{v\in \mathcal{S}(n)| L(0)v=\big(\frac{i}{2}+k\big)v \text{ for some } k\in \Z\big\}.$$
The following result has been proved in \cite{FF} (cf. \cite[Theorem~2.1]{W}):
\begin{proposition}
We have
\begin{itemize}
\item[(1)] As vertex operator algebras, $\mathcal{S}(n)^{\bar 0} \cong L_{-\frac{1}{2}}(sp(2n))$.
\item[(2)] As $L_{-\frac{1}{2}}(sp(2n))$-modules, $\mathcal{S}(n)^{\bar 1}\cong L_{sp(2n)}\big(-\frac{1}{2},\bar{\Lambda}_1\big)$, where $\bar{\Lambda}_1$ is the first fundamental weight of $sp(2n)$.
\end{itemize}
\end{proposition}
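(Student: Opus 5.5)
The plan is to realize $\mathcal{S}(n)$ concretely and identify the even part with the simple affine algebra by matching generators, characters and simplicity. First, construct the $sp(2n)$ currents inside $\mathcal{S}(n)$. Use the quadratic normally ordered fields
\[
:a_i^{\epsilon}(z)a_j^{\epsilon'}(z):,\qquad \epsilon,\epsilon'\in\{\pm\},\ 1\le i,j\le n,
\]
whose modes close into $\hat{\g}$ with $\g=sp(2n)$ at level $-\tfrac12$. This is checked by Wick's theorem: the double contractions produce the central term, and its coefficient, after normalizing so that $(\theta|\theta)=2$, equals $-\tfrac12$. This is the classical oscillator (Segal–Shale–Weil) representation. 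These fields have conformal weight $1$ and lie in $\mathcal{S}(n)^{\bar 0}$, since a product of two generators $a^\pm(-\tfrac12)\1$ has integral weight. We therefore obtain a vertex algebra homomorphism $V^{-1/2}(sp(2n))\to \mathcal{S}(n)^{\bar 0}$ sending $\1$ to $\1$.

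Second, show that $\mathcal{S}(n)^{\bar 0}$ and $\mathcal{S}(n)^{\bar 1}$ are irreducible as $\hat{\g}$-modules. The space $\mathcal{S}(n)$ is an irreducible module for the Weyl algebra $W_n$, and the parity operator $(-1)^{2L(0)}$ (the $\Z_2$-automorphism $a^\pm\mapsto -a^\pm$) commutes with the $\hat{\g}$-action. Two approaches are available:
\begin{enumerate}
\item Compare characters. The character of $\mathcal{S}(n)^{\bar 0}$ (resp. $\mathcal{S}(n)^{\bar 1}$) equals the even (resp. odd) part of $\prod_{r\in\frac12+\Z_{\ge0}}(1-zq^{r})^{-1}(1-z^{-1}q^r)^{-1}$ over all $n$ weights. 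One then checks that this agrees with the known Kac–Wakimoto character of $L_{sp(2n)}(-\tfrac12,0)$ (resp. $L_{sp(2n)}(-\tfrac12,\bar\Lambda_1)$), which are admissible weights.
\item Argue directly. A $\hat{\g}$-submodule of $\mathcal{S}(n)^{\bar 0}$ invariant under the Howe dual pair $(sp(2n),O(1)=\Z_2)$ forces irreducibility of each isotypic component. Concretely, show that every singular vector for $\hat{\g}$ in $\mathcal{S}(n)$ is a multiple of $\1$ or $a^+_1(-\tfrac12)\1$ (the highest weight vector of weight $\bar\Lambda_1$). Any $\hat{\g}$-singular vector $v$ is annihilated by $\g\otimes t\C[t]$ and by the positive root vectors. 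Using that $e_\theta(1)$-type operators act as $a^+_i(\tfrac12)a^+_j(\tfrac12)$, together with the PBW basis of $\mathcal{S}(n)$, one shows $v$ has minimal degree.
\end{enumerate}

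Irreducibility yields the isomorphisms. In (1), $\mathcal{S}(n)^{\bar 0}$ is a cyclic highest weight module with top $\C\1$ and is irreducible, so it equals $L_{-\frac12}(sp(2n))$. The vertex operator algebra structure follows because the homomorphism from $V^{-1/2}$ factors through the simple quotient; Sugawara's conformal vector coincides with $\omega_1$ by a direct Wick computation, the central charges agreeing at $-n$. In (2), $\mathcal{S}(n)^{\bar 1}$ is generated by its top $\mathrm{span}\{a_i^\pm(-\tfrac12)\1\}\cong\C^{2n}=L(\bar\Lambda_1)$, hence it is $L_{sp(2n)}(-\tfrac12,\bar\Lambda_1)$.

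The main obstacle is the irreducibility step. I would first try the singular-vector argument via the dual pair, with the character identity as a fallback. Alternatively, simplicity of $\mathcal{S}(n)$ as a vertex algebra combined with the general orbifold result (Dong–Mason: the fixed-point subalgebra of a simple VOA under a finite group is simple, and the eigenspaces are irreducible) gives irreducibility of $\mathcal{S}(n)^{\bar0}$ and $\mathcal{S}(n)^{\bar1}$ as modules for $\mathcal{S}(n)^{\bar0}$. It then remains only to show that $\mathcal{S}(n)^{\bar0}$ is generated by its weight-one space. This follows by a standard reconstruction argument: products of even numbers of generating fields are spanned by iterated products of the quadratic currents.
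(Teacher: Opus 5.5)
\textbf{There is a genuine gap: the step that carries the whole content of the proposition is asserted, not proved.}

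Your reduction is sound. The quadratic fields give $\hat{\g}$ with $\g=sp(2n)$ at level $-\frac12$, the Sugawara vector equals $\omega_1$, and Dong--Mason gives that $\mathcal{S}(n)^{\bar 0}$ is simple and $\mathcal{S}(n)^{\bar 1}$ is an irreducible $\mathcal{S}(n)^{\bar 0}$-module. (The paper gives no argument here; it simply cites \cite{FF} and \cite{W}.) Everything then rests on one claim: $\mathcal{S}(n)^{\bar 0}$ is generated by its weight-one space, i.e.\ $\mathcal{S}(n)^{\bar 0}=U(\hat{\g})\1$. That claim is exactly the Feingold--Frenkel theorem, and your ``standard reconstruction argument'' does not establish it.

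By Wick's theorem, $(:cd:)_{(-m)}(:ab:)$ is a quartic normally ordered expression plus bilinear correction terms carrying extra derivatives. So iterated products of currents never isolate the bilinears $:\partial^p a\,\partial^q b:$; only the combinations $\partial^{k}(:ab:)$ come for free. Once all bilinears are known to lie in the subalgebra, Wick induction on the number of fields does give every even monomial. The real issue is therefore the bilinears, and there the specific group $\Z_2$ matters.

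The same heuristic applied to the $U(1)$-invariant part of $\mathcal{S}(1)$ (equally many $a_1^+$'s and $a_1^-$'s) would say it is generated by the current $:a_1^+a_1^-:$. That is false. This current generates a Heisenberg vertex algebra, whose weight-$2$ space is $2$-dimensional. But $a_1^+(-\frac32)a_1^-(-\frac12)\1$, $a_1^+(-\frac12)a_1^-(-\frac32)\1$ and $a_1^+(-\frac12)^2a_1^-(-\frac12)^2\1$ are three independent $U(1)$-invariants of weight $2$.

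What is missing is an actual proof that every $:\partial^p a\,\partial^q b:$ lies in the subalgebra $V$ generated by the currents. One way to start: $(:cd:)_{(-1)}(:\partial^m a\,b:)$ and $(:cb:)_{(-1)}(:\partial^m a\,d:)$ have the same quartic part. So if $d,a$ is the only contracting pair among $a,b,c,d$, their difference is a nonzero multiple of $:\partial^{m+1}c\,b:$. You would then have to push an induction on $m$ through, using $\partial$ and the $sp(2n)$ zero modes. This needs separate care for the $sp(2n)$-invariant bilinears and for small $n$; for $n=1$ no such quadruple exists at all.

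Your other two routes do not close the gap either.
\begin{itemize}
\item The character route is valid in principle, but as written it amounts to citing the Kac--Wakimoto admissible character formula together with an identity you have not checked.
\item The singular-vector sketch is flawed. For $x\in\g$, $x(1)=\sum_r :a(r)b(1-r):$ contains terms with creation operators; it is not just $a(\frac12)b(\frac12)$.
\item Even granting the singular-vector claim, knowing that $\1$ is the only singular vector in $\mathcal{S}(n)^{\bar 0}$ only shows that $U(\hat{\g})\1$ is simple, not that it equals $\mathcal{S}(n)^{\bar 0}$. For that you also need the invariant bilinear form: if $U(\hat{\g})\1\neq\mathcal{S}(n)^{\bar 0}$, its orthogonal complement would be a nonzero submodule whose lowest part contains a singular vector not proportional to $\1$.
\end{itemize}
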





We construct a subalgebra of $\mathcal{S}(n-1)\otimes M(1)$ in the following Lemma:
\begin{lemma}\label{simple}
 Let $\lambda_1$ be the first fundamental weight of $sp(2n-2)$. Then the subspace
   \[
   U: = L_{sp(2n-2)}\left(-\frac{1}{2}, 0\right) \otimes M(1) ^+ \oplus  L_{sp(2n-2)}\left(-\frac{1}{2}, \lambda_1\right)  \otimes M(1)^- 
   \]
   of $\mathcal{S}(n-1)\otimes M(1)$ is a simple vertex operator algebra.
   \end{lemma}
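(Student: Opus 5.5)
The plan is to realize $U$ as the fixed-point subalgebra of a $\Z_2$-action on the simple vertex operator algebra $\mathcal{S}(n-1)\otimes M(1)$, and then use the standard theorem that fixed points of a finite automorphism group of a simple VOA form a simple VOA (Dong--Mason / Dong--Li--Mason).

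\textbf{Choice of involution.} On $\mathcal{S}(n-1)$, let $\sigma$ be the parity automorphism $a_i^{\pm}(r)\mapsto -a_i^{\pm}(r)$. Its fixed points are $\mathcal{S}(n-1)^{\bar 0}$ and its $-1$-eigenspace is $\mathcal{S}(n-1)^{\bar 1}$. On $M(1)$ take $\theta$. Define
\[
g=\sigma\otimes\theta .
\]
This is an automorphism of order $2$ of $\mathcal{S}(n-1)\otimes M(1)$, and it preserves the conformal vector $\omega_1\otimes\1+\1\otimes\omega$.

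\textbf{Identifying the fixed points.} The fixed-point space is
\[
\mathcal{S}(n-1)^{\bar 0}\otimes M(1)^+\;\oplus\;\mathcal{S}(n-1)^{\bar 1}\otimes M(1)^- .
\]
By the Proposition recalled above (from \cite{FF}), $\mathcal{S}(n-1)^{\bar 0}\cong L_{-\frac12}(sp(2n-2))$ as VOAs and $\mathcal{S}(n-1)^{\bar 1}\cong L_{sp(2n-2)}(-\frac12,\lambda_1)$ as modules. So the fixed-point space is exactly $U$, and $U$ is a vertex subalgebra containing the conformal vector.

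\textbf{Grading.} The half-integer weights of $\mathcal{S}(n-1)^{\bar 1}$ are paired with the odd $\theta$-part $M(1)^-$, whose weights are integers. At first glance this produces half-integral weights in the second summand of $U$. Since $U$ is intended as a vertex operator algebra, the conformal vector must be adjusted.

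Take $\omega_U=\omega_1\otimes\1+\1\otimes\omega'$, where $\omega'=\omega+c\,\alpha(-2)\1$ is not available, because $\alpha(-2)\1\notin M(1)^+$. Alternatively, accept a $\frac12\Z$-grading, as the paper only needs a VOA in the broad sense. I would check which option matches the paper's conventions. The simplicity argument below is unaffected either way.

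\textbf{Simplicity.}
\begin{itemize}
\item $\mathcal{S}(n-1)$ is simple, as stated earlier in the paper.
\item $M(1)$ is simple.
\item Hence the tensor product $\mathcal{S}(n-1)\otimes M(1)$ is simple, since both factors have countable dimension and the tensor product of simple vertex algebras over $\C$ is simple.
\item Dong--Mason's theorem then gives that $V^{\langle g\rangle}$ is simple.
\end{itemize}

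That theorem is usually stated for VOAs, but its proof uses only simplicity of $V$ and the density/Schur-type argument: the algebra spanned by the modes of $V$ acts irreducibly, and compact-group (here $\Z_2$) duality decomposes $V$ into irreducible pairs. So it applies to a simple vertex (super)algebra with a finite automorphism group, including the $\frac12\Z$-graded setting.

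If one prefers a direct argument, let $I\subset U$ be a nonzero ideal. Then $\mathcal{S}(n-1)\otimes M(1)$ acting on $I$ generates the whole space, using the associativity argument of \cite{DM}. Projecting back with $\frac12(1+g)$ shows that $I=U$.

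The main obstacle is the grading/conformal-vector issue: making sure $U$ is a vertex operator algebra with the correct conformal vector. The simplicity itself follows directly from the fixed-point theorem.
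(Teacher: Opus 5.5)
Your proposal is correct and follows the paper's proof. In both, $U$ is the fixed-point subalgebra of the simple vertex algebra $\mathcal{S}(n-1)\otimes M(1)$ under $\sigma\otimes\theta$; your parity automorphism agrees with the paper's $\sigma$, which is $\id$ on $\mathcal{S}(n-1)^{\bar 0}$ and $-\id$ on $\mathcal{S}(n-1)^{\bar 1}$. Simplicity then follows from Dong--Mason.

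The grading issue you flag is not an obstacle. $U$ is meant to be $\frac{1}{2}\Z$-graded with conformal vector $\omega_1\otimes\1+\1\otimes\omega_2$, so no change of conformal vector is needed. The paper itself uses that $y\otimes h(-1)\1$ has weight $\frac{3}{2}$, matching the weight-$\frac{3}{2}$ generators of $W_{-1}^{min}(sp(2n))$.
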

   \begin{proof}
   The subspace $U$ is actually the fixed point subalgebra of $\mathcal{S}(n-1)\otimes M(1)$ by the autumorphism $\sigma \otimes \theta$, where $\sigma$ be the linear isomorphism of $\mathcal{S}(n-1)$ defined by $\sigma|_{\mathcal{S}(n-1)^{\bar 0}}=\id$ and $\sigma|_{\mathcal{S}(n-1)^{\bar 1}}=-\id$. 
   
   Since $\mathcal{S}(n-1)\otimes M(1)$ is a simple vertex algebra, it follows from \cite[Theorem~4.4]{DM} that $U$ is a simple vertex operator algebra.
   \end{proof}

\begin{theorem}\label{structureA}
The minimal affine $W$-algebra  $$W_{-1}^{min}(sp(2n))\cong L_{sp(2n-2)}\left(-\frac{1}{2},0\right) \otimes M(1) ^+ \oplus  L_{sp(2n-2)}\left(-\frac{1}{2},\lambda_1\right)  \otimes M(1)^-$$ as a vertex operator algebra.
  \end{theorem}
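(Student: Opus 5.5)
Since Lemma \ref{simple} already shows that $U$ is simple, the plan is to construct a surjective vertex algebra homomorphism from the universal minimal $W$-algebra $W^{-1}_{min}(sp(2n))$ onto $U$. Simplicity of $U$ then forces $U \cong W_{-1}^{min}(sp(2n))$.

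\textbf{Generators of the universal algebra.} By Kac--Wakimoto \cite{KW2}, $W^k_{min}(\g)$ for $\g = sp(2n)$ is strongly generated by the following fields.
\begin{itemize}
\item The affine currents $J^{\{a\}}$, $a \in \g^\natural = sp(2n-2)$, at level $k+\frac12$. For $k=-1$ this is level $-\frac12$.
\item Weight $\frac32$ fields $G^{\{v\}}$, $v \in \g_{-1/2}$. As a $\g^\natural$-module, $\g_{-1/2} \cong \C^{2n-2}$.
\item The Virasoro field $L$.
\end{itemize}
Note that $\g^\natural$ has no center here. The OPEs among these fields are given explicitly in \cite{KW2}, with coefficients rational in $k$.

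\textbf{Candidate fields in $U$.} I would take the following.
\begin{itemize}
\item $J^{\{a\}} \mapsto$ the $sp(2n-2)$ currents of $\mathcal{S}(n-1)$, i.e.\ the quadratics $:a_i^\pm a_j^\pm:$. These generate $L_{-\frac12}(sp(2n-2)) \otimes \1$.
\item $G^{\{v\}} \mapsto$ the fields $c\, a_i^{\pm}(z)\alpha(z)$. These have weight $\frac12+1 = \frac32$, lie in $L_{sp(2n-2)}(-\frac12,\lambda_1)\otimes M(1)^-$, and transform as $\C^{2n-2}$.
\item $L \mapsto \omega_1 + \frac12\alpha(-1)^2\1 + (\text{correction})$.
\end{itemize}
The Sugawara vector of $sp(2n-2)$ at level $-\frac12$ equals $\omega_1$. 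So I would allow a correction in $M(1)^+$, most naturally a multiple of $\alpha(-3)$-type terms; since $\alpha(-2)\1$ is odd, it has to be absent. I would then check the central charge. The central charge of $W^{k}_{min}(sp(2n))$ is
\[
c(k) = \frac{k\,\dim\g}{k+h^\vee} - 6k + h^\vee - 4,
\]
and for $k=-1$, $h^\vee = n+1$, this gives $-n+1 = -(n-1)+1+\cdots$. This should match $-(n-1)+1 = c(\mathcal{S}(n-1))+c(M(1))$, with a check whether $2-n$ works out.

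\textbf{Verifying the OPEs.} Next I would verify the $\lambda$-brackets. The $J$--$J$ and $J$--$G$ brackets are immediate from free field computations. The essential one is $[G^{\{u\}}{}_\lambda G^{\{v\}}]$. In $U$ it equals $c^2\,(u,v)$-contracted Wick expansions: terms like $:\alpha\alpha:$, $:a^\pm a^\pm:$, and $\partial$ terms with $\lambda$-powers. Matching against the KW formula
\[
[G^{\{u\}}{}_\lambda G^{\{v\}}] = -2(k+h^\vee)\langle u,v\rangle L + \langle u,v\rangle \sum :J J: + \ldots + \lambda^2\text{-term}
\]
should determine $c$ and the correction to $L$. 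This is the main computational obstacle, because the quadratic term $\sum :J^{\{u_i\}}J^{\{u^i\}}:$ must be reproduced by free fields. I expect it works because at $k=-1$ the quartic bosonic normal-ordered products collapse via the Weyl relations, so the Sugawara identity $\omega_1 = \text{Sug}$ in $\mathcal{S}(n-1)$ absorbs that term. If a mismatch appears, I would adjust the $G$'s by adding $a_i^\pm(-\frac32)$-type terms, i.e.\ $\partial a_i^\pm$ multiples, which also lie in $U$.

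\textbf{Surjectivity and conclusion.} Given the OPEs, the universal property of $W^{-1}_{min}(sp(2n))$ (as the vertex algebra freely generated subject to these $\lambda$-brackets) yields a homomorphism $\Phi$ into $U$. For surjectivity, I would show that the images generate $U$.
\begin{itemize}
\item The currents generate $L_{-\frac12}(sp(2n-2))$.
\item Products $G_{(j)}G$ produce $\alpha(-1)^2$ and $J$-type elements, hence generators $\omega, J$ of $M(1)^+$ by \cite{DG}.
\item Acting on the $G$'s then gives all of $L_{sp(2n-2)}(-\frac12,\lambda_1)\otimes M(1)^-$, which is irreducible over $L_{-\frac12}(sp(2n-2))\otimes M(1)^+$.
\end{itemize}
Thus $\Phi$ is onto the simple algebra $U$ (Lemma \ref{simple}), so it factors through the simple quotient and gives the isomorphism.
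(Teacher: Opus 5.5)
Your route differs from the paper's. It is viable in principle, but as written it is incomplete at its decisive step.

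You want to build a homomorphism from the universal algebra $W^{-1}_{min}(sp(2n))$ onto $U$ by checking the Kac--Wakimoto $\lambda$-brackets inside $\mathcal{S}(n-1)\otimes M(1)$. That framework is legitimate, since $W^{k}_{min}$ is the universal enveloping vertex algebra of its nonlinear Lie conformal algebra. The paper, however, never computes an OPE. It applies the recognition theorem \cite[Theorem~3.2]{ACKL}, which in effect packages the fact that the $G$--$G$ bracket is forced once the central charge, the affine subalgebra, and a generating set of currents, a Virasoro vector and weight-$\frac32$ fields of type $\g_{-1/2}$ are fixed. So the paper only checks four things:
\begin{itemize}
\item $c=2-n$ on both sides;
\item the affine subalgebra is $L_{-\frac12}(sp(2n-2))$;
\item $U$ is generated by these fields (Proposition \ref{prop:genofU});
\item $U$ is simple (Lemma \ref{simple}).
\end{itemize}
Your approach would give an explicit free-field realization with normalized $G$'s. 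But the $[G^{\{u\}}{}_\lambda G^{\{v\}}]$ matching, which you call the main obstacle and leave as ``I expect it works'', is exactly what ACKL supplies. To finish your version you must actually carry it out. That means rewriting the quartic $\beta\gamma$ expressions coming from the $:JJ:$ terms into the quadratic terms such as $:\!\partial a_i a_j\!:$ that appear in $U$, using the weight-$2$ relations of the simple quotient $L_{-\frac12}(sp(2n-2))$. It also means checking that a single scalar fits the cubic pole, the coefficient of $L$, and the affine part simultaneously.

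Several details also need fixing.
\begin{itemize}
\item \textbf{Central charge.} At $k=-1$ it is $-(2n+1)+6+(n+1)-4=2-n$, not $-n+1$. This equals $c(\mathcal{S}(n-1))+c(M(1))=-(n-1)+1$, so the match does hold.
\item \textbf{No correction to $L$.} The weight-$2$ space of $M(1)^+$ is $\C\alpha(-1)^2\1$; $\alpha(-2)\1$ is odd, and $\alpha(-3)$-terms have the wrong weight and parity. So necessarily $L\mapsto\omega_1\otimes\1+\1\otimes\omega_2$.
\item \textbf{Your fallback is unavailable.} $\partial a_i^\pm\otimes\1$ lies in $\mathcal{S}(n-1)^{\bar 1}\otimes M(1)^+$, the $(-1)$-eigenspace of $\sigma\otimes\theta$, so it is not in $U$. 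In fact the weight-$\frac32$ space of $U$ is exactly $\mathrm{span}\{a_i^\pm\}\otimes\alpha(-1)\1$. By Schur's lemma the only freedom in the images of the $G$'s is one overall scalar, so if the matching failed nothing could repair it; the verification cannot be bypassed.
\item \textbf{Surjectivity.} $G_{(j)}G$ is at most quadratic in $\alpha$, so it cannot produce $J$, which contains $\alpha(-1)^4\1$. You need iterated products involving four $G$'s, as carried out in Appendix B.
\end{itemize}
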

 \begin{proof}
 It is enough to verify all the conditions in \cite[Theorem~3.2]{ACKL} hold. 
 
 First, we show the central charges of $W_{-1}^{min}(sp(2n))$  and $U$ are equal. It is known that the central charges of $L_{sp(2n-2)}\big(-\frac{1}{2},0\big)$ and $M(1)$ are $-n+1$ and $1$, respectively. This implies that the central charge of $U$ is equal to $-n+2$. On the other hand, the central charge of $W_{-1}^{min}(sp(2n))$ is also equal to $-n+2$ (cf. \cite[Eqn.~(5.7)]{KW2}). Hence, the central charges of $W_{-1}^{min}(sp(2n))$ and $U$ are equal.

By \cite[Theorem~5.1]{KW2}, the affine subalgebra of $W_{-1}^{min}(sp(2n))$ is a quotient of $L_{-\frac{1}{2}}(sp(2n-2))$ (cf. \cite[Table~3]{ArM}). Therefore, the second condition in \cite[Theorem~3.2]{ACKL} holds.

We now show that $W_{-1}^{min}(sp(2n))$ and $U$ have the same generators. In fact, \cite[Theorem~5.1]{KW2} says $W_{-1}^{min}(sp(2n))$ is strongly generated by the set
\begin{align*}
    x(-1)\1\otimes \1, \;\; \omega_1\otimes \1+\1\otimes \omega_2,\;\; y\otimes h(-1)\1,
\end{align*}
for $x\in sp(2n-2)$, $y\in L_{sp(2n-2)}(\lambda_1)$, and $\omega_1$ and $\omega_2$ are the conformal elements of $L_{-\frac{1}{2}}(sp(2n-2))$ and $M(1)^+$, respectively.
In the following Proposition \ref{prop:genofU} with proof deferred to Appendix \ref{appB}, we prove $U$ is also generated by the same set of generators. 

Finally, by Lemma \ref{simple}, $U$ is a simple vertex algebra. Thus all the conditions in \cite[Theorem~3.2]{ACKL} hold. Consequently $W_{-1}^{min}(sp(2n))$ is isomorphic to $U$ as a vertex operator algebra.
  \end{proof}

\begin{proposition}\label{prop:genofU}
  The vertex operator algebra $U$ is generated by 
  \begin{align*}
    &x(-1)\1\otimes \1,~~~x\in sp(2n-2),\\
    &\omega_1\otimes \1+\1\otimes \omega_2, \\
    &y\otimes h(-1)\1,~~~y\in L_{sp(2n-2)}(\lambda_1),
    \end{align*}
    where $\omega_1$ and $\omega_2$ are the conformal elements of $L_{-\frac{1}{2}}(sp(2n-2))$ and $M(1)^+$, respectively.
  \end{proposition}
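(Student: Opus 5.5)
Let $\widetilde U$ denote the vertex subalgebra of $U$ generated by the three families in the statement. The plan is to show $\widetilde U = U$. First I will show that $\widetilde U$ contains $L_{-\frac12}(sp(2n-2))\otimes \1$ and $\1\otimes M(1)^+$. The affine part is clear, since the elements $x(-1)\1\otimes\1$ generate $L_{-\frac12}(sp(2n-2))\otimes\1$. Hence $\widetilde U$ also contains the Sugawara vector $\omega_1\otimes\1$, and therefore $\1\otimes\omega_2$. So $\widetilde U\supset L_{-\frac12}(sp(2n-2))\otimes L(1,0)$.

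Next, $M(1)^+$ is generated by $\omega_2$ together with $J$ (by \cite[Theorem~2.7]{DG}), so it suffices to produce $\1\otimes J$. For this I take two generators $y\otimes h(-1)\1$ and $y'\otimes h(-1)\1$ with $y,y'\in L_{sp(2n-2)}(\lambda_1)$ and form the products
\[
(y\otimes h(-1)\1)_{(j)}(y'\otimes h(-1)\1)=\sum_{i} y_{(i)}y'\otimes (h(-1)\1)_{(j-i-1)}h(-1)\1 .
\]
Realize $y=a_i^{\pm}(-\frac12)\1$ in $\mathcal S(n-1)$, and choose $y=a^+_1(-\tfrac12)\1$, $y'=a^-_1(-\tfrac12)\1$. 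Then $y_{(p)}y'$ is nonzero only for finitely many $p$: it is $\1$ for $p=0$, and for $p<0$ it is a normal ordered quadratic in $\mathcal S(n-1)^{\bar0}$. Taking $j=-3$, the $M(1)$-factors that occur are $(h(-1)\1)_{(q)}h(-1)\1$ for $q\le -1$, namely $h(-1)^2\1$, $h(-2)h(-1)\1$, $h(-3)h(-1)\1+\tfrac12h(-2)^2\1$, and so on. Each of these lies in $M(1)^+$. I will then use the affine action to project onto the $\1\otimes M(1)^+$ component. The subspace $L_{-\frac12}(sp(2n-2))\otimes M(1)^+$ is a module over $L_{-\frac12}(sp(2n-2))\otimes\1\subset\widetilde U$. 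Acting with zero modes $x(0)$ and positive modes $x(m)$, $m>0$, kills the $\mathcal S^{\bar0}$-components of positive weight. Concretely, since $L_{-\frac12}(sp(2n-2))$ is simple with a unique vacuum-like vector up to scalar, the operator $x(m)\otimes1$ maps the weight-$1$ tensor factor $a^+(-\frac12)a^-(-\frac12)\1$ to a multiple of $\1$. By taking suitable combinations, I can extract the vectors $\1\otimes u$ for $u\in\{h(-1)^2\1,\ h(-3)h(-1)\1+\ldots\}$ of weight up to $4$. Then $h(-1)^2\1$ gives $\omega_2$, and the weight-$4$ vector combined with $L(-2)\omega_2$ and $L(-1)^2\omega_2$ yields $J$, provided the resulting weight-$4$ vector is not in the Virasoro span. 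I will check this by comparing coefficients of $h(-1)^4\1$, which arises from $j=-5$ with $q=-1$ applied to a quartic term, or from iterating the construction twice.

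Once $\widetilde U\supset L_{-\frac12}(sp(2n-2))\otimes M(1)^+$, I observe that $L_{sp(2n-2)}(-\frac12,\lambda_1)\otimes M(1)^-$ is an irreducible module for this tensor product VOA. This holds because each factor is irreducible and the first factor is a module for a rational-in-category-$\mathcal O$ affine VOA, so the tensor product of irreducibles is irreducible. The submodule $\widetilde U\cap(L_{sp(2n-2)}(-\frac12,\lambda_1)\otimes M(1)^-)$ is nonzero, since it contains $y\otimes h(-1)\1$. Therefore it is everything, and $\widetilde U=U$.

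The main obstacle is the extraction of $\1\otimes J$, and specifically checking that a nonzero non-Virasoro weight-$4$ component actually appears. I would do this by an explicit small computation in $\mathcal S(1)\otimes M(1)$ using the free field realization. If a direct component projection is awkward, the fallback is to note that the generated subalgebra $\widetilde U\cap(\1\otimes M(1)^+)$ is a vertex subalgebra strictly containing $L(1,0)$. The structure theory of $M(1)^+$ as an $L(1,0)$-module, $M(1)^+=\bigoplus_k L(1,4k^2)$, then forces it to contain $J$, because any proper extension must contain the $L(1,4)$ highest weight vector $J$.
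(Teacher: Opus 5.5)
Your overall plan matches the paper's outline: get $L_{-\frac12}(sp(2n-2))\otimes\1$ and $\1\otimes\omega_2$, produce $\1\otimes J$ from products of the odd generators $a_1^\pm\otimes h$, and finish by irreducibility of $L_{sp(2n-2)}(-\frac12,\lambda_1)\otimes M(1)^-$. The first and last steps are fine.

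The gap is the step you flag yourself, the extraction of $\1\otimes J$, and as written it does not go through. There are three problems.
\begin{enumerate}
\item Your projection mechanism is backwards. The modes $x(m)$, $m\ge 0$, annihilate $\1$, so they kill exactly the component $\1\otimes u$ you want to keep. They do not kill the positive-weight components.
\item You misidentify the $M(1)$-factors. Since $(a_1^+\otimes h)_{(j)}(a_1^-\otimes h)=\sum_i (a_1^+)_{(i)}a_1^-\otimes h(j-i-1)h(-1)\1$, the factors are $h(q)h(-1)\1$. The weight-$4$ vector you list, $h(-3)h(-1)\1+\frac12h(-2)^2\1$, is exactly $L(-4)\1$. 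It lies in the Virasoro span, so the check you propose would fail for it. Also, a product of two generators is quadratic in $h$, so no $h(-1)^4\1$ coefficient can arise at $j=-5$.
\item Your fallback presupposes that $\widetilde U\cap(\1\otimes M(1)^+)$ strictly contains $\1\otimes L(1,0)$, which is the very point at issue. Its other claim, that every vertex subalgebra of $M(1)^+$ properly containing $L(1,0)$ contains $J$, is asserted without proof. Proving it would need, for example, a Galois-correspondence argument for $O(2)\subset SO(3)$ on $V_{\sqrt2\Z\alpha}$, or an explicit fusion computation.
\end{enumerate}

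The repair is short. The operator $(\omega_1\otimes\1)_{(1)}=(\omega_1)_{(1)}\otimes\id$ preserves $\widetilde U$. It is diagonalizable on each finite-dimensional weight space of $U$, so $\widetilde U$ contains the eigencomponents of its elements. On $L_{-\frac12}(sp(2n-2))\otimes M(1)^+$ its eigenvalue-$0$ part is $\1\otimes M(1)^+$. Take $j=-2$:
\begin{align*}
(a_1^+\otimes h)_{(-2)}(a_1^-\otimes h)&=\1\otimes h(-3)h(-1)\1+a_1^+(-\tfrac12)a_1^-(-\tfrac12)\1\otimes h(-2)h(-1)\1\\
&\quad+a_1^+(-\tfrac32)a_1^-(-\tfrac12)\1\otimes h(-1)^2\1+a_1^+(-\tfrac72)a_1^-(-\tfrac12)\1\otimes\1 .
\end{align*}
The last three terms have first-factor weights $1,2,4$, so $\1\otimes h(-3)h(-1)\1\in\widetilde U$. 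Now use
\[
L(-4)\1=h(-3)h(-1)\1+\tfrac12h(-2)^2\1,\qquad L(-2)^2\1=h(-3)h(-1)\1+\tfrac14h(-1)^4\1 .
\]
These show $h(-3)h(-1)\1\notin L(1,0)_4$. Since $\dim M(1)^+_4=3$, you get $\1\otimes M(1)^+_4\subset\widetilde U$, and hence $\1\otimes J\in\widetilde U$.

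The paper avoids any projection. It shows each correction term is itself a product of elements already in the subalgebra. It then produces all three monomials $h(-1)^4\1$, $h(-3)h(-1)\1$, $h(-2)^2\1$ separately, the quartic one via the iterated product $(a_1^+\otimes h)_{(0)}\bigl((a_1^-\otimes h)_{(-1)}(\1\otimes h(-1)^2\1)\bigr)$. Once repaired, your route is more economical: it uses $\1\otimes L(1,0)\subset\widetilde U$ and needs only one non-Virasoro vector.
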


 \subsection{Decompositions of $W_{-1}^{min}(sp(2n))$-modules as $L_{-\frac{1}{2}}(sp(2n-2)) \otimes M(1)^+$-modules}  In this section, we will decompose certain irreducible $W_{-1}^{min}(sp(2n))$-modules as $L_{-\frac{1}{2}}(sp(2n-2)) \otimes M(1) ^+$-modules.  For this, we need to use the theory of Zhu's functor developed in \cite{Z}.

We first determine the Zhu's algebra  $A(W_{-1}^{min}(sp(2n)))$ for the minimal $W$-algebra $W_{-1}^{min}(sp(2n))$.
\begin{proposition}\label{ZhuW}
  The Zhu's algebra $A(W_{-1}^{min}(sp(2n)))$ is isomorphic to $$ (U(sp(2n-2))  \otimes {\C}[\bar\omega_2] )/I,$$  where $I$ is a certain two sided ideal of  $U(sp(2n-2))  \otimes {\C}[\bar\omega_2]$.
\end{proposition}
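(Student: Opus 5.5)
We want $A(W_{-1}^{min}(sp(2n)))\cong (U(sp(2n-2))\otimes\C[\bar\omega_2])/I$. The plan is to show that Zhu's algebra is generated by the images of the strong generators of $W_{-1}^{min}(sp(2n))$. We then show that the images of the odd generators $y\otimes h(-1)\1$ become redundant, or vanish, modulo $O(W)$. The claimed surjection from $U(sp(2n-2))\otimes\C[\bar\omega_2]$ follows, and $I$ is defined as its kernel.

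First we use the standard fact (\cite{Z}, cf. \cite{KW2}, \cite{A1}) that if a vertex operator algebra $V$ is strongly generated by homogeneous fields $\{a^i\}$, then $A(V)$ is generated by the images $[a^i]$. By \cite[Theorem~5.1]{KW2}, the strong generators are:
\begin{itemize}
\item $x(-1)\1\otimes\1$ for $x\in sp(2n-2)$;
\item the conformal vector $\omega=\omega_1\otimes\1+\1\otimes\omega_2$;
\item the weight $3/2$ fields $G^{\{y\}}=y\otimes h(-1)\1$ for $y\in L_{sp(2n-2)}(\lambda_1)$.
\end{itemize}
Theorem~\ref{structureA} describes $W_{-1}^{min}(sp(2n))$ with these generators. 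Since $W_{-1}^{min}(sp(2n))$ is $\frac12\Z$-graded and the $G^{\{y\}}$ have half-integral weight, we use the $\frac12\Z$-graded version of Zhu's algebra, as in \cite{KW2}, \cite{A1}. In that version, $O(V)$ contains the whole half-integer-weight subspace $V^{\bar 1}$: for $a\in V^{\bar1}$ one has $a\circ b=\Res_z Y(a,z)b\,\frac{(1+z)^{\wt a-1/2}}{z}$, and in particular $a=a\circ\1\in O(V)$. Hence $[G^{\{y\}}]=0$. Thus $A(W_{-1}^{min}(sp(2n)))$ is generated by $[x(-1)\1]$ and $[\omega]$. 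Equivalently, $A(W)$ is generated by $[x(-1)\1]$ and $[\1\otimes\omega_2]=[\omega]-[\omega_1\otimes\1]$; we set $\bar\omega_2:=[\1\otimes\omega_2]$.

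Next we check the relations needed to obtain a homomorphism from $U(sp(2n-2))\otimes\C[\bar\omega_2]$:
\begin{itemize}
\item The map $x\mapsto[x(-1)\1]$ is a Lie algebra homomorphism, since $[a]*[b]-[b]*[a]=[\Res_z(1+z)^{\wt a-1}Y(a,z)b]$, which for weight one vectors gives $[x,y](-1)\1$.
\item The element $[\omega]$ is central in $A(V)$. The class $[\omega_1\otimes\1]$ is the Sugawara image, a polynomial in the $[x(-1)\1]$. This holds because the affine subalgebra is a quotient of $L_{-\frac12}(sp(2n-2))$ (as used in Theorem~\ref{structureA}), which is not at critical level. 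Consequently $[\omega_1\otimes\1]$ lies in the centre of the image of $U(sp(2n-2))$. Hence $\bar\omega_2$ commutes with all $[x(-1)\1]$.
\end{itemize}
So $x\mapsto[x(-1)\1]$, $\bar\omega_2\mapsto[\1\otimes\omega_2]$ extends to a surjective algebra homomorphism $U(sp(2n-2))\otimes\C[\bar\omega_2]\to A(W_{-1}^{min}(sp(2n)))$, and $I$ is its kernel.

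The main obstacle is the grading issue in the first step: we must make sure we use the correct Zhu algebra for the $\frac12\Z$-graded $W$-algebra, namely the one relevant to the Ramond-twisted versus untwisted sectors used later. We should also verify that the odd generators contribute nothing beyond the relations in $I$. If instead one works with an integer-graded conformal vector (shifting by $\frac12 x$), the $G^{\{y\}}$ have integral weight and would not vanish. In that case, we would have to show, using the OPE $G^{\{y\}}_{(0)}G^{\{y'\}}$ from \cite{KW2}, that their classes lie in the subalgebra generated by $[x(-1)\1]$ and $[\omega]$ modulo $O(V)$. We would first try the standard $\frac12\Z$-graded convention, where the argument above is immediate.
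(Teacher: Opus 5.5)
Your proposal is correct and follows essentially the same route as the paper. Both arguments use that $A(W_{-1}^{min}(sp(2n)))$ is generated by the classes of the strong generators from \cite[Theorem~5.1]{KW2}, and that in the $\frac12\Z$-graded Zhu algebra the weight-$\frac32$ generators vanish because $y\otimes h(-1)\1=(y\otimes h(-1)\1)\circ\1\in O(W)$.

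The one difference is how the surjection is built. The paper obtains it as the map $\phi$ from $A\big(L_{-\frac12}(sp(2n-2))\otimes L(1,0)\big)$, which \cite{FZ} and \cite{Wa} identify as a quotient of $U(sp(2n-2))\otimes\C[\bar\omega_2]$; this makes the commutation of $\bar\omega_2$ with $sp(2n-2)$ automatic. You instead construct the homomorphism directly, checking the Lie relations and using the centrality of $[\omega]$ together with the Sugawara--Casimir identification.
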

\begin{proof}
Let $L(1,0)$ be the Virasoro vertex operator algebra of central charge $1$. By Theorem \ref{structureA}, $L_{-\frac{1}{2}}(sp(2n-2)) \otimes L(1,0)$ is a subalgebra of $W_{-1}^{min}(sp(2n))$. By the definition of the Zhu's algebra, there exists an algebra homomorphism $$\phi: A \big(L_{-\frac{1}{2}}(sp(2n-2)) \otimes L(1,0)\big)\to A(W_{-1}^{min}(sp(2n))).$$

By \cite[Theorem~5.1]{KW2}, $W_{-1}^{min}(sp(2n))$ is generated by
\begin{align*}
    &x(-1)\1\otimes \1,~~~x\in sp(2n-2),\\
    &\1\otimes \omega_2,\\
    &y\otimes h(-1)\1,~~~y\in L_{sp(2n-2)}(\bar{\lambda}_1).
    \end{align*}
It follows from Proposition 2.5 of \cite{Abe2} that $A(W_{-1}^{min}(sp(2n)))$ is generated by
\begin{align*}
    &\overline{x(-1)\1\otimes \1},~~~x\in sp(2n-2),\\
    &\overline{\1\otimes \omega_2},\\
    &\overline{y\otimes h(-1)\1},~~~y\in L_{sp(2n-2)}(\bar{\lambda}_1).
    \end{align*}
Moreover, since $\wt~ y\otimes h(-1)\1=\frac{3}{2}$, we have
\begin{align*}
    (y\otimes h)\circ \1=\Res_z\frac{(1+z)}{z^2}Y(y\otimes h, z)\1=y\otimes h.
    \end{align*}
Therefore, $\overline{y\otimes h(-1)\1}=0$, for any $y\in L_{sp(2n-2)}(\bar{\lambda}_1)$. This implies that $\phi$ is surjective. By \cite[Theorem~3.1.1]{FZ} and \cite[Lemma~4.1]{Wa}, $$A(W_{-1}^{min}(sp(2n))) \cong (U(sp(2n-2))  \otimes {\C}[\bar\omega_2] )/I,$$  where $I$ is a certain two sided ideal of  $U(sp(2n-2))  \otimes {\C}[\bar\omega_2]$.
\end{proof}

By Proposition \ref{ZhuW} and using the theorem of Zhu's functor (\cite{Z}), we have:
\begin{corollary}\label{isoM}
Let $W_1, W_2$ be irreducible $W_{-1}^{min}(sp(2n))$-modules. Suppose that
\begin{itemize}
\item[(1)] $W_1, W_2$ have the same conformal weights,
\item[(2)] The top spaces $W_1(0), W_2(0)$ are isomorphic as $sp(2n-2)$--modules.
\end{itemize}
Then $W_1$ and $W_2$ are isomorphic.
\end{corollary}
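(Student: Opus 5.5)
The plan is to reduce everything to Zhu's correspondence. By Zhu's theorem \cite{Z}, irreducible $\Z_{\geq 0}$-gradable (or $\frac12\Z_{\geq0}$-gradable) $W_{-1}^{min}(sp(2n))$-modules are in bijection with irreducible $A(W_{-1}^{min}(sp(2n)))$-modules, via $W \mapsto W(0)$. So it suffices to show that $W_1(0)\cong W_2(0)$ as $A(W_{-1}^{min}(sp(2n)))$-modules.

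First, each $W_i(0)$ is an irreducible module for the Zhu algebra. By Proposition \ref{ZhuW}, this algebra is a quotient of $U(sp(2n-2))\otimes \C[\bar\omega_2]$. Hence any $A(W_{-1}^{min}(sp(2n)))$-module structure on $W_i(0)$ is determined by the actions of $sp(2n-2)$, given by $x(-1)\1\otimes\1\mapsto o(x(-1)\1)=x(0)$, and of $\bar\omega_2$, given by $o(\1\otimes\omega_2)$.

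Second, I would pin down the action of $\bar\omega_2$. The total conformal vector is $\omega=\omega_1\otimes\1+\1\otimes\omega_2$, and $o(\omega)=L(0)$ acts on $W_i(0)$ as the scalar $h_i$, the conformal weight. The element $\omega_1$ is the Sugawara vector of $L_{-\frac12}(sp(2n-2))$, so $o(\omega_1\otimes \1)$ acts by the Sugawara expression in the $x(0)$, i.e. a multiple of the Casimir of $sp(2n-2)$. Therefore
\[
o(\1\otimes\omega_2)=h_i-\tfrac{1}{2(k+h^\vee)}\,\mathrm{Cas}_{sp(2n-2)},\qquad k=-\tfrac12,
\]
on $W_i(0)$. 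This is a function of the $sp(2n-2)$-action and the scalar $h_i$ alone.

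Now let $f:W_1(0)\to W_2(0)$ be an $sp(2n-2)$-module isomorphism, as given by hypothesis (2). Then $f$ intertwines the Casimir. Since $h_1=h_2$ by hypothesis (1), $f$ also intertwines $o(\1\otimes\omega_2)$. Because the generators of $U(sp(2n-2))\otimes\C[\bar\omega_2]$ are intertwined, $f$ is an isomorphism of $U(sp(2n-2))\otimes\C[\bar\omega_2]$-modules. Both modules factor through the quotient by $I$, so $f$ is an isomorphism of $A(W_{-1}^{min}(sp(2n)))$-modules. Zhu's bijection then gives $W_1\cong W_2$.

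The only delicate point is the Sugawara identification: the conformal vector of the affine subalgebra must be exactly the Sugawara vector $\omega_1$. This holds because $L_{-\frac12}(sp(2n-2))$ carries its Sugawara conformal vector, as used in Theorem \ref{structureA}. A second subtlety concerns irreducible modules that are not finitely generated over $sp(2n-2)$ on the top; Zhu's bijection still applies there. Alternatively, one can simply note that $\bar\omega=\overline{\omega_1\otimes\1}+\bar\omega_2$ lies in the centre of the Zhu algebra and acts by $h_i$.
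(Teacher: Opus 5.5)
Your proof is correct and follows the paper's route, which simply combines Proposition \ref{ZhuW} with Zhu's correspondence; you make explicit the step the paper leaves implicit, namely that on the top space $o(\1\otimes\omega_2)=L(0)-o(\omega_1\otimes\1)$ is determined by the conformal weight and the $sp(2n-2)$-action. The Sugawara identification you flag as delicate is not actually needed: $\overline{\omega_1\otimes\1}$ lies in the image of $A(L_{-\frac12}(sp(2n-2)))$, a quotient of $U(sp(2n-2))$, so any $sp(2n-2)$-isomorphism already intertwines $o(\omega_1\otimes\1)$.
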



Because the functor $H_{\theta}$ is exact (cf. Theorem \ref{exact}), $H_{\theta}(L_{sp(2n)}(-1,\bar{\Lambda}_2))$ and $H_{\theta}   (L_{sp(2n)}(-1,s\bar{\Lambda}_1) )$ for the first two fundamental weights $\bar{\Lambda}_1$ and $\bar{\Lambda}_{2}$ of $sp(2n)$ and $s \in \Z_{\geq 0}$, are irreducible $W_{-1}^{min}(sp(2n))$-modules. We will determine the decompositions of these irreducible $W_{-1}^{min}(sp(2n))$-modules as $L_{-\frac{1}{2}}(sp(2n-2)) \otimes M(1) ^+$-modules.


\begin{theorem}\label{structureM}
   As $L_{-\frac{1}{2}}(sp(2n-2)) \otimes M(1) ^+$-modules, we have
  $$H_{\theta} (L_{sp(2n)}(-1,\bar{\Lambda}_2)) \cong  L_{sp(2n-2)}\left(-\frac{1}{2},0\right) \otimes M(1) ^- \oplus  L_{sp(2n-2)}\left(-\frac{1}{2},\lambda_1\right) \otimes M(1) ^+.$$
  \end{theorem}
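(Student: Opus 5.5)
The strategy is to build an irreducible $W_{-1}^{min}(sp(2n))$-module with the right decomposition inside a module of $\mathcal{S}(n-1)\otimes M(1)$. We then identify it with $H_{\theta}(L_{sp(2n)}(-1,\bar{\Lambda}_2))$ using Corollary \ref{isoM}.

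\textbf{Step 1: construction.} Set
\[
N := L_{sp(2n-2)}\left(-\tfrac{1}{2},0\right)\otimes M(1)^- \oplus L_{sp(2n-2)}\left(-\tfrac{1}{2},\lambda_1\right)\otimes M(1)^+ .
\]
This is the $(-1)$-eigenspace of the automorphism $\sigma\otimes\theta$ acting on $\mathcal{S}(n-1)\otimes M(1)$. Since $U$ is the fixed point subalgebra of $\sigma\otimes\theta$ (Lemma \ref{simple}), $N$ is a $U$-module, hence a $W_{-1}^{min}(sp(2n))$-module by Theorem \ref{structureA}. The group is $\Z_2$ and $\mathcal{S}(n-1)\otimes M(1)$ is simple, so the Dong--Mason quantum Galois theory (\cite{DM}, the same reference used in Lemma \ref{simple}) shows that each nonzero eigenspace is an irreducible module over the fixed point algebra. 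Hence $N$ is an irreducible $W_{-1}^{min}(sp(2n))$-module.

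\textbf{Step 2: top spaces.} We next compare the lowest weight spaces. In $N$ the lowest conformal weight occurs in the summand $L_{sp(2n-2)}(-\frac12,\lambda_1)\otimes M(1)^+$ at weight $\frac12$. Its top space is $\C^{2n-2}\otimes \C\1 \cong L_{sp(2n-2)}(\lambda_1)$. The other summand starts at weight $1$: its top is $\C\1\otimes \C h(-1)\1$. So $N$ has conformal weight $\frac12$ and top space $L_{sp(2n-2)}(\lambda_1)$.

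For $H_{\theta}(L_{sp(2n)}(-1,\bar{\Lambda}_2))$ we use the standard minimal-reduction formulas of Kac--Wakimoto/Arakawa. The conformal weight of $H_\theta(L(k,\Lambda))$ is
\[
h_\Lambda=\frac{(\Lambda|\Lambda+2\rho)}{2(k+h^\vee)}-\Lambda(x).
\]
Here $k+h^\vee=-1+n+1=n$. We have $(\bar\Lambda_2|\bar\Lambda_2+2\rho)=2n$ in the normalization $(\theta|\theta)=2$, and $\bar\Lambda_2(x)=1/2$. This gives $h=1-\frac12=\frac12$; I will double-check the constants. The top space is the $\ker(\mathrm{ad}\, x)$-part of $L_{sp(2n)}(\bar\Lambda_2)$ of extremal $x$-eigenvalue (highest $x$-eigenvalue), viewed as an $sp(2n-2)$-module. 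Under $sp(2)\times sp(2n-2)$ we have $\wedge^2_0\C^{2n} \cong (\C^2\otimes\C^{2n-2})\oplus \wedge^2_0\C^{2n-2}\oplus \C$. The part with $x$-eigenvalue $\frac12$ is $\C^{2n-2}=L_{sp(2n-2)}(\lambda_1)$. The weights therefore match.

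\textbf{Step 3: identification.} Since $L_{sp(2n)}(-1,\bar\Lambda_2)$ is an irreducible ordinary module, $H_\theta(L_{sp(2n)}(-1,\bar\Lambda_2))$ is irreducible (as stated before the theorem) and ordinary by Theorem \ref{exact}. It and $N$ have the same conformal weight and isomorphic top spaces, so Corollary \ref{isoM} gives $H_\theta(L_{sp(2n)}(-1,\bar\Lambda_2))\cong N$. This isomorphism is, in particular, an isomorphism of $L_{-\frac12}(sp(2n-2))\otimes M(1)^+$-modules, which is the claim.

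\textbf{Main obstacle.} The hard step is Step 2 on the $H_\theta$ side: pinning down the conformal weight shift and the top space of $H_\theta(L(k,\Lambda))$ with the correct conventions. Specifically, this means determining which $x$-eigenspace of the top $\g$-module survives as the top space. To settle it I would cross-check against the vacuum case. There $\Lambda=0$ gives weight $0$ with trivial top, consistent with $U$, and this calibrates the sign conventions. I would also confirm that $h_\Lambda$ is not accidentally equal to the weight $1$ of the other summand's top, so that the lowest weights are unambiguous.
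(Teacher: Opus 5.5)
Your proposal is correct and follows the paper's proof. In both, the right-hand side is realized as the $(-1)$-eigenspace of $\sigma\otimes\theta$ in $\mathcal{S}(n-1)\otimes M(1)$, which is irreducible by Dong--Mason; one then matches the conformal weight $\frac12$ and the top space $L_{sp(2n-2)}(\lambda_1)$ with those of $H_\theta(L_{sp(2n)}(-1,\bar{\Lambda}_2))$ and concludes with Corollary \ref{isoM}. The only difference is on the $H_\theta$ side: you compute the weight and top directly, via the formula and the $x$-grading, and your computation is correct. The paper instead cites \cite[Theorem~6.3, 7.1]{KW2} and uses Proposition \ref{ZhuW} to see that the top is an irreducible $sp(2n-2)$-module.
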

 \pf
  Since $H_{\theta}(L_{sp(2n)}(-1,\bar{\Lambda}_2))$ is an irreducible  $W_{-1}^{min}(sp(2n))$-module, by Proposition \ref{ZhuW}, $H_{\theta}   (L_{sp(2n)}(-1,\bar{\Lambda}_2))(0)$ is an irreducible $U(sp(2n-2))  \otimes {\C}[\bar\omega_2]$-module. It follows that $H_{\theta}   (L_{sp(2n)}(-1,\bar{\Lambda}_2))(0)$ is an irreducible $U(sp(2n-2))$-module. By \cite[Theorem~6.3]{KW2}, $H_{\theta}(L_{sp(2n)}(-1,\bar{\Lambda}_2))(0) \cong L_{sp(2n-2)}(\lambda_1)$ as an $sp(2n-2)$-module. Moreover, By \cite[Theorem~7.1]{KW2}, the conformal weight of $H_{\theta} (L_{sp(2n)}(-1,\bar{\Lambda}_2))$ is $\frac{1}{2}$.

  On the other hand, recall that $\sigma\otimes \theta$ is an automorphism of $\mathcal{S}(n-1)\otimes M(1)$. The eigenspace of $\sigma\otimes \theta$ with eigenvalue $-1$ is equal to $$M=L_{sp(2n-2)}\left(-\frac{1}{2},0\right) \otimes M(1) ^- \oplus  L_{sp(2n-2)}\left(-\frac{1}{2},\lambda_1\right) \otimes M(1)^+.$$
  It follows from \cite[Theorem~4.4]{DM} that $M$ is an irreducible  $W_{-1}^{min}(sp(2n))$-module. Moreover, the conformal weight of $M$ is equal to $\frac{1}{2}$. Furthermore, $M(0)$ viewed as an $sp(2n-2)$-module is isomorphic to $L_{sp(2n-2)}(\lambda_1)$. It follows from Corollary \ref{isoM} that $$H_{\theta}(L_{sp(2n)}(-1,\bar{\Lambda}_2)) \cong  L_{sp(2n-2)}\left(-\frac{1}{2},0\right) \otimes M(1) ^- \oplus  L_{sp(2n-2)}\left(-\frac{1}{2},\lambda_1\right) \otimes M(1)^+.$$
  This completes the proof.
 \qed

 We next determine the decomposition of $H_{\theta}   (L_{sp(2n)}(-1,s\bar{\Lambda}_1) )$.
 \begin{theorem}\label{structureM2}
   As $L_{-\frac{1}{2}}(sp(2n-2)) \otimes M(1) ^+$-modules,
    $$H_{\theta}   (L_{sp(2n)}(-1,s\bar{\Lambda}_1) ) \cong   \mathcal S(n-1) \otimes M\left(1, \frac{s}{\sqrt{2 n}} \right), $$for $s \in {\Z}_{>0}$.
  \end{theorem}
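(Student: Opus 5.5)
Mirroring the proof of Theorem \ref{structureM}, we realize the right-hand side as an irreducible $W_{-1}^{min}(sp(2n))$-module. We then match its conformal weight and top space with those of $H_{\theta}(L_{sp(2n)}(-1,s\bar{\Lambda}_1))$ and conclude by Corollary \ref{isoM}.

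\textbf{Step 1: $\mathcal S(n-1)\otimes M(1,\mu)$ is an irreducible $U$-module.} Set $\mu=\frac{s}{\sqrt{2n}}$ (up to the normalization of $\alpha$ fixed by the Kac--Wakimoto computation). By Theorem \ref{structureA}, $W_{-1}^{min}(sp(2n))\cong U=(\mathcal S(n-1)\otimes M(1))^{\sigma\otimes\theta}$. The space $\mathcal S(n-1)\otimes M(1,\mu)$ is an irreducible untwisted module for the simple vertex algebra $\mathcal S(n-1)\otimes M(1)$; the half-integer grading of $\mathcal S(n-1)$ is harmless, since $\mathcal S(n-1)$ is a module over itself. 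Restrict to the orbifold $U$. Because $\mu\neq 0$, $\theta$ does not stabilize $M(1,\mu)$: $\theta\circ M(1,\mu)\cong M(1,-\mu)\not\cong M(1,\mu)$ as $M(1)$-modules. Hence $\sigma\otimes\theta$ does not stabilize the module, and the Dong--Mason/Dong--Yamskulna theory of orbifold modules \cite{DM} gives that its restriction to $U$ is irreducible. One can also see this directly: $\mathcal S(n-1)^{\bar 0}\otimes M(1)^+$ and $\mathcal S(n-1)^{\bar 1}\otimes M(1)^-$ together act, and $M(1,\mu)$ is irreducible over $M(1)^+$. As an $L_{-\frac12}(sp(2n-2))\otimes M(1)^+$-module it decomposes as
\[
L_{sp(2n-2)}\left(-\tfrac12,0\right)\otimes M(1,\mu)\oplus L_{sp(2n-2)}\left(-\tfrac12,\lambda_1\right)\otimes M(1,\mu),
\]
which is the claimed $\mathcal S(n-1)\otimes M(1,\mu)$.

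\textbf{Step 2: identify the invariants of $H_{\theta}(L_{sp(2n)}(-1,s\bar\Lambda_1))$.} By exactness (Theorem \ref{exact}) and \cite{A1}, this module is irreducible. By \cite[Theorem~6.3]{KW2}, its top space is the $sp(2n-2)$-module obtained from $L_{sp(2n)}(s\bar\Lambda_1)$ by restricting to the centralizer of the $sl_2$-triple. The highest weight $s\bar\Lambda_1$ restricts to $sp(2n-2)$ trivially, with $x$-eigenvalue $s/2$, so the top space is $\C$, the trivial module. By \cite[Theorem~7.1]{KW2}, the conformal weight is
\[
h=\frac{(s\bar\Lambda_1|s\bar\Lambda_1+2\rho)}{2(k+h^\vee)}-\frac{s}{2}=\frac{s(s+2n)}{4n}-\frac s2=\frac{s^2}{4n},
\]
using $(\bar\Lambda_1|\bar\Lambda_1)=\frac12$, $(\bar\Lambda_1|2\rho)=n$ and $k+h^\vee=n$.

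\textbf{Step 3: compare with the candidate.} The top space of $\mathcal S(n-1)\otimes M(1,\mu)$ is $\C\1\otimes e^{\mu}$, which is trivial for $sp(2n-2)$ and has weight $\mu^2/2=\frac{s^2}{4n}$ when $\mu=\frac{s}{\sqrt{2n}}$. The $\bar\omega_2$-eigenvalue in Zhu's algebra (Proposition \ref{ZhuW}) is therefore determined by the conformal weight, so Corollary \ref{isoM} gives the isomorphism.

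The main obstacle is Step 2: pinning down the top space and the conformal weight via Kac--Wakimoto's formulas, with the correct normalizations and shift. A secondary point is the irreducibility claim in Step 1. If a direct citation of \cite{DM} does not cover it, I would argue from Zhu's algebra: any nonzero submodule contains a top vector, and the generators $y\otimes h(-1)\1$ of Proposition \ref{prop:genofU} move between the two summands.
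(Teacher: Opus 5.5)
Your proposal is correct and follows essentially the same route as the paper: realize $\mathcal S(n-1)\otimes M\big(1,\frac{s}{\sqrt{2n}}\big)$ as an irreducible $W_{-1}^{min}(sp(2n))$-module via \cite[Theorem~6.1]{DM}, compute the trivial top space and the conformal weight $\frac{s^2}{4n}$ of $H_{\theta}(L_{sp(2n)}(-1,s\bar{\Lambda}_1))$ via \cite[Theorems~6.3, 7.1]{KW2}, and conclude with Corollary~\ref{isoM}. The one point to tighten is your description of the top space: \cite[Theorem~6.3]{KW2} gives its highest weight $s\bar{\Lambda}_1|_{\h^\natural}=0$, not the restriction of the whole module $L_{sp(2n)}(s\bar{\Lambda}_1)$ (that restriction is nontrivial), and it is the irreducibility of the top as a $U(sp(2n-2))$-module, from Proposition~\ref{ZhuW}, that then forces the top to be trivial.
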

 \begin{proof}
  By Proposition \ref{ZhuW}, $H_{\theta}   (L_{sp(2n)}(-1,s\bar{\Lambda}_1) ) (0)$ is an irreducible module of  $U(sp(2n-2))  \otimes {\C}[\bar\omega_2] $. It follows that $H_{\theta}   (L_{sp(2n)}(-1,s\bar{\Lambda}_1) )(0)$ is an irreducible module of  $U(sp(2n-2))$. By \cite[Theorem~6.3]{KW2}, $H_{\theta}(L_{sp(2n)}(-1,s\bar{\Lambda}_1) )(0)$ viewed as an $sp(2n-2)$-module is isomorphic to $L_{sp(2n-2)}(0)$. Moreover, by \cite[Theorem~7.1]{KW2}, the conformal weight of $H_{\theta}   (L_{sp(2n)}(-1,s\bar{\Lambda}_1) )$ is equal to $\frac{s^2}{4n}$.
  
  On the other hand, $\mathcal S(n-1) \otimes M\big(1, \frac{s}{\sqrt{2n}}\big)$ is an irreducible module of $\mathcal{S}(n-1)\otimes M(1)$. By \cite[Theorem~6.1]{DM}, $\mathcal S(n-1) \otimes M\big(1, \frac{s}{\sqrt{2n}}\big)$ is an irreducible $\mathcal{S}(n-1)\otimes M(1)^{\sigma\otimes \theta}$-module, and hence also an irreducible $W_{-1}^{min}(sp(2n))$-module. Moreover, the conformal weight of $\mathcal S(n-1) \otimes M\big(1, \frac{s}{\sqrt{2n}}\big)$ is equal to $\frac{s^2}{4n},$ and $\mathcal S(n-1) \otimes M\big(1, \frac{s}{\sqrt{2n}}\big)(0)$ viewed as an $sp(2n-2)$-module is isomorphic to $L_{sp(2n-2)}(0)$. Then the conclusion follows from Corollary \ref{isoM}.
 \end{proof}

\section{Semisimplicity of $KL_{-1}(sp(2n))$}\label{cn}
\def\theequation{4.\arabic{equation}}
\setcounter{equation}{0}
In this section, we assume that $n\geq 2$. We show that the category $\cC_{-1}(sp(2n))$ of grading-restricted generalized modules for the affine vertex operator algebra $L_{-1}(sp(2n))$ is semisimple, and hence the Kazhdan-Lusztig category $KL_{-1}(sp(2n))$ is the same as $\cC_{-1}(sp(2n))$, and admits a braided tensor category structure. By Theorem \ref{keysemi}, to prove that $\cC_{-1}(sp(2n))$ is semisimple, it is enough to prove that every highest weight $L_{-1}(sp(2n))$-module in $\cC_{-1}(sp(2n))$ is irreducible.

First we need the classification of irreducible $L_{-1}(sp(2n))$-modules in $\cC_{-1}(sp(2n))$. For $n \geq 3$, this is obtained in \cite{AP1,AP2} by using explicit formulas for singular vectors in $V^{-1}(sp(2n))$.  But for $n=2$, the singular vector approach is more technical, we have to use minimal $W$--algebras and quantum Hamilton reduction. We will present the detail in the proof of the following theorem:
\begin{theorem}\label{clasirr}
Let $\bar{\Lambda}_1, \cdots, \bar{\Lambda}_{n}$ be the fundamental weights of $sp(2n)$. Then
$$\{L_{sp(2n)}(-1, s\bar{\Lambda}_1)|s\in \Z_{\geq 0}\}\cup \{L_{sp(2n)}(-1, \bar{\Lambda}_2)\}$$
provides a complete list of irreducible $L_{-1}(sp(2n))$-modules in $\cC_{-1}(sp(2n))$.
\end{theorem}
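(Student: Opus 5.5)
Here is the plan. For $n\ge 3$ the classification is already available from \cite{AP1,AP2}, so the real work is $n=2$. Still, I would give a uniform argument through quantum Hamiltonian reduction, using Theorem \ref{structureA}. Three steps carry it.

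\emph{Step 1: existence.} First I would show that every module in the list lies in $\cC_{-1}(sp(2n))$. I would use the free-field realization: $\mathcal{S}(2n)^+$ is a module for $L_{-1}(sp(2n))\otimes M(1)^+$, where the $sp(2n)$ currents are quadratic in $n$ copies of the rank-$2n$ Weyl fields. Each weight $s\bar\Lambda_1$ and $\bar\Lambda_2$ then occurs as a top space of some $M(1)^+$-isotypic component (the $M(1,\tfrac{s}{\sqrt{-2n}})$- and $M(1)^-$-components respectively). Each of these components is lower bounded with finite-dimensional weight spaces, so the modules are grading-restricted. Alternatively, for $n\ge3$ one can simply cite \cite{AP2}.

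\emph{Step 2: restricting the highest weights.} Let $L=L_{sp(2n)}(-1,\mu)$ be irreducible in $\cC_{-1}(sp(2n))$. Then $\mu$ is dominant integral, so $(\mu|\theta)\in\Z_{\ge0}$. I would split into two cases.

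If $(\mu|\theta)\ne k+1=0$, then by Theorem \ref{exact} and Arakawa's results \cite{A1}, $H_\theta(L)$ is a nonzero irreducible ordinary $W_{-1}^{min}(sp(2n))$-module. By Theorem \ref{structureA} it restricts to an $L_{-\frac12}(sp(2n-2))\otimes M(1)^+$-module whose top space is an irreducible $sp(2n-2)$-module; it has weight $\mu|_{sp(2n-2)}$ (\cite[Theorem~6.3]{KW2}) and conformal weight given by \cite[Theorem~7.1]{KW2}. Since $H_\theta(L)$ is ordinary, the $L_{-\frac12}(sp(2n-2))$-components must be ordinary. By the results on $\mathcal{S}(n-1)$ these are only $L_{sp(2n-2)}(-\tfrac12,0)$ and $L_{sp(2n-2)}(-\tfrac12,\lambda_1)$, so $\mu|_{sp(2n-2)}\in\{0,\lambda_1\}$. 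Writing $\mu=a\bar\Lambda_1+\nu$, this forces $\mu\in\{s\bar\Lambda_1\}$ or $\mu\in\{\bar\Lambda_2+s\bar\Lambda_1\}$. For $n=2$ one must be careful with labeling conventions, but the shape of the argument is the same.

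\emph{Step 3: excluding the extra families.} The main obstacle is ruling out $\mu=\bar\Lambda_2+s\bar\Lambda_1$ with $s\ge1$, and controlling the case $(\mu|\theta)=0$ (which gives only $\mu=0$ anyway, since $\theta=2\epsilon_1$ and $\mu$ is dominant). For this I would use the $M(1)^+$-side. The $M(1)^+$-multiplicity space of $H_\theta(L)$ is an ordinary $M(1)^+$-module, and because $\theta$ commutes with all of $sp(2n-2)$ it has a specific lowest conformal weight. By Table 1 and the Dong–Nagatomo classification it must be $M(1)^\pm$ or $M(1,\lambda)$; twisted-type modules would give half-integral shifts incompatible with Theorem \ref{structureA}. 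I would then compare the conformal weight $\frac{(\mu|\mu+2\rho)}{2(k+h^\vee)}-(\mu|\theta)/2$ computed from \cite[Theorem~7.1]{KW2} with $h_{sp(2n-2)}+\lambda^2/2$ (or $0$, $1$). Because $W_{-1}^{min}$ contains $y\otimes h(-1)\1$ pairing $\lambda_1$ with $M(1)^-$, $H_\theta(L)$ must be a module for the simple current extension $U$. This forces the $\lambda_1$-component to pair with $M(1)^\pm$ and the $0$-component with $M(1)^\mp$, or both with $M(1,\lambda)$. That in turn pins down the conformal weight. I expect that the resulting equation allows only $s=0$ in the $\bar\Lambda_2$ family; for the $s\bar\Lambda_1$ family it fixes $\lambda=s/\sqrt{2n}$, which is consistent. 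Checking that this weight comparison really excludes $s\ge1$ is the step I would verify most carefully. If it fails, my fallback is to use the top-space $A(U)$-action of $\overline{y\otimes h(-1)\1}$ via the Zhu algebra of Proposition \ref{ZhuW} to obtain a polynomial constraint on $s$.
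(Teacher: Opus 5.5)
Your Steps 1 and 2 follow the paper's route, but run uniformly in $n$, whereas the paper cites \cite{AP1,AP2} for $n\ge 3$ and uses $H_\theta$ only for $n=2$. Existence comes from the decomposition in Theorem~\ref{dec-mn}. Then $H_\theta$ gives the top $sp(2n-2)$-weight $\mu|_{sp(2n-2)}$, and the list of ordinary $L_{-\frac12}(sp(2n-2))$-modules forces $\mu=s\bar\Lambda_1+t\bar\Lambda_2$ with $t\in\{0,1\}$. Two small corrections:
\begin{itemize}
\item That $L_{sp(2n-2)}(-\frac12,0)$ and $L_{sp(2n-2)}(-\frac12,\lambda_1)$ are the \emph{only} ordinary $L_{-\frac12}(sp(2n-2))$-modules is a classification theorem; the paper cites \cite{AM1} for $sl(2)$. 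It does not follow from the decomposition of $\mathcal S(n-1)$, which only gives existence.
\item Your case split on $(\mu|\theta)$ is unnecessary. At $k=-1$ one has $k-(\mu|\theta)<0$ for every dominant $\mu$, so $H_\theta(L)\neq 0$ always.
\end{itemize}

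The genuine gap is Step 3, which is where the theorem is actually decided. No conformal-weight equation can exclude $s\ge 1$. For $\mu=\bar\Lambda_2+s\bar\Lambda_1$ the top of $H_\theta(L)$ has $sp(2n-2)$-type $\lambda_1$ and weight $\frac12+\Delta_s$, where $\Delta_s=\frac{s^2+2s}{4n}$. The equation $\frac12+\Delta_s=\frac12+\frac{\lambda^2}{2}$ is solvable for every $s$, so the numerics alone are consistent.

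What you need is the paper's inequality. The highest weight vector $w$ generates $L_{sp(2n-2)}(-\frac12,\lambda_1)\otimes W^1$, where $W^1$ is a highest weight $M(1)^+$-module of top weight $\Delta_s>0$. Your Dong--Nagatomo identification presupposes irreducibility; the paper works with this highest weight module $W^1$ instead. The odd part $L_{sp(2n-2)}(-\frac12,\lambda_1)\otimes M(1)^-$ of $W_{-1}^{min}(sp(2n))$ acts nontrivially on this submodule, by simplicity of $W_{-1}^{min}(sp(2n))$. Now use $L_{sp(2n-2)}(-\frac12,\lambda_1)\boxtimes L_{sp(2n-2)}(-\frac12,\lambda_1)=L_{-\frac12}(sp(2n-2))$ and Abe's analysis of intertwining operators of type $\binom{W^0}{M(1)^-\ W^1}$, which does not require $W^1$ irreducible. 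Together they show the image contains $L_{-\frac12}(sp(2n-2))\otimes W^0$ with lowest weight $\Delta_s$, or $0$ if $W^1\cong M(1)^-$. This lies strictly below the top weight $\frac12+\Delta_s$, a contradiction.

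Your "pairing'' sentence contains the seed of this argument. But you then look for an equation in $s$, instead of noticing that the $L_{sp(2n-2)}(-\frac12,0)$-partner always sits below the top unless the top $M(1)^+$-type is $M(1)^+$, i.e.\ $s=0$.

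If you want the uniform argument for $n\ge 3$, you must also rule out twisted-type $W^1$, where the weight inequality can degenerate. For example, $n=12$, $s=1$ gives $\Delta_s=\frac1{16}$. If $W^1\cong M(1)(\theta)^+$, the partner $M(1)(\theta)^-$ sits at exactly $\frac12+\Delta_s$. Here you need the mod-$\frac12\Z$ grading argument you only allude to: the odd part shifts weights by $\frac12+\Z$.

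Your fallback does not work. Since $y\otimes h(-1)\1$ has weight $\frac32$, its image in $A(W_{-1}^{min}(sp(2n)))$ is zero, as shown in the proof of Proposition~\ref{ZhuW}. So it imposes no constraint on the top space.
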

\begin{proof}
In the case $n \ge 3$, the assertion follows from classification results obtained  in Corollary 4.1 of \cite{AP1} and Proposition 4 of \cite{AP2}.

Assume now that $n=2$. Using the decomposition of conformal embedding  $$L_{-1}(sp(2n)) \hookrightarrow L_{-1}(sl(2n))$$ from \cite{AP1}  ( see also  Theorem \ref{dec-mn} below), which also holds for $n=2$, we see that $$ L_{sp(4)}(-1, s\bar{\Lambda}_1), s\in \Z_{\geq 0}\; \text{ and } \; L_{sp(4)}(-1, \bar{\Lambda}_2)$$ are also   $L_{-1}(sp(4))$-modules.

Assume that $M$ is an ordinary $L_{-1}(sp(4))$-module. Then it has the form $$ M= L_{sp(4)}(-1, s\bar{\Lambda}_1 +   t\bar{\Lambda}_2),$$ for certain $  s, t\in \Z_{\geq 0}$. By Theorem \ref{exact}, $H_{\theta}(M)$ is an irreducible, ordinary,  highest weight
$W_{-1}^{min}(sp(4))$--module. This implies that  the top component of $H_{\theta}(M)$ is $sl(2)$--module with highest weight   $t \bar{\Lambda}_1$. Using the classification of $L_{-\frac{1}{2}} (sl(2))$--modules from \cite{AM1}, we know $t=0$ or $t=1$.

For $t=0$, we get $M= L_{sp(4)}(-1, s\bar{\Lambda}_1)$.
For $t=1$, the top space of $H_{\theta} (M)$ has conformal weight

 $$ \frac{(  s\bar{\Lambda}_1 + \bar{\Lambda}_2  + 2 \rho \vert    s\bar{\Lambda}_1 + \bar{\Lambda}_2 ) }{ 2 (-1 + h^{\vee}) }- \frac{1}{2} ( s\bar{\Lambda}_1 + \bar{\Lambda}_2 \vert \theta)  =  \frac{1}{2} + \Delta_s, $$
 where $\Delta_s = \frac{s^2 +2 s }{8}$.

Assume that the top space of $H_{\theta} (M)$ is isomorphic to the $sl(2)$--module with $sl(2)$ weight $\bar{\lambda}_1$.  Let $w$ be a highest weight vector of  $H_{\theta} (M)$. Then regarding $H_{\theta} (M)$ as an $L_{-\frac{1}{2}} (sl(2)) \otimes M(1)^+$--module, we see that
 $$(L_{-\frac{1}{2}} (sl(2)) \otimes M(1)^+)\cdot  w = L_{sl(2)}\left(-\frac{1}{2},\bar\lambda_1\right) \otimes W^1$$
 where $W^1$ is a highest $M(1)^+$--module whose top component has conformal weight
 $\Delta_s$, and $\bar{\lambda}_1$ is the fundamental weight of $sl(2)$.

If $s >0$, then $\Delta_s >0$.  First we notice the following fusion rules for  $L_{-\frac{1}{2}} (sl(2))$-modules (cf.  \cite[Section 5.2.1]{AKMPP2}):
  $$L_{sl(2)} \left(-\frac{1}{2},\bar\lambda_1\right) \boxtimes  L_{sl(2)} \left(-\frac{1}{2},\bar\lambda_1\right) =  L_{-\frac{1}{2}} (sl(2)).$$
Using fusion rules calculation from \cite{Abe3} (see Step 2 of \cite[Section 4.2]{Abe3}
  \protect\footnote{Strictly speaking Abe proved that if $N, L$ are irreducible $M(1)^+$--modules  and if  there is non-zero intertwining operator of type
   $\binom{L}{M(1)^- \   N }$, then  $(N, L)$ is one of the following pairs $(M(1)^{\pm}, M(1)^{\mp})$,  $(M(1, \theta) ^{\pm},  M(1, \theta) ^{\mp})$, $(M(1, \lambda), M(1, \mu)) \  (\lambda^2 = \mu ^2)$. But the  calculation   uses  the structure of the $A(M(1)^+)$--bimodule $A(M(1)^-)$, and doesn't depend on irreducibility of $N$ and $L$.  Therefore these arguments can be applied on $N= W^1$ and $L = W^0$ without assumption of irreducibility. }
   ), one concludes that:
  \begin{itemize}
  \item if $s=2$,  $ W^0 \subset M(1)^-  \times   W^1$, where $W^0$ is a highest weight module for $M(1)^+$ of conformal weight $0$;
  \item if $ s \ne 2$;  $ W^0 \subset M(1)^-  \times   W^1$, where $W^0$ is a highest weight module for $M(1)^+$ of conformal weight $\Delta_s$.
 \end{itemize}
This implies that  $(L_{sl(2)} \big(-\frac{1}{2},\bar\lambda_1\big) \otimes M(1)^- )  \cdot (L_{sl(2)} \big(-\frac{1}{2},\bar\lambda_1\big) \otimes W^1)$  contains an $L_{-\frac{1}{2}} (sl(2)) \otimes M(1)^+$-submodule of the form
  $  L_{-\frac{1}{2}} (sl(2)) \otimes  W^0$   having  lowest weight component of
  \begin{itemize}
  \item conformal weight  zero, if $s=2$; or
  \item  conformal weight $\Delta_s$  if $ s\ne 2$.
  \end{itemize}
This is a contradiction, since the top space of  $H_{\theta}(M)$ has conformal weight $\frac{1}{2} + \Delta_s$ which can not be neither zero or $\Delta_s$. Thus we have proved that $s=0$, so $M= L_{sp(2n)}(-1, \bar{\Lambda}_2)$.
\end{proof}


We next show that every highest weight $L_{-1}(sp(2n))$-module in  $\cC_{-1}(sp(2n))$ is irreducible. First, we show highest weight $L_{-1}(sp(2n))$-module of highest weight $-\Lambda_0+s\bar{\Lambda}_1$ is irreducible:
\begin{proposition}\label{irre3}
Suppose $M$ is a highest weight $L_{-1}(sp(2n))$-module in $\cC_{-1}(sp(2n))$ with highest weight $-\Lambda_0+s\bar{\Lambda}_1$ for $s\in \Z_{> 0}$, then $M$ is irreducible.
\end{proposition}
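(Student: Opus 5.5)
Let $M$ be a highest weight $L_{-1}(sp(2n))$-module in $\cC_{-1}(sp(2n))$ with highest weight $-\Lambda_0+s\bar{\Lambda}_1$, $s\in\Z_{>0}$. The plan is to pass to the minimal $W$-algebra with the exact functor $H_{\theta}$ and then use the tensor category $\cC_1(M(1)^+)$ to rule out non-split extensions. The steps are: show that $H_\theta(M)$ is irreducible, then pull irreducibility back to $M$.

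\emph{Composition factors of $M$.} By Theorem \ref{clasirr}, every composition factor of $M$ is $L_{sp(2n)}(-1,t\bar{\Lambda}_1)$ for some $t\ge 0$, or $L_{sp(2n)}(-1,\bar{\Lambda}_2)$. If $M$ were reducible, some factor other than the top would occur. Its conformal weight must exceed that of $M$ by a positive integer. The conformal weights are
\[
h_t=\frac{(t\bar\Lambda_1|t\bar\Lambda_1+2\rho)}{2(h^\vee-1)}=\frac{t(t+2n)}{4n}
\]
for the $t\bar\Lambda_1$ factors, and a fixed value for $\bar\Lambda_2$. The central character (linkage) of the highest weight $-\Lambda_0+s\bar\Lambda_1$ restricts the possible factors further. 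I expect only factors $L_{sp(2n)}(-1,t\bar\Lambda_1)$ with $t\equiv s \pmod 2$ to survive, together with possibly $\bar\Lambda_2$ when $s$ is even.

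\emph{Apply $H_\theta$.} By Theorem \ref{exact}, $H_\theta(M)$ is a finite length ordinary-type $W_{-1}^{min}(sp(2n))$-module. Its composition factors are the modules described in Theorems \ref{structureM} and \ref{structureM2}, and $H_\theta(L_{-1}(sp(2n)))=W_{-1}^{min}(sp(2n))$ is given by Theorem \ref{structureA}. Restricting to $L_{-\frac12}(sp(2n-2))\otimes M(1)^+$, the summand $\mathcal S(n-1)\otimes M(1,\frac{s}{\sqrt{2n}})$ coming from the top factor contains $L_{-\frac12}(sp(2n-2))\otimes M(1,\frac{s}{\sqrt{2n}})$. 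Every other factor contributes $L_{-\frac12}(sp(2n-2))$-isotypic parts tensored with $M(1)^\pm$ or $M(1,\frac{t}{\sqrt{2n}})$. Take the $L_{-\frac12}(sp(2n-2))$-multiplicity space (the commutant part) of the vacuum module in $H_\theta(M)$. This is a finite length module in $\cC_1(M(1)^+)$, and it is generated by the top vector, because $H_\theta(M)$ is highest weight, being a quotient of $H_\theta$ of a generalized Verma module. The highest weight is nonzero: $\frac{s^2}{4n}>0$. By Theorem \ref{thm:irreduciblehwm}(1), this highest weight $M(1)^+$-module is irreducible, hence isomorphic to $M(1,\frac{s}{\sqrt{2n}})$. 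Using the simple current $L_{-\frac12}(sp(2n-2),\lambda_1)\otimes M(1)^-$ and the fusion rule $M(1)^-\boxtimes M(1,\lambda)=M(1,\lambda)$ from Theorem \ref{fr-1}, the $\lambda_1$-isotypic part is then also forced to be $M(1,\frac{s}{\sqrt{2n}})$. Hence $H_\theta(M)\cong \mathcal S(n-1)\otimes M(1,\frac{s}{\sqrt{2n}})$ is irreducible. One point needs care here: the commutant multiplicity space must be shown to be generated by its top. I would argue this via the associativity of the $U$-action combined with rigidity of $\cC_1(M(1)^+)$. This is where the tensor structure is used.

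\emph{Pull back to $M$.} $H_\theta$ is exact and sends no nonzero module of our list to zero. The modules $H_\theta(L(-1,t\bar\Lambda_1))$ and $H_\theta(L(-1,\bar\Lambda_2))$ are nonzero by Theorems \ref{structureA}, \ref{structureM}, and \ref{structureM2}. Since $H_\theta(M)$ is irreducible, $M$ has length one, so $M$ is irreducible. I expect the main obstacle to be the second step: proving that the $M(1)^+$-multiplicity space is a highest weight module, that is, excluding self-extensions of $M(1,\lambda)$ as in Theorem \ref{Abe} inside $H_\theta(M)$.
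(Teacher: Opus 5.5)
\textbf{Gap at the decisive step.} Your overall route is the paper's: apply $H_\theta$, use Theorem \ref{thm:irreduciblehwm}(1) to identify the $M(1)^+$-part at $w^s$ as $M(1,\frac{s}{\sqrt{2n}})$, propagate with $M(1)^-\boxtimes M(1,\lambda)=M(1,\lambda)$, conclude $H_\theta(M)\cong\mathcal S(n-1)\otimes M(1,\frac{s}{\sqrt{2n}})$, and pull back. The problem is the step you flagged yourself. You apply Theorem \ref{thm:irreduciblehwm}(1) to the \emph{full} vacuum multiplicity space $W^0$. That requires $W^0$ to be cyclic on $w^s$, and ``$H_\theta(M)$ is highest weight'' does not give this. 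Write $U^0=L_{sp(2n-2)}(-\frac12,0)\otimes M(1)^+$ and $U^1=L_{sp(2n-2)}(-\frac12,\lambda_1)\otimes M(1)^-$. Generation over $U^0\oplus U^1$ lets vectors $u_m u'_n w^s$ with $u,u'\in U^1$ land in the vacuum-isotypic part. A priori $W^0$ could therefore be a nonsplit self-extension of $M(1,\lambda)$, which Theorem \ref{Abe} allows. No argument excluding this is given. Rigidity of $\cC_1(M(1)^+)$ is not what rules it out. For the same reason, your identification of the $\lambda_1$-isotypic part is also unjustified. Your step 1 (linkage, and the finite length of $M$ and of $H_\theta(M)$) is speculative and unnecessary.

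\textbf{How the paper closes it.} The paper never examines $W^0$ directly.
\begin{itemize}
\item The top of $H_\theta(M)$ is the trivial $sp(2n-2)$-module $\C w^s$, and $J_3$ commutes with $sp(2n-2)$. Hence $w^s$ is a $J_3$-eigenvector killed by positive modes, so $M(1)^+\cdot w^s$ is automatically a highest weight module. By Theorem \ref{thm:irreduciblehwm}(1) it is $\cong M(1,\frac{s}{\sqrt{2n}})$.
\item By admissibility of $-\frac12$, $U^0\cdot w^s\cong L_{sp(2n-2)}(-\frac12,0)\otimes M(1,\frac{s}{\sqrt{2n}})$.
\item The span of $u_nv$ with $u\in U^1$ and $v\in U^0\cdot w^s$ is the image of an intertwining operator. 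By the fusion rule it is $\cong L_{sp(2n-2)}(-\frac12,\lambda_1)\otimes M(1,\frac{s}{\sqrt{2n}})$.
\item The submodule generated by $w^s$ is $\mathrm{span}\{v_nw^s : v\in U,\ n\in\Z\}=U^0\cdot w^s+U^1\cdot w^s$ (Dong--Mason, Li). So $H_\theta(M)$ is the direct sum of these two irreducible pieces, and no extension can occur.
\end{itemize}
This also shows a posteriori that your $W^0$ equals $M(1)^+\cdot w^s$. For the pull-back you only need that $H_\theta$ is exact and nonzero on each irreducible module of Theorem \ref{clasirr}, applied to an irreducible subquotient of the maximal submodule of $M$.
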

\begin{proof}
By \cite[Theorem~6.3]{KW2}, $H_{\theta}(M)$ is a highest weight module of $W_{-1}^{min}(sp(2n))$. Since $L_{-\frac{1}{2}}(sp(2n))\otimes M(1)^+$ is a subalgebra of $W_{-1}^{min}(sp(2n))$, $H_{\theta}(M)$ is also a $L_{-\frac{1}{2}}(sp(2n))\otimes M(1)^+$-module. Let $w^s$ be the highest weight vector of $H_{\theta}(M)$.
Since $$[J_3, x_0]=0$$ for any $x\in sp(2n-2)$, $J_3w^s$ is also a highest weight vector of $H_{\theta}(M)$. This implies that the $M(1) ^+$-submodule of $H_{\theta}(M)$ generated by $w^s$ is a highest weight module for $M(1)^+$. Moreover, by Theorem \ref{structureM2}, the conformal weight of $w^s$ with respect to $M(1)^+$ is $\frac{s^2}{4n}$. Therefore, by Theorem 3.8, the $M(1) ^+$-submodule of $H_{\theta}(M)$ generated by $w^s$ must be isomorphic to $M(1, \frac{s}{\sqrt{2n}})$.

Since $-\frac{1}{2}$ is an admissible level of $sp(2n-2)$ (cf. \cite{KW1}), it follows from  Theorem \ref{structureM2} that $L_{sp(2n-2)}(-\frac{1}{2},0) \otimes M(1) ^+$-submodule of $H_{\theta}(M)$ generated by $w^s$ is isomorphic to $L_{sp(2n-2)}\big(-\frac{1}{2},0\big) \otimes M\big(1, \frac{s}{\sqrt{2 n}} \big)$. By Proposition \ref{fusionM2}, $$M(1)^- \boxtimes M\left(1, \frac{s}{\sqrt{2n}}\right)= M\left(1, \frac{s}{\sqrt{2 n}}\right).$$
This implies that
$$\bigg\{u_nv|u\in L_{sp(2n-2)}\left(-\frac{1}{2},\lambda_1\right)  \otimes M(1)^-, v\in L_{sp(2n-2)}\left(-\frac{1}{2},0\right) \otimes M\left(1, \frac{s}{\sqrt{2 n}}\right), n\in \Z\bigg\}$$
must be isomorphic to $L_{sp(2n-2)}\big(-\frac{1}{2},\lambda_1\big) \otimes M\big(1, \frac{s}{\sqrt{2 n}}\big )$. Since  $H_{\theta}(M)$ is generated by $w^s$, $H_{\theta}(M)$ has the following decomposition as an $L_{sp(2n-2)}\big(-\frac{1}{2},0\big) \otimes M(1) ^+$-module:
 $$H_{\theta}(M)\cong L_{sp(2n-2)}\left(-\frac{1}{2},0\right) \otimes M\left(1, \frac{s}{\sqrt{2 n}}\right)\oplus L_{sp(2n-2)}\left(-\frac{1}{2},\lambda_1\right) \otimes M\left(1, \frac{s}{\sqrt{2 n}}\right).$$
 Therefore, it follows from Theorem \ref{structureM2} that $H_{\theta}(M)$ is an irreducible $W_{-1}^{min}(sp(2n))$-module. Hence, $M$ is an irreducible $L_{-1}(sp(2n))$-module.
\end{proof}

We next show that every highest weight $L_{-1}(sp(2n))$-module of highest weight $-\Lambda_0$ is irreducible:
\begin{proposition}\label{irre1}
Suppose $M$ is a highest weight $L_{-1}(sp(2n))$-module in  $\cC_{-1}(sp(2n))$ with highest weight $-\Lambda_0$, then $M$ is irreducible.
\end{proposition}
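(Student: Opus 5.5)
The plan follows the template of Proposition \ref{irre3}: pass to $H_{\theta}(M)$ and use the structure of $W_{-1}^{min}(sp(2n))$ as an extension of $L_{-\frac{1}{2}}(sp(2n-2))\otimes M(1)^+$ (Theorem \ref{structureA}). The new feature is that the $M(1)^+$-part now has conformal weight $0$, and Theorem \ref{thm:irreduciblehwm}(2) allows a non-split length two module there.

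\textbf{Step 1: the highest weight vector of $H_\theta(M)$.} By \cite[Theorem~6.3]{KW2}, $H_{\theta}(M)$ is a highest weight $W_{-1}^{min}(sp(2n))$-module. Its highest weight vector $w$ has $sp(2n-2)$-weight $0$ and conformal weight $0$, exactly as for $H_\theta(L_{-1}(sp(2n)))=W_{-1}^{min}(sp(2n))$. Since $J_3$ commutes with $x_0$ for $x\in sp(2n-2)$, the argument used in Proposition \ref{irre3} shows that the $M(1)^+$-submodule generated by $w$ is a highest weight $M(1)^+$-module of conformal weight $0$.

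\textbf{Step 2: identifying the submodules.} By Theorem \ref{thm:irreduciblehwm}(2), this $M(1)^+$-submodule is either $M(1)^+$ or a non-split extension $0\to M(1)^-\to W\to M(1)^+\to 0$. Since $L_{-\frac12}(sp(2n-2))$ at admissible level has semisimple category, the $L_{-\frac12}(sp(2n-2))\otimes M(1)^+$-submodule generated by $w$ is $L_{sp(2n-2)}(-\frac12,0)\otimes W$ with $W\in\{M(1)^+,\ \text{the extension}\}$. Acting by the odd part $L_{sp(2n-2)}(-\frac12,\lambda_1)\otimes M(1)^-$, and using the fusion rule $M(1)^-\boxtimes M(1)^\pm=M(1)^\mp$ (Proposition \ref{fusionM1}), the module $H_\theta(M)$ is a quotient of
\[
L_{sp(2n-2)}\left(-\tfrac12,0\right)\otimes W\ \oplus\ L_{sp(2n-2)}\left(-\tfrac12,\lambda_1\right)\otimes (M(1)^-\boxtimes W).
\]

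\textbf{Step 3: ruling out the extension (main obstacle).} We must show $W\cong M(1)^+$. Suppose $W$ is the non-split extension. Then $M(1)^-\boxtimes W$ is a non-split extension of $M(1)^-$ by $M(1)^+$, because $M(1)^-$ is an invertible simple current in the rigid category $\mathcal C$. The vector $w$ has $L(0)$-eigenvalue $0$, but the weight-one space of $W$ contains $L(-1)w\neq 0$. This would make $w$ a non-vacuum-like vector of weight $0$ in $H_\theta(M)$.

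To reach a contradiction, I would use the fact that $H_\theta(M)$ is a highest weight module whose highest weight is that of the vacuum, together with the sesquilinear/contragredient pairing. Concretely, $H_\theta(M)$ is a cyclic highest weight module, and $L(-1)w$ must lie in a proper submodule. The image of $L(-1)w$ in $L_{sp(2n-2)}(-\frac12,0)\otimes M(1)^-$ would be a singular vector for $W_{-1}^{min}(sp(2n))$. This can be checked using the generators $y\otimes h(-1)\mathbf 1$ and the structure in Theorem \ref{structureA}.

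It then remains to show that this singular vector is zero. The cleanest route would be to argue at the level of $M$: by exactness of $H_\theta$ (Theorem \ref{exact}), such a singular vector pulls back to a submodule of $M$ of conformal weight $1$ with top space $\mathbb C$. Lowest weight considerations exclude this, since the weight-one space of $V^{-1}(sp(2n))$ is $sp(2n)$, which has no trivial summand. Hence $L(-1)w=0$, so $w$ is vacuum-like and $W=M(1)^+$.

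\textbf{Step 4: conclusion.} With $W=M(1)^+$, $H_\theta(M)$ is a quotient of $W_{-1}^{min}(sp(2n))$ viewed as a module over itself. Since this algebra is simple, $H_\theta(M)$ is irreducible, and therefore $M$ is irreducible, as in Proposition \ref{irre3}.
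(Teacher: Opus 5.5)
Your Step 3 does not go through, and it is the step that carries all the content.

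(i) You assert that the image of $L(-1)w$ is a singular vector for $W_{-1}^{min}(sp(2n))$, but this is not checked and is doubtful. For a weight $\frac32$ generator $G=y\otimes h(-1)\1$ one gets $G_{(1)}L(-1)w=L(-1)G_{(1)}w+G_{(0)}w=G_{(0)}w$. In the very scenario you want to exclude, $M(1)^-\boxtimes W$ contains $M(1)^+$, so $L_{sp(2n-2)}(-\frac12,\lambda_1)\otimes(M(1)^-\boxtimes W)$ has a nonzero weight $\frac12$ space. Hence nothing forces $G_{(0)}w=0$.

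(ii) Even granting a singular vector of weight $1$, nothing inside $H_\theta(M)$ is contradicted. A highest weight module of conformal weight $0$ may have a proper submodule starting in weight $1$. Contrast Proposition \ref{irre2}, where the contradiction is a submodule of weight $0<\frac12$.

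(iii) Exactness of $H_\theta$ does not let you pull submodules or singular vectors of $H_\theta(M)$ back to $M$; it only pushes submodules of $M$ forward. Moreover $H_\theta$ shifts conformal weights and top spaces, so the ``submodule of $M$ of conformal weight $1$ with top $\C$'' has no justification.

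(iv) Your closing observation, that the weight-one space of $V^{-1}(sp(2n))$ has no trivial summand, concerns the highest weight vector $v$ of $M$, not the vector $w\in H_\theta(M)$.

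The fix is to apply that last observation directly to $M$. This is exactly the paper's proof, and it makes the passage to $H_\theta(M)$ unnecessary.
\begin{itemize}
\item Since $M\in\cC_{-1}(sp(2n))$ has highest weight $-\Lambda_0$, its top is the trivial $sp(2n)$-module, so $x(0)v=0$ for all $x\in sp(2n)$.
\item The relation $[L(-1),x(m)]=-mx(m-1)$ then shows that $L(-1)v$ is annihilated by all $x(m)$ with $m\ge 0$.
\item So $L(-1)v$ is an $sp(2n)$-invariant vector in $M_1=\mbox{span}\{x(-1)v\}$. This space is a quotient of the adjoint representation, which has no invariants, hence $L(-1)v=0$. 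Equivalently, the Sugawara formula gives $L(-1)v=\frac{1}{k+h^\vee}\sum_i x_i(-1)x^i(0)v=0$.
\item Thus $v$ is vacuum-like, and \cite[Proposition~4.7.7]{LL} together with simplicity of $L_{-1}(sp(2n))$ gives $M\cong L_{-1}(sp(2n))$.
\end{itemize}
Once you have this, your Steps 1--4, Theorem \ref{thm:irreduciblehwm}, the fusion rules and the structure of $W_{-1}^{min}(sp(2n))$ are all superfluous. The $H_\theta$ template is genuinely needed only in Propositions \ref{irre3} and \ref{irre2}, where the top space is a nontrivial $sp(2n)$-module and this direct argument is unavailable.
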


\begin{proof}
Let $w$ be the highest weight vector of $M$. Since
$$[L(-1), x(n)]=-nx(n-1)$$
for any $x\in sp(2n)$, $L(-1)w$ is also a highest weight vector for $L_{-1}(sp(2n))$. Since the weight one subspace of $M$ does not contain a singular vector for $L_{-1}(sp(2n))$,  $L(-1)w=0$. Therefore,  $w$ is a vacuum-like vector. By Proposition 4.7.7 of \cite{LL}, $M$ must be isomorphic to $L_{-1}(sp(2n))$. Thus, $M$ is irreducible.
\end{proof}

Finally, we show that every highest weight $L_{-1}(sp(2n))$-module of highest weight $-\Lambda_0+\bar{\Lambda}_2$ is irreducible.
\begin{proposition}\label{irre2}
Suppose $M$ is a highest weight $L_{-1}(sp(2n))$-module in $\cC_{-1}(sp(2n))$ with highest weight $-\Lambda_0+\bar{\Lambda}_2$, then $M$ is irreducible.
\end{proposition}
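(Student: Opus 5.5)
We follow the pattern of Proposition \ref{irre3}. By Theorem \ref{exact}, $H_{\theta}$ is exact and sends ordinary modules to ordinary modules. Since $\bar{\Lambda}_2$ is not of the form $s\bar{\Lambda}_1$ and sits in a different block, it suffices to show that $H_{\theta}(M)$ is an irreducible $W_{-1}^{min}(sp(2n))$-module. Indeed, a nonzero proper submodule $K\subset M$ would be generated by a singular vector of weight $-\Lambda_0+\mu$ with $\mu\ne\bar{\Lambda}_2$. By Theorem \ref{clasirr} and the ordinarity of $M$, we must have $\mu\in\{0, s\bar{\Lambda}_1\}$, and $H_{\theta}$ of the corresponding irreducible quotient is nonzero by \cite[Theorem~6.3]{KW2}. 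Exactness then gives $H_\theta(K)\neq 0$ as a proper submodule of $H_\theta(M)$.

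First, let $w$ be the highest weight vector of $H_{\theta}(M)$, which is a highest weight module by \cite[Theorem~6.3]{KW2}. Its top space is $L_{sp(2n-2)}(\lambda_1)$ and its conformal weight is $\frac12$ (as in the proof of Theorem \ref{structureM}). We then view $H_\theta(M)$ as an $L_{-\frac12}(sp(2n-2))\otimes M(1)^+$-module. The component of $w$ along $L_{-\frac12}(sp(2n-2))$ has conformal weight $\frac{(\lambda_1,\lambda_1+2\rho)}{2(-\frac12+n)}=\frac12$, so its $M(1)^+$-conformal weight is $0$. Since $J_3$ commutes with $x_0$ for $x\in sp(2n-2)$ (and with the affine part), the $M(1)^+$-submodule generated by $w$ is a highest weight $M(1)^+$-module of conformal weight $0$. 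By Theorem \ref{thm:irreduciblehwm}(2), it is either $M(1)^+$ or the length-two logarithmic extension $E$ of $M(1)^+$ by $M(1)^-$. Because $-\frac12$ is admissible for $sp(2n-2)$, the submodule generated by $w$ is $L_{sp(2n-2)}(-\frac12,\lambda_1)\otimes W^0$ with $W^0\in\{M(1)^+,E\}$.

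Next we apply the odd part $L_{sp(2n-2)}(-\frac12,\lambda_1)\otimes M(1)^-$ of $W_{-1}^{min}(sp(2n))$ (Theorem \ref{structureA}) to this submodule. Using the fusion rules $L(-\frac12,\lambda_1)\boxtimes L(-\frac12,\lambda_1)=L_{-\frac12}(sp(2n-2))$ and, in the tensor category $\cC_1(M(1)^+)$, $M(1)^-\boxtimes M(1)^\pm=M(1)^\mp$ (Proposition \ref{fusionM1}, together with exactness and rigidity), we get that $H_\theta(M)$ decomposes as
\[
L_{sp(2n-2)}(-\tfrac12,\lambda_1)\otimes W^0\oplus L_{sp(2n-2)}(-\tfrac12,0)\otimes (M(1)^-\boxtimes W^0)\text{-quotient}.
\]
If $W^0=M(1)^+$, this matches the decomposition in Theorem \ref{structureM}, which is irreducible, and we are done.

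The main obstacle is excluding $W^0=E$, in which case $H_\theta(M)$ would be a non-split self-extension involving $H_\theta(L(-1,\bar\Lambda_2))$ twice. My first attempt uses $L(0)$: on $E$, $L(0)$ acts non-semisimply, mixing the weight-$0$ vector with the image of $L(-1)$. However, $H_\theta(M)$ is ordinary by Theorem \ref{exact}(2), since $M\in\cC_{-1}(sp(2n))$ is a highest weight module and $L(0)$ acts semisimply on it through the Sugawara construction. The Virasoro element of $W^{min}$ is $\omega_1\otimes\1+\1\otimes\omega_2$, and $\omega_1$ acts semisimply on the admissible affine factor, so $\omega_2$ must act semisimply on $W^0$. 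This would force $W^0\ne E$, provided $L(0)$ is indeed non-semisimple on Abe's module $E$. If it is not (that is, if $E$ is merely non-split but $L(0)$-semisimple), I would instead use the conformal weight: $E$ has $M(1)^-$ in degree $1$, so $H_\theta(M)$ would contain $L_{sp(2n-2)}(-\frac12,\lambda_1)\otimes M(1)^-$ at weight $\frac32$ generated from $w$ by $M(1)^+$ alone. Comparing characters with $H_\theta$ of the generalized Verma quotient would give a contradiction, since the characters of $H_\theta$ of highest weight modules are bounded by those of the Verma module computed via \cite{KW2}.
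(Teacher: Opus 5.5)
Your reduction to the irreducibility of $H_\theta(M)$ is fine. It is more careful than the paper's one-line ``hence $M$ is irreducible''; note that a proper submodule only \emph{contains} a singular vector, but that suffices. Your analysis up to the dichotomy $W^0\in\{M(1)^+,E\}$, including the case $W^0=M(1)^+$, matches the paper.

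The gap is the exclusion of $W^0=E$. Your first attempt rests on a false premise. By the proof of Theorem \ref{thm:irreduciblehwm}, $E$ is generated by its highest weight vector $v$ with $L(0)v=0$. Hence $E$ is spanned by the vectors $u_nv$, and $L(0)u_nv=(\wt u-n-1)u_nv$, so $L(0)$ acts semisimply on $E$. The non-splitting comes from $L(-1)v$ being a nonzero multiple of $h(-1)\1\in M(1)^-$, not from a Jordan block. So ordinarity of $H_\theta(M)$ excludes nothing. Your fallback character comparison is not an argument either. The character of $H_\theta$ of the generalized Verma module is that of a universal highest weight $W_{-1}^{min}(sp(2n))$-module, far larger than twice $\mathrm{ch}\,H_\theta(L_{sp(2n)}(-1,\bar{\Lambda}_2))$, and you do not identify any inequality that fails.

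The missing idea is the paper's conformal-weight argument. If $W^0=E$, then $H_\theta(M)$ contains the submodule $L_{sp(2n-2)}(-\tfrac12,\lambda_1)\otimes M(1)^-$ coming from $M(1)^-\subset E$. Apply the odd part $L_{sp(2n-2)}(-\tfrac12,\lambda_1)\otimes M(1)^-$ of $W_{-1}^{min}(sp(2n))$ to \emph{this} submodule. The span of the resulting $u_nv$ is nonzero, by simplicity of $W_{-1}^{min}(sp(2n))$. It is also a quotient of the fusion product. By $L_{sp(2n-2)}(-\tfrac12,\lambda_1)\boxtimes L_{sp(2n-2)}(-\tfrac12,\lambda_1)=L_{-\frac12}(sp(2n-2))$ and $M(1)^-\boxtimes M(1)^-=M(1)^+$, that fusion product is the irreducible module $L_{sp(2n-2)}(-\tfrac12,0)\otimes M(1)^+$. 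Thus $H_\theta(M)$ contains a vector of conformal weight $0$. But all conformal weights of the highest weight module generated by $w$ lie in $\tfrac12+\tfrac12\Z_{\geq 0}$, a contradiction.

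Your third paragraph obscures this. Applying the odd part to all of $L_{sp(2n-2)}(-\tfrac12,\lambda_1)\otimes E$ only yields a quotient of $L_{sp(2n-2)}(-\tfrac12,0)\otimes(M(1)^-\boxtimes E)$. Since $M(1)^+$ sits as a \emph{sub}module of $M(1)^-\boxtimes E$, such a quotient could miss the weight-$0$ vector. Restricting to $M(1)^-\subset E$ is what forces it to appear.
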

\begin{proof} By \cite[Theorem~6.3]{KW2}, $H_{\theta}(M)$ is a highest weight module of $W_{-1}^{min}(sp(2n))$, and $H_{\theta}(M)$ can be viewed as an $L_{-\frac{1}{2}}(sp(2n))\otimes M(1)^+$-module.
Let $w$ be the highest weight vector of $H_{\theta}(M)$. By the similar argument as in Proposition \ref{irre3}, the $M(1) ^+$-submodule $\langle w\rangle$ of $H_{\theta}(M)$ generated by $w$ is a highest weight module for $M(1) ^+$. Moreover, by Theorem \ref{structureM}, the conformal weight of $w^s$ with respect to  $M(1) ^+$ is $0$. Therefore, by Theorem 3.8, the $M(1) ^+$-submodule $\langle w\rangle$ is isomorphic to  $M(1) ^+$ or there is an exact sequence
\begin{equation*}
    0 \rightarrow M(1) ^- \rightarrow \langle w\rangle \rightarrow M(1) ^+ \rightarrow 0.
    \end{equation*}

If $\langle w\rangle$ is isomorphic to  $M(1) ^+$, by the similar argument as in Proposition \ref{irre3}, $H_{\theta}(M)$ has the following decomposition as a $L_{-\frac{1}{2}}(sp(2n))\otimes M(1)^+$-module:
 $$H_{\theta}(M)\cong  L_{sp(2n-2)}\left(-\frac{1}{2},0\right) \otimes M(1) ^- \oplus  L_{sp(2n-2)}\left(-\frac{1}{2},\lambda_1\right) \otimes M(1) ^+.$$
Therefore, it follows from Theorem \ref{structureM} that $H_{\theta}(M)$ is an irreducible $W_{-1}^{min}(sp(2n))$-module. Hence, $M$ is an irreducible $L_{-1}(sp(2n))$-module.

We next consider the case that there is an exact sequence
\begin{equation*}
    0 \rightarrow M(1) ^- \rightarrow \langle w\rangle \rightarrow M(1) ^+ \rightarrow 0.
    \end{equation*} The $L_{sp(2n-2)}(-\frac{1}{2},0) \otimes M(1) ^+$-submodule of $H_{\theta}(M)$ generated by $w$ contains a submodule isomorphic to $L_{sp(2n-2)}\big(-\frac{1}{2},\lambda_1\big) \otimes M(1) ^-$. By Proposition \ref{fusionM1}, $$M(1)^- \boxtimes M(1)^- = M(1)^+.$$
    This implies that
$$\bigg\{u_nv\bigg|u, v\in L_{sp(2n-2)}\left(-\frac{1}{2},\lambda_1\right)  \otimes M(1)^-, n\in \Z\bigg\}$$
must be isomorphic to $L_{sp(2n-2)}\big(-\frac{1}{2},0\big) \otimes M(1) ^+$. Therefore, $H_{\theta}(M)$ contains a $W_{-1}^{min}(sp(2n))$-submodule isomorphic to $W_{-1}^{min}(sp(2n))$. However, $H_{\theta}(M)$ is a highest weight module for $W_{-1}^{min}(sp(2n))$ generated by $w$ and the conformal weight of  $H_{\theta}(M)$ is $\frac{1}{2}$. We then  obtain a contradiction.
\end{proof}

Now we are ready to prove the main result in this section.
\begin{theorem}\label{semitypeC}
The category $\cC_{-1}(sp(2n))$ is semisimple.
\end{theorem}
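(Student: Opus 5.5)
The plan is to apply Theorem \ref{keysemi}: it suffices to show that every highest weight $L_{-1}(sp(2n))$-module in $\cC_{-1}(sp(2n))$, in particular every generalized Verma module of $L_{-1}(sp(2n))$ lying in $\cC_{-1}(sp(2n))$, is irreducible.

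First I will reduce to the three types of highest weights already treated. Let $M$ be a highest weight $L_{-1}(sp(2n))$-module in $\cC_{-1}(sp(2n))$ with highest weight $-\Lambda_0+\mu$. Its irreducible quotient $L_{sp(2n)}(-1,\mu)$ is an irreducible module in $\cC_{-1}(sp(2n))$. By Theorem \ref{clasirr}, this forces $\mu = s\bar{\Lambda}_1$ for some $s\in\Z_{\geq 0}$, or $\mu=\bar{\Lambda}_2$.

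Next I will split into cases according to $\mu$:
\begin{itemize}
\item If $\mu=0$, Proposition \ref{irre1} gives irreducibility.
\item If $\mu=s\bar{\Lambda}_1$ with $s>0$, Proposition \ref{irre3} applies.
\item If $\mu=\bar{\Lambda}_2$, Proposition \ref{irre2} applies.
\end{itemize}
Hence every highest weight module in $\cC_{-1}(sp(2n))$ is irreducible, and Theorem \ref{keysemi} yields semisimplicity.

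The only delicate point is checking that the hypotheses of the Propositions match the hypothesis of Theorem \ref{keysemi}. A generalized Verma module $V_{sp(2n)}(-1,\mu)$ that happens to be an $L_{-1}(sp(2n))$-module in $\cC_{-1}(sp(2n))$ is indeed a highest weight $L_{-1}(sp(2n))$-module. Grading-restrictedness is part of membership in $\cC_{-1}(sp(2n))$, and the Propositions were stated for exactly such modules. I expect no real obstacle beyond this bookkeeping; the substantive work (the $M(1)^+$ fusion rules and the highest weight classification, Theorem \ref{thm:irreduciblehwm}) has already been done in those Propositions.

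As a consequence, $KL_{-1}(sp(2n))=\cC_{-1}(sp(2n))$, since every finite length module is a direct sum of simples in $\cC_{-1}(sp(2n))$. By Theorem \ref{tensorCY}, it is then a braided tensor category.
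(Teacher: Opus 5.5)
Your proposal is correct and follows the paper's proof: you reduce via Theorem~\ref{keysemi} to irreducibility of highest weight modules in $\cC_{-1}(sp(2n))$, then settle the three possible highest weights with Propositions~\ref{irre1}, \ref{irre3} and \ref{irre2}. Your explicit appeal to Theorem~\ref{clasirr} to show that these three cases are exhaustive is a step the paper leaves implicit.
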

\begin{proof}
By Theorem \ref{keysemi}, it is enough to prove that every highest weight $L_{-1}(sp(2n))$-module in $\cC_{-1}(sp(2n))$ is irreducible. This follows from Propositions \ref{irre3}, \ref{irre1}, \ref{irre2}.
\end{proof}

By Theorem \ref{semitypeC}, the category $KL_{-1}(sp(2n))$ is the same as the category $\cC_{-1}(sp(2n))$, and hence it is a semisimple category consisting of direct sums of irreducible modules $L_{sp(2n)}(-1, s\bar{\Lambda}_1)$, $s\in \Z_{\geq 0}$ or $L_{sp(2n)}(-1, \bar{\Lambda}_2)$. Moreover, by Theorem \ref{tensorCY}, we have
\begin{theorem}\label{tencn}
The category $KL_{-1}(sp(2n))$ is a braided tensor category.
\end{theorem}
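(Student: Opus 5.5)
We deduce the statement from Theorem \ref{semitypeC} together with Theorem \ref{tensorCY}. By Theorem \ref{semitypeC}, every object of $\cC_{-1}(sp(2n))$ is a direct sum of irreducible modules. By Theorem \ref{clasirr}, these irreducibles are the modules $L_{sp(2n)}(-1, s\bar{\Lambda}_1)$ for $s\in\Z_{\geq 0}$ and $L_{sp(2n)}(-1,\bar{\Lambda}_2)$.

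First, every generalized Verma module of $L_{-1}(sp(2n))$ lying in $\cC_{-1}(sp(2n))$ is a highest weight module in $\cC_{-1}(sp(2n))$. By Propositions \ref{irre3}, \ref{irre1} and \ref{irre2}, every such module is irreducible. The only possible highest weights are those in the list of Theorem \ref{clasirr}, since the top of a highest weight module maps onto the top of its simple quotient. In particular, every such module has length one, so the hypothesis of Theorem \ref{tensorCY} holds and $KL_{-1}(sp(2n))$ carries a braided tensor category structure.

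Second, we identify $KL_{-1}(sp(2n))$ with $\cC_{-1}(sp(2n))$. An object of $KL_{-1}(sp(2n))$ has finite length with composition factors in $\cC_{-1}(sp(2n))$. Each extension of two grading-restricted modules is again grading-restricted, since weight spaces are finite-dimensional and the weights are bounded below. So by induction on length every object of $KL_{-1}(sp(2n))$ lies in $\cC_{-1}(sp(2n))$, and by semisimplicity it is a finite direct sum of the irreducibles above. Conversely, an object of $\cC_{-1}(sp(2n))$ is a direct sum of irreducibles whose conformal weights are bounded below with finite-dimensional weight spaces. To conclude it has finite length, I would argue that each irreducible from Theorem \ref{clasirr} contributes nonzero, finite-dimensional vectors to some fixed weight spaces. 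The conformal weights of $L_{sp(2n)}(-1,s\bar\Lambda_1)$ grow like $s^2$ (up to a positive constant), so only finitely many distinct isomorphism classes can occur. Each occurs with finite multiplicity because its top weight space is finite-dimensional. This gives $KL_{-1}(sp(2n))=\cC_{-1}(sp(2n))$, so the tensor structure lives on a semisimple category, and closure of the tensor product inside the category follows from \cite{CY}.

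The only delicate point is the finite-length argument: a grading-restricted module cannot contain infinitely many summands, since they would pile up in some weight space. This follows from the conformal weight formula \cite[Theorem~7.1]{KW2} (for $s\bar\Lambda_1$ it is $\frac{s(s+2n)}{4(n-1)}\cdot$const, increasing in $s$) combined with finite-dimensionality of graded pieces. Everything else is a direct citation of Theorem \ref{tensorCY}.
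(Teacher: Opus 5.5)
Your first step is the paper's argument. Propositions \ref{irre3}, \ref{irre1} and \ref{irre2} (the content behind Theorem \ref{semitypeC}) show that every generalized Verma module in $\cC_{-1}(sp(2n))$ is irreducible, hence of finite length. Theorem \ref{tensorCY} then gives the braided tensor structure on $KL_{-1}(sp(2n))$, and that alone proves the statement.

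Your second step, however, asserts something false and should be dropped. An object of $\cC_{-1}(sp(2n))$ need not have finite length. The lowest conformal weight of $L_{sp(2n)}(-1,s\bar{\Lambda}_1)$ is $\frac{s(s+2n)}{4n}$ (the denominator is $4n$, not $4(n-1)$), and this tends to infinity. So every generalized $L(0)$-eigenspace of $\bigoplus_{s\geq 0} L_{sp(2n)}(-1,s\bar{\Lambda}_1)$ meets only finitely many summands. This infinite direct sum is therefore grading-restricted, lies in $\cC_{-1}(sp(2n))$, and has infinite length.

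Your ``pile up'' heuristic runs backwards: the growth of the conformal weights is exactly what keeps the weight spaces finite-dimensional. Finite multiplicity of each isomorphism class therefore does not bound the number of classes that occur.

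What you can legitimately say is that $KL_{-1}(sp(2n))\subset\cC_{-1}(sp(2n))$, and that the objects of $KL_{-1}(sp(2n))$ are the finite direct sums of the irreducibles of Theorem \ref{clasirr}. The paper's remark that the two categories ``are the same'' should be read in that restricted sense. In any case, only the finite-length hypothesis of Theorem \ref{tensorCY} is used in the proof.
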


\section{Schur-Weyl duality between $L_{-1}(sp(2n))$ and $M(1)^+$}
In this section, we will show that there exists a Schur-Weyl duality between vertex operator algebras $L_{-1}(sp(2n))$ and $M(1)^+$ for $n \geq 2$.

First, we recall the Schur-Weyl duality between vertex operator algebras $L_{-1}(sl(2n))$ and $M(1)$ obtained in \cite{AP1}. Consider the Weyl vertex algebra $\mathcal{S}(2n)$ (cf. Sec. \ref{weyl}). Set $$ H =  \sum_{i=1}^{2n}  a_i ^+\left(-\frac{1}{2}\right) a_i ^-\left(-\frac{1}{2}\right)\1, $$
and $\h_1=\C H$. Let $h = \frac{1}{\sqrt{-2n}} H$. Define a  bilinear form $(,)$ on $\h_1$ such that
$(h, h)=1$. Then it is known that the subalgebra $\langle H\rangle$ of $\mathcal{S}(2n)$ generated by $H$ is isomorphic to the Heisenberg vertex operator algebra $M_{\h_1}(1)$ (cf. Section 6 of \cite{AP2}). For $\beta\in \h_1$, we use  $M_{\h_1}(1,\beta)$ to denote the highest weight $M_{\h_1}(1)$-modules such that $h(0)$ acts as $\beta \cdot{\id}$.

It is proved in \cite[Theorem~3.2]{AP1} that $L_{-1}(sl(2n))$ and $M_{\h_1}(1)$ form a commutant pair in $\mathcal{S}(2n)$:
\begin{theorem}\label{swAM}
The Weyl vertex algebra $\mathcal{S}(2n)$ viewed as an $L_{-1}(sl(2n))\otimes M_{\h_1}(1)$-module has the following decomposition:
$$
    \mathcal{S}(2n) = \bigoplus_{s =0}^\infty L_{sl(2n)}(-1, s\dot{\Lambda}_1) \otimes M_{\h_1}\left(1,\frac{s}{\sqrt{-2n}}\right) \oplus \bigoplus_{s = 1}^\infty L_{sl(2n)}(-1, s\dot{\Lambda}_{2n-1}) \otimes M_{\h_1}\left(1,-\frac{s}{\sqrt{-2n}}\right),$$
where $\dot{\Lambda}_1, \dots, \dot{\Lambda}_{2n-1}$ are the fundamental weights of the Lie algebra $sl(2n)$.
\end{theorem}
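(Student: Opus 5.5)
We would follow the standard dual-pair strategy for free-field realizations, modelled on Howe duality for $(GL(1),GL(2n))$ acting on the Fock space of $2n$ $\beta\gamma$ pairs.

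\textbf{Step 1: the two commuting subalgebras.} First, inside $\mathcal{S}(2n)$ we would consider the quadratic fields
\[
E_{ij}=a_i^+\left(-\tfrac12\right)a_j^-\left(-\tfrac12\right)\1 .
\]
A direct Wick-theorem computation shows that they close into an affine $gl(2n)$ at level $-1$ ($\beta\gamma$ systems give level $-1$ for the natural representation). The traceless part generates a quotient of $V^{-1}(sl(2n))$. The trace part is $H$, with $(H,H)$ proportional to $-2n$, which explains the normalization $h=H/\sqrt{-2n}$. Since $H$ commutes with the traceless part, we obtain a homomorphism from $V^{-1}(sl(2n))\otimes M_{\h_1}(1)$ into $\mathcal{S}(2n)$.

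\textbf{Step 2: grading and character comparison.} Next we would grade $\mathcal{S}(2n)$ by the charge $H(0)$, which is integral; call it $q$. The charge-$q$ subspace is an $\widehat{sl(2n)}$-module commuting with the Heisenberg algebra. Its Heisenberg vacuum space (the $h(m)$, $m>0$, singular vectors) is therefore a $V^{-1}(sl(2n))$-module whose top component is generated by $a^+(-\tfrac12)^{s}\1$ or $a^-(-\tfrac12)^{s}\1$. These are highest weight vectors of $sl(2n)$-weight $s\dot\Lambda_1$ (resp.\ $s\dot\Lambda_{2n-1}$) and charge $\pm s$.

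We would then compare characters. On one side is the known character of $\mathcal{S}(2n)$ (a product of $(1-zq^{r})^{-1}$ factors). On the other side are the characters of $L_{sl(2n)}(-1,s\dot\Lambda_1)$ and $L_{sl(2n)}(-1,s\dot\Lambda_{2n-1})$, which can be computed, e.g., from the Kac--Wakimoto character formula or the known Kazama--Suzuki/free-field bosonization. The goal is to show that the sum over $s$ matches exactly.

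An alternative to explicit character identities is a more structural route. One uses that $\mathcal{S}(2n)$ is simple and that the compact group $U(1)=\exp(tH(0))$ acts on it. A Dong--Li--Mason type result for continuous group actions then gives a decomposition
\[
\mathcal{S}(2n)=\bigoplus_q \mathcal{S}(2n)_q ,
\]
with each $\mathcal{S}(2n)_q$ irreducible over the fixed-point algebra $\mathcal{S}(2n)^{U(1)}$.

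\textbf{Step 3: identifying the commutant and irreducibility.} Then we would identify
\[
\mathcal{S}(2n)^{U(1)}=\mathrm{Com}\bigl(M_{\h_1}(1),\mathcal{S}(2n)\bigr)\otimes M_{\h_1}(1)\quad\text{(modulo the Heisenberg tensor factor)},
\]
and show that the commutant equals the image of $V^{-1}(sl(2n))$. Combined with simplicity, this forces the image to be the simple quotient $L_{-1}(sl(2n))$ and each multiplicity space to be irreducible. The identification would be done via a classical invariant-theory (first fundamental theorem) argument for $GL(1)$ acting on the $\beta\gamma$ generators: invariants are generated by $a_i^+a_j^-$ bilinears and their derivatives, in the spirit of Linshaw's results on invariant subalgebras of free-field algebras. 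Irreducibility then yields that the multiplicity space of charge $\pm s$ is exactly the simple highest weight module generated by $a^{\pm}(-\tfrac12)^s\1$, giving the stated decomposition.

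\textbf{Main obstacle.} We expect the main obstacle to be Step 3: proving that the commutant of the Heisenberg subalgebra is generated by the affine currents, and hence equals $L_{-1}(sl(2n))$ rather than something larger. We would attack it first through the invariant-theory/strong-generation route, verifying that no new strong generators appear beyond weight one by a deformation (classical limit) argument. If that fails, we would fall back on the character identity.
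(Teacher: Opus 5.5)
The paper does not prove this statement; it quotes it from \cite[Theorem~3.2]{AP1}, so your argument has to stand on its own. Its formal skeleton is sound:
\begin{itemize}
\item Since $\mathcal{S}(2n)$ is simple and graded by the integral charge $H(0)$, the charge-zero part $\mathcal{S}(2n)_0$ is simple, and each charge space $\mathcal{S}(2n)_q$ is an irreducible $\mathcal{S}(2n)_0$-module. This needs only simplicity of $\mathcal{S}(2n)$, not a compact-group theorem.
\item Complete reducibility of Heisenberg modules gives $\mathcal{S}(2n)_q=\Omega_q\otimes M_{\h_1}(1,q/\sqrt{-2n})$. Here $\Omega_0=C:=\mathrm{Com}(M_{\h_1}(1),\mathcal{S}(2n))$ is simple and each $\Omega_q$ is irreducible over $C$.
\item The top spaces $S^s(\C^{2n})$ and $S^s((\C^{2n})^*)$ then identify $\Omega_{\pm s}$.
\end{itemize}
But the whole content of the theorem is the one step you leave open, $C=\mathrm{Im}\,V^{-1}(sl(2n))$, and the route you sketch for it would not get there.

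Classically the claim is false. The $GL(1)$-invariants of the arc space are generated by the symbols of $:\partial^a a_i^+\,\partial^b a_j^-:$ (as a differential algebra, by those of $:a_i^+\partial^m a_j^-:$, $m\ge 0$). They are not generated by the symbols of $E_{ij}=:a_i^+a_j^-:$ and their derivatives. Already at weight two the quadratic invariants form a $2(2n)^2$-dimensional space, of which $\partial E_{ij}$ account for only $(2n)^2$. So neither the first fundamental theorem nor a classical-limit argument can show that no new strong generators appear beyond weight one.

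The missing idea is a genuinely quantum decoupling relation. By Wick's theorem,
\[
:E_{ij}E_{kl}:-:E_{kj}E_{il}:\;=\;\delta_{il}:a_k^+\partial a_j^-:-\delta_{jk}:\partial a_i^+a_l^-:-\delta_{kl}:a_i^+\partial a_j^-:+\delta_{ij}:\partial a_k^+a_l^-:,
\]
with the quartic terms cancelling. For $j=i\notin\{k,l\}$ and $k\neq l$ this gives $:E_{ii}E_{kl}:-:E_{ki}E_{il}:\,=\,:\partial a_k^+a_l^-:$, and similar choices treat $k=l$. Together with $\partial E_{kl}$, this puts every weight-two invariant in the subalgebra generated by the currents. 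A routine induction with $(0)$- and $(1)$-products by $:a_a^+\partial a_b^-:$ and $:a_a^+\partial^2 a_b^-:$ then captures all $:a_i^+\partial^m a_j^-:$, hence all of $\mathcal{S}(2n)^{GL(1)}$.

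This uses a third index, i.e.\ $2n\ge 3$, and it must. In $\mathcal{S}(2)$ the charge-zero weight-two space is $17$-dimensional, which forces $\mathrm{Com}(M_{\h_1}(1),\mathcal{S}(2))$ to have a $12$-dimensional weight-two space. Any quotient of $V^{-1}(sl(2))$ has at most $9$ states of weight two, so the analogue of the theorem fails for $n=1$. Your proposal never uses $n\ge 2$, which already shows the decisive step is absent.

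The character fallback does not help either. Since $-1$ is not an admissible level for $sl(2n)$, the Kac--Wakimoto formula is unavailable, and the characters of $L_{sl(2n)}(-1,s\dot\Lambda_1)$ are essentially what the theorem computes.
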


Following \cite{AP2}, we define an involution $\sigma$ of $\mathcal{S}(2n)$ such that
\begin{align*}
&\sigma (a_i^+) = a_{2n+1-i} ^-, \quad  \sigma (a_{2n+1-i} ^+)=-a_i^-, \\
&\sigma (a_i^-) = -a_{2n+1-i} ^+, \quad  \sigma (a_{2n+1-i} ^-)=a_i^+ 
\end{align*}
for $i=1, \dots, n$. Note that if $S$ be a subspace of $\mathcal{S}(2n)$  which is  $\sigma$--invariant, then
$S = S^+\oplus S^-$, where $S^{\pm} = \{v \in S \vert \ \sigma(v) = \pm v\}$. The vertex subalgebras $L_{-1} (sl(2n))$ and $M_{\h_1}(1)$ are both $\sigma$-invariant and
$\sigma(H)=-H$. Moreover, it was proved in \cite[Section~8]{AP2} that 
\begin{equation}\label{fixptsigma}
    (L_{-1} (sl(2n)))^+ = L_{-1} (sp(2n)),\quad  (L_{-1} (sl(2n)))^- = L_{sp(2n)}(-1, \bar{\Lambda}_2).
\end{equation}


Now we prove the Schur-Weyl duality between $L_{-1}(sp(2n))$ and $M_{\h_1}(1)^+$:
\begin{theorem}\label{dec-mn}
The $\Z_2$-orbifold subalgebra $\mathcal{S}(2n)^+$ has the following decomposition:
\begin{align*}
\mathcal{S}(2n)^+ \cong &  L_{-1}(sp(2n))\otimes M_{\h_1}(1)^+\oplus L_{sp(2n)}(-1, \bar{\Lambda}_2)\otimes M_{\h_1}(1)^-\\
  &\quad \oplus   \bigoplus_{s = 1}^\infty L_{sp(2n)}(-1, s\bar{\Lambda}_1) \otimes M_{\h_1}\left(1,\frac{s}{\sqrt{-2n}}\right),
\end{align*}
as an $L_{-1}(sp(2n))\otimes M_{\h_1}(1)^+$-module.
\end{theorem}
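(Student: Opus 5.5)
Our plan is to start from the decomposition of $\mathcal{S}(2n)$ in Theorem \ref{swAM} and take $\sigma$-fixed points. The involution $\sigma$ preserves $L_{-1}(sl(2n))$ and $M_{\h_1}(1)$, and since $\sigma(H)=-H$, its restriction to $M_{\h_1}(1)$ is the involution $\theta$. Hence $M_{\h_1}(1)^{\pm}$ are exactly the $\pm1$-eigenspaces of $\sigma$ on $M_{\h_1}(1)$. Also $\sigma$ commutes with the action of $L_{-1}(sp(2n))\otimes M_{\h_1}(1)^+$, because both factors are $\sigma$-fixed by \eqref{fixptsigma}. So $\mathcal{S}(2n)^+$ is an $L_{-1}(sp(2n))\otimes M_{\h_1}(1)^+$-submodule of $\mathcal{S}(2n)$, and we can compute it summand by summand.

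\textbf{The summand $s=0$.} This is $L_{-1}(sl(2n))\otimes M_{\h_1}(1)$, and it is $\sigma$-stable. Splitting both factors into $\sigma$-eigenspaces by \eqref{fixptsigma}, the $+1$-eigenspace is
\[
L_{-1}(sp(2n))\otimes M_{\h_1}(1)^+\oplus L_{sp(2n)}(-1,\bar\Lambda_2)\otimes M_{\h_1}(1)^-.
\]

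\textbf{The summands $s\ge 1$.} Since $\sigma$ negates $h(0)$, it maps the $h(0)$-eigenspace with eigenvalue $\frac{s}{\sqrt{-2n}}$ onto the one with eigenvalue $-\frac{s}{\sqrt{-2n}}$. It therefore interchanges
\[
X_s:=L_{sl(2n)}(-1,s\dot\Lambda_1)\otimes M_{\h_1}\left(1,\tfrac{s}{\sqrt{-2n}}\right)\quad\text{and}\quad Y_s:=L_{sl(2n)}(-1,s\dot\Lambda_{2n-1})\otimes M_{\h_1}\left(1,-\tfrac{s}{\sqrt{-2n}}\right).
\]
Consequently $(X_s\oplus Y_s)^{\sigma}=\{x+\sigma x : x\in X_s\}$, which is isomorphic to $X_s$ via $x\mapsto x+\sigma x$. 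This map intertwines the action of $L_{-1}(sp(2n))\otimes M_{\h_1}(1)^+$ because $\sigma$ commutes with that action. So, as a module for this subalgebra, the fixed part is $X_s$ restricted to $L_{-1}(sp(2n))\otimes M_{\h_1}(1)^+$.

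\textbf{Restricting the two factors of $X_s$.}
\begin{enumerate}
\item The factor $M_{\h_1}\big(1,\frac{s}{\sqrt{-2n}}\big)$ stays irreducible over $M_{\h_1}(1)^+$ by the results recalled in Section 2.
\item The factor $L_{sl(2n)}(-1,s\dot\Lambda_1)$ must be identified with $L_{sp(2n)}(-1,s\bar\Lambda_1)$ as an $L_{-1}(sp(2n))$-module. This is the main obstacle.
\end{enumerate}

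For step 2, we would argue as follows. The top space is the $gl$-module $S^s(\C^{2n})$, and restricted to $sp(2n)$ it remains irreducible, namely $L(s\bar\Lambda_1)$. The module is in $\cC_{-1}(sp(2n))$: it is grading-restricted, since the conformal vectors agree by the conformal embedding $L_{-1}(sp(2n))\subset L_{-1}(sl(2n))$, and lower bounded. By Theorem \ref{semitypeC} it is then a direct sum of the irreducibles listed in Theorem \ref{clasirr}.

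To see that it is a single irreducible, use the coset structure. Since $M_{\h_1}(1)^+$ acts irreducibly on the second factor and $X_s$ is irreducible over $L_{-1}(sl(2n))\otimes M_{\h_1}(1)$, the $\sigma$-fixed part $(X_s\oplus Y_s)^\sigma$ is an irreducible $\mathcal{S}(2n)^+$-module by \cite[Theorem~4.4]{DM}. Hence it is generated by its top space $L(s\bar\Lambda_1)\otimes e^{\beta}$. By the commutant property, the multiplicity space of $M_{\h_1}\big(1,\frac{s}{\sqrt{-2n}}\big)$ is an $L_{-1}(sp(2n))$-module generated by a top vector of highest weight $-\Lambda_0+s\bar\Lambda_1$. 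By Proposition \ref{irre3} it is therefore irreducible, i.e. isomorphic to $L_{sp(2n)}(-1,s\bar\Lambda_1)$.

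As an alternative route for the same step, one can compare $L_{-1}(sp(2n))$-singular vectors. By semisimplicity, any extra summand would need a highest weight of the form $t\bar\Lambda_1$ or $\bar\Lambda_2$ with the correct conformal weight and the correct $h(0)$-charge $s$. The only candidate is $s\bar\Lambda_1$ at the top, since $L(0)$-weights of other summands would exceed the top but equal $\frac{(t\bar\Lambda_1,t\bar\Lambda_1+2\rho)}{2(-1+n+1)}$. For $t\neq s$ this is ruled out by charge considerations, because $h(0)$ commutes with the $sp$ action and charge $s$ forces polynomial degree $s$.

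Summing the $s=0$ part and the $s\ge1$ parts gives the stated decomposition of $\mathcal{S}(2n)^+$.
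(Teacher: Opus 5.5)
Your reduction matches the paper's. You take fixed points summand by summand, obtain the $s=0$ piece from \eqref{fixptsigma}, and identify $(X_s\oplus Y_s)^\sigma$ with $X_s$ via $x\mapsto x+\sigma x$. The gap is step 2, the isomorphism $L_{sl(2n)}(-1,s\dot\Lambda_1)\cong L_{sp(2n)}(-1,s\bar\Lambda_1)$ of $L_{-1}(sp(2n))$-modules. The paper does not prove this; it imports it from \cite[Proposition~7]{AP2}, and your substitute argument does not establish it.

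What you do get correctly from the conformal embedding, Theorem~\ref{semitypeC} and Proposition~\ref{irre3} is that the $L_{-1}(sp(2n))$-submodule generated by the top $S^s(\C^{2n})$ is a direct summand isomorphic to $L_{sp(2n)}(-1,s\bar\Lambda_1)$. The real issue is whether this summand has a complement, and nothing you say rules one out.
\begin{itemize}
\item $(X_s\oplus Y_s)^\sigma$ is not an $\mathcal{S}(2n)^+$-submodule at all: $\mathcal{S}(2n)^+$ is simple, and its charge-$t$ pieces shift charge. So \cite[Theorem~4.4]{DM} gives nothing here. At best, using $X_s\circ\sigma\cong Y_s\not\cong X_s$, you get irreducibility of $X_s$ over $(L_{-1}(sl(2n))\otimes M_{\h_1}(1))^\sigma$.
\item That algebra contains $L_{sp(2n)}(-1,\bar\Lambda_2)\otimes M_{\h_1}(1)^-$, whose modes can connect different $L_{-1}(sp(2n))$-summands of $L_{sl(2n)}(-1,s\dot\Lambda_1)$. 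So generation by the top over this algebra says nothing about generation of the multiplicity space over $L_{-1}(sp(2n))$, and ``the commutant property'' does not bridge this.
\item Making this route work would require something like $L_{sp(2n)}(-1,\bar\Lambda_2)\boxtimes L_{sp(2n)}(-1,s\bar\Lambda_1)\cong L_{sp(2n)}(-1,s\bar\Lambda_1)$. In the paper that fusion rule is a consequence of the theorem you are proving.
\end{itemize}

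Your alternative route fails as well.
\begin{itemize}
\item Since $H$ commutes with $sl(2n)$, every vector of $X_s$ has charge $s$. The charge $\#a^+-\#a^-$ does not pin down the $sp(2n)$-weight.
\item Because $\sigma$ commutes with $sp(2n)$ and negates $H$, $a_1^+$ and $\sigma(a_1^+)=a_{2n}^-$ have the same $sp(2n)$-weight $\bar\Lambda_1$ but opposite charges. Hence charge-$s$ vectors of weight $(s+2q)\bar\Lambda_1$ exist.
\item Conformal weights do not exclude such summands either: $h_{t\bar\Lambda_1}-h_{s\bar\Lambda_1}=\frac{(t-s)(t+s+2n)}{4n}$. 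For $t=s+2$ this equals $\frac{s+n+1}{n}$, a positive integer whenever $n\mid s+1$; for example $n=2$, $s=1$, $t=3$ gives $2$.
\item So weights rule out $\bar\Lambda_2$ and $t\le s$, but not $t>s$.
\end{itemize}

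You need a genuine input at this point. Either cite \cite[Proposition~7]{AP2}, as the paper does, or prove simplicity of the multiplicity spaces some other way. One possibility is a mirror/Schur--Weyl theorem such as \cite{M3}, applied to the commutant pair $L_{-1}(sp(2n))$, $M_{\h_1}(1)^+$ in the simple algebra $\mathcal{S}(2n)^+$, after checking its hypotheses.
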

\begin{proof}
By Theorem \ref{swAM}, $\mathcal{S}(2n)$ viewed as an $L_{-1}(sl(2n))\otimes M_{\h_1}(1)$-module has the following decomposition:
$$
    \mathcal{S}(2n) = \bigoplus_{s =0}^\infty L_{sl(2n)}(-1, s\dot{\Lambda}_1) \otimes M_{\h_1}\left(1,\frac{s}{\sqrt{-2n}}\right) \oplus \bigoplus_{s = 1}^\infty L_{sl(2n)}(-1, s\dot{\Lambda}_{2n-1}) \otimes M_{\h_1}\left(1,-\frac{s}{\sqrt{-2n}}\right).$$
Also, we know from \eqref{fixptsigma} that $L_{-1} (sl(2n))\otimes M_{\h_1}(1)$ is $\sigma$-invariant, and $$(L_{-1} (sl(2n))\otimes M_{\h_1}(1))^+=L_{-1}(sp(2n))\otimes M_{\h_1}(1)^+\oplus L_{sp(2n)}(-1, \bar{\Lambda}_2)\otimes M_{\h_1}(1)^-.$$

For $s \in {\Z}\setminus \{0\}$, it follows from \cite[Lemma~2.1]{AP1} that
$$\sigma\left(L_{sl(2n)}(-1, s\dot{\Lambda}_1) \otimes M_{\h_1}\left(1,\frac{s}{\sqrt{-2n}}\right)\right)=L_{sl(2n)}(-1, s\dot{\Lambda}_{2n-1}) \otimes M_{\h_1}\left(1,-\frac{s}{\sqrt{-2n}}\right).$$ Then it follows from \cite[Theorem~6.1]{DM} that $L_{sl(2n)}(-1, s\dot{\Lambda}_1) \otimes M_{\h_1}\big(1,\frac{s}{\sqrt{-2n}}\big)$ and\\ $L_{sl(2n)}(-1, s\dot{\Lambda}_{2n-1}) \otimes M_{\h_1}\big(1,-\frac{s}{\sqrt{-2n}}\big)$ are irreducible $L_{-1}(sp(2n))\otimes M_{\h_1}(1)^+$-modules. From  \cite[Proposition 7]{AP2}, as $L_{-1}(sp(2n))$--modules
$$ L_{sl(2n)}(-1, s\dot{\Lambda}_1) \cong L_{sl(2n)}(-1, s\dot{\Lambda}_{2n-1}) \cong L_{sp(2n)}(-1, s\bar{\Lambda}_1) $$
for $s \in \mathbb{Z}_{\geq 1}$.
This implies that  as $L_{-1}(sp(2n))\otimes M_{\h_1}(1)^+$-modules:
\begin{align*}
&\left(L_{sl(2n)}(-1, s\dot{\Lambda}_1) \otimes M_{\h_1}\left(1,\frac{s}{\sqrt{-2n}}\right) \bigoplus  L_{sl(2n)}(-1, s\dot{\Lambda}_{2n-1}) \otimes M_{\h_1}\left(1,-\frac{s}{\sqrt{-2n}}\right)\right)^{+} \\
&\cong  L_{sp(2n)}(-1, s\bar{\Lambda}_1) \otimes M_{\h_1}\left(1,\frac{s}{\sqrt{-2n}}\right).
\end{align*}
The statement then follows.
\end{proof}

Since $KL_{-1}(sp(2n))$ consists of direct sums of $L_{sp(2n)}(-1, \bar{\Lambda}_2)$ and $L_{sp(2n)}(-1, s\bar{\Lambda}_1)$ for $s \in \Z_{\geq 0}$, by \cite[Theorem~1.1]{M3} and Theorem \ref{dec-mn}, we have
\begin{corollary}
    As braided tensor categories, $KL_{-1}(sp(2n))$ is braided reversed equivalent to the full subcategory of $\cC_1(M_{\h_1}(1)^+)$ generated by $M_{\h_1}\big(1, \frac{s}{\sqrt{-2n}}\big)$ for $s \in \Z_{\geq 1}$ and $M_{\h_1}(1)^\pm$ under the functor that sends
    \[
    L_{sp(2n)}(-1, s\bar{\Lambda}_1) \mapsto M_{\h_1}\left(1, \frac{s}{\sqrt{-2n}}\right),\;\; L_{sp(2n)}(-1, \bar{\Lambda}_2) \mapsto M_{\h_1}(1)^-.
    \]
    In particular, $KL_{-1}(sp(2n))$ is rigid, with the following fusion product decompositions:
    \begin{align*}
&        L_{sp(2n)}(-1, \bar{\Lambda}_2) \boxtimes L_{sp(2n)}(-1, \bar{\Lambda}_2) \cong L_{-1}(sp(2n)),\\
&  L_{sp(2n)}(-1, \bar{\Lambda}_2) \boxtimes L_{sp(2n)}(-1, s\bar{\Lambda}_1) \cong L_{sp(2n}(-1, s\bar{\Lambda}_1),\\
& L_{sp(2n)}(-1, s\bar{\Lambda}_1) \boxtimes L_{sp(2n)}(-1, t\bar{\Lambda}_1) \cong L_{sp(2n)}(-1, (s+t)\bar{\Lambda}_1) \oplus L_{sp(2n)}(-1, (s-t)\bar{\Lambda}_1)
    \end{align*}
    for $s, t \in \Z_{\geq 1}$ and $s \geq t$.
\end{corollary}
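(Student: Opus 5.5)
The plan is to apply McRae's Schur--Weyl duality theorem \cite[Theorem~1.1]{M3} to the commutant pair decomposition of Theorem \ref{dec-mn}. Set $V=\mathcal{S}(2n)^+$, $U_1=L_{-1}(sp(2n))$ and $U_2=M_{\h_1}(1)^+$. We need the following inputs for that theorem.
\begin{enumerate}
\item $V$ is a simple vertex operator algebra. It is the fixed points of the simple algebra $\mathcal{S}(2n)$ under the finite order automorphism $\sigma$, so \cite[Theorem~4.4]{DM} applies.
\item $V$ decomposes as $\bigoplus_i W_i\otimes M_i$ with multiplicity one. The $W_i$ are the pairwise inequivalent simple $U_1$-modules $L_{sp(2n)}(-1,s\bar{\Lambda}_1)$ ($s\ge 0$) and $L_{sp(2n)}(-1,\bar{\Lambda}_2)$. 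The $M_i$ are the pairwise inequivalent simple $U_2$-modules $M(1)^\pm$ and $M\big(1,\tfrac{s}{\sqrt{-2n}}\big)$. Theorem \ref{dec-mn} gives exactly this, and the $M_i$ are inequivalent because their conformal weights and $J$-eigenvalues differ (Table 1).
\item The relevant module categories are braided tensor categories. On the $U_1$ side this is $KL_{-1}(sp(2n))$, which is semisimple by Theorem \ref{semitypeC} and braided tensor by Theorem \ref{tencn}. On the $U_2$ side it is $\cC_1(M(1)^+)$, which is a braided tensor category by the results of Section 4. The $W_i$ exhaust the simple objects of $KL_{-1}(sp(2n))$ by Theorem \ref{clasirr}.
\end{enumerate}

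Given these inputs, McRae's theorem says two things. First, the semisimple subcategory $\mathcal{D}$ of $\cC_1(M(1)^+)$ whose simple objects are the $M_i$ is closed under fusion, i.e.\ is a tensor subcategory. Second, there is a braid-reversed tensor equivalence $KL_{-1}(sp(2n))\to\mathcal{D}$ with $W_i\mapsto M_i'$. Since all the $M_i$ are self-dual, the correspondence is the one stated.

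Closure of $\mathcal{D}$ under fusion can also be checked directly from Theorem \ref{fr-1}. The only nontrivial case is
\[
M\Big(1,\tfrac{s}{\sqrt{-2n}}\Big)\boxtimes M\Big(1,\tfrac{t}{\sqrt{-2n}}\Big)=M\Big(1,\tfrac{s+t}{\sqrt{-2n}}\Big)\oplus M\Big(1,\tfrac{s-t}{\sqrt{-2n}}\Big).
\]
When $s=t$ the second summand is $M(1)=M(1)^+\oplus M(1)^-$. When $s<t$ we use $M(1,\lambda)\cong M(1,-\lambda)$ to bring the weight back to a nonnegative multiple of $\tfrac{1}{\sqrt{-2n}}$. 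One point needs care: a fusion product computed in $\cC_1(M(1)^+)$ need not a priori lie in the semisimple part. Theorem \ref{fr-1} settles this, because it shows these products are actual direct sums.

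Rigidity of $KL_{-1}(sp(2n))$ then comes by transport. $\cC_1(M(1)^+)$ is rigid, and $\mathcal{D}$ contains the duals of its objects because all the $M_i$ are self-dual. So $\mathcal{D}$ is rigid, and the equivalence carries rigidity back to $KL_{-1}(sp(2n))$.

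The fusion rules are obtained by pulling back those of Theorem \ref{fr-1} along the equivalence. $M(1)^-\boxtimes M(1)^-=M(1)^+$ gives $L_{sp(2n)}(-1,\bar{\Lambda}_2)^{\boxtimes 2}\cong L_{-1}(sp(2n))$. $M(1)^-\boxtimes M(1,\lambda)=M(1,\lambda)$ gives the second identity. The third identity comes from the product displayed above, where for $s=t$ the summand $M(1)$ corresponds to $L_{-1}(sp(2n))\oplus L_{sp(2n)}(-1,\bar{\Lambda}_2)$.

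The main obstacle is verifying the technical hypotheses of \cite[Theorem~1.1]{M3}. On the $U_1$ side these are routine, since $KL_{-1}(sp(2n))$ is semisimple. The delicate side is $U_2$. McRae's theorem needs $V$, viewed as a $U_2$-module, to lie in a braided tensor category of $U_2$-modules, namely an (infinite) direct sum of objects of $\cC_1(M(1)^+)$. It also needs the $M_i$ to be simple with the relevant Hom spaces trivial, and here the non-semisimplicity of $\cC_1(M(1)^+)$ appears, since there are nonsplit extensions between $M(1)^+$ and $M(1)^-$. My first approach would be to observe that the multiplicity-free decomposition of $V$ only involves simple objects. The braid-reversed equivalence then lands in the full subcategory of direct sums of these simples, and the nonsplit extensions in $\cC_1(M(1)^+)$ never enter.
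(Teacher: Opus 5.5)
\textbf{Gap: the roles of the two commutants in McRae's theorem are reversed.} Your overall route is the paper's: feed the commutant-pair decomposition of Theorem~\ref{dec-mn} into \cite[Theorem~1.1]{M3}. But the theorem is asymmetric, and with your assignment the rigidity claim becomes circular.

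In McRae's theorem, the subalgebra $U$ whose modules $U_i$ are the source of the equivalence $\tau$ (with $\tau(U_i)\cong V_i'$) is required to have the $U_i$ form a semisimple braided \emph{ribbon} category under fusion, so in particular a rigid one. The commutant's modules $V_i$ need only lie in some braided tensor category, possibly non-rigid and non-semisimple. Rigidity of $\mathcal{V}_A$ is then a \emph{conclusion} of the theorem.

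You put $U_1=L_{-1}(sp(2n))$ on the source side and call its hypotheses routine, because $KL_{-1}(sp(2n))$ is semisimple and braided tensor (Theorems~\ref{semitypeC} and~\ref{tencn}). Semisimplicity gives no rigidity, and rigidity of $KL_{-1}(sp(2n))$ is exactly what the corollary asserts. So you cannot invoke the theorem in this orientation and afterwards recover rigidity of $KL_{-1}(sp(2n))$ by transport from $\mathcal{D}$.

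\textbf{Repair: swap the roles, using ingredients you already have.}
\begin{enumerate}
\item Take $U=M_{\h_1}(1)^+$ with $\mathcal{U}_A=\mathcal{D}$.
\begin{itemize}
\item You showed directly from Theorem~\ref{fr-1} that $\mathcal{D}$ is closed under $\boxtimes$.
\item $\mathcal{D}$ is rigid, since $\cC_1(M(1)^+)$ is rigid and the $M_i$ are self-dual.
\item $\mathcal{D}$ is ribbon with twist $e^{2\pi i L(0)}$.
\end{itemize}
\item Take $V=L_{-1}(sp(2n))$ with $\mathcal{V}=KL_{-1}(sp(2n))$. This category is braided tensor by Theorem~\ref{tencn}, which is why that theorem precedes the Schur--Weyl section.
\item McRae's theorem then gives a braid-reversed equivalence $\mathcal{D}\to KL_{-1}(sp(2n))$ with $M_i\mapsto W_i'$. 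Here $W_i'\cong W_i$, because $-w_0$ acts as the identity in type $C$.
\item By Theorems~\ref{semitypeC} and~\ref{clasirr}, $\mathcal{V}_A$ is all of $KL_{-1}(sp(2n))$. So rigidity of $KL_{-1}(sp(2n))$ is an output of the theorem, not an input.
\end{enumerate}
In this orientation the non-semisimplicity of $\cC_1(M(1)^+)$, which you flagged as the delicate point, plays no role, since only the semisimple tensor subcategory $\mathcal{D}$ is fed in.

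Your observation about the case $s=t$ is correct. There the summand $M(1)=M(1)^+\oplus M(1)^-$ pulls back to $L_{-1}(sp(2n))\oplus L_{sp(2n)}(-1,\bar{\Lambda}_2)$, so the third displayed fusion rule must be read that way when $s=t$.
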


\appendix
\section{Proof of Lemma \ref{keylemma}}\label{AppA}

Let $\gamma=\sqrt{2}\alpha$ and $L= \Z \gamma$, and let $V_L$ be the lattice vertex operator algebra associated to $L$.  The conformal element of $V_L$ is $\frac{1}{2}\alpha(-1)^2$, and the subalgebra of $V_L$ generated by the conformal element is isomorphic to the simple Virasoro vertex operator algebra of central charge $1$, denoted by $L(1,0)$. Moreover, the weight one subspace of $V_L$ is isomorphic to the Lie algebra  $\mathfrak{sl}_2 = \mbox{span} \{E, F, H \}$. Therefore,  $V_L$ can be viewed as a module for $\mathfrak{sl}_2 \oplus L(1,0)$ (see \cite[Corollary~2.4]{DG}). Furthermore, in \cite[Equation~(2.2)]{DG} it is proved that $V_L$ viewed as an $\mathfrak{sl}_2 \oplus L(1,0)$-module has the following decomposition:
$$ V_{L}  = \bigoplus_{m=0} ^{\infty} \rho_{2m} \otimes L  (1, m^2),$$
where $\rho_n$ is the  $\mathfrak{sl}_2$-module of dimension $n+1$, and $L\left(1, h\right)$ denotes the highest weight irreducible $L\left(1, 0\right)$-module of highest weight $h$. Denote the highest weight vector of the $\mathfrak{sl}_2 \oplus L(1, 0)$-module $\rho_{2m} \otimes L  (1, m^2)$ by $e^{m \gamma}$.

The modules $V_L$, $V_{L+\gamma/2}$ are irreducible $V_L$-modules (cf. \cite{FLM}). In particular, $V_{L+\gamma/2}$ can also be viewed as an $\mathfrak{sl}_2 \oplus L(1,0)$-module with the following decomposition:
$$ V_{L+\gamma/2}  = \bigoplus_{m=0} ^{\infty} \rho_{2m+1} \otimes L  \left(1, \frac{(2m+1)^2}{4}\right)$$
(\cite[Equation~19]{Mi}).

Since $V_L$ contains a subalgebra isomorphic to $M(1)$, $V_L$ and $V_{L+\gamma/2}$ can be regarded $M(1)$-modules. The $M(1)$-submodule of $V_L\oplus  V_{L+\gamma/2}$ generated by $e^{m \gamma /2}$ is isomorphic to $M\big(1, \frac{m}{\sqrt{2}}\alpha\big)$ for each $m\in \Z_{\geq 0}$. By a direct calculation, we have $F^k e^{ m \gamma/2 +k \gamma}\in M\big(1, \frac{m}{\sqrt{2}}\alpha\big)$. Moreover,
 \begin{align}\label{hwv}
 v_m ^{(k)} := F^k e^{ m \gamma/2 +k \gamma}
   \end{align}
   is a highest weight vector for the Virasoro algebra $L(1,0)$. Combining this fact with \cite[Equation~(3.2)]{Abe3}, $M\left(1, \frac{m}{\sqrt{2}}\alpha\right)$ has the following decomposition:
 \begin{align*}
 M\left(1, \frac{m}{\sqrt{2}}\alpha\right) = \bigoplus_{k=0} ^{\infty}   L\left(1, \left(\frac{m}{2}+k\right)^2\right)  = \bigoplus_{k=0} ^{\infty}   L(1,0 ) v_m ^{(k)}
 \end{align*}
 as an $L(1,0)$-module.

 Let $Y(\cdot,z)$ be the vertex operator map on the $V_L$-module $V_L\oplus  V_{L+\gamma/2}$. For any $u\in V_L$ and $n \in \Z$, let $u_n = {\rm Res}_z z^{n}Y(u,z)$. 
   Then  we have the following result:
   \begin{lemma}\label{basis0}
   There exists a nonzero number $\sigma$ such that
     \begin{align}\label{const}
     E^i(E^2J)_{(-2m-4k-1)}E^jv_m ^{(k)}=\sigma e^{m \gamma/2 +(k+2) \gamma},
     \end{align}
   holds  for any $i,j\in \Z_{\geq 0}$ such that $i+j=k$.
   \end{lemma}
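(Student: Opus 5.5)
The plan is to exploit the $\mathfrak{sl}_2$-symmetry of $V_L$ together with explicit lattice vertex operators. The first step is to identify $E^2J$. The vector $J$ is a Virasoro singular vector of weight $4=2^2$ in $M(1)^+\subset V_L$, and it has $\alpha(0)$-charge $0$. By the decomposition $V_L=\bigoplus_m \rho_{2m}\otimes L(1,m^2)$, the weight-$4$ Virasoro singular vectors of $V_L$ span $\rho_4\otimes \C e^{2\gamma}$-type highest weight vectors, that is, the $\mathfrak{sl}_2$-module $\rho_4$. So $J$ is the zero-weight vector of this copy of $\rho_4$. It follows that $E^2J=c\,e^{2\gamma}$ for some nonzero constant $c$, and that $E^3J=0$. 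I would confirm $c\neq 0$ either by the injectivity of $E^2$ on the zero-weight space of $\rho_4$, or by a direct computation of $E_0^2J$ using $E=e^{\gamma}$.

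The second step removes the dependence on $i$ and $j$. Since $E$ is the zero mode of a weight-one vector, it acts as a derivation, so $[E,u_n]=(Eu)_n$ for all $u\in V_L$. Taking $u=E^2J$ gives $[E,(E^2J)_n]=(E^3J)_n=0$. Hence $E$ commutes with $(E^2J)_{(-2m-4k-1)}$, and therefore $E^i(E^2J)_{(-2m-4k-1)}E^jv_m^{(k)}=(E^2J)_{(-2m-4k-1)}E^{k}v_m^{(k)}$ whenever $i+j=k$. So it suffices to treat the case $i=0$, $j=k$.

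The third step is an explicit computation. The vector $v_m^{(k)}=F^ke^{m\gamma/2+k\gamma}$ lies in the $\mathfrak{sl}_2$-module generated by the highest weight vector $e^{m\gamma/2+k\gamma}$, which is $\rho_{m+2k}$. Standard $\mathfrak{sl}_2$ theory gives $E^kF^ke^{\beta}=c'e^{\beta}$ with $\beta=m\gamma/2+k\gamma$ and $c'=k!\prod_{l=1}^{k}(m+2k-l+1)\neq 0$. I then apply the lattice vertex operator:
\[
Y(e^{2\gamma},z)e^{\beta}=\epsilon(2\gamma,\beta)\,z^{(2\gamma,\beta)}\exp\Big(\sum_{n>0}\frac{2\gamma(-n)}{n}z^n\Big)e^{\beta+2\gamma},
\]
where $(2\gamma,\beta)=2m+4k$. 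With the convention $u_n={\rm Res}_z z^nY(u,z)$, the mode $n=-2m-4k-1$ extracts the coefficient of $z^{2m+4k}$. That coefficient is exactly the leading term $\pm e^{\beta+2\gamma}=\pm e^{m\gamma/2+(k+2)\gamma}$. This gives $\sigma=\pm c\,c'\neq 0$.

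As a consistency check, the conformal weight of the output is $(m/2+k)^2+4+2m+4k=(m/2+k+2)^2$. This is the lowest weight in $M(1)\otimes e^{m\gamma/2+(k+2)\gamma}$, so the answer necessarily lies in the one-dimensional space $\C e^{m\gamma/2+(k+2)\gamma}$. The main obstacle I expect is the first step: justifying rigorously that $J$ sits in the $\rho_4$-isotypic Virasoro-singular part with $E^2J\neq 0$. If the decomposition argument feels insufficient, I would fall back on computing $E_0^2 J$ directly via the lattice vertex operators.
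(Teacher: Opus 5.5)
Your proposal is correct and follows the same route as the paper. Both arguments show that $E^2J$ is a nonzero multiple of $e^{2\gamma}$, use $E^3J=0$ to commute $E$ past $(E^2J)_{(-2m-4k-1)}$, and conclude that $(E^2J)_{(-2m-4k-1)}E^kv_m^{(k)}$ is a nonzero multiple of $e^{m\gamma/2+(k+2)\gamma}$. Your $\rho_4$ argument for $E^2J\neq 0$ and your extraction of the $z^{2m+4k}$ coefficient of $Y(e^{2\gamma},z)e^{\beta}$ fill in steps that the paper only asserts.
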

   \pf
   First, by the definition of $J$, there exists a nonzero number $\sigma_1$ such that
     $E^2J=\sigma_1e^{2\gamma}$. And by equation (\ref{hwv}), there exists a nonzero number $\sigma_2$ such that
     \begin{align}\label{nonzero1}
     E^kv_m ^{(k)}=\sigma_2e^{m \gamma/2 +k \gamma}.
      \end{align}
      Thus, there exists a nonzero number $\sigma$ such that
     \begin{align*}
     (E^2J)_{(-2m-4k-1)}E^kv_m ^{(k)}=\sigma e^{m \gamma/2 +(k+2) \gamma}.
     \end{align*}
     Since $E^3J=0$, we have $[E, (E^2J)_{(-2m-4k-1)}]=0$. This implies that
      \begin{align*}
     E^i(E^2J)_{(-2m-4k-1)}E^jv_m ^{(k)}=(E^2J)_{(-2m-4k-1)}E^kv_m ^{(k)}=\sigma e^{m \gamma/2 +(k+2) \gamma}.
     \end{align*}
     This completes the proof.
     \qed

Using the formula (\ref{const}), we have the following result.

   \begin{lemma}\label{basis1}
 For any $i,j\in \Z_{\geq 0}$ such that $i+j=k+1$ and $i\geq 1$,
 \begin{align*}
     E^i(EJ)_{(-2m-4k-1)}E^jv_m ^{(k)}=i\sigma e^{m \gamma/2 +(k+2) \gamma}.
     \end{align*}
 \end{lemma}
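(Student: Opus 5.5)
We reduce Lemma \ref{basis1} to Lemma \ref{basis0} by commuting one copy of $E$ past the mode $(EJ)_{(-2m-4k-1)}$. The key input is the commutator formula for the weight-one element $E\in (V_L)_1$ acting via its zero mode $E=E_{(0)}$:
\[
[E, u_{(n)}] = (E_{(0)}u)_{(n)} \qquad (u\in V_L,\ n\in\Z).
\]
With $u=EJ$ this gives $[E,(EJ)_{(-2m-4k-1)}]=(E^2J)_{(-2m-4k-1)}$. Lemma \ref{basis0} shows that $(E^2J)_{(-2m-4k-1)}$ commutes with $E$, since $E^3J=0$.

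We also need the vanishing $E^{k+1}v_m^{(k)}=0$. The vector $v_m^{(k)}=F^k e^{m\gamma/2+k\gamma}$ lies in the $\mathfrak{sl}_2$-module $\rho_{2(m/2+k)}$ (that is, $\rho_{m+2k}$), whose highest weight vector is $e^{m\gamma/2+k\gamma}$. Indeed, $e^{m\gamma/2+k\gamma}$ is annihilated by $E$ and is the top of its $\mathfrak{sl}_2$-string. By \eqref{nonzero1}, $E^k v_m^{(k)}$ is a nonzero multiple of this highest weight vector, so $E^{k+1}v_m^{(k)}=0$.

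The argument then runs by induction on $i$, with $i+j=k+1$. For the base case $i=1$, $j=k$, write
\[
E(EJ)_{(-2m-4k-1)}E^{k}v_m^{(k)} = (EJ)_{(-2m-4k-1)}E^{k+1}v_m^{(k)} + (E^2J)_{(-2m-4k-1)}E^{k}v_m^{(k)} .
\]
The first term vanishes by the previous step. The second equals $\sigma e^{m\gamma/2+(k+2)\gamma}$ by Lemma \ref{basis0} with $(i,j)=(0,k)$. This gives the claim for $i=1$.

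For the inductive step, take $i\ge 2$, so $j=k+1-i\le k-1$. Commute one $E$ from $E^j$ leftward past the mode:
\[
E^{i}(EJ)_{(-2m-4k-1)}E^{j}v_m^{(k)} = E^{i}\,E\,(EJ)_{(-2m-4k-1)}E^{j-1}v_m^{(k)} \;-\; E^{i}(E^2J)_{(-2m-4k-1)}E^{j-1}v_m^{(k)}.
\]
This step is only available when $j\ge 1$. Instead we move the last $E$ on the left to the right:
\[
E^{i}(EJ)_{(-2m-4k-1)}E^{j}v = E^{i-1}(EJ)_{(-2m-4k-1)}E^{j+1}v + E^{i-1}(E^2J)_{(-2m-4k-1)}E^{j}v ,
\]
where $v=v_m^{(k)}$. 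The pair $(i-1,j+1)$ still satisfies $i-1+j+1=k+1$ with $i-1\ge 1$, so by induction the first term is $(i-1)\sigma e^{m\gamma/2+(k+2)\gamma}$. The second term has $(i-1)+j=k$, so Lemma \ref{basis0} gives $\sigma e^{m\gamma/2+(k+2)\gamma}$. Adding, we obtain $i\sigma e^{m\gamma/2+(k+2)\gamma}$, as claimed.

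The only delicate point is justifying the commutator formula for $E$ acting by its zero mode on the $V_L$-module $V_L\oplus V_{L+\gamma/2}$, which is standard from the Borcherds identity since $E$ has weight one, together with the vanishing $E^{k+1}v_m^{(k)}=0$. Both are routine.
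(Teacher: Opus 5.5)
Your proof is correct and is essentially the paper's argument: the base case uses $E^{k+1}v_m^{(k)}=0$ together with Lemma~\ref{basis0}, and the induction on $i$ moves one $E$ from the left across $(EJ)_{(-2m-4k-1)}$ via $[E,(EJ)_{(n)}]=(E^2J)_{(n)}$ (the paper only indexes this step as $i\to i+1$ rather than $i-1\to i$). The abandoned first attempt at the inductive step, which pulls an $E$ out of $E^j$, plays no role and can be deleted.
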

 \pf We prove the statement by induction on $i$. We first consider the case that $i=1$. Since $E^{k+1}v_m ^{(k)}=0$, it follows from Lemma \ref{basis0} that \begin{align*}
     E(EJ)_{(-2m-4k-1)}E^{k}v_m ^{(k)}=(E^2J)_{(-2m-4k-1)}E^{k}v_m ^{(k)}=\sigma e^{m \gamma/2 +(k+2) \gamma}.
     \end{align*}
     We now assume that the statement holds for $i$. Then it follows from Lemma \ref{basis0} that
     \begin{align*}
     &E^{i+1}(EJ)_{(-2m-4k-1)}E^{j-1}v_m ^{(k)}\\
     &=E^{i}(EJ)_{(-2m-4k-1)}E^{j}v_m ^{(k)}+E^{i}(E^2J)_{(-2m-4k-1)}E^{j-1}v_m ^{(k)}\\
     &=(i+1)\sigma e^{m \gamma/2 +(k+2) \gamma}.
     \end{align*}
 This completes the proof.
 \qed

 As a consequence, we have the following result:
  \begin{lemma}\label{basis2}
 For any $i,j\in \Z_{\geq 0}$ such that $i+j=k+2$ and $i\geq 2$,
 \begin{align*}
     E^iJ_{(-2m-4k-1)}E^jv_m ^{(k)}=\frac{i(i-1)}{2}\sigma e^{m \gamma/2 +(k+2) \gamma}.
     \end{align*}
 \end{lemma}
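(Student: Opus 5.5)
Mimic the induction of Lemma \ref{basis1}, now one level lower, with $J$ in place of $EJ$ and $EJ$ in place of $E^2J$. The relevant identity is the commutator $[E, u_{(n)}] = (E u)_{(n)}$, valid for every $u\in V_L$ and every mode $n$, where $E$ acts as the zero mode of the weight one element $E$. Applied to $u = J$, it gives
\[
E\,J_{(-2m-4k-1)} = J_{(-2m-4k-1)}E + (EJ)_{(-2m-4k-1)}.
\]
The plan is an induction on $i\ge 2$, with $i+j=k+2$ fixed.

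\emph{Base case $i=2$, $j=k$.} Write
\[
E^2J_{(-2m-4k-1)}E^{k}v_m^{(k)} = E\,J_{(-2m-4k-1)}E^{k+1}v_m^{(k)} + E\,(EJ)_{(-2m-4k-1)}E^{k}v_m^{(k)}.
\]
The first term vanishes because $E^{k+1}v_m^{(k)}=0$: the vector $v_m^{(k)}=F^ke^{m\gamma/2+k\gamma}$ lies in an $\mathfrak{sl}_2$-module, and applying $E^{k+1}$ pushes its $H$-weight beyond the top. This is the same fact used in Lemma \ref{basis1}. The second term is the case $i=1$, $j=k$ of Lemma \ref{basis1}, so it equals $\sigma e^{m\gamma/2+(k+2)\gamma} = \tfrac{2\cdot 1}{2}\sigma e^{m\gamma/2+(k+2)\gamma}$, as claimed.

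\emph{Inductive step.} Assume the formula holds for $(i,j)$ with $i\ge 2$ and $j\ge 1$; we prove it for $(i+1,j-1)$. Commute one $E$ past $J_{(-2m-4k-1)}$:
\[
E^{i+1}J_{(-2m-4k-1)}E^{j-1}v_m^{(k)} = E^{i}J_{(-2m-4k-1)}E^{j}v_m^{(k)} + E^{i}(EJ)_{(-2m-4k-1)}E^{j-1}v_m^{(k)}.
\]
The first term equals $\frac{i(i-1)}{2}\sigma e^{m\gamma/2+(k+2)\gamma}$ by the induction hypothesis. In the second term the exponents satisfy $i+(j-1)=k+1$ with $i\ge1$, so Lemma \ref{basis1} gives $i\sigma e^{m\gamma/2+(k+2)\gamma}$. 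The sum is
\[
\Bigl(\tfrac{i(i-1)}{2}+i\Bigr)\sigma e^{m\gamma/2+(k+2)\gamma} = \tfrac{(i+1)i}{2}\sigma e^{m\gamma/2+(k+2)\gamma},
\]
which is the claim for $i+1$. The induction runs until $j=0$, so it covers every $i$ with $2\le i\le k+2$.

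The only point needing care is the base case, specifically that $E^{k+1}v_m^{(k)}=0$ and that the mode index $-2m-4k-1$ is unchanged under the commutator. Both are already used in Lemma \ref{basis1}, and the rest is bookkeeping.
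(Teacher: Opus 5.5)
Your proof is correct and follows the paper's argument step for step. The base case $i=2$ uses $E^{k+1}v_m^{(k)}=0$ to reduce to the $i=1$ case of Lemma \ref{basis1}, and the inductive step combines $[E,J_{(-2m-4k-1)}]=(EJ)_{(-2m-4k-1)}$ with Lemma \ref{basis1}. The paper's inductive step cites Lemma \ref{basis0}, but, as you correctly use, it is Lemma \ref{basis1} that is actually needed there.
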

 \pf We prove the statement by induction on $i$. We first consider the case that $i=2$. Since $E^{k+1}v_m ^{(k)}=0$, it follows from Lemma \ref{basis1} that \begin{align*}
     &E^2J_{(-2m-4k-1)}E^{k}v_m ^{(k)}\\
     &=E(EJ)_{(-2m-4k-1)}E^{k}v_m ^{(k)}\\
     &=\sigma e^{m \gamma/2 +(k+2) \gamma}.
     \end{align*}
     We now assume that the statement holds for $i$. Then it follows from Lemma \ref{basis0} that
     \begin{align*}
     &E^{i+1}J_{(-2m-4k-1)}E^{j-1}v_m ^{(k)}\\
     &=E^{i}J_{(-2m-4k-1)}E^{j}v_m ^{(k)}+E^{i}(EJ)_{(-2m-4k-1)}E^{j-1}v_m ^{(k)}\\
     &=\left(\frac{i(i-1)}{2}+i\right)\sigma e^{m \gamma/2 +(k+2) \gamma}\\
     &=\frac{i(i+1)}{2}\sigma e^{m \gamma/2 +(k+2) \gamma}.
     \end{align*}
 This completes the proof.
 \qed

Now we can give the proof of Lemma \ref{keylemma}:
\begin{proof}
From \cite{Mi} (cf. \cite{M1}), the semisimple subcategory of $L(1,0)$-modules generated by $L(1, \frac{m^2}{4})$ is a vertex tensor category with the fusion rules
 \begin{align*}
 &L\left(1, \frac{n^2}{4}\right) \boxtimes L\left(1, \frac{m^2}{4}\right)\\
  &=  L\left(1, \frac{(n+m)^2}{4}\right) \oplus  L\left(1, \frac{(n+m-2)^2}{4}\right) \oplus \cdots \oplus  L\left(1, \frac{(n-m)^2}{4}\right).
 \end{align*}
 In particular,
 \begin{align*}
 &L(1,4) \boxtimes L\left(1, \left(\frac{m}{2}+k\right)^2\right)\\
 &= L\left(1, \left(\frac{m}{2}+k+2\right)^2\right) \oplus  L\left(1, \left(\frac{m}{2}+k+1\right)^2\right) \oplus \cdots \oplus  L\left(1, \left(\frac{m}{2}+k-2\right)^2\right).
\end{align*}
By calculating the conformal weights, there exists a  constant $C$ such that
 \begin{equation*}
 J_{(-2m-4k-1)}v^{(k)}_m = Cv^{(k+2)}_m + w,
 \end{equation*}
 where $w \in  \bigoplus_{i= 0}^{k+1} L\left(1, \left(\frac{m}{2}+i\right)^2\right)$.
Since $E^{k+1}v^{(k)}_m=0$, it follows that $E^{k+2}w= 0$. Furthermore, by Lemma \ref{basis2}, $E^{k+2}J_{(-2m-4k-1)}v^{(k)}_m\neq 0$.  This implies that $C\neq 0$.
\end{proof}
 
\section{Proof of Proposition \ref{prop:genofU}}\label{appB}
Let $V$ be the subalgebra of $U$ generated by the following elements:
    \begin{align*}
    &x(-1)\1\otimes \1,~~~x\in sp(2n-2),\\
    &\omega_1\otimes \1+\1\otimes \omega_2,\\
    &y\otimes h(-1)\1,~~~y\in L_{sp(2n-2)}(\lambda_1),
    \end{align*}
   where $\omega_1$ and $\omega_2$ denote the Virasoro elements of $L_{sp(2n-2)}(-\frac{1}{2},0)$ and $M(1) ^+ $, respectively. It is known that $L_{sp(2n-2)}(\lambda_1)$ is the vector representation of $sp(2n-2)$ (cf. Proposition 13.24 of \cite{Ca}). It follows that $$ L_{sp(2n-2)}(\lambda_1)={\rm span}\{a_i^\pm|1\leq i\leq n-1\}. $$ Therefore,  $V$ is the subalgebra of $U$ generated by the following elements:
    \begin{align*}
    &x(-1)\1\otimes \1,~~~x\in sp(2n-2),\\
    &\omega_1\otimes \1+\1\otimes \omega_2,\\
    &a_i^\pm\otimes h(-1)\1,~~~1\leq i\leq n-1.
    \end{align*} 

In the following, we will show that $U=V$. First note the vertex operator algebra $M(1)^+$ is generated by its Virasoro element $\omega_2$
and $J=h(-1)^4\1-2h(-3)h(-1)\1+\frac{3}{2}h(-2)^2\1.$ (\cite[Theorem~2.7]{DG}, cf. Sec \ref{sec:span}). Next, we will prove $1 \otimes J \in V$.

  To show that $U=V$, the key point is to prove that $1\otimes J\in V$. We first show that $\1\otimes h(-1)^4\1\in V$.
  \begin{lemma}\label{geners1}
  $\1\otimes h(-1)^4\1\in V$.
  \end{lemma}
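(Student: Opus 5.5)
The plan is to produce $\1\otimes h(-1)^4\1$ by taking products of two of the odd generators $a_i^\pm\otimes h(-1)\1$, and then removing the pieces that lie in $\mathcal{S}(n-1)^{\bar 0}\otimes\C\1$ or involve lower Heisenberg content.

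\textbf{Step 1 (pairing two odd generators).} Take $u=a_i^+\otimes h(-1)\1$ and $v=a_i^-\otimes h(-1)\1$ (here $a_i^\pm$ stands for $a_i^\pm(-\tfrac12)\1$). Since the two tensor factors commute up to sign, for $n\in\Z$ we have
\[
u_{(n)}v=\sum_{p+q=n-1}\bigl(a_i^+\bigr)_{(p)}a_i^-\otimes h(-1)\1_{(q)}h(-1)\1 .
\]
The only nontrivial $\beta\gamma$ products are
\[
\bigl(a_i^+\bigr)_{(-1-r)}a_i^- = a_i^+(-\tfrac12-r)a_i^-(-\tfrac12)\1,\qquad r\ge 0,
\]
together with $\bigl(a_i^+\bigr)_{(0)}a_i^-=\pm\1$. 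On the Heisenberg side, $h(-1)\1_{(q)}h(-1)\1$ equals $h(q)h(-1)\1$, i.e. $\1$ if $q=1$, or $h(-1-q')h(-1)\1$ for $q=-1-q'$.

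\textbf{Step 2 (constructing the weight-2 piece).} I will use $u_{(0)}v$ and $u_{(-1)}v$ to build vectors of the form $\1\otimes h(-1)^2\1$ modulo $\mathcal{S}(n-1)^{\bar 0}\otimes\C\1$. The term $\bigl(a_i^+\bigr)_{(0)}a_i^-\otimes h(-1)^2\1$ appears in $u_{(-1)}v$. The remaining terms are of the form $x\otimes\1$ with $x\in L_{-\frac12}(sp(2n-2))$, or of the form $a_i^+(-\tfrac12-r)a_i^-\otimes(\text{weight}\le 1)$. Summing over $i$, the bilinear $a^+a^-$ pieces reassemble into $\omega_1$ and affine currents, which already lie in $V$ because $V\ni x(-1)\1\otimes\1$ and $\omega_1\otimes\1+\1\otimes\omega_2$. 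In this way I expect to isolate $\1\otimes\omega_2$ and $\omega_1\otimes\1$ separately.

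\textbf{Step 3 (weight 4).} With $\1\otimes\omega_2$ and the $sp(2n-2)$-part in hand, I will look at $u_{(-3)}v$, or alternatively at $(u_{(-1)}v')_{(-1)}(\cdots)$ using two different pairs $(i,j)$ with $i\neq j$. For $i\neq j$ the products $a_i^\pm$ and $a_j^\pm$ have no contraction, so the product $(a_i^+\otimes h)_{(-1)}(a_j^-\otimes h)$ is $a_i^+a_j^-\otimes h(-1)^2\1$ plus terms with lower Heisenberg content. Next I act with the partner generators $(a_i^-\otimes h)_{(0)}$ and $(a_j^+\otimes h)_{(0)}$. Each action contracts a $\beta\gamma$ pair, which produces a scalar in the first factor, and adds a further $h(-1)$. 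Taking the appropriate modes, the leading term is $\1\otimes h(-1)^4\1$. All other terms are either in $\mathcal{S}^{\bar0}\otimes\{\1,h(-1)^2\1,h(-2)h(-1)\1,h(-3)h(-1)\1,h(-2)^2\1\}$, or are derivatives/modes of the vectors $\1\otimes\omega_2$ and $\1\otimes L(-2)\omega_2$, which are already in $V$.

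\textbf{Step 4 (cleanup).} I subtract the known elements of $V$: the affine descendants, $L(-2)$ and $L(-1)^2$ applied to the conformal vectors, and $\omega_2{}_{(-1)}\omega_2\propto h(-1)^4\1+\cdots$. The obstacle is whether these leftover corrections stay inside $V$. Concretely, it is a finite linear-algebra check in the weight-4 space of $M(1)^+$, spanned by $h(-1)^4\1$, $h(-3)h(-1)\1$ and $h(-2)^2\1$, which is $3$-dimensional. The subspace generated there by Virasoro alone is $2$-dimensional, so I must verify that the coefficient of $h(-1)^4\1$ in the constructed vector is independent of the Virasoro contributions. Equivalently, the constructed vector should have a nonzero component along $J$. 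I expect to check this by computing the pairing with $J$ using explicit normal ordering.

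For $n=2$ only one index $i$ is available. In that case I will use $u_{(-3)}v$ directly with $i=j$ and track the contraction terms carefully.
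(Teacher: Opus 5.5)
Your partner-contraction construction in Step~3 (for $i\neq j$) is sound and is essentially the paper's mechanism. However, the route you fall back on for $n=2$, which you also list first in Step~3, fails.

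\textbf{Why $u_{(-3)}v$ cannot work.} By your own Step~1 formula, every term of $u_{(m)}v$ has Heisenberg factor $h(-1)\1_{(q)}h(-1)\1\in\C\1+\operatorname{span}\{h(-k)h(-1)\1 : k\geq 1\}$. So no product of two odd generators ever contains $h(-1)^4\1$. Moreover $\wt(u_{(m)}v)=2-m$, so $u_{(-3)}v$ lies in weight $5$ and cannot even enter your weight-$4$ check. The same off-by-one appears in Step~2: $(a_i^+)_{(0)}a_i^-\otimes h(-1)^2\1$ occurs in $u_{(0)}v$, not in $u_{(-1)}v$.

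\textbf{The restriction $i\neq j$ is unnecessary.} Raising the Heisenberg degree one generator at a time works with $a_1^\pm$ alone. This is what the paper does, uniformly for $n\geq 2$:
\begin{align*}
(a_1^+\otimes h)_{(0)}(a_1^-\otimes h)&=\1\otimes h(-1)^2\1+(a_1^+)_{(-2)}a_1^-\otimes\1,\\
(a_1^-\otimes h)_{(-1)}(\1\otimes h(-1)^2\1)&=a_1^-\otimes h(-1)^3\1+2(a_1^-)_{(-3)}\1\otimes h(-1)\1,\\
(a_1^+\otimes h)_{(0)}(a_1^-\otimes h(-1)^3\1)&=\1\otimes h(-1)^4\1+3(a_1^+)_{(-2)}a_1^-\otimes h(-1)^2\1.
\end{align*}
The paper's displays omit the second terms on the right, but these are harmless. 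Every correction term lies in $V$ for structural reasons:
\begin{itemize}
\item $L_{-\frac12}(sp(2n-2))\otimes\1\subset V$, because it is generated by the currents;
\item $\mathcal S(n-1)^{\bar 1}\otimes h(-1)\1\subset V$, because $(x\otimes\1)_{(m)}(a_1^-\otimes h)=x_{(m)}a_1^-\otimes h$ and $\mathcal S(n-1)^{\bar 1}$ is irreducible;
\item $(x\otimes\1)_{(-1)}(\1\otimes y)=x\otimes y$.
\end{itemize}
Only the $(0)$-mode contraction returns the $\beta\gamma$ factor to $\1$. Hence the $\1\otimes(\cdot)$-component of the last product is exactly $\1\otimes h(-1)^4\1$, and no pairing with $J$ is needed. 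Terms $y\otimes h(-1)\1$ with $y\in\mathcal S(n-1)^{\bar 1}$ also arise in your $i\neq j$ route, and they are missing from your list of correction types.

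\textbf{An alternative that rescues Step~4.} Your Step~4 idea does work once the correct weight-$4$ mode is used, and it gives a genuinely different proof that also covers $n=2$. With $i=1$,
\[
u_{(-2)}v=\1\otimes h(-3)h(-1)\1+(a_1^+)_{(-1)}a_1^-\otimes h(-2)h(-1)\1+(a_1^+)_{(-2)}a_1^-\otimes h(-1)^2\1+(a_1^+)_{(-4)}a_1^-\otimes\1 .
\]
The last three terms lie in $V$ once $\1\otimes\omega_2$ does (note $h(-2)h(-1)\1=L(-1)\omega_2$), so $\1\otimes h(-3)h(-1)\1\in V$. Since $L(-2)\omega_2=\tfrac14h(-1)^4\1+h(-3)h(-1)\1$, it follows that
\[
\1\otimes h(-1)^4\1=4\,(\1\otimes\omega_2)_{(-1)}(\1\otimes\omega_2)-4\,\1\otimes h(-3)h(-1)\1\in V .
\]
This is the computation of the paper's Lemma~\ref{geners2}, which does not use the present lemma, plus one Virasoro relation. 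It also makes your ``nonzero $J$-component'' criterion concrete: $h(-3)h(-1)\1$ is not in the span of $L(-2)^2\1$ and $L(-4)\1$.

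The paper's route produces $\1\otimes h(-1)^4\1$ directly and avoids any linear algebra in the weight-$4$ space. The alternative replaces the second and third steps above by a single two-generator product.
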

  \pf We first show that $\1\otimes h(-1)^2\1\in V$. Let $Y(\cdot ,z)$ be the vertex operator map of $U$, and $u,v$ be elements in $U$. We let $Y(u, z)v=\sum_{n\in\Z}u_{(n)}vz^{-n-1}$. By the definition of $\mathcal{S}(n-1)$, we have
  $$(a_1^+)_{(0)}a_1^-=a_1^+(\frac{1}{2})a_1^-(-\frac{1}{2})\1=[a_1^+(\frac{1}{2}),a_1^-(-\frac{1}{2})]\1=\1.$$
 Therefore, by the definition of tensor product of vertex operator algebras (cf. Section 2.5 of \cite{FHL}), we have
 \begin{align*}
 &(a_1^+\otimes h)_{(0)}(a_1^-\otimes h)\\
 &=(a_1^+)_{(0)}a_1^-\otimes h_{(-1)}h+(a_1^+)_{(-1)}a_1^-\otimes h_{(0)}h\\
 &=\1\otimes h_{(-1)}^2\1.
 \end{align*}
 Hence, $\1\otimes h(-1)^2\1\in V$.

 We next show that $\1\otimes h(-1)^4\1\in V$. First,
  \begin{align*}
 &(a_1^-\otimes h)_{(-1)}(\1\otimes h(-1)^2\1)\\
 &=(a_1^-)_{(-2)}\1\otimes h(0)h(-1)^2\1+(a_1^-)_{(-1)}\1\otimes h(-1)^3\1+(a_1^-)_{(0)}\1\otimes h(-2)h(-1)^2\1\\
 &=(a_1^-)_{(-1)}\1\otimes h(-1)^3\1.
 \end{align*}
 Hence, we have $a_1^-\otimes h(-1)^3\1\in V$. Since
 \begin{align*}
 &(a_1^+\otimes h)_{(0)}(a_1^-\otimes h(-1)^3\1)\\
 &=(a_1^+)_{(0)}a_1^-\otimes h(-1)^4\1+(a_1^+)_{(-1)}a_1^-\otimes h(0)h(-1)^3\1\\
 &=\1\otimes h(-1)^4\1,
 \end{align*}
 we have $\1\otimes h(-1)^4\1\in V$. This completes the proof.
  \qed

  We next show that $\1\otimes h(-3)h(-1)\1\in V$.
  \begin{lemma}\label{geners2}
   $\1\otimes h(-3)h(-1)\1\in V$.
  \end{lemma}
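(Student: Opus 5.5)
We will produce $\1\otimes h(-3)h(-1)\1$ exactly as in Lemma \ref{geners1}: act with $(a_1^+\otimes h)_{(n)}$ on suitable elements $a_1^-\otimes(\cdot)\in V$, now using other modes $n$ so that different Heisenberg monomials appear. We have already shown that $a_1^\pm\otimes h$, $\1\otimes h(-1)^2\1$, $a_1^-\otimes h(-1)^3\1$ and $\1\otimes h(-1)^4\1$ lie in $V$. The conformal vector gives $\omega_1\otimes\1+\1\otimes\omega_2\in V$, and since $\1\otimes\omega_2=\frac12\1\otimes h(-1)^2\1\in V$, also $\omega_1\otimes\1\in V$. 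So $V$ contains the $L(-1)$-operators of both tensor factors separately, namely $(\omega_1\otimes \1)_{(0)}$ and $(\1\otimes\omega_2)_{(0)}$.

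\emph{Step 1.} From $a_1^-\otimes h\in V$, apply $(\1\otimes\omega_2)_{(0)}=\id\otimes L(-1)$ to get $a_1^-\otimes h(-2)\1\in V$. Apply it twice to get $a_1^-\otimes h(-3)\1\in V$ (up to a nonzero scalar). In the same way, $(\omega_1\otimes\1)_{(0)}$ produces $a_1^-(-\tfrac32)\1\otimes h\in V$.

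\emph{Step 2.} Compute
\[
(a_1^+\otimes h)_{(n)}(a_1^-\otimes h(-3)\1)=\sum_{j\ge0}(a_1^+)_{(n-j-1)}a_1^-\otimes h_{(j)}h(-3)\1 .
\]
The only surviving terms are $j=0$, which gives $(a_1^+)_{(n-1)}a_1^-\otimes h(-1)h(-3)\1$, and $j=3$, which gives $3\,(a_1^+)_{(n-4)}a_1^-\otimes\1$. Choose $n=1$. Then the first term is $(a_1^+)_{(0)}a_1^-\otimes h(-3)h(-1)\1=\1\otimes h(-3)h(-1)\1$. The second term is $3(a_1^+)_{(-3)}a_1^-\otimes\1$, which lies in $L_{-\frac12}(sp(2n-2))\otimes\1$. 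That algebra is generated by $x(-1)\1\otimes\1$ for $x\in sp(2n-2)$, so this term belongs to $V$. Subtracting it shows $\1\otimes h(-3)h(-1)\1\in V$.

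\emph{Main obstacle.} The key point is that the extra term lands in $L_{-\frac12}(sp(2n-2))\otimes\1\subset V$; this needs the normalization of $\mathcal S(n-1)^{\bar0}$ recalled earlier. We should also confirm that $(a_1^+)_{(0)}a_1^-=\1$, as was used in Lemma \ref{geners1}, and keep track of parity signs. None of these is a real difficulty, since $\mathcal S$ is bosonic.

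Once both lemmas hold, $\1\otimes h(-2)^2\1$ follows in the same way by acting on $a_1^-\otimes h(-2)h(-1)\1$. Hence $\1\otimes J\in V$, and $U=V$.
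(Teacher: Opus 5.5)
Your Step 2 fails as written. The correct expansion is
\[
(a_1^+\otimes h)_{(n)}(a_1^-\otimes h(-3)\1)=\sum_{j\in\Z}(a_1^+)_{(n-1-j)}a_1^-\otimes h(j)h(-3)\1 ,
\]
where $h_{(j)}=h(j)$ and the sum runs over all $j\in\Z$. Only the terms with $j\ge n-1$ contribute, because $(a_1^+)_{(i)}a_1^-=0$ for $i\ge 1$. For $n=1$ your $j=0$ term is $(a_1^+)_{(0)}a_1^-\otimes h(0)h(-3)\1=0$, since $h(0)$ acts by zero on $M(1)$. There you read $h_{(0)}$ as $h(-1)$, while for the other term you used $h_{(3)}=h(3)$. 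In fact
\[
(a_1^+\otimes h)_{(1)}(a_1^-\otimes h(-3)\1)=3\,(a_1^+)_{(-3)}a_1^-\otimes\1 ,
\]
which gives nothing new. A weight count shows the problem at once: this product has weight $\frac32+\frac72-2=3$, whereas $\1\otimes h(-3)h(-1)\1$ has weight $4$.

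The repair is to use the mode $n=0$, exactly as in Lemma \ref{geners1}. Then only $j=-1$ and $j=3$ survive, and
\[
(a_1^+\otimes h)_{(0)}(a_1^-\otimes h(-3)\1)=\1\otimes h(-3)h(-1)\1+3\,(a_1^+)_{(-4)}a_1^-\otimes\1 .
\]
The last term lies in $\mathcal{S}(n-1)^{\bar 0}\otimes\1=L_{-\frac12}(sp(2n-2))\otimes\1\subset V$. Your Step 1 is fine, since $a_1^-\otimes h(-3)\1=\frac12\big((\1\otimes\omega_2)_{(0)}\big)^2(a_1^-\otimes h)$.

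With this correction your argument is complete, and it is a genuinely different and shorter route than the paper's. The paper expands $(a_1^+\otimes h)_{(-2)}(a_1^-\otimes h)$. That product contains $\1\otimes h(-3)h(-1)\1$ together with the correction terms $(a_1^+)_{(-1)}a_1^-\otimes h(-2)h(-1)\1$ and $(a_1^+)_{(-2)}a_1^-\otimes h(-1)^2\1$, plus a term $(a_1^+)_{(-4)}a_1^-\otimes\1$ that the paper leaves implicit. The paper therefore has to prove separately that $\1\otimes h(-2)h(-1)\1\in V$, via $(a_1^+\otimes h)_{(-1)}(a_1^-\otimes h)$, and then multiply by elements of $L_{-\frac12}(sp(2n-2))\otimes\1$.

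You instead move the derivatives into the generator beforehand with $\id\otimes L(-1)=(\1\otimes\omega_2)_{(0)}$. The paper uses this device only later, in Lemma \ref{geners3}. As a result only one correction term appears, and it lives purely in the first tensor factor.

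Your closing remark is also off. The element $a_1^-\otimes h(-2)h(-1)\1$ is not in $U$ at all, since it lies in $\mathcal{S}(n-1)^{\bar 1}\otimes M(1)^+$. For $\1\otimes h(-2)^2\1$ you can use, for example,
\[
(a_1^+\otimes h(-2)\1)_{(0)}(a_1^-\otimes h(-2)\1)=\1\otimes h(-2)^2\1-6\,(a_1^+)_{(-4)}a_1^-\otimes\1 .
\]
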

  \pf First, we have
 \begin{align*}
 &(a_1^+\otimes h)_{(-2)}(a_1^-\otimes h)\\
 &=(a_1^+)_{(0)}a_1^-\otimes h(-3)h+(a_1^+)_{(-1)}a_1^-\otimes h(-2)h+(a_1^+)_{(-2)}a_1^-\otimes h(-1)^2\1\\
 &=\1\otimes h(-3)h+(a_1^+)_{(-1)}a_1^-\otimes h(-2)h+(a_1^+)_{(-2)}a_1^-\otimes h(-1)^2\1.
 \end{align*}
 Therefore, it is enough to prove that $(a_1^+)_{(-1)}a_1^-\otimes h(-2)h\in V$ and $(a_1^+)_{(-2)}a_1^-\otimes h(-1)^2\1\in V$.

 Since $(a_1^+)_{(-2)}a_1^-\in \mathcal{S}(n-1)^{\bar 0}$, we have $(a_1^+)_{(-2)}a_1^-\otimes \1\in V$. And in the proof of  Lemma \ref{geners1}, we have proved that $\1\otimes h(-1)^2\1\in V$. Moreover,
 \begin{align*}
 &((a_1^+)_{(-2)}a_1^-\otimes \1)_{(-1)}(\1\otimes h(-1)^2\1)=(a_1^+)_{(-2)}a_1^-\otimes h(-1)^2\1.
 \end{align*}
 Thus, $(a_1^+)_{(-2)}a_1^-\otimes h(-1)^2\1\in V$.

 We next show that $\1\otimes h(-2)h(-1)\1\in V$. First, \begin{align*}
 &((a_1^+)_{(-1)}a_1^-\otimes \1)_{(-1)}(\1\otimes h(-1)^2\1)=(a_1^+)_{(-1)}a_1^-\otimes h(-1)^2\1.
 \end{align*}
 it follows that $(a_1^+)_{(-1)}a_1^-\otimes h(-1)^2\1\in V$.
 Moreover, \begin{align*}
 &(a_1^+\otimes h)_{(-1)}(a_1^-\otimes h)\\
 &=(a_1^+)_{(0)}a_1^-\otimes h(-2)h+(a_1^+)_{(-1)}a_1^-\otimes h(-1)^2\1\\
 &=\1\otimes h(-2)h+(a_1^+)_{(-1)}a_1^-\otimes h(-1)^2\1.
 \end{align*}
   This implies that $\1\otimes h(-2)h\in V$.

   Since \begin{align*}
 &((a_1^+)_{(-1)}a_1^-\otimes \1)_{(-1)}(\1\otimes h(-2)h(-1)\1)=(a_1^+)_{(-1)}a_1^-\otimes h(-2)h(-1)\1,
 \end{align*}
 we have $(a_1^+)_{(-1)}a_1^-\otimes h(-2)h(-1)\1\in V$. This completes the proof.
  \qed

 Finally, we  show that $\1\otimes h(-2)^2\1\in V$.
  \begin{lemma}\label{geners3}
$\1\otimes h(-2)^2\1\in V$.
  \end{lemma}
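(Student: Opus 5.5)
We need to show $\1\otimes h(-2)^2\1\in V$, imitating Lemmas \ref{geners1} and \ref{geners2}: produce it as the $\1\otimes(\cdot)$ term in a product of two generators $a_1^\pm\otimes h$, and check that every other term already lies in $V$.

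The natural choice is to build $a_1^-\otimes h(-2)\1$ first. Since $V$ is closed under $L(-1)=(\omega_1\otimes\1+\1\otimes\omega_2)_{(0)}$ and $L(-1)$ acts as a derivation on the tensor product, we get
\[
L(-1)(a_1^-\otimes h)=(a_1^-)_{(-2)}\1\otimes h+a_1^-\otimes h(-2)\1\in V .
\]
To separate the two terms, I would use the other generators. Applying $(a_1^+\otimes h)_{(0)}$ to $(a_1^-)_{(-2)}\1\otimes h$ gives a single term, because $(a_1^+)_{(j)}(a_1^-)_{(-2)}\1$ is nonzero only for $j=1$; here $j=1$ corresponds to the mode $a_1^+(\tfrac12)$ hitting $a_1^-(-\tfrac32)\1$, which gives zero. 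Hence this is not the right way to isolate the terms.

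A cleaner route is to compute directly
\[
(a_1^+\otimes h)_{(-1)}(a_1^-\otimes h(-2)\1)=(a_1^+)_{(0)}a_1^-\otimes h(-2)^2\1+(a_1^+)_{(-1)}a_1^-\otimes h(-1)h(-2)\1+\cdots .
\]
The first term equals $\1\otimes h(-2)^2\1$. The remaining terms have the form $(a_1^+)_{(-1-j)}a_1^-\otimes h_{(j-1)}h(-2)\1$ for $j\ge 1$. Their $\mathcal S(n-1)$-factors lie in $\mathcal{S}(n-1)^{\bar 0}=L_{-\frac12}(sp(2n-2))$, and their $M(1)$-factors are $h(-1)h(-2)\1$ and multiples of $\1$ (the $j=2$ term contributes $2\,(a_1^+)_{(-3)}a_1^-\otimes\1$). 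Each such term lies in $V$ by the same $(\,\cdot\otimes\1)_{(-1)}(\1\otimes\cdot)$ trick used in Lemma \ref{geners2}, since Lemma \ref{geners2} shows $\1\otimes h(-2)h(-1)\1\in V$.

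The point that still has to be settled is therefore $a_1^-\otimes h(-2)\1\in V$. I would obtain it from the $L(-1)$ identity above, once I show $(a_1^-)_{(-2)}\1\otimes h\in V$. For that, take the product
\[
(L_{sp}\text{-current}\otimes\1)_{(-1)}(a_1^-\otimes h),
\]
using a current $x(-1)\1\otimes\1$ with $x=(a_1^-)_{(-1)}a_1^-$ or a similar element. Alternatively, use $(\omega_1\otimes\1)$, which lies in $V$ as an element of the affine subalgebra by Sugawara, together with $(\omega_1\otimes\1)_{(0)}(a_1^-\otimes h)=(a_1^-)_{(-2)}\1\otimes h$. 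Since $\omega_1\otimes\1$ is a Sugawara vector built from the currents $x(-1)\1\otimes\1$, it lies in $V$, and so $(a_1^-)_{(-2)}\1\otimes h\in V$. Subtracting from the $L(-1)$ identity then gives $a_1^-\otimes h(-2)\1\in V$.

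The remaining work is routine expansion of the Borcherds-type tensor product formula, as in the earlier lemmas. The expected obstacle is the bookkeeping of the lower-order terms; the key input that makes it go through is $\omega_1\otimes\1\in V$ via Sugawara.
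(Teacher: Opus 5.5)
Your argument is correct and is essentially the paper's proof. Like the paper, you first produce $a_1^-\otimes h(-2)\1$, then expand $(a_1^+\otimes h)_{(-1)}(a_1^-\otimes h(-2)\1)$ and dispose of the $(a_1^+)_{(-1)}a_1^-\otimes h(-1)h(-2)\1$ term via the proof of Lemma \ref{geners2}; the only difference is that the paper applies $(\1\otimes\omega_2)_{(0)}$ directly (available since $\1\otimes h(-1)^2\1\in V$ by Lemma \ref{geners1}), which is the same operator as your $L(-1)-(\omega_1\otimes\1)_{(0)}$. One small correction: the vacuum-type term, which the paper's display omits, is $2(a_1^+)_{(-4)}a_1^-\otimes\1$ (coming from $h(2)h(-2)\1=2\1$) rather than $2(a_1^+)_{(-3)}a_1^-\otimes\1$, and it is harmless because it lies in $\mathcal{S}(n-1)^{\bar 0}\otimes\1\subset V$.
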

  \pf First, we show that $a_1^-\otimes h(-2)\1\in V$. Note that
  \begin{align*}
 &(\1\otimes \omega_2)_{(0)}(a_1^-\otimes h(-1)\1)\\
 &=(\1)_{(-1)}a_1^-\otimes (\omega_2)_{(0)}h(-1)\1\\
 &=a_1^-\otimes h(-2)\1.
 \end{align*}
 Thus, $a_1^-\otimes h(-2)\1\in V$.

 We now show that $\1\otimes h(-2)^2\1\in V$. Note that
 \begin{align*}
 &(a_1^+\otimes h)_{(-1)}(a_1^-\otimes h(-2)\1)\\
 &=(a_1^+)_{(0)}a_1^-\otimes h(-2)^2\1+(a_1^+)_{(-1)}a_1^-\otimes h(-1)h(-2)\1\\
 &=\1\otimes h(-2)^2\1+(a_1^+)_{(-1)}a_1^-\otimes h(-2)h(-1)\1.
 \end{align*}
 This implies that $\1\otimes h(-2)^2\1\in V$.
  \qed

Now we can prove Proposition \ref{prop:genofU}: By Lemma \ref{geners1}, \ref{geners2}, we have $1\otimes J\in V$. Because $M(1)^+$ is generated by $\omega_2$ and $J$, $L_{sp(2n-2)}\big(-\frac{1}{2},0\big) \otimes M(1) ^+$ is contained in $V$. Since $y\otimes h(-1)\1\in V$ for any $y\in L_{sp(2n-2)}(\lambda_1)$,  it follows that $L_{sp(2n-2)}\big(-\frac{1}{2},\lambda_1\big)  \otimes M(1)^-$ is contained in $V$ as well. Consequently $U=V$.
  \qed

\subsection*{Acknowledgement}    
 D. Adamovi\'  c was partially supported by  Croatian Science Foundation under the project IP-2022-10-9006  and by the project “Implementation of cutting-edge research and its application as part of the Scientific Center of
Excellence for Quantum and Complex Systems, and Representations of Lie Algebras“, Grant No. PK.1.1.10.0004, co-financed by the European Union through the
European Regional Development Fund - Competitiveness and Cohesion Programme 2021-2027. X. Lin was partially supported by China NSF grants 12522104, 12171371. J. Yang was partially supported by China NSF grant 12371030.

D.A. and J.Y. want to thank Simons Center for Geometry and Physics at Stony Brook University and the organizers for hosting the program: Supersymmetric Quantum Field Theories, Vertex Operator Algebras, and Geometry held from March 17-April 18 in 2025, during which we made some essential progress on this project.

Part of the research reported in this paper was conducted during D.A.’s visit to the School of Mathematics and Statistics at Central China Normal University and to Wuhan University in November 2025. He is grateful to both universities for their warm hospitality. 

We also thank Sven M$\ddot{\rm o}$ller for bringing the reference \cite{GLM} to our attention. J.Y. also want to thank Thomas Creutzig for asking the questions about the existence of $M(1)^+$ tensor category, and thank Simon Lentner for some discussions related to Section 3.7.

\end{document}